\documentclass[dvipsnames, reqno]{amsart}

\usepackage[utf8]{inputenc}
\usepackage[normalem]{ulem}

\usepackage{amsfonts,amsthm,amssymb,amsmath,stackrel,latexsym,mathabx}

\usepackage{scalerel,stackengine}

\usepackage{lmodern}
\usepackage{microtype}
\usepackage{xcolor}
\usepackage{graphicx}
\usepackage[all]{xy}
\usepackage[colorlinks=true,citecolor=MidnightBlue,linkcolor=MidnightBlue,urlcolor=MidnightBlue,backref=page]{hyperref}  %
\usepackage[inline]{enumitem}
\usepackage{a4wide}
\usepackage{stmaryrd}
\usepackage{mathtools,thmtools}

\setcounter{tocdepth}{2}

\makeatletter
\DeclareRobustCommand*\cal{\@fontswitch\relax\mathcal}
\makeatother

\numberwithin{equation}{section}



\usepackage{mathrsfs}

\usepackage{scalerel,stackengine}  

\allowdisplaybreaks

\theoremstyle{plain}
\newtheorem{theorem}{Theorem}[section]
\newtheorem{lemma}[theorem]{Lemma}
\newtheorem{proposition}[theorem]{Proposition}
\newtheorem{corollary}[theorem]{Corollary}

\declaretheorem[name=Theorem,qed={$\square$},numbered=no]{theorem*}
\declaretheorem[name=Proposition,qed={$\square$},numbered=no]{proposition*}
\theoremstyle{definition}
\newtheorem{definition}[theorem]{Definition}
\theoremstyle{remark}
\declaretheorem[name=Example,qed={$\scriptstyle\blacksquare$},sibling=theorem]{example}
\declaretheorem[name=Remark,qed={$\scriptstyle\blacksquare$},sibling=theorem]{remark}

\DeclareRobustCommand{\SkipTocEntry}[5]{}

\def\be{\begin{equation}}
\def\ee{\end{equation}}
\def\ba{\begin{eqnarray}}
\def\ea{\end{eqnarray}}


\newcommand{\cD}{\mathcal{D}}

\newcommand{\cJ}{\mathcal{J}}

\newcommand{\cL}{\mathcal{L}}

\newcommand{\cP}{\mathcal{P}}

\newcommand{\cS}{\mathcal{S}}
\newcommand{\cU}{\mathcal{U}}

\newcommand{\R}{\mathbb{R}} 

\newcommand{\Z}{\mathbb{Z}}

\newcommand{\Der}{\mathfrak{der}}
\newcommand{\End}[2]{\mathrm{End}_{#1}\left({#2}\right)}
\newcommand{\Hom}[6]{{_{#1}^{#2}\mathrm{Hom}_{#3}^{#4}}(\hspace*{1pt}{#5},{#6}\hspace*{1pt})}

\usepackage{xparse}

\def\clap#1{\hbox to 0pt{\hss#1\hss}}

\newcommand{\mf}[1]{\mathfrak{#1}}
\newcommand{\ms}[1]{\mathsf{#1}}
\newcommand{\gr}{\mathsf{gr}} 
\newcommand{\tak}[1]{\times_{{#1}}} 

\newcommand{\id}{\mathsf{id}}
\newcommand{\K}{\Bbbk}

\newcommand{\RR}{{R \otimes_\ahha  \mf{h}}}
\newcommand{\LL}{\mathfrak{L}}
\newcommand{\Lie}[1]{\mathfrak{#1}}

\newcommand{\M}{\mathsf{Mod}} 
\newcommand{\Lmod}[1]{{}_{#1}\M} 
\newcommand{\Bim}[2]{{}_{#1}\M{}_{#2}} 


\newcommand{\ot}{\otimes}

\newcommand{\bla}{~{\raisebox{+1pt}{$\scriptstyle \blacktriangleright$}}~} 
\newcommand{\bra}{~{\raisebox{+1pt}{$\scriptstyle \blacktriangleleft$}}~} 


\newcommand{\ga}{\alpha} 
\newcommand{\gb}{\beta}  

\newcommand{\gd}{\delta} 
\newcommand{\gD}{\Delta} 
\newcommand{\gve}{\varepsilon}

\newcommand{\go}{\omega} 
\newcommand{\gO}{\Omega}

\newcommand{\gs}{\sigma}

\newcommand{\ahha}{{\scriptscriptstyle{A}}}
\newcommand{\behhe}{{\scriptscriptstyle{B}}}
\newcommand{\cehhe}{{\scriptscriptstyle{C}}}

\newcommand{\emme}{{\scriptscriptstyle{M}}}

\newcommand{\erre}{{\scriptscriptstyle{R}}}

\newcommand{\uhhu}{{\scriptscriptstyle{U}}}

\newcommand{\vauu}{{\scriptscriptstyle{V}}}

\newcommand{{\Aop}}{{A^{\rm op}}}
\newcommand{{\Uop}}{{U^{\rm op}}}

\newcommand{{\Aope}}{({A^{\rm op}})^{\rm e}}
\newcommand{{\Ae}}{{A^{\rm e}}}
\newcommand{\Aopp}{{\scriptscriptstyle{\Aop}}}

\newcommand{\Uopp}{{\scriptscriptstyle{\Uop}}}

\newcommand{{\coop}}{{{\rm coop}}}


\newcommand{\lact}{\smalltriangleright}                  
\newcommand{\ract}{\smalltriangleleft}
\newcommand{\blact}{\blacktriangleright} 
\newcommand{\bract}{\blacktriangleleft} 

\newcommand\pig[1]{\scalerel*[5pt]{\big#1}{%
  \ensurestackMath{\addstackgap[1.5pt]{\big#1}}}}


\newcommand{\umod}{U\mbox{-}\mathbf{Mod}}                     

\newcommand{\comodu}{\mathbf{Comod}\mbox{-}U}


\newcommand{\due}[3]{{}_{{#2 }} {#1}_{{ #3}}\,}   
\renewcommand{\hash}[1]{\operatorname{\varhash_{{\!#1}}}}

\newdir{ >}{{}*!/-10pt/@{>}}
\newdir{ (}{{}*!/-10pt/@{(}}

\makeatletter
\patchcmd{\@setaddresses}{\indent}{\noindent}{}{}
\patchcmd{\@setaddresses}{\indent}{\noindent}{}{}
\patchcmd{\@setaddresses}{\indent}{\noindent}{}{}
\patchcmd{\@setaddresses}{\indent}{\noindent}{}{}
\makeatother

\makeatletter
\@namedef{subjclassname@2020}{%
  \textup{2020} Mathematics Subject Classification}
\makeatother


\title[UEA of L-R algebras: crossed products, connections, and curvature]{Universal enveloping algebras of Lie-Rinehart algebras: crossed products, connections, and curvature}

\author[X. Bekaert]{Xavier Bekaert}
\address{X.B.: Institut Denis Poisson, Unit\'e Mixte de Recherche $7013$ du CNRS,
Universit\'e de Tours \& Universit\'e d'Orl\'eans, Parc de Grandmont, 37200 Tours, France}
\email{xavier.bekaert@lmpt.univ-tours.fr}

\author[N. Kowalzig]{Niels Kowalzig}
\address{N.K.: Dipartimento di Matematica, Universit\`a di Roma Tor Vergata, Via della Ricerca Scientifica 1, 00133 Roma, Italy}
\email{niels.kowalzig@uniroma2.it}

\author[P. Saracco]{Paolo Saracco}
\address{P.S.: D\'epartement de Math\'ematique, Universit\'e Libre de Bruxelles, Boulevard du Triomphe, B-1050 Bruxelles, Belgium}
\urladdr{\url{sites.google.com/view/paolo-saracco}}
\urladdr{\url{paolo.saracco.web.ulb.be}}
\email{paolo.saracco@ulb.be}

\keywords{Universal enveloping algebras, Lie-Rinehart algebras, crossed products, bialgebroids, connections, Hopf cocycles, weak actions}

\subjclass[2020]{16S30, 16S40, 16W25, 17B66, 53C05}

\begin{document}

\begin{abstract}
  We extend a theorem, originally formulated by Blattner-Cohen-Montgomery for crossed products arising from Hopf algebras weakly acting on noncommutative algebras, to the realm of left Hopf algebroids.   
  Our main motivation is an application to universal enveloping algebras of projective Lie-Rinehart algebras: for any given curved (resp.~flat) connection, that is, a linear (resp.~Lie-Rinehart) splitting of a Lie-Rinehart algebra extension, we provide a crossed (resp.~smash) product decomposition of the associated universal enveloping algebra, and vice versa. 
As a geometric example, we describe the associative algebra generated by the invariant vector fields on the total space of a principal bundle as a crossed product of the algebra generated by the vertical ones and the algebra of differential operators on the base.
\end{abstract}

\maketitle

\tableofcontents

\section*{Introduction}

\addtocontents{toc}{\protect\setcounter{tocdepth}{0}}

\subsection*{State of the art} 
In Lie theory, the universal enveloping algebra construction allows to construct an associative algebra $U(\mf{g})$ from any Lie algebra $\mf{g}$, in such a way that the representation theory remains unchanged. Such a construction is functorial, in the sense that it defines a functor $U$ from the category of Lie algebras to the category of associative algebras. The terminology ``universal'' is justified by the fact that such a functor is, in fact, a left adjoint to the well-known functor that sends every associative algebra to the Lie algebra that has the same underlying vector space and the commutator as bracket. A natural question which arises is how this functor behaves with respect to familiar constructions for Lie algebras, such as (semi-)direct sums or splittings of short exact sequences.

In addition, Milnor and Moore exhibited in \cite{MilnorMoore} an equivalence between the category of Lie algebras over a field $\K$ of characteristic zero and the category of cocommutative and primitively generated Hopf algebras over $\K$. This equivalence is induced by the same universal enveloping algebra construction in one direction, and by considering the space of primitive elements in the other. In light of this, Hopf algebras entered the study of Lie groups and differential geometry: Lie groups and their representation theory can be studied in terms of their invariant vector fields (and their representations), which however forces to work with non-associative (Lie) structures or, equivalently, in terms of their algebras of invariant differential operators (and their representations). This equivalent formulation allows to work in the associative setting and to take advantage of all the tools coming from ring theory and Hopf algebra theory. 

In this rich framework, classical constructions from Lie algebra theory can be equivalently reinterpreted in terms of their universal enveloping algebras. In the present work, we will start from the following well-known results.

\smallskip 

\begin{enumerate}[label=\textbf{(\Alph*)},wide,labelindent=0pt]

\item\label{item:B} Let a Lie algebra $\mathfrak{g}$ be a semi-direct sum
\[
\mathfrak{g}=\mathfrak{n}\niplus\mathfrak{h}
\]
of an ideal $\mathfrak{n}\subseteq\mathfrak{g}$ and a Lie subalgebra $\mathfrak{h}\subseteq\mathfrak{g}$.
Then the universal enveloping algebra of $\mathfrak{g}$ is isomorphic to the smash product of the 
enveloping algebras of $\mathfrak{n}$ and $\mathfrak{h}$, that is,
\[
U(\mathfrak{g}) \simeq U(\mathfrak{n}) \hash{} U(\mathfrak{h})
\]
as associative algebras.

This result is a direct consequence of the universal property of universal enveloping algebras, but it admits the following remarkable generalisation:

\smallskip

\item\label{item:C}  Let $\mathfrak{g}$ be a Lie algebra extension of $\mathfrak{h}$ by $\mathfrak{n}$, that is, a
short exact sequence
\be\label{splitses}\tag{I.1}
0 \to \mathfrak{n} \hookrightarrow \mathfrak{g} \twoheadrightarrow \mathfrak{h} \to 0
\ee
of Lie algebras.
Then the universal enveloping algebra of $\mathfrak{g}$ is isomorphic to the crossed
product of the universal enveloping algebras
of $\mathfrak{n}$ and $\mathfrak{h}$, that is,
\be\label{factorisation}\tag{I.2}
U(\mathfrak{g}) \simeq U(\mathfrak{n}) * U(\mathfrak{h})
\ee
as associative algebras.
This isomorphism defines and relies on a $\K$-linear splitting of the short exact sequence \eqref{splitses}.

\smallskip

\item\label{item:A} Let $\mf{n}$ be a Lie subalgebra of a Lie algebra $\mf{g}$. Then the universal enveloping algebra $U(\mf{g})$ is free over the universal enveloping algebra $U(\mf{n})$ of the Lie subalgebra $\mf{n}$. More precisely, consider the
short exact sequence \eqref{splitses} of $\mf{n}$-modules (via the adjoint representation),
where $\mathfrak{h}=\mathfrak{g}/\mathfrak{n}$ is the quotient module.
Then, the universal enveloping algebra of $\mathfrak{g}$ is isomorphic to the tensor product of $U(\mf{n})$ and the symmetric algebra $S(\mf{h})$, that is,
\[
U(\mf{g}) \simeq U(\mf{n}) \otimes S(\mf{h})
\]
as a left $U(\mf{n})$-module.

\end{enumerate}

The statements \ref{item:B} and \ref{item:C} above appear in this form in the textbook of McConnell and Robson \cite[pp.\ 33--34]{McConnell}. The statement \ref{item:C} also appears in the paper \cite[Ex.\  4.20]{BlattnerCohenMontgomery} (see also \cite{Mon:CPOHAAEA}) by Blattner, Cohen, and Montgomery, where it is shown to descend from general results on crossed products of Hopf algebras. Finally, the statement \ref{item:A} is a modern rephrasing of a well-known result in Lie algebra theory (see \cite[Prop.\  2.2.7]{Dixmier}).

In generalising Lie algebras, a distinguished r\^ole is played by Lie-Rinehart algebras. A Lie-Rinehart algebra over a commutative algebra $A$ is a Lie algebra $\mf{g}$ together with additional structures that mimic the interaction between the commutative algebra of smooth functions on a smooth manifold $M$ and the Lie algebra of smooth vector fields on $M$ ({\em i.e.}, the Lie derivative). In fact, the main source of examples of Lie-Rinehart algebras is provided by the global sections of Lie algebroids on smooth manifolds. For Lie-Rinehart algebras as well there is the notion of universal enveloping algebra $\cU_\ahha(\mf{g})$, which, in the geometric case, may be interpreted as an algebra of smooth differential operators. An explicit construction for $\cU_\ahha(\mf{g})$ was given by Rinehart himself in \cite{Rinehart} who also provided a Poincar\'e-Birkhoff-Witt theorem for the latter. Other equivalent constructions appear in \cite[\S18]{sweedler-groups}, \cite[p.~64]{Huebschmann}, or \cite[\S3.2]{LaiachiPaolo2}. The universal property of $\cU_\ahha(\mf{g})$ as an algebra was spelled out in \cite[p.~64]{Huebschmann} and \cite[p.~174]{Malliavin} (where it is attributed to Feld'man). Its universal property as a left $A$-bialgebroid is codified in the Cartier-Milnor-Moore Theorem for $\cU_\ahha(\mf{g})$ proven in \cite[\S3]{MoerdijkLie} and its universal property as an $A$-ring was presented in \cite{Saracco2}. 

\subsection*{Aims and objectives}
The main questions we aim to answer here are whether it is possible to provide analogues of the results \ref{item:B}, \ref{item:C}, and \ref{item:A} for Lie-Rinehart algebras and their universal enveloping algebras, at least in the projective case.
The affirmative answers to our questions above are summarised in the following results (see the main text for details and notation).

\begin{theorem*}[Theorem \ref{thm:mainthmA}, Corollary \ref{cor:mainthmB}, and Theorem \ref{thm:mainthmC}]\hfill
If $0 \to \mf{n} 
\stackrel{\iota}{\hookrightarrow} \mf{g} \stackrel{\pi}{\twoheadrightarrow} \mf{h} \to 0$ is a short exact sequence of Lie-Rinehart algebras which are projective as left $A$-modules, then we have an isomorphism 
\[
\cU_\ahha(\mf{g}) \simeq U_{\ahha}(\mf{n}) \hash{\sigma} \cU_\ahha(\mf{h})
\]
of $A$-rings and right $\cU_\ahha(\mf{h})$-comodule algebras,
where $\sigma$ is a suitable $U_{\ahha}(\mf{n})$-valued Hopf $2$-cocycle and $\hash{\sigma}$ denotes the corresponding crossed product.
In particular:
\begin{itemize}[leftmargin=0.7cm]

\item[\ref{item:B}] If $\mf{g} \simeq \mf{n} \niplus \mf{h}$ is a semi-direct sum of an $A$-Lie algebra $(A,\mf{n})$ and a Lie-Rinehart algebra $(A,\mf{h})$, then we have an isomorphism 
\[
\cU_\ahha (\mf{n} \niplus \mf{h}) \simeq U_{\ahha}(\mf{n}) \hash{} \cU_\ahha(\mf{h})
\]
of $A$-rings and right $\cU_\ahha(\mf{h})$-comodule algebras.

\item[\ref{item:C}] If $\mf{g} \simeq \mf{n} \niplus_\tau \mf{h}$ is a curved semi-direct sum of an $A$-Lie algebra $(A,\mf{n})$ and a Lie-Rinehart algebra $(A,\mf{h})$ for a Lie 2-cocycle $\tau$, then for a suitable Hopf $2$-cocycle $\sigma$ we have an isomorphism 
\[
\cU_\ahha (\mf{n} \niplus_\tau \mf{h}) \simeq U_{\ahha}(\mf{n}) \hash{\gs} \cU_\ahha(\mf{h})
\]
of $A$-rings and right $\cU_\ahha(\mf{h})$-comodule algebras.\qedhere
\end{itemize}
\end{theorem*}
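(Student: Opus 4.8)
The plan is to realise the claimed decomposition as an instance of the Hopf-algebroid version of the Blattner--Cohen--Montgomery theorem announced in the abstract, with the Lie-Rinehart datum serving only to produce the cleft comodule-algebra structure that such a theorem requires. First I would exploit functoriality of the enveloping-algebra construction: applying $\cU_\ahha$ to the short exact sequence yields an $A$-ring homomorphism $U_\ahha(\iota)\colon U_\ahha(\mf{n})\to\cU_\ahha(\mf{g})$ and a morphism of left $A$-bialgebroids $\cU_\ahha(\pi)\colon\cU_\ahha(\mf{g})\to\cU_\ahha(\mf{h})$. Composing the comultiplication of $\cU_\ahha(\mf{g})$ with $\cU_\ahha(\pi)$ on the right-hand factor endows $\cU_\ahha(\mf{g})$ with a canonical right $\cU_\ahha(\mf{h})$-comodule algebra structure, supplying one half of the desired statement for free.

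The crux of the reduction is to identify the coinvariants and to produce a cleaving map. I would first prove that the image of $U_\ahha(\iota)$ coincides with the subalgebra $\cU_\ahha(\mf{g})^{\mathrm{co}\,\cU_\ahha(\mf{h})}$ of coinvariants for this coaction. Here projectivity is essential: since all three terms are projective as left $A$-modules and $\mf{h}\cong\mf{g}/\mf{n}$, the sequence splits $A$-linearly, and a relative Poincar\'e--Birkhoff--Witt theorem gives a left $U_\ahha(\mf{n})$-module isomorphism $\cU_\ahha(\mf{g})\cong U_\ahha(\mf{n})\otimes_\ahha S_\ahha(\mf{h})$, the Lie-Rinehart analogue of \ref{item:A}, from which the coinvariants can be read off. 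Next, fixing an $A$-linear section $\gamma$ of $\pi$ — the connection — I would extend it to a convolution-invertible right $\cU_\ahha(\mf{h})$-comodule map $j\colon\cU_\ahha(\mf{h})\to\cU_\ahha(\mf{g})$: existence as a comodule map follows from the universal property together with the PBW decomposition, while convolution invertibility will rely on the fact that $\cU_\ahha(\mf{h})$ is a \emph{left Hopf algebroid}, that is, has an invertible translation map. This exhibits $\cU_\ahha(\mf{g})$ as a cleft $\cU_\ahha(\mf{h})$-extension of $U_\ahha(\mf{n})$.

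With cleftness in hand, the generalised Blattner--Cohen--Montgomery theorem produces, out of $j$ and its convolution inverse, a weak action of $\cU_\ahha(\mf{h})$ on $U_\ahha(\mf{n})$ and a normalised $U_\ahha(\mf{n})$-valued Hopf $2$-cocycle $\sigma$, together with the sought isomorphism $\cU_\ahha(\mf{g})\simeq U_\ahha(\mf{n})\hash{\sigma}\cU_\ahha(\mf{h})$ of $A$-rings and of right $\cU_\ahha(\mf{h})$-comodule algebras. The two special cases then follow by specialising the connection: when $\gamma$ is itself a morphism of Lie-Rinehart algebras its curvature $\tau(X,Y)=[\gamma X,\gamma Y]-\gamma[X,Y]$ vanishes, the cocycle $\sigma$ is trivial, and the crossed product collapses to the smash product of \ref{item:B}; when $\gamma$ is only a linear splitting, $\tau$ is a Lie $2$-cocycle valued in $\mf{n}$ which integrates to the nontrivial $\sigma$ of \ref{item:C}.

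The main obstacle I anticipate is the Hopf-algebroid bookkeeping over the noncommutative base $A$: unlike in the classical Hopf-algebra setting, the coaction, the coinvariants, and the convolution product must all be managed with respect to both the source and target $A$-module structures, and the very notions of cleftness and of the cocycle $\sigma$ have to be formulated through the translation map of the left Hopf algebroid $\cU_\ahha(\mf{h})$. Establishing convolution invertibility of the cleaving map $j$ — equivalently, that $\cU_\ahha(\mf{g})$ is cleft, and not merely Galois, over its coinvariants — is the delicate point on which the whole argument turns; once it is secured, the verification of the cocycle condition and of the crossed-product axioms becomes a lengthy but essentially routine translation of the classical computation.
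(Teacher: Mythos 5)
Your overall architecture does match the paper's: apply a Hopf-algebroid Blattner--Cohen--Montgomery theorem (Theorem \ref{thm:sigmatwisted}) to the surjection $\Pi=\cU_\ahha(\pi)$, identify $U_\ahha(\mf{n})$ with the coinvariants (the left Hopf kernel, Proposition \ref{prop:neverendingprop}), build a section of $\Pi$ from an $A$-linear splitting $\gamma$ of $\pi$, and specialise to trivialise the cocycle when $\gamma$ is a Lie-Rinehart map. But there is a genuine gap at the pivotal step, namely your sentence ``existence as a comodule map follows from the universal property together with the PBW decomposition''. No universal property of $\cU_\ahha(\mf{h})$ produces a section of $\Pi$: its universal properties are as an $A$-ring or as a bialgebroid, and a coring section goes against the direction of all of them. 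Moreover --- and this is the condition your proposal never isolates --- the bialgebroid BCM machinery requires the section $\Gamma$ to be compatible not only with the white actions (being an $A$-coring map) but also with the black ones, i.e., the conditions \eqref{vanilly}: $\Gamma(1)=1$, $\Gamma(a\blact v)=a\blact\Gamma(v)$, and $\Gamma(v\bract a)=\Gamma(v)\bract a$. Without the $\bract$-compatibility the cocycle \eqref{chesorpresa} is not even well defined on $\due V \blact {} \otimes_\Aopp V_\ract$. Producing such a $\Gamma$ is exactly where the paper invests its main new technical work: the symmetrisation map $\textsc{S}\colon\cS_\ahha(\mf{g})\to\cU_\ahha(\mf{g})$, shown to be an $A$-coring isomorphism (Theorem \ref{thm:isocoring}), the definition $\Gamma=\textsc{S}_{\mf{g}}\circ\cS_\ahha(\gamma)\circ\textsc{S}_{\mf{h}}^{-1}$, and the nontrivial verification $\Gamma(ua)=\Gamma(u)a$ (Proposition \ref{prop:Gamma}, via the shuffle computation of \S\ref{sssec:prop3.7} resting on Lemma \ref{lem:maancheno}). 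Note that the ``ordering'' device your appeal to PBW suggests is unavailable for merely projective $\mf{g}$ (no global ordered basis), would in any case only give left $A$-linearity --- the right $A$-linearity is a theorem, not a formality --- and is non-canonical (Remark \ref{rem:notbij}).

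Two further mismatches, less serious. First, you locate the delicacy in convolution invertibility of the cleaving map, but in the paper's route this is a non-issue: the translation map of $\cU_\ahha(\mf{g})$ supplies $\gamma(v)_+\otimes_\Aopp\gamma(v)_-$ directly, and both the weak action \eqref{weakadjoint} and the cocycle \eqref{chesorpresa} are written in these terms without ever forming a convolution algebra --- which is just as well, since over a noncommutative base the naive convolution product on $\mathrm{Hom}(V,U)$ is unavailable; the paper proves the isomorphism by exhibiting explicit mutually inverse maps $\Phi$ and $\Psi$ rather than invoking cleft-extension theory. Second, your plan to ``read off'' the coinvariants from the relative PBW decomposition of your \ref{item:A} is under-justified: that decomposition is only a left $U_\ahha(\mf{n})$-module isomorphism, and transporting the coaction through it already presupposes a section with the properties above; the paper instead proves $U_\ahha(\mf{n})\simeq B_\ahha(\mf{g})$ by an ordered-basis argument in the free case followed by localisation at primes (Lemmata \ref{lem:iso1}--\ref{lem:iso6}). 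In short, your plan points in the right direction, but the two steps carrying all the difficulty --- a coring section compatible with all four $A$-actions, and the coinvariance identification in the projective case --- are asserted rather than proved.
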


\begin{proposition*}[Proposition \ref{prop:anothersplitting}]
\ref{item:A} Let $\mf{n} \subseteq \mf{g}$ be an inclusion of Lie-Rinehart algebras which are projective as left $A$-modules. Suppose that the quotient $A$-module $\mf{h}=\mf{g}/\mf{n}$ is projective as well. Then we have an isomorphism
\[\cU_\ahha(\mf{g}) \simeq \cU_\ahha(\mf{n}) \otimes_\ahha  \cS_\ahha(\mf{h})\]
as left $\cU_\ahha(\mf{n})$-modules. In particular, $\cU_\ahha(\mf{g})$ is projective over $\cU_\ahha(\mf{n})$.
\end{proposition*}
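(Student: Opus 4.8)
The plan is to prove the isomorphism by comparing associated graded objects with respect to Rinehart's Poincar\'e--Birkhoff--Witt theorem, and then to read off projectivity as a formal consequence. Write $\pi\colon\mf{g}\twoheadrightarrow\mf{h}$ for the quotient map. Since $\mf{h}$ is projective as a left $A$-module, the sequence $0\to\mf{n}\to\mf{g}\xrightarrow{\pi}\mf{h}\to0$ splits in $\Lmod{A}$; I fix an $A$-linear section $s\colon\mf{h}\to\mf{g}$, so that $\mf{g}=\mf{n}\oplus s(\mf{h})$ as left $A$-modules. Endow $\cU_\ahha(\mf{g})$ and $\cU_\ahha(\mf{n})$ with their canonical (order) filtrations. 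Because $\mf{g}$ and $\mf{n}$ are $A$-projective, Rinehart's theorem provides isomorphisms of graded $A$-algebras $\gr\cU_\ahha(\mf{g})\simeq\cS_\ahha(\mf{g})$ and $\gr\cU_\ahha(\mf{n})\simeq\cS_\ahha(\mf{n})$, and the $A$-module splitting further yields $\cS_\ahha(\mf{g})\simeq\cS_\ahha(\mf{n})\otimes_\ahha\cS_\ahha(\mf{h})$, the multiplication map $a\otimes b\mapsto\iota_*(a)\,s_*(b)$ being an isomorphism precisely because the symmetric algebra of a direct sum factorises as a tensor product (here $\iota_*$ and $s_*$ are induced on symmetric algebras by the inclusion $\mf{n}\hookrightarrow\mf{g}$ and by $s$).

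The key construction is a left $\cU_\ahha(\mf{n})$-linear comparison map
\[
\Phi\colon\cU_\ahha(\mf{n})\otimes_\ahha\cS_\ahha(\mf{h})\longrightarrow\cU_\ahha(\mf{g}),\qquad u\otimes\xi\longmapsto u\cdot\psi(\xi),
\]
where $\psi\colon\cS_\ahha(\mf{h})\to\cU_\ahha(\mf{g})$ is a left $A$-linear, filtered section built degree by degree. For each $p$, the symbol map $\cU_\ahha^{\le p}(\mf{g})\twoheadrightarrow\gr_p\cU_\ahha(\mf{g})\simeq\cS_\ahha^p(\mf{g})$ is a surjection of left $A$-modules; since $\cS_\ahha^p(\mf{h})$ is a projective left $A$-module (as $\mf{h}$ is $A$-projective), the left $A$-linear map $s_*\colon\cS_\ahha^p(\mf{h})\hookrightarrow\cS_\ahha^p(\mf{g})$ lifts along this surjection to a left $A$-linear $\psi_p\colon\cS_\ahha^p(\mf{h})\to\cU_\ahha^{\le p}(\mf{g})$, and I set $\psi=\bigoplus_p\psi_p$. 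The delicate point --- and the place where the projectivity of $\mf{h}$ is genuinely used --- is precisely that $\psi$ must be taken \emph{left $A$-linear}: this is what makes $u\otimes\xi\mapsto u\cdot\psi(\xi)$ descend to the balanced tensor product $\cU_\ahha(\mf{n})\otimes_\ahha\cS_\ahha(\mf{h})$, since then $ua\cdot\psi(\xi)=u\cdot\psi(a\xi)$ in spite of the anchor-twisted commutation relations between $A$ and $\mf{g}$ inside $\cU_\ahha(\mf{g})$. A naive symmetrisation of $s$ would fail this test.

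By construction $\Phi$ is filtered of degree zero for the tensor-product filtration $F_p=\sum_{i+j\le p}F_i\cU_\ahha(\mf{n})\otimes_\ahha\cS_\ahha^{\,j}(\mf{h})$ on the source, and the remaining work is to compute $\gr\Phi$. On the source, using that each $\cS_\ahha^{\,j}(\mf{h})$ is $A$-projective, hence flat, the functor $-\otimes_\ahha\cS_\ahha^{\,j}(\mf{h})$ is exact and commutes with passage to associated graded, so $\gr\big(\cU_\ahha(\mf{n})\otimes_\ahha\cS_\ahha(\mf{h})\big)\simeq\cS_\ahha(\mf{n})\otimes_\ahha\cS_\ahha(\mf{h})$. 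Under this identification and the Rinehart isomorphism on the target, $\gr\Phi$ is exactly the multiplication isomorphism $\cS_\ahha(\mf{n})\otimes_\ahha\cS_\ahha(\mf{h})\xrightarrow{\ \sim\ }\cS_\ahha(\mf{g})$ of the first paragraph --- indeed $\psi_p$ was chosen to reduce to $s_*$ on symbols. As both filtrations are exhaustive and bounded below, an isomorphism on associated graded forces $\Phi$ itself to be an isomorphism of left $\cU_\ahha(\mf{n})$-modules.

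Finally, the projectivity assertion is formal: since $\mf{h}$ is $A$-projective, $\cS_\ahha(\mf{h})=\bigoplus_{j\ge0}\cS_\ahha^{\,j}(\mf{h})$ is a projective left $A$-module, hence a direct summand of some free module $A^{(I)}$. Tensoring the corresponding splitting on the left by $\cU_\ahha(\mf{n})$ realises $\cU_\ahha(\mf{g})\simeq\cU_\ahha(\mf{n})\otimes_\ahha\cS_\ahha(\mf{h})$ as a direct summand of the free left $\cU_\ahha(\mf{n})$-module $\cU_\ahha(\mf{n})^{(I)}$, whence $\cU_\ahha(\mf{g})$ is projective over $\cU_\ahha(\mf{n})$. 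I expect the only real subtlety to be the one flagged above, namely securing the left $A$-linearity of $\psi$ so that $\Phi$ is well defined over $\otimes_\ahha$; once that is in place, the associated-graded comparison and the projectivity conclusion are routine.
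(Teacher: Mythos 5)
Your proof is correct, and its skeleton is the same as the paper's: an explicitly left $\cU_\ahha(\mf{n})$-linear, filtered comparison map $\cU_\ahha(\mf{n})\otimes_\ahha\cS_\ahha(\mf{h})\to\cU_\ahha(\mf{g})$, identification of its associated graded with the multiplication isomorphism $\cS_\ahha(\mf{n})\otimes_\ahha\cS_\ahha(\mf{h})\simeq\cS_\ahha(\mf{g})$, and the standard completeness argument for exhaustive, discrete (hence separated and complete) filtrations to conclude (compare \eqref{eq:celapossofare} and the surrounding argument, which invokes N\u{a}st\u{a}sescu--van Oystaeyen at the same spot). Where you genuinely diverge is in producing the left $A$-linear filtered section $\psi\colon\cS_\ahha(\mf{h})\to\cU_\ahha(\mf{g})$ with symbol $s_*$, which you correctly identify as the crux. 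The paper takes, in your notation, $\psi=\textsc{S}_{\mf{g}}\circ\cS_\ahha(s)$, where $\textsc{S}_{\mf{g}}$ is the symmetrisation map of \S\ref{ssec:sym}, built from a projective basis precisely so that all $A$-coefficients sit on the left; its $A$-bilinearity (Lemma \ref{lem:symmetrisation}) plays exactly the role of your ``delicate point''. You instead construct $\psi$ degree by degree by lifting $s_*\colon\cS_\ahha^p(\mf{h})\to\cS_\ahha^p(\mf{g})\simeq\gr_p\cU_\ahha(\mf{g})$ through the left $A$-linear symbol surjection $\cU_\ahha(\mf{g})_{\leq p}\twoheadrightarrow\gr_p\cU_\ahha(\mf{g})$, legitimate because $\cS_\ahha^p(\mf{h})$ is $A$-projective (a direct summand of a symmetric power of a free module). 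This buys you a shorter, self-contained argument that bypasses the whole symmetrisation machinery, including the shuffle combinatorics of Appendix \ref{sec:tech}; the cost is that your $\psi$ is non-canonical, whereas the paper's $\textsc{S}$ is moreover an isomorphism of $A$-corings (Theorem \ref{thm:isocoring}) and is reused elsewhere, notably to build the coring section $\Gamma$ of \S\ref{ssec:crossdec}, for which an arbitrary lift would not do. Your flatness justification of $\gr\big(\cU_\ahha(\mf{n})\otimes_\ahha\cS_\ahha(\mf{h})\big)\simeq\cS_\ahha(\mf{n})\otimes_\ahha\cS_\ahha(\mf{h})$ is also a clean substitute for the paper's citation of Majewski. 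One small gloss: your aside that ``a naive symmetrisation of $s$ would fail'' is accurate only for the basis-free formula $X_1\cdots X_k\mapsto\frac{1}{k!}\sum_{\sigma\in\mf{S}_k}X_{\sigma(1)}\cdots X_{\sigma(k)}$, which is indeed not $A$-multilinear when the anchor is non-trivial; the basis-dependent symmetrisation \eqref{eq:symmetrisation} of the paper does pass the left-$A$-linearity test, which is the whole point of \S\ref{ssec:sym}.
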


The statements in the theorem above follow as applications of the construction of the symmetrisation map in \S\ref{ssec:sym} and of the main theorem of the present paper (see again the main text for notation and details), which reads as follows:

\begin{theorem*}[Theorem \ref{thm:sigmatwisted}]
Let $(U,A)$ and $(V,A)$ be two left Hopf algebroids over the same base algebra $A$ and 
let $U \stackrel{\pi}{\to} V \to 0$ be an exact sequence of left Hopf algebroids which splits as an $A$-coring sequence, that is, there exists a morphism $\gamma\colon V \to U$ of $A$-corings such that $\pi \circ \gamma = \id_\vauu$. Assume furthermore that $U_{\ract}$ is projective as an $A$-module and that
\[
\gamma(1_\vauu) = 1_\uhhu, \qquad \gamma(a \blact v) = a \blact \gamma(v), \qquad \gamma(v \bract a) = \gamma(v) \bract a, \qquad \forall v \in V, a \in A. 
\]
Let $B$ be the left Hopf kernel of $\pi$.
Then there exists a $B$-valued Hopf $2$-cocycle $\gs\colon \due U \blact {} \otimes_\Aopp U_\ract \to B$ such that $B$ becomes a $\gs$-twisted left $V$-module with respect to the weak left $V$-action $v \rightslice b = \gamma(v)_+\, b\, \gamma(v)_-$ for all $v \in V$, $b \in B$, and there is an isomorphism
$$
\Phi\colon B \hash{\gs} V \to U, \quad b \hash{} v \mapsto b\,\gamma(v)
$$
of $\Ae$-rings and of right $V$-comodule algebras.
\end{theorem*}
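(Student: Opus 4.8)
The plan is to adapt the cleft-extension strategy underlying the classical theorem of Blattner, Cohen, and Montgomery, with the $A$-coring section $\gamma$ in the rôle of a cleaving map and the translation map $u \mapsto u_+ \ot u_-$ of the left Hopf algebroid $U$ playing the part of the convolution inverse of $\gamma$, which in the absence of an antipode need not exist on the nose. First I would record the structural compatibilities that feed the machinery: because $\gamma$ is a morphism of $A$-corings one has $\gD(\gamma(v)) = \gamma(\xu v) \ot \gamma(\xd v)$ and $\gve(\gamma(v)) = \gve(v)$, while $\pi\circ\gamma = \id_\vauu$ and the multiplicativity of $\pi$ intertwine the translation maps of $U$ and $V$ along $\pi$ and $\gamma$. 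Together with the $A$-bilinearity of $\gamma$ for $\blact,\bract$ and the normalisation $\gamma(1_\vauu) = 1_\uhhu$, these are exactly the inputs the construction requires.

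Next I would introduce the weak action and the cocycle and verify their axioms. For $v\in V$ and $b\in B$ the formula $v\rightslice b = \gamma(v)_+\,b\,\gamma(v)_-$ lands in $B$ by a coinvariance computation that uses the Takeuchi condition satisfied by $\gamma(v)_+\ot\gamma(v)_-$ together with the defining property of the Hopf kernel, and $1_\vauu\rightslice b = b$ follows from $\gamma(1_\vauu)=1_\uhhu$ and $(1_\uhhu)_+\ot(1_\uhhu)_- = 1_\uhhu\ot 1_\uhhu$. The cocycle $\gs$ is then forced by the demand that $\gamma$ be multiplicative up to $\gs$, namely
\[
\gamma(v)\,\gamma(v') = \gs(\xu v\ot\xu{v'})\,\gamma(\xd v\,\xd{v'}),
\]
which pins $\gs$ down with values in $B$ once $\gamma(\xd v\,\xd{v'})$ is inverted through the translation map; its normalisation is immediate from $\gamma(1_\vauu)=1_\uhhu$. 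The remaining items --- the straightening relation $\gamma(v)\,b = (\xu v\rightslice b)\,\gamma(\xd v)$, the Hopf $2$-cocycle identity for $\gs$, and the compatibility making $B$ a $\gs$-twisted left $V$-module --- all reduce to associativity in $U$ combined with the coassociativity and counitality carried by $\gamma$; concretely, the straightening relation is the identity $(\xu u)_+\ot(\xu u)_-\,\xd u = u\ot 1_\uhhu$ applied to $u=\gamma(v)$, with $b$ slid into the balanced tensorand. These verifications are bookkeeping-heavy but formal.

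With the action and cocycle in hand, the map $\Phi\colon B\hash{\gs} V\to U$, $b\hash{} v\mapsto b\,\gamma(v)$, is a morphism of $\Ae$-rings essentially by construction: expanding $b\,\gamma(v)\,b'\,\gamma(v')$ by the straightening relation and then the defining relation for $\gs$ reproduces verbatim the twisted multiplication of $B\hash{\gs} V$, the unit being handled by $\gamma(1_\vauu)=1_\uhhu$ and the compatibility with the source and target maps by the $A$-bilinearity of $\gamma$. That $\Phi$ is moreover a morphism of right $V$-comodule algebras is immediate from $\pi\circ\gamma=\id_\vauu$ and $\gamma$ being a coring map, the $V$-coaction on $U$ being $u\mapsto \xu u\ot\pi(\xd u)$.

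The main obstacle is the bijectivity of $\Phi$. I would produce an explicit inverse built from the coaction and the translation map, of the shape $u\mapsto \xu u\,\gamma(\pi(\xd u))_-\,\hash{}\,\pi(\xt u)$, arranged so that its first tensorand is coinvariant and hence lies in $B$, and then check that it is two-sided inverse to $\Phi$ by repeated use of the one-sided translation identities (beginning with $(\xu u)_+\ot(\xu u)_-\,\xd u = u\ot 1_\uhhu$). This is the delicate step precisely because $U$ is only a \emph{left} Hopf algebroid: there is no antipode, so every cancellation must be engineered through these one-sided identities and the Takeuchi subproduct, and one must keep track of the four commuting actions $\blact,\bract,s,t$ throughout. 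The hypothesis that $U_\ract$ be projective as an $A$-module enters exactly here, ensuring both that the translation-map inverse is well defined over $\ot_\ahha$ and that the comparison map $B\ot_\ahha V\to U$ underlying $\Phi$ is an isomorphism rather than merely a split monomorphism of comodules, so that the coinvariant-level bijection induced by $\Phi$ upgrades to a global one.
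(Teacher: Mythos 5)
Your proposal follows the same architecture as the paper's proof in its essential constructions: the map $\Phi(b \hash{} v) = b\,\gamma(v)$, an inverse assembled from the coaction $u \mapsto u_{(1)} \otimes_\ahha \gamma\pi(u_{(2)})$ and the translation map, the straightening relation $\gamma(v)\,b = (v_{(1)} \rightslice b)\,\gamma(v_{(2)})$ extracted from \eqref{Sch3}, and the cocycle pinned down by $\gamma(v)\gamma(w) = \gs(v_{(1)},w_{(1)})\,\gamma(v_{(2)}w_{(2)})$, which is exactly how \eqref{chesorpresa} arises. The one genuine methodological divergence is in the treatment of the cocycle axioms: you propose to verify the measuring, twisted-module, and Hopf $2$-cocycle conditions for $\gs$ directly (``bookkeeping-heavy but formal''), whereas the paper never checks them by hand. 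Instead it first proves $\Phi$ and $\Psi$ are mutually inverse, so that $U \simeq B \otimes_\ahha V$ merely as $\K$-modules, then transports the multiplication of $U$ across this bijection, computes that the transported product has the crossed-product shape, and finally invokes the ``if and only if'' of Proposition \ref{GT3} to conclude \emph{for free} that $\gs$ is a cocycle and that $B$ is a $\gs$-twisted module. Your route is viable, but in this one-sided setting it is substantially longer: each axiom-check must be routed through the identities \eqref{Sch1}--\eqref{Sch9} with careful attention to which tensor products are balanced over $A$ versus $\Aopp$, and the transport-of-structure trick buys precisely the avoidance of that labour.

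One point needs repair before your argument is literally well-formed. Your candidate inverse $u \mapsto u_{(1)}\,\gamma(\pi(u_{(2)}))_- \hash{} \pi(u_{(3)})$ leaves the leg $\gamma(\pi(u_{(2)}))_+$ dangling: since $\gamma\pi(u_{(2)})_+ \otimes_\Aopp \gamma\pi(u_{(2)})_-$ lives in $\due U \blact {} \otimes_\Aopp U_\ract$ (a sum, balanced over $\Aopp$), the symbol $\gamma\pi(u_{(2)})_-$ in isolation has no meaning. The paper caps that leg with $t_\uhhu \gve_\uhhu$, setting $\Psi(u) = t_\uhhu\gve_\uhhu\big(\gamma\pi(u_{(2)})_+\big)\,u_{(1)}\,\gamma\pi(u_{(2)})_- \hash{} \pi(u_{(3)})$ --- the left-Hopf surrogate for $\gve(x_{(1)})S(x_{(2)}) = S(x)$ in the Hopf-algebra case --- and the same cap appears inside \eqref{chesorpresa} when $\gs$ is solved for. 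Establishing that this composite is well defined is where the real work lies: the paper factors $\Psi$ through the maps $\phi$, $\psi$, $\alpha$, $\alpha'$ between double Takeuchi products, and the projectivity of $U_\ract$ is consumed exactly there, via \cite[Prop.~1.7]{Takeuchi}, which makes $\alpha$ and $\alpha'$ invertible --- not in any descent-style upgrade of a coinvariant-level bijection, which the paper does not need: $\Phi \circ \Psi = \id_\uhhu$ and $\Psi \circ \Phi = \id$ are checked directly, the latter using the left $B$-linearity of the coaction (a consequence of the definition of the Hopf kernel) together with $\varphi \circ \gamma = s_\uhhu \circ \gve_\vauu$. Once you replace your heuristic inverse by this properly balanced composite, your outline closes.
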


A geometric motivation for the extension of \ref{item:B} and \ref{item:C} from Lie algebras to Lie-Rinehart algebras is that the above two situations in the Lie algebroid case correspond, respectively, to flat or curved connections on a transitive Lie algebroid (for instance, on the Atiyah algebroid of a principal bundle). 
A corollary of our result is that an invariant Ehresmann connection on a principal bundle provides a factorisation of the associative algebra generated by the invariant vector fields on the total space as a smash or crossed product of the algebra generated by the vertical ones and the algebra of differential operators on the base manifold.
The extension of the result \ref{item:A} to Lie-Rinehart algebras is motivated geometrically by its application to foliations. 

In addition, we provide the following alternative description of the (crossed) product $\hash{\gs}$ as the product $\times_\tau$ (which involves a Lie cocycle $\tau$ instead of a Hopf cocycle $\sigma$).

\begin{theorem*}[Theorem \ref{rain&fog}]
Let $\iota \colon A \to R$ be an $A$-algebra and $(A, \mf{h})$ a Lie-Rinehart algebra such that both $R$ and $\mf{h}$ are projective as left $A$-modules. Then for an $A$-ring $S$ the following are equivalent:
\begin{enumerate}
\item
$S \simeq R \times_\tau \cU_\ahha(\mf{h})$ in the sense of Definition \ref{def:GaloisDescent} where $\tau$ denotes a Lie cocycle.
\item
  $S \simeq R  \hash{\gs}  \cU_\ahha(\mf{h})$ in the sense of Definition \ref{GT4} where $\sigma$ denotes a Hopf cocycle. \qedhere
  \end{enumerate}
\end{theorem*}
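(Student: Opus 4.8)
The plan is to reduce the stated equivalence to an explicit dictionary between Lie cocycles $\tau$ on $\mf{h}$ and Hopf cocycles $\gs$ on $\cU_\ahha(\mf{h})$, both valued in $R$, and then to check that under this dictionary the two product constructions of Definitions \ref{def:GaloisDescent} and \ref{GT4} carry the \emph{same} twisted multiplication on the common underlying $A$-module $R \otimes_\ahha \cU_\ahha(\mf{h})$. Because an isomorphism $S \simeq R \times_\tau \cU_\ahha(\mf{h})$ and an isomorphism $S \simeq R \hash{\gs} \cU_\ahha(\mf{h})$ refer to this same ambient module equipped with twisted products, it suffices to transport the cocycle data back and forth so that the two multiplications agree; the comparison isomorphism with $S$ is then literally the same map in both directions, and the equivalence (i)$\,\Leftrightarrow\,$(ii) follows.

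For the implication (i)$\,\Rightarrow\,$(ii), starting from a Lie cocycle $\tau$ I would build a Hopf cocycle $\gs\colon \cU_\ahha(\mf{h}) \otimes_\ahha \cU_\ahha(\mf{h}) \to R$ by extending $\tau$ off the generators via the universal property of $\cU_\ahha(\mf{h})$ together with the symmetrisation map of \S\ref{ssec:sym}. Concretely, the symmetrisation identifies $\cU_\ahha(\mf{h})$ with $\cS_\ahha(\mf{h})$ as filtered $A$-coalgebras, by the Poincaré–Birkhoff–Witt theorem, which is available thanks to the projectivity of $\mf{h}$ over $A$; this forces the value of $\gs$ on products of generators through the convolution formula governed by the coproduct, with $\tau$ supplying its value on $\mf{h} \otimes_\ahha \mf{h}$. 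I would then verify that the Lie $2$-cocycle identity for $\tau$, together with the compatibility between the bracket and the anchor-induced weak action of $\mf{h}$ on $R$, is exactly what is needed for $\gs$ to satisfy the Hopf $2$-cocycle condition; by PBW this verification reduces to generators. Finally one checks that the $\gs$-twisted product of Definition \ref{GT4} recovers the $\tau$-twisted product of Definition \ref{def:GaloisDescent}.

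For the converse (ii)$\,\Rightarrow\,$(i), given a Hopf cocycle $\gs$ I would restrict it along the canonical inclusion $\mf{h} \hookrightarrow \cU_\ahha(\mf{h})$ to obtain $\tau := \gs|_{\mf{h} \otimes_\ahha \mf{h}}$, and show that the Hopf $2$-cocycle condition restricts to the Lie $2$-cocycle identity on primitive elements, that $\tau$ is $A$-bilinear and antisymmetric with the required compatibility with the weak action, and that restriction is a two-sided inverse to the extension of the previous paragraph. Comparing the two multiplications on $R \otimes_\ahha \cU_\ahha(\mf{h})$ then yields $R \times_\tau \cU_\ahha(\mf{h}) \simeq R \hash{\gs} \cU_\ahha(\mf{h})$.

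The step I expect to be the main obstacle is the verification that the extension of $\tau$ to $\gs$ is well defined and genuinely satisfies the Hopf $2$-cocycle condition, and dually that restriction lands in the Lie cocycles, i.e.\ that the Lie-level and Hopf-level cocycle identities truly translate into one another rather than merely matching on generators by accident. This is precisely where the projectivity of $R$ and $\mf{h}$ as left $A$-modules becomes indispensable: it guarantees that PBW and the symmetrisation isomorphism hold over the base $A$, that the relevant $A$-module tensor products are well behaved enough to evaluate the convolution and to run the comparison on generators, and that the inductive extension off $\mf{h}$ is both forced and consistent. Once this dictionary is in place, the remainder is a bookkeeping of the twisted associativity constraints defining the two products.
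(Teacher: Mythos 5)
Your proposal has two genuine gaps, both at the step you yourself flag as the main obstacle. First, the ``dictionary'' you propose does not exist in the form you need it. A Hopf $2$-cocycle $\gs$ on $\cU_\ahha(\mf{h})$ is \emph{not} determined by its restriction to $\mf{h} \otimes_\ahha \mf{h}$, and the coproduct does not force the extension: the paper's Remark \ref{rem:notbij} (the Heisenberg algebra) exhibits two distinct unital $A$-coring sections $\Gamma \neq \Gamma'$ restricting to the same $\gamma$, hence two distinct cocycles over the same Lie datum, and the explicit values $\gs(P^n,Q^n) = n!\,C^n$ there are produced by a \emph{choice of ordering}, not by any convolution formula driven by $\tau$. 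So ``restriction is a two-sided inverse to extension'' fails; at best one has the correspondence of Proposition \ref{prop:seccorresp}, which is bijective only after passing to graded associates. Relatedly, the restriction $\gs|_{\mf{h}\otimes\mf{h}}$ need not be alternating — nothing in Definition \ref{GT2} forces $\gs(X,X)=0$ — which is why the paper sets $\tau(X,Y) \coloneqq \gs(X,Y) - \gs(Y,X)$ rather than $\tau \coloneqq \gs|_{\mf{h}\otimes\mf{h}}$. Finally, you cannot extend the weak action off generators ``via the universal property of $\cU_\ahha(\mf{h})$'': a measuring is not an algebra map into $\End{\K}{R}$, and the twisted-module axiom (item \eqref{item:wa5} of Definition \ref{GT2}) couples $\rightslice$ and $\gs$, so the pair must be constructed simultaneously. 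The paper does this via the generalised Blattner--Cohen--Montgomery machinery: the coring section $\Gamma$ built from the symmetrisation map feeds into Theorem \ref{thm:sigmatwisted}, which produces $\gs$ and the weak action together through the translation map $\gamma(v)_+ \, b \, \gamma(v)_-$.

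Second, your framing that both products are ``twisted multiplications on the common underlying $A$-module $R \otimes_\ahha \cU_\ahha(\mf{h})$'' begs the question on the Lie side. Definition \ref{def:GaloisDescent} defines $R \times_\tau \cU_\ahha(\mf{h})$ as the quotient $\cU_\ahha(\Lie{R} \niplus_\tau \mf{h})/\mathscr{I}$, and identifying this quotient with $R \otimes_\ahha \cU_\ahha(\mf{h})$ is precisely the hard content of (i)$\Rightarrow$(ii): the paper first applies Theorem \ref{thm:mainthmA} to the extension $0 \to \Lie{R} \to \Lie{R}\niplus_\tau\mf{h} \to \mf{h} \to 0$ to get $\cU_\ahha(\LL) \simeq U_\ahha(\Lie{R}) \hash{\gs} \cU_\ahha(\mf{h})$, then proves the ideal identity $\mathscr{I} = \mathscr{I}_0\,\cU_\ahha(\LL)$ (following McConnell and Montgomery) and runs a Snake Lemma argument, using that $\cU_\ahha(\LL)$ is projective over $U_\ahha(\Lie{R})$, to descend the crossed product along the quotient $U_\ahha(\Lie{R}) \twoheadrightarrow R$. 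None of this is bookkeeping, and your proposal supplies no substitute for it. For the converse, the paper likewise does not compare two multiplications on the nose: it verifies the connection, curvature, and cocycle identities for the antisymmetrised $\tau$, invokes the universal property of $\cU_\ahha(\LL)$ to get $\overline\Phi$, and constructs the inverse $\overline\Psi$ \emph{recursively on PBW generators} via $r \otimes_\ahha uX \mapsto \Psi(r \otimes_\ahha u)\Psi(1 \otimes_\ahha X) - \Psi\big(r\gs(u_{(1)},X) \otimes_\ahha u_{(2)}\big)$ — a construction your plan would still need, since the claimed on-the-nose agreement of products is unavailable once the cocycle dictionary fails to be canonical.
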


This latter result extends the equivalent descriptions of crossed product factorisations in \cite{Mon:CPOHAAEA} to the realm of Lie-Rinehart algebras.

\subsection*{Outline}
The plan
of the paper is as follows: In \S\ref{sec:Preliminaries}, we summarise the main notions and results concerning Lie-Rinehart algebras and their universal enveloping algebras, with the intention of keeping the presentation as self-contained as possible.  Both \S\ref{ssec:sym}, where we exhibit a Lie-Rinehart analogue of the well-known symmetrisation map, and \S\ref{ssec:modalgs}, where we prove our first decomposition theorem (Proposition \ref{prop:iso3}), are of particular relevance for their novelty, whereas \S\ref{sec:BCM} is entirely devoted to proving our main theorem, {\em i.e.}, Theorem \ref{thm:sigmatwisted}. In \S\ref{sec:weakcase}, we apply this theorem to the cases of interest for the study of Lie-Rinehart algebras: the crossed product decomposition result of Theorem \ref{thm:mainthmA} and the smash product decomposition result of Theorem \ref{thm:mainthmC}. Finally, we conclude by concrete geometric applications of our achievements in \S\ref{geomex}. Appendix \ref{sec:tech} contains the technical details of some proofs omitted from the main body, to avoid making the discussion unnecessarily heavy.

\addtocontents{toc}{\protect\setcounter{tocdepth}{2}}

\section{Preliminaries and first results}
\label{sec:Preliminaries}

We work over a ground field $\K$ of characteristic $0$. All vector spaces and algebras are assumed to be over $\K$. The unadorned tensor product $\otimes$ stands for $\otimes_{\K}$. Identity morphisms $\id_\vauu$ are often denoted simply by $V$. 
Lie algebras will be denoted by Gothic letters.
Unless stated otherwise, an algebra $A$ is always assumed to be associative and unital.
The Lie algebra associated to $A$, denoted by $\Lie{A}$ (or simply $A$ again), 
is the Lie algebra with $A$ itself as underlying $\K$-vector space and bracket given by the commutator $[a,b] = ab-ba$ for all $a,b \in A$. If $A$ is noncommutative, $\Aop$ denotes its opposite algebra and $\Ae  \coloneqq A \otimes \Aop$ its enveloping algebra.

An {\em $A$-ring} is a $\K$-algebra $R$ together with a morphism of $\K$-algebras $\phi \colon A \to R$, not necessarily landing in the center. This endows $R$ with the $A$-bimodule structure $a \cdot r \cdot b \coloneqq \phi(a)\,r\,\phi(b)$ for all $a,b \in A$, $r \in R$. Equivalently, an $A$-ring is a monoid in the monoidal category $(\Bim{\ahha}{\ahha},\ot_\ahha,A)$ of $A$-bimodules. An {\em $A$-algebra} over a commutative algebra $A$ is an $A$-ring $(R,\phi)$ such that the image of $\phi$ is in the center $Z(R)$ of $R$ or, equivalently, a monoid in the monoidal category $(\Lmod{\ahha},\ot_\ahha,A)$ of $A$-modules.

Dually, an $A$-coring is a comonoid in the monoidal category $(\Bim{\ahha}{\ahha},\ot_\ahha,A)$, that is, an $A$-bimodule $C$ together with a comultiplication $\Delta_{\cehhe} \colon C \to C \ot_\ahha C$ and a counit $\varepsilon_{\cehhe} \colon C \to A$ such that
\[
(\Delta_{\cehhe} \ot_\ahha C) \circ \Delta_{\cehhe} = (C \ot_\ahha \Delta_{\cehhe}) \circ \Delta_{\cehhe}, \qquad \text{and} \qquad (\varepsilon_{\cehhe} \ot_\ahha C)\circ \Delta_{\cehhe} = \id_{\cehhe} = (C \ot_\ahha \varepsilon_{\cehhe}) \circ \Delta_{\cehhe}.
\]
Similarly, an {\em $A$-coalgebra} over a commutative algebra $A$ is a comonoid in $(\Lmod{\ahha},\ot_\ahha,A)$. For a coalgebra or a coring, we usually adopt the Heyneman-Sweedler notation
\[\Delta_{\cehhe}(c) = c_{(1)} \otimes_\ahha  c_{(2)}\] 
(summation understood) to explicitly describe the comultiplication. When there is no risk of confusion, we often write simply $\Delta$ and $\varepsilon$ instead of $\Delta_{\cehhe}$ and $\varepsilon_{\cehhe}$.


\subsection{Lie-Rinehart algebras}\label{sec:LieRin}
Let us fix a commutative algebra $A$ over a field $\K$. 
A \emph{Lie-Rinehart algebra} over $A$ (so baptised in \cite{Huebschmann} to honour Rinehart who studied them in \cite{Rinehart}) is a Lie algebra $\mf{g}$ endowed with a (left) $A$-module structure $A\otimes \mf{g}\to \mf{g}, \ a \otimes X \mapsto a \cdot X,$ and with a Lie algebra morphism $\omega_{\mf{g}}\colon\mf{g}\to \Der_{\,\K}(A)$ 
such that
\[
\omega_{\mf{g}} ( a\cdot X) = a\cdot \omega_{\mf{g}} ( X) \qquad \text{and}\qquad [ X,a\cdot Y] = a \cdot [ X,Y] +\big(\omega_{\mf{g}} (X)(a)\big) \cdot Y
\]
for all $a \in A$ and $X,Y\in \mf{g}$. The Lie algebra morphism $\omega_{\mf{g}}$ with the above property is called the \emph{anchor} of the Lie-Rinehart algebra. 
We often write $\omega$ instead of $\omega_{\mf{g}}$ and $X(a)$ instead of $\omega_{\mf{g}}(X)(a)$ for $X \in \mf{g}$ and $a \in A$, if this does not create confusion.

A \emph{morphism of Lie-Rinehart algebras} over the same base $A$ is a Lie algebra morphism $f\colon\mf{g}\to \mf{g}^{\prime }$ which is left $A$-linear and such that $\omega^{\prime }\circ f=\omega $. 

A Lie-Rinehart algebra typically will be written as $(A,\mf{g},\omega)$. We may also write $(A,\mf{g})$ or just $\mf{g}$ when $A$ or $\omega$ are clear from the context or irrelevant, and likewise we may simply write $f\colon \mf{g} \to \mf{g}'$ to mean a morphism of Lie-Rinehart algebras.

\begin{example}[Lie algebroid]
The smooth global sections of a Lie algebroid over a real smooth manifold $M$ naturally form a Lie-Rinehart algebra over $C^\infty(M)$ (see, {\em e.g.},~\cite[p.\ 101]{Mackenzie}, where Lie-Rinehart algebras are called \emph{Lie pseudoalgebras}). In particular, the smooth vector fields on $M$ give rise to the Lie-Rinehart algebra
$\mathfrak{X}(M) \coloneqq\Der_{\R}\big(C^\infty(M)\,\big)\simeq\Gamma(TM)$ whose anchor is the identity.
\end{example}

\begin{definition}[Lie-Rinehart ideal] 
An \textit{$A$-Lie algebra} is a Lie-Rinehart algebra over $A$ with trivial anchor.
A \textit{Lie-Rinehart ideal} $\mathfrak{n}$ of a Lie-Rinehart algebra $\mathfrak{g}$ over $A$ is an $A$-Lie algebra which is a Lie ideal $\mathfrak{n}\subseteq\mathfrak{g}$.
\end{definition}

\begin{example}[Kernel of a Lie-Rinehart algebra morphism]
\label{ex:LRideal}
The kernel $\mathfrak{n}=\ker f$ of a morphism $f\colon\mf{g}\to \mf{h}$ of Lie-Rinehart algebras from $\mathfrak{g}$ to $\mathfrak{h}$ is a Lie-Rinehart ideal $\mathfrak{n}\subseteq\mathfrak{g}$.
\end{example}

From now on, all Lie-Rinehart algebras are assumed to be over $A$. 
By a short exact sequence 
\begin{equation}\label{eq:ses} 
0 \to \mf{n} \xrightarrow{\iota} \mf{g} \xrightarrow{\pi} \mf{h} \to 0
\end{equation}
of Lie-Rinehart algebras we mean a short exact sequence of morphisms of $A$-modules which are also morphisms of Lie algebras and that are compatible with the anchors as above. The latter condition implies, in particular, that the anchor of $\mf{n}$ needs to be zero (that is, $\mf{n}$ is an $A$-Lie algebra). We will often assume, in addition, that our Lie-Rinehart algebras are projective as (left) $A$-modules, which we may refer to as \emph{projective Lie-Rinehart algebras}. This, in particular, implies that $\pi\colon\mf{g} \to \mf{h}$ admits a section $\gamma\colon\mf{h} \to \mf{g}$ as left $A$-linear map.


\subsection{Lie-Rinehart actions, connections, and curvature}
\label{ssec:LRconnection}
Let $(A,\mf{h}, \go)$ be a Lie-Rinehart algebra with anchor $\omega\colon\mf{h} \to \Der_{\,\K}(A)$.

\begin{definition}[Representations of Lie-Rinehart algebras {\cite[p.\ 62]{Huebschmann}}]
\label{def:LRaction}
A \emph{representation} (or \emph{action}) of a Lie-Rinehart algebra $(A,\mf{h},\omega)$ on a left $A$-module $N$ is a Lie algebra morphism
$\rho \colon \mf{h} \to \mathfrak{gl}_{\,\K}(N)$ such that
\begin{equation}\label{eq:LRaction}
    \rho(a\cdot X)(n) = a\cdot \rho(X)(n) \qquad \text{and} \qquad \rho(X)(a\cdot n) = \omega(X)(a)\cdot n + a\cdot \rho(X)(n)
\end{equation}
for all $X \in \mf{h}$, $n\in N$, $a\in A$. Equivalently, $N$ has a left $\mf{h}$-action $\mf{h} \otimes N \to N$, $X \otimes n \mapsto X(n),$ such that
\begin{equation}
  \label{postnummer}
  (a\cdot X)(n) = a \cdot X(n) \qquad \text{and} \qquad X(a\cdot n) = X(a) \cdot n + a \cdot X(n) 
  \end{equation}
hold for all $X \in\mf{h}$, $n \in N$ and $a \in A$. We refer to $N$ as a {\em left $(A,\mf{h})$-module} or
{\em Lie-Rinehart module}.

\end{definition}

For the sake of simplicity, in the future we may omit the $\cdot$ symbol for the actions.
For an $(A,\mf{h})$-module $N$, we call
$\Hom{}{}{\ahha}{}{\textstyle\bigwedge_\ahha^\bullet \! \mf{h}}{N}$
the space of {\em alternating $N$-valued $A$-multilinear} {\em forms}, which yields a cochain complex $\left(\Hom{}{}{\ahha}{}{\textstyle\bigwedge_\ahha^\bullet \! \mf{h}}{N}, d\right )$, where
$$
d \colon \Hom{}{}{\ahha}{}{\textstyle\bigwedge_\ahha^n \! \mf{h}}{N} \to \Hom{}{}{\ahha}{}{\textstyle\bigwedge_\ahha^{n+1} \! \mf{h}}{N}
$$
is the {\em de Rham-Chevalley-Eilenberg-Rinehart} differential
\begin{small}
\begin{equation}
\label{fassbinder}
\begin{split}
d \alpha(X^1, \ldots, X^{n+1}) & \coloneqq \textstyle\sum\limits^{n+1}
_{i=1} (-1)^{i-1} X^i\pig(\alpha(X^1, \ldots,  \hat{X}^i, \ldots, X^{n+1})\pig) \\
&\quad + \textstyle\sum\limits_{i < j} (-1)^{i+j} \alpha\big([X^i, X^j], X^1, \ldots, \hat{X}^i, \ldots, \hat{X}^j, \ldots,  X^{n+1}\big),
\end{split}
\end{equation}
\end{small}%
and where, as usual, $\hat X^i$ means omission of $X^i\in\mathfrak{h}$, see \cite[Eq.~(4.3)]{Rinehart}.

More generally, if we have an assignment $\mf{h} \otimes N \to N$
which is not necessarily a left $\mf{h}$-action but which still obeys \eqref{eq:LRaction}, or equivalently a map $\rho\colon \mf{h} \to \mathfrak{gl}_{\,\K}(N)$ which is not necessarily a Lie algebra morphism but still fulfilling \eqref{postnummer}, then we call this a {\em left $(A,\mf{h})$-connection} on $N$, following \cite[p.~70f.]{Huebschmann}. A Lie-Rinehart module structure would then be a {\em flat} connection. We rewrite this by using the adjoint
$$
\nabla\colon N \to \mathrm{Hom}_\ahha (\mf{h},N), \qquad n \mapsto \big\{X \mapsto \rho(X)(n)\big\},
$$
of $\rho$ and hence the properties \eqref{eq:LRaction} turn into the customary formul\ae
\begin{equation}
  \label{izvestiya}
\nabla_{aX} \,n = a\nabla_X\, n, \qquad
\nabla_X(an) = X(a)\, n + a \,\nabla_X \,n,
\end{equation}
for all $X \in \mf{h}$, $n \in N$ and $a \in A$, well-known in differential geometry for a linear connection on a vector bundle.
In such a case, the formula \eqref{fassbinder} for $d$ still makes sense if changing the notation from $X(\cdot)$ to $\nabla_X(\cdot)$ in the first summand of the right hand side. The only difference is, in general, that $d^2 \neq 0$, whence it is not a differential any more. To indicate this situation of a non-flat connection, we denote
the left-hand side of \eqref{fassbinder} by $D\alpha$ (instead of $d\alpha$)
and call this the {\em exterior covariant  derivative} on $\Hom{}{}{\ahha}{}{\textstyle\bigwedge^\bullet_\ahha  \!\mf{h}}{N}$. Even if $\big(\Hom{}{}{\ahha}{}{\textstyle\bigwedge^\bullet_\ahha  \!\mf{h}}{N}, D\big)$ does not define a cochain complex for a non-flat connection, we will nevertheless refer to elements in $\Hom{}{}{\ahha}{}{\textstyle\bigwedge^\bullet_\ahha  \!\mf{h}}{N}$  as {\em cochains} and as {\em cocycles} to those elements $\ga \in \Hom{}{}{\ahha}{}{\textstyle\bigwedge^\bullet_\ahha  \!\mf{h}}{N}$ that fulfil $D\ga = 0$.

The {\em curvature} $\gO\colon \bigwedge^{2}_\ahha  \!\mf{h} \to \mathfrak{gl}_{\,\K}(N)$ is the adjoint of
$
D^2\colon N \to \Hom{}{}{\ahha}{}{\textstyle\bigwedge^{2}_\ahha  \!\mf{h}}{N},
$
which leads to the familiar expression
\begin{equation}
  \label{prawda}
\gO(X,Y) = \nabla_X \nabla_Y - \nabla_Y \nabla_X - \nabla_{[X,Y]}, 
\end{equation}
for $X,Y \in \mf{h}$, and the connection is flat as above if and only if its curvature vanishes.


\subsection{Curved semi-direct sum of Lie-Rinehart algebras}\label{ssec:curvedcase}

Assume now that $(A,\mf{n}, 0)$ is a Lie-Rinehart algebra with trivial anchor, that is, $\mf{n}$ is an $A$-Lie algebra. Furthermore, assume that there is a left $(A,\mf{h})$-connection $\nabla \colon \mf{n} \to \Hom{}{}{\ahha}{}{\mf{h}}{\mf{n}}$ on the $A$-module $\mf{n}$ by means of a map $\rho\colon \mf{h} \to \Der_{\,\K}(\mf{n}) \subset \mathfrak{gl}_{\,\K}({\mf{n}})$ 
and assume that the curvature $\gO$ of $\nabla$ is given by
\begin{equation}
  \label{beatport}
\gO(X,Y) = [\tau(X,Y), -]_{\mf{n}}, 
\end{equation}
where $\tau \in \Hom{}{}{\ahha}{}{\textstyle\bigwedge^2_\ahha  \mf{h}}{\mf{n}}$ is a $2$-form which is closed with respect to the exterior covariant derivative, that is, $D \tau = 0$.

\begin{proposition}\label{prop:curvedsdp}
If the aforementioned assumptions are met with respect to a Lie-Rinehart algebra $(A,\mf{h}, \go_{\mf{h}})$ and a Lie-Rinehart algebra $(A,\mf{n}, 0)$ with trivial anchor, then the $A$-module $\mf{n} \oplus \mf{h}$ can be made into a Lie-Rinehart algebra with anchor
$$
\omega
\colon\mf{n} \oplus \mf{h} \to \Der_{\,\K}(A), \quad (n,X) \mapsto \omega_{\mf{h}}(X), 
$$
and Lie bracket
\begin{equation}
\label{baratti&milano}
\big[(n, X), (m, Y) \big]_\tau \coloneqq \pig([n, m] + \nabla_X m - \nabla_Y n + \tau(X,Y), [X,Y] \pig)
\end{equation}
for $n, m \in \mf{n}$ and $X, Y \in \mf{h}$.
\end{proposition}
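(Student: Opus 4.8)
The plan is to verify directly that the prescribed data satisfy the three defining axioms of a Lie-Rinehart algebra: that the bracket $[\cdot,\cdot]_\tau$ of \eqref{baratti&milano} is a Lie bracket, that $\omega$ is a Lie algebra morphism into $\Der_{\,\K}(A)$, and that the two compatibility identities relating the $A$-module structure, the bracket, and the anchor hold. A useful observation throughout is that both $[\cdot,\cdot]_\tau$ and $\omega$ respect the decomposition $\mf{n}\oplus\mf{h}$: the $\mf{h}$-component of the bracket is simply $[\cdot,\cdot]_{\mf{h}}$ and the anchor sees only the $\mf{h}$-component, so every axiom splits into an $\mf{h}$-part, which reduces to the fact that $(A,\mf{h},\go_{\mf{h}})$ is already a Lie-Rinehart algebra, and an $\mf{n}$-part, where all the work lies.

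I would dispatch the easy axioms first. Antisymmetry of \eqref{baratti&milano} is immediate: the $\mf{n}$-component $[n,m]+\nabla_X m-\nabla_Y n+\tau(X,Y)$ changes sign under swapping $(n,X)\leftrightarrow(m,Y)$ because $[\cdot,\cdot]_{\mf{n}}$ is antisymmetric, the two connection terms interchange, and $\tau$ is alternating. That $\omega$ is a Lie morphism and satisfies $\omega(a\cdot(n,X))=a\cdot\omega(n,X)$ follows at once from the corresponding properties of $\omega_{\mf{h}}$. For the Leibniz identity, I would expand $[(n,X),a\cdot(m,Y)]_\tau=[(n,X),(am,aY)]_\tau$ term by term, using that $\mf{n}$ is an $A$-Lie algebra (so $[n,am]=a[n,m]$), the connection axioms \eqref{izvestiya} (so $\nabla_X(am)=X(a)\,m+a\nabla_X m$ and $\nabla_{aY}n=a\nabla_Y n$), the $A$-linearity of $\tau$, and the Lie-Rinehart identity $[X,aY]=a[X,Y]+X(a)Y$ in $\mf{h}$; collecting the derivation terms yields exactly $a\cdot[(n,X),(m,Y)]_\tau+(\omega(n,X)(a))\cdot(m,Y)$.

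The heart of the proof, and the main obstacle, is the Jacobi identity. Its $\mf{h}$-component is the cyclic sum of $[[X,Y],Z]$, which vanishes because $\mf{h}$ is a Lie algebra, so I would concentrate on the $\mf{n}$-component $J$ of the Jacobiator. I would expand $J$ as a cyclic sum of $[A_1,p]+\nabla_{[X,Y]}p-\nabla_Z A_1+\tau([X,Y],Z)$, with $A_1=[n,m]+\nabla_X m-\nabla_Y n+\tau(X,Y)$, and sort the resulting terms into four families. The triple brackets $[[n,m],p]$ sum to zero by the Jacobi identity of $\mf{n}$. The terms linear in $\nabla$ carrying a single $\mf{n}$-bracket cancel in the cyclic sum precisely because each $\nabla_X=\rho(X)$ is a derivation of $\mf{n}$, so that $\nabla_Z[n,m]=[\nabla_Z n,m]+[n,\nabla_Z m]$.

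The remaining terms are where the two geometric hypotheses enter. Grouping the second-order terms acting on each fixed element of $\mf{n}$, the combinations $\nabla_Z\nabla_Y-\nabla_Y\nabla_Z+\nabla_{[Y,Z]}$ and their cyclic analogues are, up to sign, exactly the curvatures $\gO(\cdot,\cdot)$ of \eqref{prawda}; by hypothesis \eqref{beatport} these equal the inner derivations $[\tau(\cdot,\cdot),-]_{\mf{n}}$, which then cancel term by term against the explicit $[\tau(X,Y),p]$ contributions (using the antisymmetry of $\tau$). Finally, the leftover terms $-\nabla_Z\tau(X,Y)$ and $\tau([X,Y],Z)$, summed cyclically, assemble into $-D\tau(X,Y,Z)$ as read off from \eqref{fassbinder}, which vanishes by the closedness assumption $D\tau=0$. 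Hence $J=0$ and the Jacobi identity holds. I expect the only genuine difficulty to be the careful sign bookkeeping in the cyclic sums; conceptually, the three hypotheses — $\rho$ valued in derivations, curvature inner via $\tau$, and $D\tau=0$ — are each consumed exactly once, matching the three non-trivial families of obstruction terms.
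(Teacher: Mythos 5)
Your proof is correct, and it is precisely the direct verification that the paper declares ``straightforward and left to the reader'' (citing Mackenzie for the Lie algebroid case), so it matches the intended argument and fills in the omitted details. In particular, your bookkeeping for the Jacobi identity is accurate: the hypothesis $\rho(Z)\in\Der_{\,\K}(\mf{n})$ kills the mixed $\nabla$--bracket family, the grouped second-order terms $\nabla_Z\nabla_Y-\nabla_Y\nabla_Z+\nabla_{[Y,Z]}=-\gO(Y,Z)$ cancel against the explicit $[\tau(\cdot,\cdot),-]_{\mf{n}}$ contributions by \eqref{beatport}, and the residual cyclic sum is exactly $-D\tau(X,Y,Z)$ with the signs dictated by \eqref{fassbinder}, which vanishes by $D\tau=0$.
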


The proof is straightforward and left to the reader.
For the case of Lie algebroids, see \cite[Thm.~3.20]{Mackenziebook} or \cite[Thm.~7.3.7]{Mackenzie}.
Taking $\tau = 0$, we obtain:

\begin{corollary}
\label{cor:sdp}
Assume that we have a left action (a flat left connection) of a Lie-Rinehart algebra $(A,\mf{h})$ on an $A$-Lie algebra $(A,\mf{n})$, that is, assume a Lie algebra morphism 
$\rho \colon \mf{h} \to \Der_{\,\K}(\mf{n})$ 
is given
such that Eqs.~\eqref{eq:LRaction} hold.
Then the $A$-module $\mf{n} \oplus \mf{h}$ has the structure of a Lie-Rinehart algebra  over $A$ given by the anchor
\[
\omega_0\left(n,X\right)(a) =  \omega_{\mf{h}}(X)(a)
\]
and the Lie bracket
\[
\left[\left(n,X\right),\left(m,Y\right)\right]_0 = \pig( [n,m] +  \rho(X)(m) - \rho(Y)(n), [X,Y] \pig)
\]
for all $X, Y \in \mf{h}$, $n, m\in \mf{n}$, and $a\in A$.
\end{corollary}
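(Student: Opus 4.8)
The cleanest route is to read Corollary~\ref{cor:sdp} as the flat specialisation $\tau = 0$ of Proposition~\ref{prop:curvedsdp}, which we are entitled to invoke. First I would check that the hypotheses align. A Lie-Rinehart action, i.e.\ a Lie algebra morphism $\rho\colon \mf{h}\to\Der_\K(\mf{n})$ satisfying \eqref{eq:LRaction}, is exactly a \emph{flat} connection: by \eqref{prawda} its curvature is $\gO(X,Y) = [\rho(X),\rho(Y)] - \rho([X,Y])$, which vanishes identically precisely because $\rho$ is a morphism of Lie algebras. Thus the curvature condition \eqref{beatport} holds with the choice $\tau = 0$, and the closedness requirement $D\tau = 0$ is then automatic. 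Since $(A,\mf{n},0)$ has trivial anchor by assumption, all the standing hypotheses preceding Proposition~\ref{prop:curvedsdp} are met with $\tau = 0$.

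It then remains to substitute $\tau = 0$ into the conclusions. The anchor produced by the proposition does not involve $\tau$, so it coincides with $\omega_0$. The bracket \eqref{baratti&milano} collapses to
\[
\big[(n,X),(m,Y)\big]_0 = \big([n,m] + \nabla_X m - \nabla_Y n,\ [X,Y]\big),
\]
which is the asserted formula once $\nabla_X m$ is rewritten as $\rho(X)(m)$. This already yields the Lie-Rinehart structure on $\mf{n}\oplus\mf{h}$ claimed in the corollary.

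Should one prefer a self-contained verification (the proof of Proposition~\ref{prop:curvedsdp} being left to the reader), the routine checks are antisymmetry of $[\cdot,\cdot]_0$, the two Lie-Rinehart compatibility identities --- both reducing to \eqref{eq:LRaction} together with the vanishing of the anchor of $\mf{n}$ --- and the Jacobi identity. The Jacobi identity is the only genuinely delicate point and the sole place where flatness is used: expanding the cyclic sum, the $\mf{h}$-component is just the Jacobi identity of $\mf{h}$, while in the $\mf{n}$-component the pure brackets cancel by the Jacobi identity of $\mf{n}$, the mixed terms cancel because each $\rho(X)$ is a derivation of $\mf{n}$, and what survives is exactly the cyclic sum of the curvature operators $\big([\rho(X),\rho(Y)] - \rho([X,Y])\big)(p)$, which vanishes precisely by flatness of $\rho$. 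I expect this to be the main obstacle, although with $\tau = 0$ it is strictly lighter than the curved computation underlying Proposition~\ref{prop:curvedsdp}.
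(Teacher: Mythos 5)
Your proposal is correct and matches the paper's own route exactly: the paper derives Corollary~\ref{cor:sdp} by setting $\tau=0$ in Proposition~\ref{prop:curvedsdp} (whose proof it likewise leaves to the reader), and your verification that a Lie algebra morphism $\rho$ forces the curvature \eqref{prawda} to vanish, making \eqref{beatport} and $D\tau=0$ hold trivially with $\tau=0$, is precisely the hypothesis check this specialisation requires. Your optional direct verification, with the Jacobi identity reducing to flatness of $\rho$, is a sound (if unneeded) supplement.
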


\begin{definition}
\label{def:curvedsdp}
We call the Lie-Rinehart algebra from Proposition \ref{prop:curvedsdp} the {\em curved semi-direct sum} of $(A, \mf{n})$ and $(A, \mf{h})$ and we denote it by $(A, \mf{n} \niplus_\tau \mf{h})$. 
Likewise, the Lie-Rinehart algebra of Corollary \ref{cor:sdp} is called the \emph{semi-direct sum} Lie-Rinehart algebra of $(A, \mf{n})$ and $(A, \mf{h})$ and denote it by $(A, \mf{n} \niplus \mf{h})$. 
\end{definition}

This definition of semi-direct sum coincides with the one provided in \cite[\S2.2]{Casas}. 
The following characterisations of (curved) semi-direct sums should sound familiar. 

\begin{proposition}
\label{prop:semidirect}
Let $(A, \mf{n},0)$ be an $A$-Lie algebra and let $(A, \mf{g},\omega_{\mf{g}})$ as well as $(A, \mf{h},\omega_{\mf{h}})$ be Lie-Rinehart algebras. 
Then the following are equivalent:
\begin{enumerate}[label=(C\arabic*),leftmargin=0.9cm]
    \item\label{item:CP1} There is a connection $\rho\colon\mf{h} \to \Der_{\,\K}(\mf{n})$ of $\mf{h}$ on $\mf{n}$ and an isomorphism 
    $
    \mf{n}\niplus_\tau \mf{h} \simeq \mf{g}
    $ 
    of Lie- Rinehart algebras;
    \item\label{item:CP2} $\mf{n}$ is a Lie-Rinehart ideal in $\mf{g}$ and $\mf{h}$ is an $A$-submodule of $\mf{g}$ such that ${\omega_{\mf{g}}}\,|_{\mf{h}} = \omega_{\mf{h}}$ and $\mf{g} = \mf{n} + \mf{h}$, along with $\mf{n}\cap\mf{h} = 0$ as well as $[X,Y]_{\mf{g}} - [X,Y]_{\mf{h}} \in \mf{n}$ for all $X,Y \in \mf{h}$;
    \item\label{item:CP3} there is a short exact sequence of Lie-Rinehart algebras
    \[ 0 \to \mf{n} \xrightarrow{\iota} \mf{g} \xrightarrow{\pi} \mf{h} \to 0\]
    such that $\pi$ admits a left $A$-linear section.
\end{enumerate}
Furthermore, the following are equivalent as well:
\begin{enumerate}[label=(S\arabic*),leftmargin=0.9cm]
    \item\label{item:SP1} There is an action (a flat connection) $\rho\colon\mf{h} \to \Der_{\,\K}(\mf{n})$ of $\mf{h}$ on $\mf{n}$ and an isomorphism $
    \mf{n}\niplus \mf{h} \simeq \mf{g}$ of Lie-Rinehart algebras;
    \item\label{item:SP2} $\mf{n}$ is an $A$-submodule and a Lie ideal in $\mf{g}$ and $\mf{h}$ is a Lie-Rinehart subalgebra of $\mf{g}$ such that $\mf{g} = \mf{n} + \mf{h}$ and $\mf{n}\cap\mf{h} = 0$;
    \item\label{item:SP3} there is a short exact sequence of Lie-Rinehart algebras
    \[ 0 \to \mf{n} \xrightarrow{\iota} \mf{g} \xrightarrow{\pi} \mf{h} \to 0\]
    such that $\pi$ admits a left $A$-linear section which is also a morphisms of Lie algebras.
\end{enumerate}
In any of the above cases, the section $\gamma$ of $\pi$ satisfies $\omega_{\mf{g}} \circ \gamma = \omega_{\mf{h}}$.
\end{proposition}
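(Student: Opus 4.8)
The plan is to establish the two chains of equivalences by cyclic implications — $\ref{item:CP1}\Rightarrow\ref{item:CP3}\Rightarrow\ref{item:CP2}\Rightarrow\ref{item:CP1}$ for the curved case and $\ref{item:SP1}\Rightarrow\ref{item:SP3}\Rightarrow\ref{item:SP2}\Rightarrow\ref{item:SP1}$ for the flat one — treating the flat statements throughout as the specialisation $\tau=0$ (equivalently, vanishing curvature) of the curved ones. I would dispose of the concluding assertion about the anchor first, as it is purely formal: in the presence of a short exact sequence as in $\ref{item:CP3}$ or $\ref{item:SP3}$, the surjection $\pi$ is by definition a morphism of Lie-Rinehart algebras, so $\omega_{\mf{h}}\circ\pi=\omega_{\mf{g}}$; composing on the right with a section $\gamma$ and using $\pi\circ\gamma=\id_{\mf{h}}$ yields $\omega_{\mf{g}}\circ\gamma=\omega_{\mf{h}}\circ\pi\circ\gamma=\omega_{\mf{h}}$.

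For $\ref{item:CP1}\Rightarrow\ref{item:CP3}$ I would start from an isomorphism $\mf{n}\niplus_\tau\mf{h}\simeq\mf{g}$ and check, directly from the bracket \eqref{baratti&milano} and the anchor of Proposition \ref{prop:curvedsdp}, that $\iota\colon n\mapsto(n,0)$ and $\pi\colon(n,X)\mapsto X$ are Lie-Rinehart morphisms forming the short exact sequence, while $\gamma\colon X\mapsto(0,X)$ is a left $A$-linear section of $\pi$; in the flat case one sees in addition, using $\tau=0$, that $\gamma$ is a Lie morphism, giving $\ref{item:SP3}$. For $\ref{item:CP3}\Rightarrow\ref{item:CP2}$ the section splits the underlying $A$-module sequence, so $\mf{g}=\iota(\mf{n})\oplus\gamma(\mf{h})$; identifying $\mf{n}$ with $\iota(\mf{n})$ and $\mf{h}$ with $\gamma(\mf{h})$ gives $\mf{g}=\mf{n}+\mf{h}$ and $\mf{n}\cap\mf{h}=0$. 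That $\mf{n}=\ker\pi$ is a Lie-Rinehart ideal is Example \ref{ex:LRideal}, $\gamma(\mf{h})$ is an $A$-submodule by $A$-linearity of $\gamma$, and $\omega_{\mf{g}}|_{\mf{h}}=\omega_{\mf{h}}$ is the formula just proved; finally $\pi\big([X,Y]_{\mf{g}}-[X,Y]_{\mf{h}}\big)=0$ shows $[X,Y]_{\mf{g}}-[X,Y]_{\mf{h}}\in\ker\pi=\mf{n}$, and if $\gamma$ is moreover a Lie morphism this difference vanishes, i.e. $\mf{h}$ is a subalgebra, giving $\ref{item:SP2}$.

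The heart of the argument is $\ref{item:CP2}\Rightarrow\ref{item:CP1}$, where I would reconstruct the connection and cocycle internally. Since $\mf{n}$ is an ideal, $\rho(X)(n):=[X,n]_{\mf{g}}$ defines a map $\rho\colon\mf{h}\to\mathfrak{gl}_{\,\K}(\mf{n})$; using the Jacobi identity one checks $\rho(X)\in\Der_{\,\K}(\mf{n})$, and — crucially using that the anchor of $\mf{n}$ is trivial and that $\omega_{\mf{g}}|_{\mf{h}}=\omega_{\mf{h}}$ — that $\rho$ satisfies \eqref{eq:LRaction}, so it is a connection. Setting $\tau(X,Y):=[X,Y]_{\mf{g}}-[X,Y]_{\mf{h}}\in\mf{n}$ gives an alternating $\mf{n}$-valued form whose $A$-bilinearity again follows from the anchor compatibility (the anchor contributions in the two brackets cancel). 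It then remains to verify the two defining hypotheses of the curved semi-direct sum: the curvature identity \eqref{beatport}, which drops out of $\gO(X,Y)(n)=[[X,Y]_{\mf{g}},n]-[[X,Y]_{\mf{h}},n]=[\tau(X,Y),n]_{\mf{n}}$ by Jacobi in $\mf{g}$, and the cocycle condition $D\tau=0$, obtained by decomposing the Jacobi identity for $X,Y,Z\in\mf{h}$ into its $\mf{h}$- and $\mf{n}$-components, the latter being exactly $D\tau=0$ for the differential \eqref{fassbinder}. By Proposition \ref{prop:curvedsdp} this data yields $\mf{n}\niplus_\tau\mf{h}$, and $(n,X)\mapsto n+X$ is the required isomorphism; in the flat case $\mf{h}$ is a subalgebra so $\tau=0$, hence by \eqref{beatport} the curvature vanishes and $\rho$ is a genuine action, giving $\ref{item:SP1}$ via Corollary \ref{cor:sdp}.

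I expect the main obstacle to be precisely the extraction of $D\tau=0$ from the Jacobi identity in the last implication: one must track carefully which of the three brackets ($\mf{g}$, $\mf{h}$, $\mf{n}$) occurs at each place and reconcile the cyclic Jacobi signs with the sign convention of \eqref{fassbinder}. Every other step is a matter of unwinding the three equivalent packagings of one and the same splitting datum, together with the anchor bookkeeping that makes $\rho$ a connection and $\tau$ an $A$-bilinear form.
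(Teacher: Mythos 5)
Your proposal is correct and, in substance, the same as the paper's proof: the paper also argues by cyclic implications (in the order \ref{item:CP1} $\Rightarrow$ \ref{item:CP2} $\Rightarrow$ \ref{item:CP3} $\Rightarrow$ \ref{item:CP1}, i.e.\ your cycle traversed in the opposite direction), and its key step \ref{item:CP3} $\Rightarrow$ \ref{item:CP1} builds exactly your data, namely $\rho(X) = [\gamma(X),-]_{\mf{g}}$ and $\tau(X,Y) = [\gamma(X),\gamma(Y)]_{\mf{g}} - \gamma\big([X,Y]_{\mf{h}}\big)$, which after identifying $\mf{h}$ with $\gamma(\mf{h})$ coincide with the formulas in your step \ref{item:CP2} $\Rightarrow$ \ref{item:CP1}. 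Like you, the paper then checks (leaving the details to the reader, as you partly do for $D\tau=0$) that these data satisfy the hypotheses of Proposition \ref{prop:curvedsdp} and that $(n,X) \mapsto n + \gamma(X)$ is an isomorphism, and it treats the flat chain as the $\tau = 0$ specialisation. Your one-line disposal of the final anchor identity $\omega_{\mf{g}} \circ \gamma = \omega_{\mf{h}}$ is also the intended argument.

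One point deserves care. In your step \ref{item:CP2} $\Rightarrow$ \ref{item:CP1} you invoke ``the anchor of $\mf{n}$ is trivial'' to obtain $\rho(aX)(n) = a\,\rho(X)(n)$. The hypotheses of \ref{item:CP2} do not literally include $\omega_{\mf{g}}|_{\mf{n}} = 0$: the abstract $A$-Lie algebra $(A,\mf{n},0)$ has zero anchor by fiat, but that says nothing a priori about the restriction of $\omega_{\mf{g}}$ to the subspace $\mf{n} \subseteq \mf{g}$, and indeed the paper singles this vanishing out as ``the only non-trivial check'' (it appears in its \ref{item:CP2} $\Rightarrow$ \ref{item:CP3} step). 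The derivation is one line and you should insert it: for $X \in \mf{h}$, $n \in \mf{n}$, $a \in A$ one has $[aX,n]_{\mf{g}} = a[X,n]_{\mf{g}} - \omega_{\mf{g}}(n)(a)\,X$, and since the first two terms lie in $\mf{n}$ (a Lie ideal and $A$-submodule) while $\omega_{\mf{g}}(n)(a)\,X \in \mf{h}$, the decomposition $\mf{g} = \mf{n} \oplus \mf{h}$ forces $\omega_{\mf{g}}(n) = 0$. With this inserted your argument is complete: the $A$-bilinearity of $\tau$ via cancellation of anchor terms, the curvature identity \eqref{beatport} from the Jacobi identity in $\mf{g}$, and $D\tau = 0$ as the $\mf{n}$-component of the cyclic Jacobi identity all match the verifications the paper intends when it asserts that $(\mf{h},\mf{n},\rho,\tau)$ satisfy the conditions of Proposition \ref{prop:curvedsdp}.
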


\begin{proof}
We briefly sketch the proof and leave the straightforward details to the interested reader.

{\ref{item:CP1} $\Rightarrow$ \ref{item:CP2}}: the assignments
\[\varphi_{\mf{n}}\colon\mf{n} \to \mf{n}\niplus_\tau \mf{h}, \quad n \mapsto (n,0) \qquad \text{and} \qquad \varphi_{\mf{h}}\colon\mf{h} \to \mf{n}\niplus_\tau \mf{h}, \quad X \mapsto (0,X)\]
realise $\mf{n}$ as a Lie-Rinehart ideal and $\mf{h}$ as an $A$-submodule of $\mf{n}\niplus_\tau \mf{h}$ satisfying the given conditions.

{\ref{item:CP2} $\Rightarrow$ \ref{item:CP3}}: the hypotheses entail that $\mf{g} = \mf{n} \oplus \mf{h}$ as $A$-modules. Thus, the desired short exact sequence of Lie-Rinehart algebras is provided by the (split) short exact sequence of $A$-modules
\[
\xymatrix @R=15pt{
0 \ar[r] & \mf{n} \ar[r]^-{i_1} & \mf{n} \oplus \mf{h} \ar[r]^-{\pi_2} & \mf{h} \ar[r] &  0.
}
\]
The only non-trivial check consists in observing that $\omega_{\mf{g}}(n) = 0$ for all $n \in \mf{n}$. In fact, we have that
\[[aX,n]_{\mf{g}} = a[X,n]_{\mf{g}} - \omega_{\mf{g}}(n)(a)X\,\in\mf{g}\] 
for all $X \in \mf{h}$ and $a \in A$, and since $[aX,n]_{\mf{g}}, a[X,n]_{\mf{g}} \in \mf{n}$ (because it is a Lie ideal and an $A$-submodule) and since $\omega_{\mf{g}}(n)(a)X \in \mf{h}$ (because it is an $A$-submodule), the hypotheses $\mf{g} = \mf{n} + \mf{h}$ and $\mf{n}\cap\mf{h} = 0$ imply that $\omega_{\mf{g}}(n) = 0$ for all $n \in \mf{n}$. 

{\ref{item:CP3} $\Rightarrow$ \ref{item:CP1}}:
let $\gamma\colon\mf{h} \to \mf{g}$ be a section of $\pi$ as a left $A$-linear map. If we consider the left $A$-linear map
\[\rho\colon\mf{h} \to \Der_{\,\K}(\mf{n}), \qquad X \mapsto \big[\gamma(X),-\big]_{\mf{g}}\]
and the alternating $A$-bilinear form
\[\tau\colon\mf{h} \times \mf{h} \to \mf{n}, \qquad (X,Y) \mapsto \big[\gamma(X),\gamma(Y)\big]_{\mf{g}} - \gamma\big([X,Y]_{\mf{h}}\big),\]
then the data $(\mf{h},\mf{n},\rho,\tau)$ satisfy the conditions of Proposition \ref{prop:curvedsdp} and hence we may form $\mf{n} \niplus_\tau \mf{h}$. The natural left $A$-linear isomorphism
\[\psi\colon\mf{n} \oplus \mf{h} \to \mf{g}, \qquad (n,X) \mapsto n + \gamma(X),\]
then turns into an isomorphism $\psi \colon \mf{n} \niplus_\tau \mf{h} \to \mf{g}$ of Lie-Rinehart algebras over $A$.

The proof of the equivalence between \ref{item:SP1}, \ref{item:SP2}, and \ref{item:SP3} follows the same lines.
\end{proof}


\subsection{Universal enveloping algebras}
\label{regenradar}
 The \emph{universal enveloping algebra} of a Lie-Rinehart algebra $(A, \mf{h}, \go)$ is a $\K$-algebra $\cU_{\ahha}(\mf{h}) $ endowed with a morphism $\iota_{\ahha}\colon A\rightarrow  \cU_{\ahha}(\mf{h}) $ of $\K$-algebras and a morphism $\iota _{\mf{h}}\colon\mf{h}\rightarrow  \cU_{\ahha}(\mf{h}) $ of Lie algebras over $\K$ such that
\begin{equation}
\label{eq:compLRalg}
\iota_{\mf{h}}(aX) = \iota_\ahha (a) \, \iota_{\mf{h}}(X) 
\quad \text{and} \quad 
\iota_{\mf{h}}(X) \, \iota_\ahha (a) -\iota_\ahha(a) \, \iota_{\mf{h}}(X) =\iota_\ahha \big(\omega(X)(a)\big)
\end{equation}
for all $a\in A$ and $X\in \mf{h}$, which is universal with respect to this property. This means that if $\left( \mathcal{U},\phi_\ahha ,\phi _{\mf{h}}\right) $ is another $\K$-algebra with a morphism $\phi_\ahha \colon A\rightarrow \mathcal{U}$ of $\K$-algebras and a morphism $\phi_{\mf{h}}\colon\mf{h}\rightarrow \mathcal{U}$ of $\K$-Lie algebras such that
\begin{equation}
\label{dumdidum}
\phi_{\mf{h}}(aX) = \phi_\ahha (a) \, \phi_{\mf{h}}(X) \quad \text{and} \quad \phi_{\mf{h}}(X) \, \phi_\ahha ( a) -\phi_\ahha (a) \, \phi _{\mf{h}}( X) =\phi_\ahha \big(\omega(X) (a) \big),
\end{equation}
then there exists a unique $\K$-algebra morphism $\Phi \colon \cU_{\ahha}
\left( \mf{h}\right) \rightarrow \mathcal{U}$ such that $\Phi \circ \iota_\ahha =\phi_\ahha $ and $\Phi \circ \iota_{\mf{h}}=\phi_{\mf{h}}$. 
Informally speaking, the universal enveloping algebra is designed in order to have a bijective correspondence (in fact, an isomorphism of categories) between 
$(A,\mf{h})$-module and $\cU_\ahha(\mf{h})$-module structures on an $A$-module $N$.

\begin{remark}
 \label{enhancement}
The above universal property can be stated more compactly as follows. For an $A$-ring $(R,\phi_\ahha )$, set
\[
\cL_\ahha(R) \coloneqq \pig\{(r,\delta) \in R \times \Der_{\K}(A) ~\big\vert~ [\,r,\phi_\ahha (a)\,] = \phi_\ahha\big(\delta(a)\big) \text{ for all }a \in A \pig\}.
\]
This is a Lie-Rinehart algebra over $A$ with anchor given by the projection on $\Der_{\K}(A)$, and with component-wise bracket and $A$-action. In this setting, $ \cU_\ahha \left( \mf{h}\right) $ is an $A$-ring endowed with a morphism $\jmath_{\mf{h}}\colon\mf{h}\rightarrow  \cL_\ahha \big(\,\cU_\ahha \left( \mf{h}\right)\big) $ of Lie-Rinehart algebras such that if $\left( R,\phi _{\mf{h}}\right) $ is another $A$-ring with a morphism $\phi_{\mf{h}}\colon\mf{h}\rightarrow \cL_\ahha (R)$ of Lie-Rinehart algebras, then there exists a unique $A$-ring morphism $\Phi \colon \cU_\ahha  \left( \mf{h}\right) \rightarrow R$ such that $\cL_\ahha (\Phi) \circ \jmath_{\mf{h}}=\phi_{\mf{h}}$. 
See \cite[\S3]{Saracco2} for more details.
\end{remark}

Apart from the well-known constructions of \cite{Rinehart} and \cite{Huebschmann}, the universal enveloping algebra of a Lie-Rinehart algebra $(A,\mf{h},\omega)$ admits other (equivalent) realisations. For instance, the following one (which is coming from \cite{LaiachiPaolo2}) resembles closely the classical construction of the universal enveloping algebra of a Lie algebra. Set $\eta \colon\mf{h}\to \mf{h}\otimes A, X\mapsto X\otimes 1_\ahha$, and consider the tensor $A$-ring $T_{\ahha}(\mf{h} \otimes A) $ on the $A$-bimodule $\mf{h}\otimes A$, that is, 
\[
T_{\ahha}( \mf{h} \otimes A) \coloneqq A \oplus (\mf{h} \otimes A) \oplus \big((\mf{h} \otimes A) \otimes_\ahha  (\mf{h} \otimes A)\big) \oplus \ldots = \bigoplus_{n \geq 0} (\mf{h} \otimes A)^{\otimes_\ahha  n}.\]
It can be shown that 
$ 
 \cU_\ahha \left( \mf{h}\right) \simeq T_{\ahha}\left( \mf{h}\otimes A\right)\big/\cJ,
$ 
where the two-sided ideal $\cJ$ is given by
\[
\cJ \coloneqq\Bigg\langle \left.
\begin{gathered}
\eta (X) \otimes _{\ahha}\eta (Y) -\eta (Y)
\otimes _{\ahha}\eta (X) -\eta \big([ X,Y] \big), 
\\
 \eta (X)  a -a \eta (X)-\omega (X)(a)
\end{gathered}
~\right|~  \begin{gathered} X,Y\in \mf{h}, \\ a\in A \end{gathered}\Bigg\rangle.
\]
We have an algebra morphism $\iota _{\ahha}\colon A\rightarrow \cU_{\ahha}(\mf{h}), a\mapsto a+\cJ,$ and a Lie algebra map $\iota _{\mf{h}}\colon\mf{h}\rightarrow \cU_{\ahha}(\mf{h}), X\mapsto \eta \left( X\right) +\cJ,$ that satisfy the compatibility condition \eqref{eq:compLRalg}. Since an $A$-Lie algebra $\mf{n}$ is equivalent to a Lie-Rinehart algebra over $A$ with zero anchor, a similar construction holds for $A$-Lie algebras. Observe that in this latter case, the relation $\eta(X)  a + \cJ = a  \eta(X) + \cJ$  for all $a\in A$ and $X \in \mf{n}$ entails that
\[
X \otimes a + \cJ = (X \otimes 1) a + \cJ = \eta(X)  a + \cJ = a  \eta(X) + \cJ = a  (X \otimes 1) + \cJ = (a X) \otimes 1 + \cJ,
\]
and hence the foregoing construction is isomorphic to the ordinary universal enveloping algebra $U_{\ahha}(\mf{n})$ of an $A$-Lie algebra.

In what follows, we will often assume that $\mf{h}$ is a projective left $A$-module so that the Poincar\'e–Birkhoff–Witt (PBW) theorem holds, that is to say, $\gr\big(\,\cU_\ahha(\mf{h})\big) \simeq \cS_\ahha(\mf{h})$ as commutative $A$-algebras (see \cite[\S3]{Rinehart}). As a consequence, we will identify $A$ and $\mf{h}$ with their images in $\cU_\ahha(\mf{h})$, that is to say, we will often omit to write explicitly $\iota_\ahha$ or $\iota_{\mf{h}}$.

\begin{remark}
  The universal enveloping algebra $\cU_\ahha(\mf{h})$ of a Lie-Rinehart algebra $\mf{h}$ is naturally an $A$-coalgebra with respect to the left $A$-module structure $a  u \coloneqq \iota_\ahha(a)\,u$ for all $a \in A$, $u \in \cU_\ahha(\mf{h})$, and with comultiplication and counit uniquely determined by 
  \begin{gather*}
  \label{primitive}
  \Delta(a) = a \ot_\ahha 1, \qquad \Delta\left(X_1\cdots X_n\right) = {\sum_{\substack{t=0,\ldots, n \\ \sigma \in W_{t,n-t}}}} X_{\sigma(1)}\cdots X_{\sigma(t)} \ot_\ahha X_{\sigma(t+1)}\cdots X_{\sigma(n)}, \\
  \varepsilon(a) = a \qquad \text{and} \qquad \varepsilon\left(X_1\cdots X_n\right) = 0
  \end{gather*}
  for all $a \in A$ and $X_1,\ldots,X_n \in \mf{h}$, where $W_{t,n-t}$ are the $(t,n-t)$-shuffles. For instance, 
  \begin{equation}\label{eq:DeltaXY}
  \Delta(X) = X \ot_\ahha 1 + 1 \ot_\ahha X \quad \text{and} \quad \Delta(XY) = XY \ot_\ahha 1 + X \ot_\ahha Y + Y \ot_\ahha X + 1 \ot_\ahha XY
  \end{equation}
  for $X,Y \in \mf{h}$.
\end{remark}

\subsection{A symmetrisation map}
\label{ssec:sym}

Let $(A, \mf{g}, \omega)$ be a Lie-Rinehart algebra which is projective as a left $A$-module and let $\{\chi_i,\varphi_i\mid i \in I\}$ be a projective basis for $\mf{g}$, where $\{\chi_i\mid i \in I\}$ stands for a generating set of $\mf{g}$, while the $\{\varphi_i\mid i \in I\}$ are the corresponding $A$-linear forms in $\mf{g}^*=\Hom{\ahha}{}{}{}{\mf{g}}{A}$. Consider $\mf{g}$ as an $A$-bimodule with right $A$-action induced by the left one. In this way, we may consider the symmetric algebra $\cS_\ahha (\mf{g})$ over $A$, and it is easily seen that for all $k \geq 1$,
\begin{equation}\label{eq:symmetr}
\textsc{S}_k \colon \cS_\ahha ^k(\mf{g}) \to \cU_\ahha(\mf{g})_{\leq k}, \qquad X_1 \cdots X_k \mapsto {\textstyle\frac{1}{k!}}\smashoperator[l]{\sum_{\substack{ j_1,\ldots,j_k \in I \\ \sigma\in\mf{S}_k}} }\varphi_{j_1}(X_1)\cdots\varphi_{j_k}(X_k)\,\chi_{j_{\sigma(1)}}\cdots\chi_{j_{\sigma(k)}}
\end{equation}
is a well-defined morphism of $A$-bimodules, where the $A$-bimodule structure on the codomain is coming from the left $A$-module structure.
For instance, $\textsc{S}_1=\iota_{\mf{g}}\colon \cS^1_\ahha(\mf{g}) = \mf{g} \hookrightarrow  \cU_\ahha(\mf{g})_{\leq 1}$ and 
\begin{equation}\label{eq:case2}
\textsc{S}_2 \colon X \otimes_{\ahha} Y \mapsto {\textstyle{\frac{1}{2}}}\sum_{i,j \in I}\varphi_i(X)\varphi_j(Y)(\chi_i\chi_j + \chi_j\chi_i)
\end{equation} 
for $X, Y \in \mf{g}$.
Together with $\textsc{S}_0\colon \cS^0_\ahha (\mf{g}) = A \xrightarrow{=} A = \cU_\ahha(\mf{g})_{\leq 0}$, the morphisms \eqref{eq:symmetr} induce an $A$-bilinear morphism
\begin{equation}
\label{eq:symmetrisation}
\textsc{S}\colon\cS_\ahha (\mf{g}) \to \cU_\ahha(\mf{g}),
\end{equation}
which plays the r\^ole of the well-known symmetrisation map (if $A = \K$, the base field, this \emph{is} the usual symmetrisation map). We prove now that $\textsc{S}$ is, in fact, an isomorphism of $A$-corings.

\begin{remark}
\label{rem:psiinj}
We can always rewrite the sum in \eqref{eq:symmetr} as
\begin{equation}
\label{eq:arciuffa}
\begin{split}
&   \phantom{=} \sum_{\substack{ j_1,\ldots,j_k \\ \sigma\in\mf{S}_k} }\varphi_{j_1}\left(X_1\right)\cdots\varphi_{j_k}\left(X_k\right)\,\chi_{j_{\sigma(1)}}\cdots\chi_{j_{\sigma(k)}} 
    \\
    &= \sum_{\sigma \in \mf{S}_k}\,\sum_{ j_{\sigma(1)},\ldots,j_{\sigma(k)} }\varphi_{j_{\sigma(1)}}\left(X_{\sigma(1)}\right)\cdots\varphi_{j_{\sigma(k)}}\left(X_{\sigma(k)}\right)\,\chi_{j_{\sigma(1)}}\cdots\chi_{j_{\sigma(k)}} 
    \\
     &= \sum_{\sigma \in \mf{S}_k}\,\sum_{ j_{1},\ldots,j_{k} }\varphi_{j_{1}}\left(X_{\sigma(1)}\right)\cdots\varphi_{j_{k}}\left(X_{\sigma(k)}\right)\,\chi_{j_{1}}\cdots\chi_{j_{k}},
\end{split}
\end{equation}
where we used that $A$ is commutative (hence  $\varphi_{j_1}(X_1)\cdots\varphi_{j_k}(X_k)=\varphi_{j_{\sigma(1)}}(X_{\sigma(1)})\cdots\varphi_{j_{\sigma(k)}}(X_{\sigma(k)})$ for any permutation $\sigma\in\mf{S}_k$) for the first equality and that the indexes $j_m$'s in the summation are dummy for the second one.
\end{remark}

\begin{lemma}\label{lem:symmetrisation}
Let $(A, \mf{g}, \omega)$ be a Lie-Rinehart algebra which is projective as a left $A$-module. The morphism $\textsc{S}\colon\cS_\ahha (\mf{g}) \to \cU_\ahha(\mf{g})$ of \eqref{eq:symmetrisation} is an isomorphism of $A$-bimodules, where the bimodule structure on $\cS_\ahha (\mf{g})$ is the one induced by the $A$-algebra structure and the $A$-bimodule structure on $\cU_\ahha(\mf{g})$ is the one induced by the left $A$-module structure.
\end{lemma}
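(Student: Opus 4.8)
The plan is to exploit the Poincar\'e--Birkhoff--Witt theorem together with a standard filtered-to-graded argument. Both sides carry compatible filtrations: $\cS_\ahha(\mf{g})$ is filtered by the subspaces $\bigoplus_{j \leq k}\cS_\ahha^j(\mf{g})$ and $\cU_\ahha(\mf{g})$ by the $\cU_\ahha(\mf{g})_{\leq k}$. Since each component $\textsc{S}_k$ lands in $\cU_\ahha(\mf{g})_{\leq k}$ by construction, the map $\textsc{S}$ is a filtered morphism and hence induces a morphism $\gr(\textsc{S})$ on the associated graded objects. The heart of the proof is to identify $\gr(\textsc{S})$, and I expect everything else to be formal bookkeeping.

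First I would compute $\gr(\textsc{S})$ in each degree $k$. Recall that the PBW theorem gives an isomorphism $\gr\big(\cU_\ahha(\mf{g})\big) \simeq \cS_\ahha(\mf{g})$ of graded $A$-algebras which is the identity in degree one; under it, the class of a product $\chi_{j_1}\cdots\chi_{j_k}$ in $\cU_\ahha(\mf{g})_{\leq k}/\cU_\ahha(\mf{g})_{\leq k-1}$ is sent to the symmetric monomial $\chi_{j_1}\cdots\chi_{j_k}$ of $\cS_\ahha^k(\mf{g})$. Using the rewriting of the defining sum from Remark \ref{rem:psiinj}, the factors $\chi_{j_1}\cdots\chi_{j_k}$ occur in one fixed order inside $\textsc{S}_k(X_1\cdots X_k)$, with the permutation $\sigma\in\mf{S}_k$ acting only on the arguments of the $\varphi$'s. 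Passing to the associated graded, the $A$-multilinearity of the symmetric product together with the projective basis identity $\sum_i \varphi_i(Y)\chi_i = Y$, applied slot by slot, collapses each of the $k!$ summands indexed by $\sigma$ to $X_{\sigma(1)}\cdots X_{\sigma(k)} = X_1\cdots X_k$ in $\cS_\ahha^k(\mf{g})$. The normalisation $\tfrac{1}{k!}$ then produces exactly $X_1\cdots X_k$, so that $\gr(\textsc{S})$ is the identity of $\cS_\ahha(\mf{g})$.

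Once $\gr(\textsc{S})$ is the identity, the conclusion follows by the usual argument for filtered modules whose filtrations are exhaustive and bounded below. For injectivity, if $s = s_0 + \cdots + s_k$ has nonzero top homogeneous component $s_k$, then $\textsc{S}(s) = \textsc{S}_k(s_k) + (\text{terms in } \cU_\ahha(\mf{g})_{\leq k-1})$, whose symbol in degree $k$ is $\gr(\textsc{S})(s_k) = s_k \neq 0$; hence $\textsc{S}(s)\neq 0$. For surjectivity, I would induct on the filtration degree of $u \in \cU_\ahha(\mf{g})$: writing the symbol $\bar u$ in $\cU_\ahha(\mf{g})_{\leq k}/\cU_\ahha(\mf{g})_{\leq k-1}$ as the symbol of $\textsc{S}_k(s_k)$ for a suitable $s_k \in \cS_\ahha^k(\mf{g})$, the difference $u - \textsc{S}_k(s_k)$ lies in $\cU_\ahha(\mf{g})_{\leq k-1}$ and is covered by the inductive hypothesis, the base case $k=0$ being $\textsc{S}_0 = \id_A$. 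As $\textsc{S}$ is already $A$-bilinear, the resulting bijection is an isomorphism of $A$-bimodules.

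The main obstacle I anticipate is precisely the identification of $\gr(\textsc{S})$ with the identity: one must check that reordering the $\chi$'s only introduces terms of strictly lower filtration degree, so that they vanish in the associated graded, and that the projective basis relation may legitimately be inserted in each factor of the symmetric product. The rewriting in Remark \ref{rem:psiinj}, which rests on the commutativity of $A$, is exactly what renders this step transparent, since it presents the $\chi$'s in a single fixed order from the outset.
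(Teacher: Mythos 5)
Your proof is correct and follows essentially the same route as the paper: both endow $\cS_\ahha(\mf{g})$ and $\cU_\ahha(\mf{g})$ with the canonical discrete, exhaustive filtrations, check that $\textsc{S}$ is a filtered morphism, and identify $\gr(\textsc{S})$ with the Poincar\'e--Birkhoff--Witt isomorphism via the rewriting of Remark \ref{rem:psiinj} and the dual basis relation. The only difference is cosmetic: where you conclude with the hands-on symbol argument for injectivity and induction on filtration degree for surjectivity, the paper instead cites the general fact that a filtered morphism of complete filtered modules whose associated graded is bijective is itself bijective (Nastasescu--van Oystaeyen, Ch.\ D, Cor.\ III.5) --- both conclusions are legitimate for the same reason, namely that the filtrations are separated, exhaustive, and bounded below.
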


\begin{proof}
Consider $\cU_\ahha(\mf{g})$ as a $\Z$-filtered left $A$-module via
\[
F_n\big(\cU_\ahha(\mf{g})\big) \coloneqq
\begin{cases}
\cU_\ahha(\mf{g})_{\leq n} & n \geq 0, 
\\
0 & n < 0.
\end{cases}
\]
Analogously, consider $\cS_\ahha (\mf{g})$ as a $\Z$-filtered left $A$-module with filtration induced by the graduation
\[
F_n\big(\cS_\ahha (\mf{g})\big) \coloneqq 
\begin{cases}
\displaystyle \bigoplus_{k=0}^n\cS_\ahha ^k(\mf{g}) & n \geq 0,
\\
0 & n < 0.
\end{cases}
\]
Notice that both filtrations, the one on $\cU_\ahha(\mf{g})$ and the one on $\cS_\ahha (\mf{g})$, are separated (that is to say, the intersection $\bigcap_{n \in \Z}F_n$ is $0$), exhaustive (that is to say, the union $\bigcup_{n \in \Z}F_n$ is the whole $A$-module), and discrete (that is to say, $F_n=0$ for all $n < 0$). In particular, both of them are complete filtered $A$-modules (see \cite[Ch.\ D, \S{I} \& \S II]{Nastasescu-vanOystaeyen}). Moreover, by construction, $\textsc{S}$ is a filtered morphism of filtered left $A$-modules and if we consider the graded associated $\gr(\textsc{S})$, then it satisfies
\[
\cS_\ahha ^k(\mf{g}) \simeq \gr_k\big(\cS_\ahha (\mf{g})\big) \xrightarrow{\gr_k(\textsc{S})} \gr_k\big(\cU_\ahha(\mf{g})\big), \qquad X_1\cdots X_k \mapsto X_1\cdots X_k + \cU_\ahha(\mf{g})_{\leq k-1}
\]
for all $k \geq 0$, and $0$ elsewhere. That is to say, $\gr(\textsc{S})$ is the isomorphism of the PBW theorem. Then $\textsc{S}$ is an isomorphism of $A$-bimodules, in view of \cite[Ch.\ D, Cor.\ III.5]{Nastasescu-vanOystaeyen}. 
\end{proof}

\begin{theorem}\label{thm:isocoring}
Let $(A, \mf{g}, \omega)$ be a Lie-Rinehart algebra which is projective as left $A$-module. 
The morphism $\textsc{S}\colon\cS_\ahha (\mf{g}) \to \cU_\ahha(\mf{g})$ of \eqref{eq:symmetrisation} is an isomorphism of $A$-corings.
\end{theorem}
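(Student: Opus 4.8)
The plan is to leverage Lemma \ref{lem:symmetrisation}: since $\textsc{S}$ is already known to be an isomorphism of $A$-bimodules, it only remains to check that it intertwines the counits and the comultiplications of the two $A$-corings. Compatibility with the counits is immediate for degree reasons: $\textsc{S}_0 = \id_A$ while, for $k \geq 1$, every summand of $\textsc{S}_k(X_1\cdots X_k)$ in \eqref{eq:symmetr} is a left $A$-multiple of a product $\chi_{j_{\sigma(1)}}\cdots\chi_{j_{\sigma(k)}}$ of at least one generator, hence is killed by the (left $A$-linear) counit of $\cU_\ahha(\mf{g})$; this matches $\varepsilon_{\cS_\ahha(\mf{g})}$, which vanishes on $\cS_\ahha^k(\mf{g})$ for $k \geq 1$ and is the identity on $\cS^0_\ahha(\mf{g}) = A$.

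The core of the proof is the comultiplication identity $\Delta_{\cU_\ahha(\mf{g})} \circ \textsc{S} = (\textsc{S}\ot_\ahha\textsc{S})\circ\Delta_{\cS_\ahha(\mf{g})}$. Both sides are morphisms of $A$-bimodules, and since the products $X_1\cdots X_k$ with $X_i \in \mf{g}$ generate $\cS_\ahha(\mf{g})$ as an $A$-bimodule, it suffices to verify the identity on such elements. The elements of $\mf{g}$ are primitive in both corings, so on the symmetric side $\Delta_{\cS_\ahha(\mf{g})}$ is the deconcatenation
\[
\Delta_{\cS_\ahha(\mf{g})}(X_1\cdots X_k) = \sum_{S\subseteq\{1,\dots,k\}} X_S \ot_\ahha X_{S^c},
\]
where $X_S$ denotes the symmetric product of the $X_i$ with $i \in S$; on the enveloping side, $\Delta_{\cU_\ahha(\mf{g})}$ is the shuffle coproduct extending \eqref{eq:DeltaXY}. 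Expanding $\Delta_{\cU_\ahha(\mf{g})}(\textsc{S}(X_1\cdots X_k))$ by applying the shuffle coproduct to each monomial in \eqref{eq:symmetr}, and comparing with $\sum_{S}\textsc{S}(X_S)\ot_\ahha\textsc{S}(X_{S^c})$, reduces the statement to a purely combinatorial identity: every ordered term on the enveloping side is produced exactly $\binom{k}{p}$ times (with $p = |S|$), and the normalisation $\tfrac{1}{k!}\binom{k}{p}\,p!\,(k-p)! = 1$ makes the two expressions coincide. Over the ground field this is the classical argument, and a conceptual shortcut is available there, namely that $\textsc{S}$ sends the grouplike $\exp(Y)$ of $\cS(\mf{g})$ to that of $\cU(\mf{g})$ for $Y \in \mf{g}$, so that both sides agree on $\exp(\sum_i t_i X_i)$ and one merely extracts the multilinear coefficient in the formal parameters $t_i$.

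The main obstacle is to run this matching over a general commutative base $A$ rather than over $\K$. Here the summands of $\textsc{S}_k$ carry projective-basis coefficients $\varphi_{j}(X) \in A$, and these do not commute freely past the generators $\chi_{j}$ inside $\cU_\ahha(\mf{g})$: pushing a coefficient across a generator produces an anchor (derivation) term, so the explicit value of $\textsc{S}$ already differs from the field case by lower-order corrections and the grouplike/exponential argument no longer applies verbatim. The remedy is to reorganise the sums exactly as in Remark \ref{rem:psiinj} (equation \eqref{eq:arciuffa}), using the commutativity of $A$ to treat the scalar coefficients as a single left factor and the relabelling of dummy indices to restore the symmetric form; the combinatorial heart of the computation is then identical to the field case, while the anchor corrections on the two sides are seen to correspond term by term. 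These bookkeeping details are routine but delicate, and constitute the only genuinely laborious part of the argument.
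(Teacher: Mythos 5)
Your proposal follows essentially the same route as the paper's own proof (\S\ref{sssec:proof3.3}): reduce, by $A$-bilinearity of both sides, to monomials $X_1\cdots X_k$; expand both sides via the shuffle coproduct on primitives; reorganise the projective-basis coefficients as in Remark \ref{rem:psiinj}; and match the coefficient of each tensor $\chi_{j_1}\cdots\chi_{j_t}\otimes_\ahha\chi_{j_{t+1}}\cdots\chi_{j_k}$ using $|W_{s,t}|=\binom{k}{t}$ together with the bijection of Lemma \ref{lem:shuffles}, so that the normalisation $\tfrac{1}{k!}\,|W_{s,t}|=\tfrac{1}{s!\,t!}$ makes the two expansions coincide --- exactly your identity $\tfrac{1}{k!}\binom{k}{p}p!(k-p)!=1$. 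One correction to your narrative, though: the anchor corrections you anticipate never arise in this computation. Since $\Delta_{\cU}$ is left $A$-linear and the reindexing \eqref{eq:arciuffa} keeps all coefficients $\varphi_{j}(X)\in A$ bundled as a single left factor, nothing ever has to cross a generator $\chi_j$, so there are no ``anchor corrections on the two sides'' to match term by term --- that part of your plan describes work that is vacuous rather than delicate, and the combinatorics really is verbatim the field case. (Anchor terms genuinely do enter, via Lemma \ref{lem:maancheno}, only in the separate compatibility of $\Gamma$ with \emph{right} multiplication by $A$ in Proposition \ref{prop:Gamma}, not in Theorem \ref{thm:isocoring}; relatedly, your claim that the value of $\textsc{S}$ ``differs from the field case by lower-order corrections'' is inaccurate, as \eqref{eq:symmetr} is exact with all coefficients on the left.)
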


For the sake of readability, the full proof is postponed to \S\ref{sssec:proof3.3}. Here below, we only explicitly show the comultiplicativity of $\textsc{S}$ on a homogeneous element of degree $2$. The general case is not different. 

\begin{example}
For all $X,Y \in \mf{g}$ we have
\begin{align*}
& \Delta_{\cU}\big(\textsc{S}(XY)\big) \stackrel{\scriptscriptstyle\eqref{eq:case2}}{=} \frac{1}{2}\sum_{i,j} \varphi_i(X)\varphi_j(Y)\Delta_\cU\big(\chi_i\chi_j + \chi_j \chi_i\big) \\
 & \stackrel{\scriptscriptstyle\eqref{eq:DeltaXY}}{=} \frac{1}{2}\sum_{i,j} \varphi_i(X)\varphi_j(Y)\Big(\big(\chi_i\chi_j + \chi_j\chi_i\big)\ot_\ahha 1 + 2\,\chi_i\ot_\ahha \chi_j + 2\,\chi_j \ot_\ahha \chi_i + 1 \ot_\ahha \big(\chi_i\chi_j + \chi_j\chi_i\big)\Big) \\
 & \stackrel{\scriptscriptstyle\eqref{eq:case2}}{=} \textsc{S}(XY) \ot_\ahha 1 +  X \ot_\ahha Y +  Y \ot_\ahha X + 1 \ot_\ahha \textsc{S}(XY) \\
  & \stackrel{\phantom{\eqref{eq:case2}}}{=} \textsc{S}(XY) \ot_\ahha \textsc{S}(1) +  \textsc{S}(X) \ot_\ahha \textsc{S}(Y) +  \textsc{S}(Y) \ot_\ahha \textsc{S}(X) + \textsc{S}(1) \ot_\ahha \textsc{S}(XY) \\
 & \stackrel{\phantom{\eqref{eq:DeltaXY}}}{=} (\textsc{S} \ot_\ahha \textsc{S})\big(\Delta_\cS(XY)\big). \qedhere
\end{align*}
\end{example}

Following Theorem \ref{thm:isocoring}, the forthcoming proposition answers our question \ref{item:A} by providing a Lie-Rinehart analogue of the well-known fact that a universal enveloping algebra is always free over any universal enveloping subalgebra (see \cite[Prop.\  2.2.7]{Dixmier}).

\begin{proposition}\label{prop:anothersplitting}
Let $\mf{h} \subseteq \mf{g}$ be an inclusion of Lie-Rinehart algebras which are projective as left $A$-modules. Suppose that the quotient $A$-module $\mf{g}/\mf{h}$ is projective, too. Then we have an isomorphism
\[\cU_\ahha(\mf{g}) \simeq \cU_\ahha(\mf{h}) \otimes_\ahha  \cS_\ahha(\mf{g}/\mf{h})\]
as left $\cU_\ahha(\mf{h})$-modules. In particular, $\cU_\ahha(\mf{g})$ is projective over $\cU_\ahha(\mf{h})$.
\end{proposition}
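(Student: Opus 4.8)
The plan is to reduce the statement to the symmetrisation isomorphism of Lemma~\ref{lem:symmetrisation} together with a filtered-module comparison on the level of associated graded objects. First I would exploit that, since $\mf{g}/\mf{h}$ is projective, the short exact sequence $0 \to \mf{h} \to \mf{g} \to \mf{g}/\mf{h} \to 0$ of left $A$-modules splits: fix an $A$-linear section $s\colon \mf{g}/\mf{h} \to \mf{g}$ and set $\mf{p} \coloneqq s(\mf{g}/\mf{h})$, so that $\mf{g} = \mf{h} \oplus \mf{p}$ as $A$-modules with $\mf{p} \simeq \mf{g}/\mf{h}$. This gives the standard multiplicative decomposition $\cS_\ahha(\mf{g}) \simeq \cS_\ahha(\mf{h}) \otimes_\ahha \cS_\ahha(\mf{p})$ of symmetric $A$-algebras and, through the induced algebra map $\cS_\ahha(s)$, an embedding $\cS_\ahha(\mf{g}/\mf{h}) \simeq \cS_\ahha(\mf{p}) \hookrightarrow \cS_\ahha(\mf{g})$.

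The crucial point is to work with an \emph{explicit} candidate for the isomorphism, namely
\[
\Psi\colon \cU_\ahha(\mf{h}) \otimes_\ahha \cS_\ahha(\mf{g}/\mf{h}) \to \cU_\ahha(\mf{g}), \qquad u \otimes \bar{s} \mapsto u \cdot \textsc{S}\big(\cS_\ahha(s)(\bar{s})\big),
\]
where $\textsc{S}$ is the symmetrisation map \eqref{eq:symmetrisation} for $\mf{g}$. A short verification using the left $A$-bilinearity of $\textsc{S}$ and of $\cS_\ahha(s)$ shows that $\Psi$ is well defined on the tensor product over $A$. By construction $\Psi$ is \emph{manifestly} left $\cU_\ahha(\mf{h})$-linear, because the $\cU_\ahha(\mf{h})$-action only affects the left tensor factor and $\Psi$ realises it through left multiplication in $\cU_\ahha(\mf{g})$. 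It then remains to prove that $\Psi$ is bijective.

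For bijectivity I would rerun the filtered argument of Lemma~\ref{lem:symmetrisation}. Equip $\cU_\ahha(\mf{g})$ with its standard PBW filtration and the domain with the tensor-product filtration $F_n = \sum_{i+k \le n} \cU_\ahha(\mf{h})_{\le i} \otimes_\ahha \cS_\ahha^k(\mf{g}/\mf{h})$; both filtrations are discrete, exhaustive and separated, hence complete, and $\Psi$ is a filtered morphism. Since each $\cS_\ahha^k(\mf{g}/\mf{h})$ is a projective, hence flat, $A$-module, the associated graded of the domain is $\gr\big(\cU_\ahha(\mf{h})\big) \otimes_\ahha \cS_\ahha(\mf{g}/\mf{h}) \simeq \cS_\ahha(\mf{h}) \otimes_\ahha \cS_\ahha(\mf{p})$ by the PBW theorem for $\mf{h}$, which matches $\gr\big(\cU_\ahha(\mf{g})\big) \simeq \cS_\ahha(\mf{g})$. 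Because the symbol of $\textsc{S}(X_1 \cdots X_k)$ in degree $k$ is again $X_1 \cdots X_k$ (this is precisely the identification of $\gr(\textsc{S})$ with the PBW isomorphism, established in Lemma~\ref{lem:symmetrisation}), the map $\gr(\Psi)$ becomes the canonical multiplication isomorphism $\cS_\ahha(\mf{h}) \otimes_\ahha \cS_\ahha(\mf{p}) \xrightarrow{\sim} \cS_\ahha(\mf{g})$. Invoking the same criterion as in the Lemma, namely \cite[Ch.\ D, Cor.\ III.5]{Nastasescu-vanOystaeyen}, an isomorphism on associated graded objects forces $\Psi$ itself to be an isomorphism.

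Finally, the ``in particular'' clause is immediate: $\cS_\ahha(\mf{g}/\mf{h})$ is projective as a left $A$-module, hence a direct summand of some free module $A^{(\Lambda)}$, so $\cU_\ahha(\mf{h}) \otimes_\ahha \cS_\ahha(\mf{g}/\mf{h})$ is a direct summand of $\cU_\ahha(\mf{h}) \otimes_\ahha A^{(\Lambda)} \simeq \cU_\ahha(\mf{h})^{(\Lambda)}$, a free left $\cU_\ahha(\mf{h})$-module, whence $\cU_\ahha(\mf{g})$ is projective over $\cU_\ahha(\mf{h})$. The main obstacle is exactly the non-multiplicativity of $\textsc{S}$: one cannot simply tensor the two symmetrisation isomorphisms for $\mf{h}$ and $\mf{g}$ and expect a $\cU_\ahha(\mf{h})$-linear map, which is why the argument is organised around the manifestly linear map $\Psi$ and then pushed to the associated graded, where every structure collapses to the commutative multiplication on $\cS_\ahha(\mf{g})$.
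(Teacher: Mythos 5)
Your proposal is correct and essentially coincides with the paper's own proof: your map $\Psi$ is exactly the paper's composition $\Theta$ (left multiplication of $\cU_\ahha(\iota)(u)$ against $\textsc{S}_{\mf{g}}\big(\cS_\ahha(\sigma)(\bar{s})\big)$), and the bijectivity argument --- discrete, exhaustive, hence separated and complete filtrations, identification of the associated graded of the domain with $\cS_\ahha(\mf{h}) \otimes_\ahha \cS_\ahha(\mf{g}/\mf{h})$ via projectivity of the graded pieces, recognition of $\gr(\Psi)$ as the multiplicative PBW isomorphism, and the appeal to \cite[Ch.\ D, Cor.\ III.5]{Nastasescu-vanOystaeyen} --- matches the paper step for step, as does the concluding direct-summand observation for projectivity over $\cU_\ahha(\mf{h})$. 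The only cosmetic difference is that the paper makes explicit (with references) the comparison isomorphism $\gr\big(\cU_\ahha(\mf{h})\big) \otimes_\ahha \gr\big(\cS_\ahha(\mf{g}/\mf{h})\big) \simeq \gr\big(\cU_\ahha(\mf{h}) \otimes_\ahha \cS_\ahha(\mf{g}/\mf{h})\big)$, which requires projectivity of the graded pieces of \emph{both} factors, whereas you cite only the flatness of $\cS_\ahha^k(\mf{g}/\mf{h})$; since $\gr_k\big(\cU_\ahha(\mf{h})\big) \simeq \cS_\ahha^k(\mf{h})$ is projective by the PBW theorem, this is harmless.
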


\begin{proof}
Denote by $\iota\colon \mf{h} \subseteq \mf{g}$ the inclusion.
By the standing hypotheses, we may fix a left $A$-linear section $\sigma\colon\mf{g}/\mf{h} \to \mf{g}$ of the canonical projection $\pi\colon\mf{g} \to \mf{g}/\mf{h}$ and we have that $\mf{g} \simeq \mf{h} \oplus \mf{g}/\mf{h}$ as left $A$-modules. Notice that 
\[\cS_\ahha(\mf{h} \oplus \mf{g}/\mf{h}) \simeq \cS_\ahha(\mf{h}) \otimes_\ahha  \cS_\ahha(\mf{g}/\mf{h})\]
as $A$-algebras via the unique morphism
\[\cS_\ahha(\mf{h}) \otimes_\ahha  \cS_\ahha(\mf{g}/\mf{h}) \to \cS_\ahha(\mf{h} \oplus \mf{g}/\mf{h})\] induced by the canonical maps $\mf{h} \to \mf{h} \oplus \mf{g}/\mf{h} \leftarrow \mf{g}/\mf{h}$ and the universal property of the coproduct $\cS_\ahha(\mf{h}) \otimes_\ahha  \cS_\ahha(\mf{g}/\mf{h})$ applied to the resulting $A$-algebra morphisms
\[\cS_\ahha(\mf{h}) \to \cS_\ahha(\mf{h} \oplus \mf{g}/\mf{h}) \qquad \text{and} \qquad \cS_\ahha(\mf{g}/\mf{h}) \to \cS_\ahha(\mf{h} \oplus \mf{g}/\mf{h}).\]
Therefore, the $A$-algebra morphisms
\[\cS_\ahha(\iota) \colon \cS_\ahha(\mf{h}) \to \cS_\ahha(\mf{g}) \qquad \text{and} \qquad \cS_\ahha(\sigma) \colon \cS_\ahha(\mf{g}/\mf{h}) \to \cS_\ahha(\mf{g})\]
induce an isomorphism of graded $A$-modules (in fact, $A$-algebras)
\begin{equation}
\label{eq:ancoraunosforzo}
\begin{array}{rcl}
\cS_\ahha(\mf{h}) \otimes_\ahha  \cS_\ahha(\mf{g}/\mf{h}) & \longrightarrow & \cS_\ahha(\mf{g}), 
\\  
U_1 \cdots U_m\otimes_\ahha  \pi(X_1)\cdots \pi(X_n) & \longmapsto & \iota(U_1) \cdots \iota(U_m)\sigma\pi(X_1)\cdots \sigma\pi(X_n)
\end{array}
\end{equation}
with explicit inverse uniquely determined by
\[
\cS_\ahha(\mf{g}) \to \cS_\ahha(\mf{h}) \otimes_\ahha  \cS_\ahha(\mf{g}/\mf{h}), \qquad X \mapsto \big(X-\sigma\pi(X)\big) \otimes_\ahha  1 + 1 \otimes_\ahha  \pi(X).
\]
In this setting, consider the composition
\begin{equation}\label{eq:celapossofare}
\Theta\colon \cU_\ahha(\mf{h}) \otimes_\ahha  \cS_\ahha(\mf{g}/\mf{h}) \xrightarrow{\quad \cU_\ahha(\iota)\otimes_\ahha  \left(\textsc{S}_\mf{g}\circ\cS_\ahha(\sigma)\right) \quad} \cU_\ahha(\mf{g}) \otimes_\ahha  \cU_\ahha(\mf{g}) \to \cU_\ahha(\mf{g}),
\end{equation}
where the right-most arrow is simply the multiplication in $\cU_\ahha(\mf{g})$ and the tensor product over $A$ in the domain $\cU_\ahha(\mf{h}) \otimes_\ahha  \cS_\ahha(\mf{g}/\mf{h})$ is taken with respect to the regular right $A$-module structure $\cU_\ahha(\mf{h})_\ahha$ on $\cU_\ahha(\mf{h})$ and the regular left $A$-module structure on $\cS_\ahha(\mf{g}/\mf{h})$. This is a filtered morphism with respect to the filtration
\[
F_n\big(\cU_\ahha(\mf{h}) \otimes_\ahha  \cS_\ahha(\mf{g}/\mf{h})\big) 
= 
\smashoperator[r]{\sum_{h+k=n}} \, \pig(F_h\big(\cU_\ahha(\mf{h})\big) \otimes_\ahha  F_k\big(\cS_\ahha(\mf{g}/\mf{h})\big)\pig)
\]
on the domain and the primitive filtration on the codomain. Since $\gr_k\left(\cU_\ahha(\mf{h})\right) \simeq \cS_\ahha^k(\mf{h})$ and $\gr_h\left(\cS_\ahha(\mf{g}/\mf{h})\right) = \cS_\ahha^h(\mf{g}/\mf{h})$ are projective $A$-bimodules for all $h,k \geq 0$ (where, as usual, the $A$-bimodule structure is the symmetric one induced by the left $A$-module structure), it follows that the canonical morphism
\begin{eqnarray*}
\gr\big(\cU_\ahha(\mf{h})\big) \otimes_\ahha  \gr\big(\cS_\ahha(\mf{g}/\mf{h})\big) & \longrightarrow & \gr\big(\cU_\ahha(\mf{h}) \otimes_\ahha  \cS_\ahha(\mf{g}/\mf{h})\big), 
\\
\pig(u + F_h\big(\cU_\ahha(\mf{h})\big)\pig) \otimes_\ahha  \pig(s + F_k\big(\cS_\ahha(\mf{g}/\mf{h})\big)\pig) & \longmapsto & u \otimes_\ahha  s + F_{h+k+1}\big(\cU_\ahha(\mf{h}) \otimes_\ahha  \cS_\ahha(\mf{g}/\mf{h})\big)
\end{eqnarray*}
is an isomorphism of graded $A$-bimodules. This is a consequence of, for example, \cite[Thm.\  C.24, p.\ 93]{Majewski}, or one may adapt directly \cite[Lem.\  1.1]{Saracco} or \cite[Appendix B.3]{LaiachiPaolo2}). The graded morphism associated to the composition \eqref{eq:celapossofare} is exactly the isomorphism \eqref{eq:ancoraunosforzo}, up to the last isomorphism. Since the filtrations on $\cU_\ahha(\mf{h}) \otimes_\ahha  \cS_\ahha(\mf{g}/\mf{h})$ and $\cU_\ahha(\mf{g})$ are discrete (that is, there exists $n_0 \in \Z$ such that $F_i = 0$ for all $i \leq n_0$) and exhaustive (the union of all the terms give the entire bimodule), they are separated and complete and hence \cite[Ch.\ D, Cor.\ III.5]{Nastasescu-vanOystaeyen} implies that \eqref{eq:celapossofare} is an isomorphism of $A$-bimodules. It is clearly left $\cU_\ahha(\mf{h})$-linear.
The final claim follows from the fact that $\cS_\ahha(\mf{g}/\mf{h})$ is a projective left $A$-module.
\end{proof}

\begin{example}
For the sake of having a concrete instance of the final argument of the proof of Proposition \ref{prop:anothersplitting}, consider the element $UV \otimes_\ahha  \pi(X)\pi(Y)$ in $\cS_\ahha(\mf{h}) \otimes_\ahha  \cS_\ahha(\mf{g}/\mf{h})$. Then the composition
\[\cS_\ahha(\mf{h}) \otimes_\ahha  \cS_\ahha(\mf{g}/\mf{h}) \simeq \gr\big(\cU_\ahha(\mf{h})\big) \otimes_\ahha  \gr\big(\cS_\ahha(\mf{g}/\mf{h})\big) \simeq \gr\big(\cU_\ahha(\mf{h}) \otimes_\ahha  \cS_\ahha(\mf{g}/\mf{h})\big) \xrightarrow{\gr(\Theta)} \gr\big(\cU_\ahha(\mf{g})\big) \simeq \cS_\ahha(\mf{g})\]
acts on $UV \otimes_\ahha  \pi(X)\pi(Y)$ as 
\begin{align*}
UV \otimes_\ahha  \pi(X)\pi(Y) & \mapsto \pig(UV + F_1\big(\cU_\ahha(\mf{h})\big) \pig) \otimes_\ahha  \pig(\pi(X)\pi(Y) + F_1\big(\cS_\ahha(\mf{g}/\mf{h})\big) \pig) 
\\
 & \mapsto \big(UV  \otimes_\ahha  \pi(X)\pi(Y) \big) + F_3\big(\cU_\ahha(\mf{h}) \otimes_\ahha  \cS_\ahha(\mf{g}/\mf{h})\big) 
 \\
 & \mapsto \frac{1}{2}\iota(U)\iota(V)\pig(\sum_{i,j \in I}\varphi_i(\sigma\pi(X))\varphi_j(\sigma\pi(Y))(\chi_i\chi_j + \chi_j\chi_i)\pig) + F_3\big(\cU_\ahha(\mf{g})\big) \\
 & = \iota(U)\iota(V)\sigma\pi(X)\sigma\pi(Y) + F_3\big(\cU_\ahha(\mf{g})\big) \\
 & \mapsto \iota(U)\iota(V)\sigma\pi(X)\sigma\pi(Y). \qedhere
\end{align*}
\end{example}

\begin{remark}
\label{rem:KISS}
\begin{enumerate}[label=(\alph*),leftmargin=0.6cm]
    \item By using a right-hand analogue of the symmetrisation map $\textsc{S}$, one can prove a variation on Proposition \ref{prop:anothersplitting}, providing an isomorphism  of right $\cU_\ahha(\mf{h})$-modules of the form 
    \[\cU_\ahha(\mf{g}) \simeq \cS_\ahha(\mf{g}/\mf{h}) \otimes_\ahha  \cU_\ahha(\mf{h}),\]
allowing to conclude 
    \begin{equation}\label{eq:isoCalaque}
    \cU_\ahha(\mf{g})\,/\,\cU_\ahha(\mf{g})\,\mf{h} \simeq \cS_\ahha(\mf{g}/\mf{h})
    \end{equation}
as left $A$-modules.
    For details on the isomorphism \eqref{eq:isoCalaque} (and its analogue for universal enveloping algebras of ordinary Lie algebras) and its geometric implications see \cite{Calaque1,Calaque2}.
        \item
    Observe that another $A$-coring isomorphism $\cS_\ahha(\mf{g}) \to \cU_\ahha(\mf{g})$ is provided by the pbw map of \cite{PBW}, but the symmetrisation map $\textsc{S}$ does not coincide with it, in general. \qedhere
\end{enumerate}
\end{remark}

\subsection{Left bialgebroids}
\label{bialgebroids1}
First introduced in {\cite{Takeuchi}}, \emph{left bialgebroids} (also known as \emph{$\times_\ahha$-bialgebras}) are a generalisation of $\K$-bialgebras to bialgebra objects over a noncommutative base ring $A$, consisting of compatible $\Ae$-ring and $A$-coring structures on the same $\K$-vector space $U$. In particular, to define a left bialgebroid $(U, A, \gD, \gve, s, t)$, or $(U,A)$ for short, one starts with a ring homomorphism $s\colon A \to U$ (called the \emph{source}) and a ring anti-homomorphism $t\colon A \to U$ (called the {\em target}) which commute, in the sense that $s(a)t(b) = t( b)s(a)$ for all $a,b \in A$. These induce four commuting $A$-module structures on $U$, denoted by
\begin{equation}
\label{pergolesi}
  a \blact b \lact u \ract c \bract d  \coloneqq t(c)s(b)u\,s(d)t(a)
\end{equation}
  for $u \in U, \, a,b,c,d \in A$, which we abbreviate by
$
\due U {\blact \lact} {\ract \bract}
$, depending on the relevant action(s) in a specific situation. 
For example, the tensor product $U_\ract \otimes_\ahha  \due U \lact {}$ has to be understood as
$$
U_\ract \otimes_\ahha  \due U \lact {}
   \coloneqq U \otimes U/\,
\mathrm{span}\big\{t( a) u \otimes v - u \otimes s(a)v, \ \forall u,v \in U, \forall a \in A  \big\}.
$$
Next, one introduces a counit $\gve\colon U \to A$ subject to
\begin{equation}
\label{beethoven1}
\gve(b \lact u \ract c) =  b\,\gve(u)\,c, \quad
\gve(a \blact u) = \gve(u \bract a), \quad
\gve(uv) =  \gve\big(u \bract \gve(v)\big) = \gve\big(\gve(v) \blact u\big), 
\end{equation}
that is, which is linear only with respect to the ``white'' $A$-actions and defines on $A$ the structure of a $U$-module by means of the ``black'' ones, and which in particular
is not a morphism of algebras, in striking contrast to the bialgebra case. 
Moreover, apart from the multiplication $m_\uhhu$, one also has a coassociative comultiplication 
 $$
 \Delta\colon U \to \due U {} \ract \otimes_\ahha  \due U \lact {}, \ u \mapsto u_{(1)} \otimes_\ahha  u_{(2)},
 $$
 that corestricts to the {\em Sweedler-Takeuchi product}
 \[
U \times_{\scriptscriptstyle A} U   \coloneqq
   \big\{ {\textstyle \sum_i} u_i \otimes  v_i \in U_{\!\ract}  \otimes_\ahha  \!  \due U \lact {} \mid {\textstyle \sum_i} a \blact u_i \otimes v_i = {\textstyle \sum_i} u_i \otimes v_i \bract a,  \ \forall a \in A  \big\} \subseteq \due U {} \ract \otimes_\ahha  \due U \lact {},
\]
 which is an $\Ae$-ring via factor-wise multiplication. The use of the Sweedler-Takeuchi product is that it provides the only way of giving a well-defined sense to the compatibility between product and coproduct in the sense
 of
 \begin{equation}
     \label{beethoven2}
     \gD(uv) = \gD(u)\gD(v), \quad \gD( a \blact b \lact u \ract c \bract d) = \big( b \lact u_{(1)} \bract d\,\big) \otimes_\ahha  \big(a \blact u_{(2)} \ract c\,\big),
 \end{equation}
 for $u, v \in U$ and $a,b,c,d \in A$. Furthermore, usually the coproduct is asked to be counital, that is,
 \begin{equation}
 \label{beethoven3}
 m_\uhhu\circ (s\gve \otimes_\ahha  \id_\uhhu) \circ \gD = \id_\uhhu =  m_\Uopp \circ (\id_\uhhu \otimes_\ahha  t\gve) \circ \gD. 
\end{equation}
If the rings $U$  and $A$ are unital, the coproduct and the counit are asked to be so as well, that is
$\gD(1_\uhhu) = 1_\uhhu \otimes_\ahha  1_\uhhu$ and $ \gve(1_\uhhu) = 1_\ahha$,
(see, for instance, \cite{BoeSzl:HAWBAAIAD, Takeuchi}). A {\em morphism} $\pi\colon (U,A) \to (V,A)$ of left bialgebroids over the same base algebra is a map that commutes with all structure maps in an obvious way, that is
\begin{equation}
\label{krach1}
\begin{gathered}
\pi \circ m_\uhhu = m_\vauu \circ (\pi \otimes_\ahha  \pi), \qquad (\pi \otimes_\ahha  \pi)\circ \gD_\uhhu = \gD_\vauu \circ \pi, \\
\gve_\uhhu = \gve_\vauu \circ \pi, \qquad \pi \circ s_\uhhu = s_\vauu, \qquad  \pi \circ t_\uhhu = t_\vauu.
\end{gathered}
\end{equation}

For the sake of completeness, let us mention that \emph{right bialgebroids} can be defined analogously by switching the r\^oles of black and white actions from \eqref{pergolesi}.

\begin{remark}\label{rem:monoidal}
Given a left bialgebroid $(U,A)$, any left module $\left(M,\cdot\right)$ over the $\K$-algebra $U$ inherits an $A$-bimodule structure via
\begin{equation}\label{eq:restscal}
A \otimes M \otimes A \to M, \qquad a' \ot m \ot a \mapsto a'\lact m \ract a \coloneqq s(a')t(a) m.
\end{equation}
In particular, given two left $U$-modules $M,N$, one may consider their tensor product $M \otimes_\ahha  N$ as $A$-bimodules and this naturally becomes a left $\left(U \tak{\ahha} U\right)$-module via
\[
(U \tak{\ahha} U) \otimes (M \otimes_\ahha  N) \to M \otimes_\ahha  N, \quad \textstyle\big(\sum\limits_i u_i \ot_\ahha v_i\big) \ot (m \ot_\ahha n) \mapsto \sum\limits_i u_i m \ot_\ahha v_i n
\]
and then a left $U$-module via restriction of scalars along the algebra morphism $\Delta\colon U \to U \tak{\ahha} U$. Analogously, the base algebra $A$ is naturally a left $\End{\K}{A}$-module by evaluation and then a left $U$-module by restriction of scalars along the map
\begin{equation}\label{eq:UAact}
U \to \End{\K}{A}, \quad u \mapsto \big\{\,a \mapsto \varepsilon(u \bract a)\,\big\},
\end{equation}
which is an algebra morphism due to the third condition in \eqref{beethoven1}.
The category $\umod$ of left $U$-modules is a monoidal category in the sense of \cite[\S VII.1]{MacLane}, with tensor product $\ot_\ahha$ and unit object $A$.
\end{remark}

A distinguished class of left bialgebroids is that of cocommutative left bialgebroids. A left bialgebroid $U$ with $s = t$ over a commutative base algebra $A$ is said to be a cocommutative left bialgebroid if the comultiplication is cocommutative, that is to say, if $\mathsf{tw} \circ \Delta = \Delta$, where $\mathsf{tw}(x \otimes_\ahha  y) = y \otimes_\ahha  x$ for all $x,y \in U$ (in fact, cocommutativity of $\Delta$ implies that source and target should coincide and hence that $A$ can be assumed to be commutative). 

\subsection{Left Hopf algebroids} 
\label{bialgebroids2}

Generalising Hopf algebras ({\em i.e.}, bialgebras with an antipode or, equivalently, whose Hopf-Galois map is invertible), given a left bialgebroid $(U, A)$ one considers the following map of left $U$-modules:
\begin{equation}
  \label{nochmehrRegen}
\gb
\colon \due U \blact {} \otimes_{\Aopp} U_\ract \to U_\ract  \otimes_\ahha   \due U \lact, \qquad u \otimes_\Aopp v  \mapsto  u_{(1)} \otimes_\ahha  u_{(2)}  v, 
\end{equation}
called the \emph{Hopf-Galois map}, where
$$
\due U \blact {} \otimes_{\Aopp} U_\ract {}
   \coloneqq U \otimes U/\,
\mathrm{span}\big\{ u\, t( a) \otimes v - u \otimes t(a)\,v, \ \forall u,v \in U, \forall a \in A  \big\}.
$$
Then the left bialgebroid $(U,A)$ is called a {\em left Hopf algebroid} (also known as \emph{$\times_\ahha $-Hopf algebra}), or simply {\em left Hopf}, if $\gb$ is invertible (see {\cite[Def.~3.5]{Schauenburg}}). 
By adopting a kind of Sweedler notation for the so-called \emph{translation map} $\beta^{-1}(\cdot\,\otimes_\ahha  1)\colon U\to{}_\blact U \! \otimes_\Aopp \! U_\ract
$\,:
\begin{equation*}
 u_+ \otimes_\Aopp u_-  \coloneqq  \beta^{-1}(u \otimes_\ahha  1)
\end{equation*}
with, as usual, summation understood, one proves that for a left Hopf algebroid
\begin{align}
\label{Sch1}
u_+ \otimes_\Aopp  u_- & \in
 U \times_\Aopp U, & &  \\
\label{Sch2}
u_{+(1)} \otimes_\ahha  u_{+(2)} u_- &= u \otimes_\ahha  1  & & \textrm{in } U_{\!\ract} \! \otimes_\ahha  \! {}_\lact U,  \\
\label{Sch3}
u_{(1)+} \otimes_\Aopp u_{(1)-} u_{(2)}  &= u \otimes_\Aopp  1 & & \textrm{in }  {}_\blact U \! \otimes_\Aopp \! U_\ract,  \\
\label{Sch4}
u_{+(1)} \otimes_\ahha  u_{+(2)} \otimes_\Aopp  u_{-} &= u_{(1)} \otimes_\ahha  u_{(2)+} \otimes_\Aopp u_{(2)-} & & \textrm{in } \due U {} {\ract} \otimes_\ahha  \due U {\lact \blact} {} \otimes_\Aopp \due U {} {\ract} ,  \\
\label{Sch5}
u_+ \otimes_\Aopp  u_{-(1)} \otimes_\ahha  u_{-(2)} &= u_{++} \otimes_\Aopp u_- \otimes_\ahha  u_{+-} & &  \textrm{in } \due U {\blact} {} \otimes_\Aopp\due U {} {\ract} \otimes_\ahha  \due U {\lact} {\ract},  \\
\label{Sch6}
(uv)_+ \otimes_\Aopp  (uv)_- &= u_+v_+ \otimes_\Aopp v_-u_- & & \textrm{in }  {}_\blact U \! \otimes_\Aopp \! U_\ract,
\\
\label{Sch7}
u_+u_- &= s (\varepsilon (u)), & &  \\
\label{Sch8}
\varepsilon(u_-) \blact u_+  &= u, & &  \\
\label{Sch9}
(s (a) t (b))_+ \otimes_\Aopp  (s (a) t (b) )_-
&= s (a) \otimes_\Aopp s (b) & &  \textrm{in }  {}_\blact U \! \otimes_\Aopp \! U_\ract,
\end{align}
hold \cite[Prop.~3.7]{Schauenburg}, 
where in  \eqref{Sch1}  we mean the Takeuchi-Sweedler product
\begin{equation*}
\label{petrarca}
   U \! \times_\Aopp \! U    \coloneqq
   \big\{ {\textstyle \sum_i} u_i \otimes v_i \in {}_\blact U  \otimes_\Aopp  U_{\!\ract} \mid {\textstyle \sum_i} u_i \ract a \otimes v_i = {\textstyle \sum_i} u_i \otimes a \blact v_i, \ \forall a \in A \big\}.
\end{equation*}

A {\em morphism} $\pi\colon (U,A) \to (V,A)$ of left
Hopf algebroids over the same base is a morphism of the underlying left bialgebroids as specified in \eqref{krach1} that additionally fulfils
\begin{equation}
\label{krach2}
\gb_\vauu^{-1} \circ \pi = (\pi \otimes_\Aopp \pi) \circ \beta^{-1}_\uhhu.
\end{equation}
One easily verifies that a bialgebroid morphism between the underlying bialgebroid structure between two left Hopf algebroids is automatically a left Hopf algebroid morphism.

Observe that the canonical map \eqref{nochmehrRegen} is not the only one of this kind: a different choice leads to the notion of {\em right} Hopf algebroids.

\begin{example}[Universal enveloping algebra of a Lie-Rinehart algebra]
\label{ex:UEAbialgd}
It can be proven that the universal enveloping algebra $\cU_\ahha(\mf{h})$ of a Lie-Rinehart algebra $\mf{h}$ is a cocommutative left bialgebroid over $A$ (see \cite[Thm.~3.7]{Xu:QG}) and, in fact, even a left Hopf algebroid (see \cite[Ex.~2]{KowKra:DAPIACT}). 
The structure maps are uniquely determined by
\begin{gather}
\label{mainsomma1}
\Delta(X) = X \otimes_\ahha  1 + 1 \otimes_\ahha  X, \qquad \varepsilon(X) = 0, \\
\label{mainsomma2}
\beta^{-1}(X \otimes_\ahha  1) = X \otimes_\Aopp 1 - 1 \otimes_\Aopp X
\end{gather}
for all $X \in \mf{h}$. In particular,  the action \eqref{eq:UAact} satisfies
\begin{equation}
\varepsilon(Xa) \stackrel{\scriptscriptstyle\eqref{eq:compLRalg}}{=} \varepsilon(aX) + X(a) \stackrel{\scriptscriptstyle\eqref{mainsomma1}}{=} X(a) \label{eq:Xfactor}
\end{equation}
for all $X \in \mf{h}$ and $a \in A$.
\end{example}

\begin{remark}
A left Hopf structure on a left bialgebroid still does not imply the existence of an antipode as in \cite{BoeSzl:HAWBAAIAD}. 
For example, the universal enveloping algebra $\cU_\ahha(\mf{h})$ of a Lie-Rinehart algebra $(A,\mf{h},\omega)$ admits an antipode if $A$ is equipped with a {\em flat right $(A,\mf{h})$-connection} \cite[Prop.~3.12]{KowPos:TCTOHA}, which, however, may not always exist \cite{AnaUli}.

However, in case $(U, A) = (H, \K)$ is a Hopf algebra over a field $\K$, then the invertibility of 
$\gb$ guarantees the existence of the antipode $S$. 
In this case, one has $h_+ \otimes_\K h_- = h_{(1)} \otimes_\K S(h_{(2)})$ for any $h \in H$.
\end{remark}

In view of Remark \ref{rem:monoidal} and Example \ref{ex:UEAbialgd}, it is a well-known fact that the isomorphism of categories between the category of representations of a Lie-Rinehart algebra and the category of modules over its universal enveloping algebra from \S\ref{regenradar} can be enhanced to an isomorphism which also respects the monoidal structure on the two, as summarised in the following standard result.

\begin{theorem}
\label{thm:monoidaliso}
There is a monoidal isomorphism of monoidal categories between the category of representations of the Lie-Rinehart algebra $(A, \mf{h})$, or left $(A, \mf{h})$-modules for short,  and the category of left $\cU_\ahha(\mf{h})$-modules.
\end{theorem}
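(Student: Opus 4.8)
The plan is to build on the isomorphism of underlying categories already recorded in \S\ref{regenradar}, which sends an $(A,\mf{h})$-module $N$ (a left $A$-module equipped with a representation $\rho\colon\mf{h}\to\mathfrak{gl}_{\,\K}(N)$ satisfying \eqref{postnummer}) to the left $\cU_\ahha(\mf{h})$-module obtained from the universal property, and conversely restricts a $\cU_\ahha(\mf{h})$-action along $\iota_\ahha$ and $\iota_{\mf{h}}$. Call this functor $F$; it is the identity on underlying (left) $A$-modules, since on both sides the $A$-action is the one induced by $\iota_\ahha$ (equivalently, the symmetric $A$-bimodule structure \eqref{eq:restscal}, as $s=t$ for the cocommutative bialgebroid $\cU_\ahha(\mf{h})$). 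It therefore remains only to upgrade $F$ to a strict monoidal isomorphism, that is, to check that it carries the Lie-Rinehart tensor product to the bialgebroid tensor product of Remark \ref{rem:monoidal} and the tensor unit to the tensor unit.

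The observation that makes the verification short is that $\cU_\ahha(\mf{h})$ is generated as an $A$-ring by $\iota_{\mf{h}}(\mf{h})$, so two left $\cU_\ahha(\mf{h})$-module structures on the same underlying $A$-module coincide as soon as the associated actions agree on $A$ and on every $X\in\mf{h}$. Thus, to compare $F(M\otimes_\ahha N)$ with $F(M)\otimes_\ahha F(N)$---which share the underlying $A$-module $M\otimes_\ahha N$---it suffices to evaluate the two $\cU_\ahha(\mf{h})$-actions on generators. On $a\in A$ both are the regular $A$-bimodule action carried by $\otimes_\ahha$. On $X\in\mf{h}$, the bialgebroid action is computed by restriction along $\Delta$; since $\Delta(X)=X\otimes_\ahha 1+1\otimes_\ahha X$ is primitive by \eqref{mainsomma1}, one obtains $X\cdot(m\otimes_\ahha n)=Xm\otimes_\ahha n+m\otimes_\ahha Xn$, which is precisely the diagonal action defining the tensor product of Lie-Rinehart modules. (Primitivity is also what guarantees that this diagonal action descends to the balanced tensor product $\otimes_\ahha$.)

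For the unit objects, both monoidal structures take $A$ as tensor unit. The Lie-Rinehart structure on $A$ is the regular $A$-action together with $X\cdot a=\omega(X)(a)$, whereas the $\cU_\ahha(\mf{h})$-module structure on $A$ is that of \eqref{eq:UAact}, namely $u\cdot a=\varepsilon(u\bract a)$. On generators, unwinding \eqref{pergolesi} gives $u\bract a=u\,s(a)$, so $s(a')\cdot a=\varepsilon\big(s(a'a)\big)=a'a$ recovers the regular action, while $X\cdot a=\varepsilon(X\bract a)=\varepsilon(Xa)=X(a)=\omega(X)(a)$ by \eqref{eq:Xfactor}; hence $F(A)=A$ as $(A,\mf{h})$-modules. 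Finally, on both sides the associativity and unit constraints are the canonical ones of the monoidal category $(\Bim{\ahha}{\ahha},\otimes_\ahha,A)$ of $A$-bimodules, and since $F$ is the identity on underlying bimodules these constraints are literally the same, so $F$ is a strict monoidal isomorphism.

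I expect the only genuine point requiring care to be the bookkeeping with the four $A$-actions \eqref{pergolesi}: one must confirm that the diagonal $\mf{h}$-action is well defined over $\otimes_\ahha$ and that $u\bract a$ indeed unwinds to $u\,s(a)$, so that \eqref{eq:Xfactor} applies verbatim. Both facts are immediate from the primitivity of $\Delta$ on $\mf{h}$ and from $s=t$ for $\cU_\ahha(\mf{h})$, so no substantial obstacle arises; the whole content of the statement is the identification of the two tensor products on generators, which \eqref{mainsomma1} renders transparent.
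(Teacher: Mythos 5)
Your proposal is correct and is precisely the standard argument the paper invokes without proof: the paper states Theorem \ref{thm:monoidaliso} as a well-known consequence of the category isomorphism from \S\ref{regenradar} together with the monoidal structure of Remark \ref{rem:monoidal} and the primitive comultiplication \eqref{mainsomma1} of Example \ref{ex:UEAbialgd}, which are exactly the ingredients you assemble. Your key reductions --- that $\cU_\ahha(\mf{h})$ is generated as an $A$-ring by $\iota_{\mf{h}}(\mf{h})$, so the two $\cU_\ahha(\mf{h})$-actions on $M\otimes_\ahha N$ need only be compared on generators, and that \eqref{eq:Xfactor} identifies the counit action on $A$ with the anchor --- are sound and complete the verification.
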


\subsection{Left module algebras and the smash product}
\label{ssec:modalgs}

Let $(U,A)$ be a left bialgebroid. Recall that a \emph{(left) $U$-module algebra} is a $\K$-algebra $R$ equipped with a left $U$-module structure $U \otimes R \to R, u \otimes r \mapsto u \cdot r,$ such that, with respect to the $A$-actions from \eqref{eq:restscal},
\[
 (r \ract a)r' = r(a \lact r') 
, \qquad u\cdot (rr') = (u_{(1)}\cdot r)(u_{(2)}\cdot r'), \qquad u \cdot 1_\erre  = \varepsilon(u) \lact 1_\erre 
\]
for all $a \in A$, $r,r'\in R$, $u \in U$. 
In particular, $1_\erre \ract a  = a \lact 1_\erre $ for all $a \in A$. 
Equivalently, $R$ is a monoid in the monoidal category of left $U$-modules. 

Given a left module algebra $R$ over a left bialgebroid $(U,A)$, we may consider the \emph{smash product}  
$R \hash{} U$ 
(see, for instance, \cite[\S3.7.1]{GabiHandbook}). This is the $\K$-vector space $R \otimes_{\ahha} U$ together with unit $1_\erre \otimes_{\ahha} 1_\uhhu$ and multiplication
\[
(r \otimes_{\ahha} u)(r' \otimes_{\ahha} u') = r\left(u_{(1)}\cdot r'\right) \otimes_{\ahha} u_{(2)}u'
\]
for all $r,r' \in R$ and all $u,u'\in U$. It is, in fact, an $R$-ring with the $\K$-algebra morphism
\begin{equation}\label{eq:etabeta}
R \to R \hash{} U, \qquad r \mapsto r \otimes_{\ahha} 1_\uhhu .
\end{equation}

\begin{lemma}\label{lem:smashbialgd}
If $(U,A)$ is a cocommutative left bialgebroid (or left Hopf algebroid) and $R$ is a commutative left $U$-module algebra, then $R \hash{} U$ is a cocommutative left bialgebroid (resp.~left Hopf algebroid) over $R$.
\end{lemma}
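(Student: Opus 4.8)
The plan is to equip $R \hash{} U$ with explicit structure maps turning it into a cocommutative left bialgebroid over the commutative base $R$, and then to reduce each of the axioms \eqref{beethoven1}--\eqref{beethoven3} to the corresponding property of $(U,A)$. Throughout, I would use that, $R$ being a \emph{commutative} $U$-module algebra, the induced left $A$-action on $R$ is realised by multiplication through the algebra map $\phi_\erre\colon A \to R$, $a \mapsto a \lact 1_\erre$: indeed the module algebra axioms (applied to $s_\uhhu(a)\cdot(1_\erre\,r)$, using $\Delta(s_\uhhu(a)) = s_\uhhu(a)\otimes_\ahha 1$) give $a \lact r = (a \lact 1_\erre)\,r = \phi_\erre(a)\,r$ for all $a \in A$, $r \in R$. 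Write $S \coloneqq R \hash{} U$ for brevity.

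First I would take source and target to coincide, $s_\esse = t_\esse \coloneqq \gl\colon R \to S$, $r \mapsto r \otimes_\ahha 1_\uhhu$, which is the $\K$-algebra morphism \eqref{eq:etabeta}; as $R$ is commutative, $\gl$ is simultaneously an algebra and an anti-algebra map, and $\gl(r)\gl(r') = rr' \otimes_\ahha 1_\uhhu = \gl(r')\gl(r)$, so $s_\esse$ and $t_\esse$ commute and the four $R$-actions of \eqref{pergolesi} collapse to $r \lact x = x \ract r = \gl(r)\,x$ and $r \blact x = x \bract r = x\,\gl(r)$. Then I would set
\[
\gve_\esse(r \otimes_\ahha u) = \gve_\uhhu(u) \lact r, \qquad \Delta_\esse(r \otimes_\ahha u) = (r \otimes_\ahha u_{(1)}) \otimes_\erre (1_\erre \otimes_\ahha u_{(2)}).
\]
Their well-definedness over $\otimes_\ahha$ follows from \eqref{beethoven1} and the $A$-linearity of $\Delta_\uhhu$, while $\Delta_\esse(1_\esse) = 1 \otimes_\erre 1$ and $\gve_\esse(1_\esse) = 1_\erre$ are immediate from $\Delta_\uhhu(1_\uhhu) = 1_\uhhu \otimes_\ahha 1_\uhhu$ and $\gve_\uhhu(1_\uhhu) = 1_\ahha$.

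The bulk of the argument is the verification of the bialgebroid axioms. Coassociativity of $\Delta_\esse$ is inherited from that of $\Delta_\uhhu$, since the $R$-leg stays in the first tensorand, and counitality \eqref{beethoven3} reduces to $s_\uhhu\gve_\uhhu(u_{(1)})\,u_{(2)} = u$ after the absorption identity $(\gve_\uhhu(v_{(1)}) \lact 1_\erre) \otimes_\ahha v_{(2)} = 1_\erre \otimes_\ahha v$, which is itself \eqref{beethoven3} for $U$ read inside $S$. The counit conditions \eqref{beethoven1} for $\gve_\esse$ follow from those for $\gve_\uhhu$ together with $a \lact r = \phi_\erre(a)\,r$, and multiplicativity \eqref{beethoven2} is a direct computation from the smash multiplication, coassociativity, and the same absorption identity. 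The crucial step is that $\Delta_\esse$ corestricts to the Sweedler--Takeuchi product $S \times_\erre S$: spelling out the defining condition and transporting the $R$-valued factor $u_{(2)} \cdot \rho$ across $\otimes_\erre$ by the balancing relation $\gl(\rho)\,x \otimes_\erre y = x \otimes_\erre \gl(\rho)\,y$, the condition collapses to $u_{(1)} \otimes u_{(2)} = u_{(2)} \otimes u_{(1)}$, i.e.\ to the cocommutativity of $\Delta_\uhhu$. The same cocommutativity, combined once more with the balancing relation (now used to slide the factor $r$ from the first to the second leg), yields $\mathsf{tw} \circ \Delta_\esse = \Delta_\esse$, so that $S$ is cocommutative.

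For the left Hopf case I would define a translation map on $S$ by $(r \otimes_\ahha u)_+ \otimes_{\erre^{\mathrm{op}}} (r \otimes_\ahha u)_- \coloneqq (r \otimes_\ahha u_+) \otimes (1_\erre \otimes_\ahha u_-)$ out of the translation map $u_+ \otimes_{\Aopp} u_-$ of $U$, and check that $x \otimes_\erre y \mapsto x_+ \otimes_{\erre^{\mathrm{op}}} x_-\,y$ is a two-sided inverse of the Hopf--Galois map \eqref{nochmehrRegen} of $S$: one composite reduces to \eqref{Sch2} for $U$ (after the counit absorption above) and the other to \eqref{Sch3}, while well-definedness of the translation map relies on \eqref{Sch1} and the module algebra axioms. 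The main obstacle throughout is the bookkeeping of the four (collapsed) $R$-module structures through the balancing of $\otimes_\erre$: this is precisely where the hypotheses are used essentially, since cocommutativity of $(U,A)$ is exactly what forces $\Delta_\esse$ into the Takeuchi product (and makes it cocommutative), and commutativity of $R$ is what lets the $A$-action on $R$ be realised as multiplication and the source and target be identified.
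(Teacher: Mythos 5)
Your bialgebroid construction coincides with the paper's: the same source/target $r \mapsto r \otimes_\ahha 1_\uhhu$, the same coproduct $(r \otimes_\ahha u) \mapsto (r \otimes_\ahha u_{(1)}) \otimes_\erre (1_\erre \otimes_\ahha u_{(2)})$ and counit $(r\otimes_\ahha u)\mapsto r\,\gve_\uhhu(u)$, and the paper likewise treats the bialgebroid axioms as a straightforward check; your observation that cocommutativity is precisely what lands $\Delta_\esse$ in the Takeuchi product (and makes it cocommutative) is correct and well placed. Where you genuinely diverge is the left Hopf part. The paper never writes a candidate translation map in Sweedler form: it notes that $\beta_\uhhu^{-1}$ is left $A$-linear, defines $\tilde\beta^{-1} \coloneqq \psi^{-1}\circ(R\otimes_\ahha \beta^{-1})\circ\varphi$ as a composite of manifestly well-defined maps (with $\varphi$, $\psi$ explicit isomorphisms onto $R \otimes_\ahha (U\otimes_\ahha U)$, resp.\ $R\otimes_\ahha(U_\bract\otimes_\ahha {}_\lact U)$), and then reduces everything to the single identity $\tilde\beta = \varphi^{-1}\circ(R\otimes_\ahha\beta)\circ\psi$, verified via cocommutativity. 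You instead posit the translation map componentwise, $(r\otimes_\ahha u)_+ \otimes (r\otimes_\ahha u)_- \coloneqq (r\otimes_\ahha u_+)\otimes(1_\erre\otimes_\ahha u_-)$, and verify the two composites; those computations are fine and do reduce to \eqref{Sch2} and \eqref{Sch3} exactly as you say (using that $(1_\erre\otimes_\ahha u')(1_\erre\otimes_\ahha u'') = 1_\erre\otimes_\ahha u'u''$ by counitality). Your route yields the inverse in closed form; the paper's route buys freedom from the hardest verification.

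That hardest verification is the one soft spot in your sketch: the well-definedness of $x\otimes_\erre y \mapsto x_+\otimes x_-\,y$ over the balanced tensor product does \emph{not} follow from \eqref{Sch1} and the module-algebra axioms alone, as you assert. On the domain the balancing is the one of \eqref{eq:tensR}, $(r\otimes_\ahha u)\otimes(\rho t\otimes_\ahha v)\sim(r\rho\otimes_\ahha u)\otimes(t\otimes_\ahha v)$, while on the codomain the relevant right $R$-action on the first leg is $x\bract\rho = x\,(\rho\otimes_\ahha 1_\uhhu) = r(u_{(1)}\cdot\rho)\otimes_\ahha u_{(2)}$, which involves the $U$-action on $R$. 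Tracking $\rho$ through your formula, equality of the two images amounts to
\begin{equation*}
\big((u_{+(1)}u_{-(1)})\cdot\rho\big)\otimes u_{+(2)}\otimes u_{-(2)} \;=\; \rho\otimes u_{+}\otimes u_{-}
\end{equation*}
in the appropriate quotient, and this genuinely uses cocommutativity together with \eqref{Sch4}--\eqref{Sch5}: in the Hopf-algebra model, where $u_+\otimes u_- = u_{(1)}\otimes S(u_{(2)})$, it is the computation $u_{(1)}S(u_{(4)})\otimes u_{(2)}\otimes S(u_{(3)}) = 1\otimes u_{(1)}\otimes S(u_{(2)})$, which requires reordering coproduct legs. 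So either carry out this check explicitly (it is true, but it is of the same order of difficulty as everything else combined), or, more economically, define your inverse as the paper does, as a composition of well-defined maps, which converts the well-definedness problem into the single equality of maps above.
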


\begin{proof}
 Observe that, in this setting, $R$ becomes an $A$-algebra with respect to the symmetric $A$-action $a\lact r = s(a)\cdot r = t(a) \cdot r = r \ract a$ for all $a \in A$, $r \in R$. Thus, we do not need to specify which $A$-action we are considering on $R$ when taking tensor products over $A$. Moreover, we already know that $R \hash{} U$ is an $R$-ring with respect to \eqref{eq:etabeta}. In addition, it is straightforward to check that
\begin{gather*}
\Delta'\coloneqq \pig(R \otimes_\ahha  U \xrightarrow{R \otimes_\ahha  \Delta} R \otimes_\ahha  U \otimes_\ahha  U \simeq \left(R \otimes_\ahha  U\pig) \otimes_{\erre} \left(R \otimes_\ahha  U\right)\right) \\
\text{and} \qquad \varepsilon'\coloneqq\pig(R \otimes_\ahha  U \xrightarrow{R \otimes_\ahha  \varepsilon} R \otimes_\ahha  A \simeq R\pig),
\end{gather*}
where $\left(R \otimes_\ahha  U\right) \otimes_{\erre} \left(R \otimes_\ahha  U\right)$ is given by
\begin{equation}\label{eq:tensR}
\frac{\left(R \otimes_\ahha  U\right) \otimes \left(R \otimes_\ahha  U\right)}{\Big\langle (r \otimes_\ahha  u) \otimes (st\otimes_\ahha  v) - (rs \otimes_\ahha  u) \otimes (t\otimes_\ahha  v) ~\Big\vert~  r,s,t \in R,  u,v  \in U\Big\rangle},
\end{equation}
define a cocommutative left bialgebroid structure on $R \hash{} U$.

Let us check explicitly the claim concerning the Hopf algebroid property. For the sake of clarity, let us write $\prescript{}{\erre}{\left(R \otimes_\ahha  U\right)} \otimes_{\erre} \prescript{}{\erre}{\left(R \otimes_\ahha  U\right)}$ for the tensor product in \eqref{eq:tensR} and $\left(R \otimes_\ahha  U\right)_\erre \otimes_{\erre} \prescript{}{\erre}{\left(R \otimes_\ahha  U\right)}$ for the tensor product
\[
\frac{\left(R \otimes_\ahha  U\right) \otimes \left(R \otimes_\ahha  U\right)}{\Big\langle (r \otimes_\ahha  u) \otimes (st\otimes_\ahha  v) - (r(u_{(1)}\cdot s) \otimes_\ahha  u_{(2)}) \otimes (t\otimes_\ahha  v) ~\Big\vert~ r,s,t \in R, u,v \in U\Big\rangle}.
\]
Recall that to be a left Hopf algebroid for a cocommutative bialgebroid $(U,A)$ means that the canonical map $\beta\colon U_{\bract}^{} \otimes_\ahha  \prescript{}{\lact}{U} \to \prescript{}{\lact}{U} \otimes_\ahha  \prescript{}{\lact}{U}$ from \eqref{nochmehrRegen} 
is invertible. Observe that $\beta$ is left $A$-linear with respect to the regular left $A$-action on both domain and codomain, that is
\[\beta(s(a)u \otimes_\ahha  v) = s(a)u_{(1)} \otimes_\ahha  u_{(2)}v,\]
whence $\beta^{-1}$ is left $A$-linear, too (see \eqref{Sch9}). Therefore we may consider
\[R \otimes_\ahha  \left(\prescript{}{\lact}{U} \otimes_\ahha  \prescript{}{\lact}{U}\right) \xrightarrow{R \otimes_\ahha  \beta^{-1}} R \otimes_\ahha  \left(U_{\bract}^{} \otimes_\ahha  \prescript{}{\lact}{U}\right)\]
and (pre)compose it with the isomorphisms
{\small
\begin{gather*}
\prescript{}{\erre}{\left(R \otimes_\ahha  \prescript{}{\lact}{U}\right)} \otimes_\erre \prescript{}{\erre}{\left(R \otimes_\ahha  \prescript{}{\lact}{U}\right)} \xrightarrow{\varphi} R \otimes_\ahha  \left(\prescript{}{\lact}{U} \otimes_\ahha  \prescript{}{\lact}{U}\right), \quad (r \otimes_\ahha  u) \otimes_\erre (s \otimes_\ahha  v) \mapsto rs \otimes_\ahha  u \otimes_\ahha  v, \\
\left(R \otimes_\ahha  \prescript{}{\lact}{U}\right)_{\erre}^{} \otimes_\erre \prescript{}{\erre}{\left(R \otimes_\ahha  \prescript{}{\lact}{U}\right)} \xrightarrow{\psi} R \otimes_\ahha  \left(U_{\bract}^{} \otimes_\ahha  \prescript{}{\lact}{U}\right), \ (r \otimes_\ahha  u) \otimes_\erre (s \otimes_\ahha  v) \mapsto r(u_{(1)} \cdot s) \otimes_\ahha  u_{(2)} \otimes_\ahha  v,
\end{gather*}}%
in order to construct
\begin{equation*}
\begin{array}{rcl}
\prescript{}{\erre}{\left(R \otimes_\ahha  \prescript{}{\lact}{U}\right)} \otimes_\erre \prescript{}{\erre}{\left(R \otimes_\ahha  \prescript{}{\lact}{U}\right)} 
&
\xrightarrow{\tilde{\beta}^{-1} \coloneqq \psi^{-1} \circ (R \otimes_\ahha  \beta^{-1}) \circ \varphi} 
&
\left(R \otimes_\ahha  \prescript{}{\lact}{U}\right)_{\erre}^{} \otimes_\erre \prescript{}{\erre}{\left(R \otimes_\ahha  \prescript{}{\lact}{U}\right)} 
\\
(r \otimes_\ahha  u) \otimes_\erre (s \otimes_\ahha  v) 
&
\xmapsto{\phantom{\tilde{\beta}^{-1} \coloneqq \psi^{-1} \circ (R \otimes_\ahha  \beta^{-1}) \circ \varphi}}
& 
(rs \otimes_\ahha  u_+) \otimes_\erre (1_\erre  \otimes_\ahha  u_-v).
\end{array}
\end{equation*}
To check that $\tilde\beta^{-1}$ is, in fact, the inverse of
\begin{equation*}
\begin{array}{rcl}
\left(R \otimes_\ahha  \prescript{}{\lact}{U}\right)_{\erre}^{} \otimes_\erre \prescript{}{\erre}{\left(R \otimes_\ahha  \prescript{}{\lact}{U}\right)} 
&
\xrightarrow{\ \tilde{\beta}\ } 
& 
\prescript{}{\erre}{\left(R \otimes_\ahha  \prescript{}{\lact}{U}\right)} \otimes_\erre \prescript{}{\erre}{\left(R \otimes_\ahha  \prescript{}{\lact}{U}\right)} 
\\
(r \otimes_\ahha  u) \otimes_\erre (s \otimes_\ahha  v)  
&
\xmapsto{\phantom{\ \tilde{\beta} \ }}
& 
\left(r \otimes_\ahha  u_{(1)}\right) \otimes_\erre \left((u_{(2)}\cdot s) \otimes_\ahha  u_{(3)}v\right),
\end{array}
\end{equation*}
it suffices to check that $\tilde\beta = \varphi^{-1} \circ \left(R \otimes_\ahha  \beta\right) \circ \psi$. Thanks to the cocommutativity of $\Delta$, a direct computation shows that this is indeed the case.
\end{proof}

When the hypotheses of Lemma \ref{lem:smashbialgd} are satisfied, we may call the resulting left bialgebroid (left Hopf algebroid) structure on $R \hash{} U$ a \emph{smash product left bialgebroid (left Hopf algebroid) structure}.

\begin{proposition}
\label{prop:iso3}
Let $(A,\mf{h},\omega)$ be a Lie-Rinehart algebra over the commutative algebra $A$ such that $\mf{h}$ is projective as a left $A$-module. Assume that $R$ is a commutative $\K$-algebra together with a Lie algebra morphism $\rho\colon\mf{h} \to \Der_{\,\K}(R)$ and a $\K$-algebra morphism $\eta\colon A\to R$ such that $\eta$ is also a morphism of $\mf{h}$-representations  (intertwining $\rho$ and $\omega$) and $\rho(a\cdot X) = \eta(a)\rho(X)$ for all $a \in A$, $X \in \mf{h}$.
Then: 
\begin{enumerate}[label=(\alph*),ref={\textit{(\alph*)}},leftmargin=0.7cm]
    \item\label{item:iso3a} The algebra $R$ is a left $\cU_\ahha(\mf{h})$-module algebra, whence $R \hash{} \cU_\ahha(\mf{h})$ admits a structure of smash product left Hopf algebroid over $R$.
    \item\label{item:iso3b} The $R$-module $R \otimes_{\ahha} \mf{h}$ admits a structure of Lie-Rinehart algebra over $R$ with bracket
			\[
			\big[r \otimes_{\ahha} X, r' \otimes_{\ahha} Y\big] = rr' \otimes_{\ahha} [X,Y] - r'\rho(Y)(r) \otimes_{\ahha} X + r\rho(X)(r') \otimes_{\ahha} Y
			\]
			and anchor
			\begin{equation}\label{eq:anchorp}
			\omega_{R} \colon R \otimes_{\ahha} \mf{h} \to \Der_{\,\K}(R), \qquad r \otimes_{\ahha} X \mapsto r\rho(X)
			\end{equation}
			for all $r,r' \in R$, $X,Y \in \mf{h}$. This Lie-Rinehart algebra will be denoted $(R,R\rtimes_\ahha \mathfrak{h})$.
    \item\label{item:iso3c} One has $\cU_\erre (R \rtimes_{\ahha} \mf{h}) \simeq R \!\hash{}\!\, \cU_\ahha(\mf{h})$ as cocommutative left Hopf algebroids over $R$ via the unique morphism induced by
\[\iota_\erre (r) \mapsto r \otimes_{\ahha} 1 \qquad \text{and} \qquad \iota_{R \otimes_{\ahha}\mf{h}}\left(r \otimes_{\ahha} X\right) \mapsto r \otimes_{\ahha} \iota_{\mf{h}}(X).\]
\end{enumerate}
\end{proposition}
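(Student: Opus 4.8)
\emph{Part \ref{item:iso3a}.} The plan is to realise $R$ first as a left $(A,\mf h)$-module and then transport along the monoidal isomorphism of Theorem \ref{thm:monoidaliso}. Endow $R$ with the $A$-action $a\cdot r\coloneqq\eta(a)r$ and the $\mf h$-action $X(r)\coloneqq\rho(X)(r)$; the two identities in \eqref{postnummer} are then immediate, the first from $\rho(a\cdot X)=\eta(a)\rho(X)$ and the second from $\rho(X)\in\Der_{\,\K}(R)$ together with the intertwining property $\rho(X)(\eta(a))=\eta(\omega(X)(a))$ of $\eta$. Thus $R$ is a left $(A,\mf h)$-module, and by the very same hypotheses it is a monoid in the monoidal category of such modules: multiplicativity of the action is the Leibniz rule for $\rho(X)$, and unitality is again the intertwining property. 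Since a module algebra over a left bialgebroid is exactly a monoid in its category of modules and Theorem \ref{thm:monoidaliso} is a \emph{monoidal} isomorphism, $R$ is a left $\cU_\ahha(\mf h)$-module algebra; Lemma \ref{lem:smashbialgd} then produces the smash product left Hopf algebroid $R\hash{}\cU_\ahha(\mf h)$ over $R$.

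\emph{Part \ref{item:iso3b}.} Here the only genuine content is that the displayed bracket and anchor descend to $R\otimes_\ahha\mf h$ and obey the Lie-Rinehart axioms. Well-definedness of $\omega_R$ reduces to $r\eta(a)\rho(X)=r\rho(a\cdot X)$; well-definedness of the bracket in its first slot rests on $[a\cdot X,Y]=a\cdot[X,Y]-\omega(Y)(a)X$, on $\rho(X)$ being a derivation, and on the intertwining property, and since the formula is visibly antisymmetric this also settles the second slot. The remaining checks — $R$-linearity and the Leibniz rule for $\omega_R$, that $\omega_R$ is a Lie map, and the Jacobi identity — are routine if somewhat lengthy, each term being controlled by the derivation property of $\rho$ and the intertwining of $\eta$; alternatively one recognises $(R,R\rtimes_\ahha\mf h)$ as the base change of $(A,\mf h)$ along $\eta$.

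\emph{Part \ref{item:iso3c}.} I would obtain the comparison map from the universal property of $\cU_\erre(R\rtimes_\ahha\mf h)$ recorded in Remark \ref{enhancement}. View $R\hash{}\cU_\ahha(\mf h)$ as an $R$-ring via \eqref{eq:etabeta} and set $\phi\colon R\otimes_\ahha\mf h\to\cL_\erre\big(R\hash{}\cU_\ahha(\mf h)\big)$, $r\otimes_\ahha X\mapsto r\otimes_\ahha\iota_{\mf h}(X)$. Expanding the smash product multiplication with $\Delta(X)=X\otimes_\ahha 1+1\otimes_\ahha X$ gives $(r\otimes_\ahha X)(r'\otimes_\ahha 1)-(r'\otimes_\ahha 1)(r\otimes_\ahha X)=r\rho(X)(r')\otimes_\ahha 1$, so $\phi$ indeed lands in $\cL_\erre$ with derivation component $\omega_R$; a second such expansion yields $(r\otimes_\ahha X)(r'\otimes_\ahha Y)-(r'\otimes_\ahha Y)(r\otimes_\ahha X)=rr'\otimes_\ahha[X,Y]+r\rho(X)(r')\otimes_\ahha Y-r'\rho(Y)(r)\otimes_\ahha X$, which is precisely $\phi$ applied to the bracket of \ref{item:iso3b}. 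Hence $\phi$ is a morphism of Lie-Rinehart algebras and the universal property furnishes a unique $R$-ring morphism $\Phi\colon\cU_\erre(R\rtimes_\ahha\mf h)\to R\hash{}\cU_\ahha(\mf h)$ with the stated values. Testing on the generators $r\otimes_\ahha 1$ and $r\otimes_\ahha X$ that $\Phi$ commutes with source, counit and comultiplication — the key point being the equality $(1\otimes_\ahha 1)\otimes_\erre(r\otimes_\ahha X)=(r\otimes_\ahha 1)\otimes_\erre(1\otimes_\ahha X)$ in the tensor square \eqref{eq:tensR} — shows that $\Phi$ is a morphism of left bialgebroids, hence of left Hopf algebroids.

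\emph{Bijectivity of $\Phi$} is the main obstacle, since the universal property delivers the morphism but no grip on its kernel or cokernel. I would argue with the filtered/graded machinery already used in Lemma \ref{lem:symmetrisation} and Proposition \ref{prop:anothersplitting}. Filter $\cU_\erre(R\rtimes_\ahha\mf h)$ by its canonical filtration and $R\hash{}\cU_\ahha(\mf h)=R\otimes_\ahha\cU_\ahha(\mf h)$ by $F_n\coloneqq R\otimes_\ahha F_n\big(\cU_\ahha(\mf h)\big)$; both are discrete and exhaustive, hence separated and complete, the smash multiplication respects the second filtration, and $\Phi$ is filtered. As $\mf h$ is projective over $A$, the module $R\otimes_\ahha\mf h$ is projective over $R$, so the PBW theorem gives $\gr\,\cU_\erre(R\rtimes_\ahha\mf h)\simeq\cS_\erre(R\otimes_\ahha\mf h)$, while right exactness of $R\otimes_\ahha-$ yields $\gr\big(R\otimes_\ahha\cU_\ahha(\mf h)\big)\simeq R\otimes_\ahha\cS_\ahha(\mf h)\simeq\cS_\erre(R\otimes_\ahha\mf h)$. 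Under these identifications $\gr(\Phi)$ fixes the degree-one generators $r\otimes_\ahha X$ and is therefore the identity of $\cS_\erre(R\otimes_\ahha\mf h)$, in particular an isomorphism; \cite[Ch.\ D, Cor.\ III.5]{Nastasescu-vanOystaeyen} then forces $\Phi$ itself to be an isomorphism. One could instead build the inverse explicitly, extending $X\mapsto 1\otimes_\ahha X$ to an algebra map $\theta\colon\cU_\ahha(\mf h)\to\cU_\erre(R\rtimes_\ahha\mf h)$ by the universal property of $\cU_\ahha(\mf h)$ and putting $r\otimes_\ahha u\mapsto\iota_\erre(r)\theta(u)$, then verifying that both composites are the identity on generators.
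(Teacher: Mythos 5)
Your parts \emph{(a)} and \emph{(b)} coincide with the paper's: \emph{(a)} is exactly the same transport of the monoid structure along the monoidal isomorphism of Theorem \ref{thm:monoidaliso} followed by Lemma \ref{lem:smashbialgd}, and for \emph{(b)} the paper simply cites \cite[Prop.~1.16]{Huebschmann}, which subsumes your direct verifications. In part \emph{(c)} you take a genuinely different route, and it is correct. The paper constructs the forward map $J'$ from the \emph{bialgebroid} universal property of \cite[Thm.~3.1]{MoerdijkLie}, after checking that $R\otimes_\ahha \iota_{\mf{h}}$ lands in the primitive elements of the smash product, so that $J'$ is a morphism of left bialgebroids for free; it then proves bijectivity by explicitly constructing the inverse --- precisely your alternative $\theta$, called $J$ there, obtained from the universal property of $\cU_\ahha(\mf{h})$ applied to $\iota_\erre\circ\eta$ and $\iota_{\RR}\circ\jmath$, extended $R$-linearly to $J_\erre(r\otimes_\ahha u)=\iota_\erre(r)J(u)$ --- and verifying both composites are identities via the uniqueness clauses of the two universal properties. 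You instead obtain $\Phi$ from the $R$-ring universal property of Remark \ref{enhancement}, which costs you the hand-check (which you perform) that $\Phi$ respects comultiplication and counit on generators, the Hopf compatibility then being automatic by the paper's observation that a bialgebroid morphism between left Hopf algebroids is a left Hopf algebroid morphism; and you establish bijectivity by the filtered/graded PBW comparison in the style of Lemma \ref{lem:symmetrisation} and Proposition \ref{prop:anothersplitting}. One point deserves tightening: the identification $\gr\big(R\otimes_\ahha\cU_\ahha(\mf{h})\big)\simeq R\otimes_\ahha\cS_\ahha(\mf{h})$ needs more than right exactness of $R\otimes_\ahha-$; what actually makes it work is that each $\gr_n\cU_\ahha(\mf{h})\simeq\cS_\ahha^n(\mf{h})$ is $A$-projective, so the inclusions $F_{n-1}\subseteq F_n$ split as $A$-modules and remain injective after tensoring --- the same mechanism the paper invokes in Proposition \ref{prop:anothersplitting} via \cite[Thm.~C.24]{Majewski} before applying \cite[Ch.~D, Cor.~III.5]{Nastasescu-vanOystaeyen}. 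The trade-off between the two proofs: the paper's double-universal-property argument is self-contained and hands you the inverse explicitly (at the price of an induction on PBW generators to see that $J_\erre$ is multiplicative), whereas your graded argument reuses machinery already set up in the paper and delivers the isomorphism without ever writing the inverse down.
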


\begin{proof}
\ref{item:iso3a}: first of all, observe that asking $\eta\colon A \to R$ to be a morphism of $\mf{h}$-representations means that $\rho(X) \circ \eta = \eta \circ \omega(X)$ for all $X \in \mf{h}$. Since we have that
\begin{align*}
\rho(a\cdot X)(r) & = \eta(a)\rho(X)(r) = a\cdot \rho(X)(r), \\
\rho(X)(a\cdot r) & = \rho(X)\big(\eta(a)r\big) = \rho(X)\big(\eta(a)\big)r + \eta(a)\rho(X)(r) \\
 & = \eta\big(\omega(X)(a)\big)r+a\cdot \rho(X)(r) = \omega(X)(a)\cdot r + a\cdot \rho(X)(r)
\end{align*}
for all $a \in A$, $X \in \mf{h}$, $r \in R$, the left $A$-module $R$ is a representation of the Lie-Rinehart algebra $(A,\mf{h},\omega)$. In particular, since it is an $A$-algebra on which $\mf{h}$ acts by $\K$-linear derivations, $R$ is a monoid in the monoidal category of $(A,\mf{h},\omega)$-representations. In view of Theorem \ref{thm:monoidaliso}, this makes of $R$ an $\cU_\ahha(\mf{h})$-module algebra with action uniquely determined by
\begin{equation}\label{eq:actionAp}
\iota_\ahha (a)\cdot r = \eta(a)r \qquad \text{and} \qquad \iota_{\mf{h}}(X) \cdot r = \rho(X)\left(r\right)
\end{equation}
for all $a \in A$, $X \in \mf{h}$ and $r \in R$.
So we may endow the tensor product $R \otimes_{\ahha} \cU_\ahha(\mf{h})$ with the smash product structure $R\hash{}\cU_\ahha(\mf{h})$.

\ref{item:iso3b}: follows from \cite[Prop.~1.16]{Huebschmann}. 

\ref{item:iso3c}: notice that both canonical morphisms $\iota_\ahha  \colon A \to \cU_\ahha(\mf{h})$ and $\iota_{\mf{h}} \colon \mf{h} \to \cU_\ahha(\mf{h})$ are left $A$-linear morphisms, whence we may consider the extensions of scalars
\[
\begin{gathered}
\left(R \otimes_{\ahha}\iota_\ahha \right) \colon R \to R \otimes_{\ahha}\cU_\ahha(\mf{h}), \quad r \mapsto r \otimes_{\ahha} 1, \qquad \text{and} \\
\left(R \otimes_{\ahha}\iota_{\mf{h}}\right) \colon \RR  \to R \otimes_{\ahha} \cU_\ahha(\mf{h}), \quad r \otimes_{\ahha} X \mapsto r \otimes_{\ahha} \iota_{\mf{h}}(X).
\end{gathered}
\]
It is straightforward to check that they satisfy the relations \eqref{dumdidum}, and that $R \ot_\ahha \iota_{\mf{h}}$ takes values in the space of primitive elements
\[
\mathcal{P}_\erre\left(R \hash{} \cU_\ahha(\mf{h})\right) = \pig\{x \in R \hash{} \cU_\ahha(\mf{h}) ~\pig\vert~  x_{(1)} \ot_\erre x_{(2)} = x \ot_\erre 1 + 1 \ot_\erre x\pig\}
\]
of the cocommutative left bialgebroid $R \hash{} \cU_\ahha(\mf{h})$. Therefore, by the universal property of $\cU_\erre \left(\RR \right)$ as a bialgebroid (see \cite[Thm.~3.1]{MoerdijkLie}), there exists a unique morphism of left bialgebroids
\[J'\colon\cU_\erre \left(\RR \right) \to R \hash{} \cU_\ahha(\mf{h})\]
such that $J'\circ \iota_{\RR } = R \otimes_{\ahha}\iota_{\mf{h}}$. 
To prove that $J'$ is an isomorphism, let us construct an inverse.

In view of \ref{item:iso3b}, the morphism $\jmath\colon\mf{h} \to R \otimes_{\ahha} \mf{h}, X \mapsto 1_\erre  \otimes_{\ahha} X,$ is a morphism of Lie algebras and of left $A$-modules. Consider the morphisms
\[
\xymatrix @C=30pt @R=15pt{
A \ar[r]^-{\iota_\ahha } \ar[d]_-{\eta} & \cU_\ahha(\mf{h}) & \mf{h} \ar[l]_-{\iota_{\mf{h}}} \ar[d]^-{\jmath} \\
 R \ar[r]_-{\iota_\erre }& \cU_\erre \left(\RR \right) &  \RR . \ar[l]^-{\iota_{\RR }}
}
\]
The composition $\iota_\erre  \circ \eta$ is a $\K$-algebra morphism (as composition of $\K$-algebra morphisms). The composition $\iota_{\RR } \circ \jmath$ is a Lie algebra morphism (as composition of Lie algebra morphisms). Furthermore, they satisfy the relations \eqref{dumdidum}.
As a consequence, by the universal property of $\cU_\ahha(\mf{h})$, there exists a unique morphism of $\K$-algebras $J\colon\cU_\ahha(\mf{h}) \to \cU_\erre \left(\RR \right)$ such that
\begin{equation}\label{eq:Jiotas}
J \circ \iota_\ahha  = \iota_\erre  \circ \eta \qquad \text{and} \qquad J \circ \iota_{\mf{h}} = \iota_{\RR } \circ \jmath.
\end{equation}
In particular, $J$ is left $A$-linear, where the $A$-module structure on $\cU_\erre (\RR )$ is given by $\iota_\erre  \circ \eta$, and
\begin{equation}
\begin{aligned}
\pig[J\big(\iota_{\mf{h}}(X)\big),\iota_\erre \left(r\right)\pig] & \stackrel{\scriptscriptstyle\eqref{eq:Jiotas}}{=} \pig[\iota_{\scriptscriptstyle \RR }\big(\jmath(X)\big),\iota_\erre \left(r\right)\pig]  \stackrel{\scriptscriptstyle\eqref{eq:compLRalg}}{=} \iota_\erre \Big(\omega_{\erre}\big(\jmath(X)\big)(r)\Big) \\
& \stackrel{\scriptscriptstyle\eqref{eq:anchorp}}{=} \iota_\erre \big(\rho\left(X\right)\left(r\right)\big) \stackrel{\scriptscriptstyle\eqref{eq:actionAp}}{=} \iota_\erre \big(\iota_{\mf{h}}(X) \cdot r\big)
\end{aligned}\label{eq:chenoia}
\end{equation}
for all $X \in \mf{h}$ and $r \in R$. 
By extension of scalars, there exists a unique morphism of left $R$-modules $J_\erre  \colon R \otimes_{\ahha} \cU_\ahha(\mf{h}) \to \cU_\erre \left(\RR \right)$ such that $J_\erre (1 \otimes_{\ahha} x) = J(x)$ for all $x \in \cU_\ahha(\mf{h})$ and a straightforward check using \eqref{eq:chenoia} and induction on a set of PBW generators of $\cU_\ahha(\mf{h})$ of the form $u = X_1\cdots X_n$ for $X_1,\ldots,X_n \in \mf{h}$ shows that
\[J_\erre  \colon R \hash{} \cU_\ahha(\mf{h}) \to \cU_\erre \left(\RR \right), \qquad r \hash{} u \mapsto \iota_\erre (r)J(u),\]
is a morphism of $R$-rings.

By a direct computation, one may check that $J_\erre  \circ J'\circ \iota_\erre  = \iota_\erre $ and $J_\erre  \circ J'\circ \iota_{\scriptscriptstyle \RR } = \iota_{\scriptscriptstyle \RR }$, whence $J_\erre  \circ J'$ is the identity, by the the universal property of $\cU_\erre \left(\RR \right)$ as an algebra (that is, the one from \S\ref{regenradar}).
On the other hand, consider the composition $J '\circ J_\erre  \colon R \otimes_{\ahha}\cU_\ahha(\mf{h}) \to R \otimes_{\ahha}\cU_\ahha(\mf{h})$. Being it left $R$-linear, it is uniquely determined by $J'\left(J_\erre \left(1 \otimes_{\ahha} u\right)\right) =  J'\left(J(u)\right)$ for all $u \in \cU_\ahha(\mf{h})$.
Now, $J'\circ J$ is a $\K$-algebra morphism (as composition of $\K$-algebra morphisms) which satisfies
\begin{gather*}
\left(J'\circ J\right)\big(\iota_\ahha (a)\big) \stackrel{\scriptscriptstyle\eqref{eq:Jiotas}}{=} J'\Big(\iota_\erre \big(\eta(a)\big)\Big) = (R \otimes_{\ahha} \iota_\ahha )\big(\eta(a) \otimes_{\ahha} 1\big) = \eta(a) \otimes_{\ahha} 1_{\cU_\ahha(\mf{h})} = 1_\erre  \otimes_{\ahha} \iota_\ahha (a) \\
\text{and} \qquad \left(J'\circ J\right)\big(\iota_{\mf{h}}(X)\big) \stackrel{\scriptscriptstyle\eqref{eq:Jiotas}}{=} J'\Big(\iota_{R\otimes_{\ahha}\mf{h}}\big(\jmath(X)\big)\Big) = (R \otimes_{\ahha} \iota_{\mf{h}})\big(1_\erre  \otimes_{\ahha} \iota_{\mf{h}}(X)\big) = 1_\erre  \otimes_{\ahha} \iota_{\mf{h}}(X)
\end{gather*}
for all $a \in A$, $X \in \mf{h}$, we have that $J'\circ J = 1_\erre  \otimes_{\ahha} \id_{\scriptscriptstyle \cU_\ahha(\mf{h})}$ and so $J'\circ J_\erre $ is the identity as well.
\end{proof}

\begin{remark}
Given a commutative $\K$-algebra $R$, we may always consider the Lie-Rinehart algebra $(R,\Der_{\,\K}(R),\id)$. In this setting, giving morphisms $\rho\colon \mf{h} \to \Der_{\,\K}(R)$ and $\eta\colon A \to R$ as in the statement of Proposition \ref{prop:iso3} is equivalent to giving a morphism of Lie-Rinehart algebras $(\eta,\rho) \colon (A,\mf{h},\omega) \to (R,\Der_{\,\K}(R),\id)$ in the sense of \cite[p.~61]{Huebschmann}, which in turn is equivalent to giving an $A$-algebra structure $\eta\colon A \to R$ and a morphism of Lie-Rinehart algebras over $A$
\[
\varrho\colon \mf{h} \to \mathcal{A}_\ahha^\erre\big(\Der_{\,\K}(R),\id\big) \coloneqq \Big\{(D,\delta) \in \Der_{\,\K}(R) \times \Der_{\,\K}(A) ~\Big\vert~ D(\eta(a)) = \eta(\delta(a)) \text{ for all }a \in A\Big\}
\]
as in \cite[Def.~4.2]{Saracco2}.
\end{remark}

\subsection{Right comodule algebras}

Recall that a \emph{right comodule} over a left bialgebroid $(U,A)$ is a pair $(M, \delta_\emme )$, where $M$ is a right $A$-module and $\delta_\emme  \colon M \to M \times_\ahha U \subset M \ot_\ahha U$ is a right $A$-module map, satisfying the usual coassociativity and counitality conditions
\[
(\delta_\emme  \ot_\ahha U) \circ \delta_\emme  = (M \ot_\ahha \Delta)\circ \delta_\emme  \qquad \text{resp.} \qquad (M \ot_\ahha \varepsilon) \circ \delta_\emme  = \id_M.
\]
As usual, we will write $\delta_\emme (m) = m_{[0]} \ot_\ahha m_{[1]}$ for all $m \in M$. Any right comodule $(M,\delta_\emme )$ can be equipped with a left $A$-action
\[
am \coloneqq m_{[0]}\varepsilon(a \blact m_{[1]})=m_{[0]}\varepsilon(m_{[1]} \bract a),
\]
for all $a \in A$, $m \in M$. The category of right comodules over $(U,A)$ is again a monoidal category with monoidal product $\ot_\ahha$. The right $A$-action and $U$-coaction on the tensor product
of two $U$-comodules $M$ and $N$ are, for all $a \in A$ and $m \ot_\ahha n \in M \ot_\ahha N$,
\[
(m \ot_\ahha n) a \coloneqq m \ot_\ahha na \qquad \text{and} \qquad \delta_{M \ot_\ahha N}( m \ot_\ahha n) \coloneqq (m_{[0]} \ot_\ahha n_{[0]}) \ot_\ahha n_{[1]}m_{[1]}.
\]
{\em Right $U$-comodule algebras} are, by definition, monoids in the monoidal category $\comodu$ of right $U$-comodules. That is to say, they are right $U$-comodules $(R,\delta_\erre )$ together with a right $U$-colinear, associative and unital multiplication $\mu_\erre \colon R \ot_\ahha R \to R$ with unit $\eta_\erre \colon A \to R$ or, equivalently, $A$-rings $(R,\eta_\erre )$ together with a coassociative and counital coaction $\delta_\erre \colon R \to R \ot_\ahha U$ such that
\begin{gather*}
  (ar)r' = a(rr'), \qquad (ra)r' = r(ar'), \qquad r(r'a) = (rr')a,
  \\
  \eta_\erre(a) = 1_\erre a = a 1_\erre, \qquad \gd_\erre(1_\erre a) = \gd_\erre(a1_\erre) = 1_\erre \otimes_\ahha  t(a),
  \\
 (rr')_{[0]} \ot_\ahha (rr')_{[1]} = r_{[0]}r'_{[0]} \ot_\ahha r_{[1]}r'_{[1]}, \qquad
   1_{[0]} \ot_\ahha 1_{[1]} = 1_\erre  \ot_\ahha 1_\uhhu. 
\end{gather*}

\begin{example}
Let $U \xrightarrow{\pi}V$ be a morphism of left bialgebroids over $A$. Then $U$ with
\[\delta_\uhhu \coloneqq \left(U \xrightarrow{\gD } U \ot_\ahha U \xrightarrow{U \ot_\ahha \pi} U \ot_\ahha V\right)\]
is a right $V$-comodule algebra.
\end{example}


\section{A generalised Blattner-Cohen-Montgomery theorem for left Hopf algebroids}\label{sec:BCM}

In this section, we extend \cite[Thm.~4.14]{BlattnerCohenMontgomery} to the realm of left Hopf algebroids. In order to do this, we need to recall a couple of notions on measurings, weak actions, and $2$-cocycles as introduced (in this context) in \cite{GabiTomek}.


\subsection{Weak actions, cocycles, and crossed products}

This subsection mainly contains (by now well-established) material from \cite{GabiTomek}, which will be stated quite in detail as explicitly needed in the main theorem in this section.

\begin{definition}[{\cite[Def.~4.1]{GabiTomek}}]
\label{GT1}
A left bialgebroid $(U,A)$ is said to {\em measure} an $A$-ring $\iota\colon A \to B$ if there exists a map $U \otimes B \to B$, $u \otimes b \mapsto u \rightslice b$ which is $A$-bilinear with respect to the $A$-bimodule structure 
$
\due {(U \otimes B)} \lact \ract \coloneqq
\due U \lact \ract \otimes B$, that is,
$$
\big(s(a)\,t(a')\,u\big)\rightslice b=i(a)(u\rightslice b)i(a'),
$$
for all $a,a'\in A$, $u\in U$ and $b\in B$,
and which is such that
\begin{equation}
\label{vendeeglobe}
u\rightslice 1_\behhe =\iota(\gve(u)) \qquad \text{and} \qquad
u\rightslice (b b')=(u_{(1)} \rightslice b)(u_{(2)} \rightslice b')
\end{equation}
for all $u\in U$ and $b,b'\in B$.
\end{definition}

A left $U$-module algebra $B$ as in \S\ref{ssec:modalgs} is measured simply by the left $U$-action on $B$.

\begin{definition}[{\cite[Def.~4.2]{GabiTomek}}]
\label{GT2}
Let $(U,A)$ be a left bialgebroid and
$B$ a $U$-measured $A$-ring.
A {\em $B$-valued $2$-cocycle} on $U$ is a map $\sigma\colon \due U \blact {} \otimes_\Aopp U_\ract \to B$ which is $A$-bilinear with respect to the $A$-bimodule structure 
$\due {(U \otimes_\Aopp U)} \lact \ract  \coloneqq
\due U \lact \ract \otimes_\Aopp U$ 
and 
for all $u,v,w\in U$, $a\in A$
subject to
\begin{enumerate}
\item
\label{item:wa1}
$(u_{(1)} \rightslice \iota(a))\sigma(u_{(2)},v)=\sigma(u, a \lact v)$,
\item
\label{item:wa2} 
$\sigma(1,u)=\iota(\gve(u))=\sigma(u,1)$,
\item
\label{item:wa3} 
$\big(u_{(1)} \rightslice  \sigma(v_{(1)},w_{(1)})\big)
\gs(u_{(2)}, v_{(2)}w_{(2)})\big)
=
\gs(u_{(1)}, v_{(1)})\gs(u_{(2)}v_{(2)}, w).
$
\end{enumerate}
A $U$-measured $A$-ring $B$ is called a {\em $\sigma$-twisted left
$U$-module} if a $B$-valued $2$-cocycle $\sigma$ is given and
\begin{enumerate}
\setcounter{enumi}{3}
\item 
\label{item:wa4} 
$1_\uhhu \rightslice b=b$,
\item 
\label{item:wa5}
$\big(u_{(1)} \rightslice  (v_{(1)} \rightslice b)\big)
\gs(u_{(2)}, v_{(2)})
=
\gs(u_{(1)}, v_{(1)})(u_{(2)}v_{(2)} \rightslice b)
$
\end{enumerate}
for all $u,v\in U$, $b\in B$.
\end{definition}

A measuring as in Definition \ref{GT1} that additionally fulfils \eqref{item:wa4} is called a {\em weak (left) action}.
 Similarly to the bialgebra case,
 the map $\sigma(u,v) \coloneqq \iota(\gve(uv))$ is a (trivial) cocycle, provided that the measuring restricts to the $U$-action on $A$, that is to say, 
 \[u\rightslice \iota(a)= \iota\big(u . a\big) \stackrel{\scriptscriptstyle\eqref{eq:UAact}}{=} \iota\big(\gve(u \bract a)\big)\]
 for $u\in U$ and $a\in A$.
A twisted left $U$-module  corresponding to this trivial cocycle
 $\sigma$
is simply a left $U$-module algebra.

\begin{proposition}[{\cite[Prop.~4.3]{GabiTomek}}]
\label{GT3}
Let $(U,A)$ be a left bialgebroid and
$\iota\colon A\to B$ be an $U$-measured $A$-ring. Then the $\K$-module $B_\bract \otimes_\ahha  \due U \lact {}$ is an associative algebra with unit $1_\behhe \otimes_\ahha  1_\uhhu$ and multiplication
\[
(b\otimes_\ahha  u)\otimes_\K (b'\otimes_\ahha  v)\mapsto
b(u_{(1)} \rightslice b')\sigma(u_{(2)}, v_{(1)})\otimes_\ahha  u_{(3)} v_{(2)}
\]
if and only if $\sigma$ is a cocycle and
$B$ is a $\sigma$-twisted $U$-module.
\end{proposition}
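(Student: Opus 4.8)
The plan is to prove both implications simultaneously, treating the five requirements of Definition~\ref{GT2} as exactly the conditions under which the proposed formula is well defined, unital, and associative: for the ``if'' direction I verify that \ref{item:wa1}--\ref{item:wa5} guarantee these three properties, and for the ``only if'' direction I specialise the algebra axioms to recover each condition in turn. The only two ingredients of the multiplication are the measuring $\rightslice$ and the map $\gs$, and I will repeatedly use their defining $A$-bilinearities (with respect to $\due U \lact \ract {}$), together with the coproduct--action compatibility \eqref{beethoven2} and the counit axioms \eqref{beethoven3}, to re-balance the tensors and to collapse counit legs.

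First I would settle well-definedness on $B_\bract \otimes_\ahha \due U \lact {}$. Compatibility with the relation in the first tensor factor, $b \bract a \otimes_\ahha u \equiv b \otimes_\ahha a \lact u$, follows formally from the $A$-bilinearity of the measuring and of $\gs$ together with \eqref{beethoven2}, requiring no extra hypothesis. Compatibility with the relation $b' \bract a \otimes_\ahha v \equiv b' \otimes_\ahha a \lact v$ in the second factor is, after expanding $\xu u \rightslice \big(b'\,\iota(a)\big)$ via the multiplicativity \eqref{vendeeglobe} of the measuring and stripping the trailing $U$-leg by the counit, precisely cocycle condition \ref{item:wa1}; hence \ref{item:wa1} is equivalent to well-definedness of the second slot. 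Next I would treat unitality: evaluating $\big(1_\behhe \otimes_\ahha 1_\uhhu\big)(b \otimes_\ahha u)$ and $(b \otimes_\ahha u)\big(1_\behhe \otimes_\ahha 1_\uhhu\big)$ and invoking \eqref{beethoven3}, one sees that the two unit laws hold if and only if $1_\uhhu \rightslice b = b$ and $\gs(1_\uhhu,u)=\iota(\gve(u))=\gs(u,1_\uhhu)$, that is, exactly \ref{item:wa4} and \ref{item:wa2}, the last equality using also the measuring axiom $u \rightslice 1_\behhe = \iota(\gve(u))$ from Definition~\ref{GT1}.

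The core of the argument is associativity. I would expand both $\big((b \otimes_\ahha u)(b' \otimes_\ahha v)\big)(b'' \otimes_\ahha w)$ and $(b \otimes_\ahha u)\big((b' \otimes_\ahha v)(b'' \otimes_\ahha w)\big)$, then push every $\rightslice$ through the $B$-products by \eqref{vendeeglobe}, reindex the Sweedler legs of $u,v,w$ by coassociativity and \eqref{beethoven2}, and bring both expressions to a common normal form sitting over a single $U$-leg. Comparing the $B$-valued coefficients then splits into two independent identities: testing on factors $1_\behhe \otimes_\ahha u$, $1_\behhe \otimes_\ahha v$, $b'' \otimes_\ahha 1_\uhhu$ and stripping the common trailing leg via the counit isolates the twisted-module identity \ref{item:wa5}, while testing on $1_\behhe \otimes_\ahha u$, $1_\behhe \otimes_\ahha v$, $1_\behhe \otimes_\ahha w$ and applying the counit to the trailing leg isolates the twisted-cocycle identity \ref{item:wa3}. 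Reading these computations in reverse shows that \ref{item:wa3} and \ref{item:wa5}, in the presence of \ref{item:wa1}, \ref{item:wa2}, \ref{item:wa4}, are also sufficient for full associativity, so that assembling the three properties equips $B \hash{\gs} U = B_\bract \otimes_\ahha \due U \lact {}$ with the claimed $\K$-algebra structure.

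I expect the bookkeeping in the associativity computation to be the main obstacle. Over a noncommutative base $A$ the four module structures of \eqref{pergolesi} must be tracked so that every intermediate term lives in the correct balanced tensor product and every application of \eqref{vendeeglobe}, of coassociativity, and of the counit is legitimate. The delicate point is to align the Sweedler indices of $u,v,w$ so that the cocycle and twisted-module axioms emerge cleanly and \emph{separately} rather than entangled; once both sides have been reduced to the same normal form, the identification with \ref{item:wa3} and \ref{item:wa5} is immediate, and the ``only if'' direction is obtained by the specialisations indicated above.
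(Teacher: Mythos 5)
The paper offers no proof of this proposition --- it is quoted verbatim from B\"ohm--Brzezi\'nski \cite[Prop.~4.3]{GabiTomek} --- and your proposal faithfully reconstructs the standard argument given there: well-definedness of the multiplication on $B_\bract \otimes_\ahha \due U \lact {}$ (automatic in the first slot via the bilinearity of the measuring and \eqref{beethoven2}, equivalent to \eqref{item:wa1} in the second slot), unitality equivalent to \eqref{item:wa2} and \eqref{item:wa4}, and associativity equivalent to \eqref{item:wa3} and \eqref{item:wa5}, with the converse implications obtained by the specialisations $b=b'=1_\behhe$, $w=1_\uhhu$, resp.\ $b''=1_\behhe$, followed by the counit collapse $b \otimes_\ahha u \mapsto b \bract \gve_\uhhu(u)$ (which is exactly where the four actions of \eqref{pergolesi} and the $A$-bilinearity of $\gs$ must be tracked, as you flag). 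Your plan is correct and takes essentially the same route as the cited source.
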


\begin{definition}[Crossed product of an $A$-ring with a left bialgebroid over $A$]
\label{GT4}
The associative algebra resulting from the preceding proposition is called a {\em $\gs$-twisted crossed product} of $B$
with $U$ and is denoted by $B \hash{\gs} U$. 
\end{definition}

For better distinction, we will sometimes write $b \hash{} u$ instead of $b \otimes_\ahha  u$ for an element in $B \hash{\gs} U$.


\subsection{Crossed products from coring split exact sequences}\label{sec:SplitThemAll}

This subsection is an extension of the corresponding Hopf algebraic results in \cite[\S4]{BlattnerCohenMontgomery} to the realm of left Hopf algebroids. Such an extension allows to apply the corresponding results to the universal enveloping algebras of Lie-Rinehart algebras.

\begin{definition}[Left Hopf kernel]
\label{boing}
Let $(U,A)$ and $(V,A)$ be two left Hopf algebroids over the same base algebra $A$ and 
let $U \stackrel{\pi}{\to} V \to 0$ be an exact sequence of left Hopf algebroids. The set
\begin{equation}
\label{mandarino}
B  \coloneqq \big\{u \in U \mid (U \otimes_\ahha  \pi)\left(\gD_\uhhu u\right) = u \otimes_\ahha  1_\vauu \in U_\ract \tak{\ahha} \due V \lact {}\big\} 
\end{equation}
is called the {\em left Hopf kernel} of $\pi$.
\end{definition}

\noindent This definition is a straightforward extension to left Hopf algebroids of the corresponding definition for Hopf algebras in \cite[Def.\  4.12]{BlattnerCohenMontgomery}.

\begin{remark}\label{rem:equalizer}
Categorically, the left Hopf kernel of $\pi\colon U \to V$ can be realised as the equaliser
\begin{equation}\label{eq:ee}
\xymatrix@C=25pt{ B \ar[r]^-{e} & \due U \lact \ract \ar@<+0.7ex>[rr]^-{(U \otimes_{\ahha} \pi)\Delta_\uhhu } \ar@<-0.7ex>[rr]_-{U \otimes_{\ahha} s_\vauu} & & \due U {} \ract \tak{\ahha} \due V \lact {}}
\end{equation}
in the category of left $A$-modules. Thus, $B$ is a left $A$-submodule of $U$ with action induced by $ \due U \lact {}$, but not a right $A$-submodule, in general.
\end{remark}

The following is our analogue of \cite[Lem.~4.13]{BlattnerCohenMontgomery}.

\begin{lemma}
\label{wasserkessel}
Let $(U,A)$ be a left Hopf algebroid as in Definition \ref{boing}.
\begin{enumerate}[leftmargin=0.7cm,label=(\roman*),ref={\it (\roman*)}]
\item\label{item:wki} The left Hopf kernel $B \subseteq U$ is an $A$-subring of $U$ with unit map $s_\uhhu\colon A \to B$. 
The elements of $B$ in $U$ commute with the elements of $t_\uhhu(A)$, that is, 
\begin{equation}
\label{ohyes}
b \ract a = t_\uhhu(a)\, b = b\, t_\uhhu(a) = a \blact b
\end{equation}
for all $b \in B$ and $a \in A$. 

\item\label{item:wkii} Equation  \eqref{ohyes} implies that
\begin{equation}
\label{adjointaction}
\rightslice \colon U \otimes_\K B \to B, \quad u \otimes b \mapsto u_+ b u_-
\end{equation}
gives a well-defined left $U$-action on $B$, which we term {\em left adjoint action}, and which descends to an $\Ae$-balanced action $\due U \blact \bract \otimes_\Ae \due B \lact \bract \to B$. Moreover, it turns $B$ into a $U$-module algebra.
\end{enumerate}
\end{lemma}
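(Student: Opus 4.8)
The plan is to identify $B$ with the coinvariants of $U$ regarded as a right $V$-comodule algebra. Recall from the discussion of right comodule algebras that $\delta_\uhhu \coloneqq (U \otimes_\ahha \pi)\circ\gD_\uhhu$ turns $U$ into a right $V$-comodule algebra, and that \eqref{mandarino} says exactly that $b \in B$ if and only if $\delta_\uhhu(b) = b \otimes_\ahha 1_\vauu$; that is, $B = U^{\mathrm{co}\,V}$. Under this reading, part \ref{item:wki} amounts to the fact that coinvariants form a subalgebra together with the commutation \eqref{ohyes}, and part \ref{item:wkii} amounts to the fact that the left adjoint action preserves coinvariants and makes them into a $U$-module algebra.

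For part \ref{item:wki}, I would first note $s_\uhhu(A) \subseteq B$: since $\gD\big(s_\uhhu(a)\big) = s_\uhhu(a) \otimes_\ahha 1_\uhhu$ (an instance of \eqref{beethoven2}), applying $U \otimes_\ahha \pi$ gives $s_\uhhu(a) \otimes_\ahha 1_\vauu$, so in particular $1_\uhhu \in B$ and $s_\uhhu\colon A \to B$ is a unit map. Closure under multiplication is immediate from multiplicativity of the coaction: for $b, b' \in B$ one gets $\delta_\uhhu(bb') = \delta_\uhhu(b)\delta_\uhhu(b') = (b \otimes_\ahha 1_\vauu)(b' \otimes_\ahha 1_\vauu) = bb' \otimes_\ahha 1_\vauu$. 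For \eqref{ohyes} it suffices to prove $t_\uhhu(a)\,b = b\,t_\uhhu(a)$, because $b \ract a = t_\uhhu(a)\,b$ and $a \blact b = b\,t_\uhhu(a)$ by \eqref{pergolesi}. Using $\gD\big(t_\uhhu(a)\big) = 1_\uhhu \otimes_\ahha t_\uhhu(a)$ together with the coinvariance of $b$, a short computation in $U_\ract \otimes_\ahha \due V \lact {}$ gives $\delta_\uhhu\big(t_\uhhu(a)\,b\big) = b \otimes_\ahha t_\vauu(a) = \delta_\uhhu\big(b\,t_\uhhu(a)\big)$, where the source--target commutation $s_\vauu(c)t_\vauu(a) = t_\vauu(a)s_\vauu(c)$ guarantees that left and right multiplication by $t_\vauu(a)$ on the $V$-leg respect the balancing. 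Applying the counit $(U \otimes_\ahha \varepsilon_\vauu)\circ\delta_\uhhu = \id_\uhhu$ (valid since $\varepsilon_\vauu\circ\pi = \varepsilon_\uhhu$ and by \eqref{beethoven3}) to both ends forces $t_\uhhu(a)\,b = b\,t_\uhhu(a)$.

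For part \ref{item:wkii}, the expression $u \rightslice b = u_+\, b\, u_-$ is well defined on $\due U \blact {} \otimes_\Aopp U_\ract$ precisely because of \eqref{ohyes}: the representatives $u_+ t_\uhhu(a) \otimes_\Aopp u_-$ and $u_+ \otimes_\Aopp t_\uhhu(a) u_-$ are sent to $u_+\,t_\uhhu(a)\,b\,u_-$ and $u_+\,b\,t_\uhhu(a)\,u_-$, which agree by part \ref{item:wki}. That $\rightslice$ is a left action is then formal: $1_\uhhu \rightslice b = b$ since $\gb^{-1}(1_\uhhu \otimes_\ahha 1_\uhhu) = 1_\uhhu \otimes_\Aopp 1_\uhhu$, while \eqref{Sch6} gives $(uv)_+ \otimes_\Aopp (uv)_- = u_+ v_+ \otimes_\Aopp v_- u_-$, whence $(uv)\rightslice b = u_+\,(v_+ b v_-)\,u_- = u \rightslice (v \rightslice b)$. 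The measuring and module-algebra axioms of Definitions~\ref{GT1}--\ref{GT2} likewise reduce to Schauenburg's identities: the bilinearity $\big(s_\uhhu(a)t_\uhhu(a')u\big)\rightslice b = s_\uhhu(a)\,(u \rightslice b)\,s_\uhhu(a')$ comes from \eqref{Sch6} and \eqref{Sch9} (which also yields the claimed descent to the $\Ae$-balanced map $\due U \blact \bract \otimes_\Ae \due B \lact \bract \to B$); the normalisation $u \rightslice 1_\behhe = s_\uhhu\big(\varepsilon_\uhhu(u)\big)$ is exactly \eqref{Sch7}; and $u \rightslice (bb') = (u_{(1)} \rightslice b)(u_{(2)} \rightslice b')$ follows from the translation--coproduct identity $u_{(1)+} \otimes_\Aopp u_{(1)-}u_{(2)+} \otimes_\Aopp u_{(2)-} = u_+ \otimes_\Aopp 1_\uhhu \otimes_\Aopp u_-$, a consequence of \eqref{Sch4} and \eqref{Sch5}.

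The main obstacle is showing that $\rightslice$ actually lands in $B$, i.e.\ $\delta_\uhhu(u_+ b u_-) = u_+ b u_- \otimes_\ahha 1_\vauu$. Multiplicativity of $\delta_\uhhu$ gives $\delta_\uhhu(u_+ b u_-) = u_{+(1)}b_{(1)}u_{-(1)} \otimes_\ahha \pi(u_{+(2)})\pi(b_{(2)})\pi(u_{-(2)})$, and the coinvariance $b_{(1)} \otimes_\ahha \pi(b_{(2)}) = b \otimes_\ahha 1_\vauu$ collapses this to $u_{+(1)} b u_{-(1)} \otimes_\ahha \pi(u_{+(2)})\pi(u_{-(2)})$. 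In the Hopf-algebra case of \cite[Lem.~4.13]{BlattnerCohenMontgomery} one writes $u_+ = h_{(1)}$, $u_- = S(h_{(2)})$ and the surviving $V$-leg telescopes to the counit by the antipode axiom; here there is no antipode and the two second legs $u_{+(2)}, u_{-(2)}$ must be recombined by hand. This is the step where I would use the translation--coproduct identities \eqref{Sch4} and \eqref{Sch5} to regroup the double coproduct, after which the compatibility \eqref{krach2} of $\pi$ with the translation maps, namely $\pi(u_+) \otimes_\Aopp \pi(u_-) = \pi(u)_+ \otimes_\Aopp \pi(u)_-$, together with \eqref{Sch7} applied inside $V$, i.e.\ $\pi(u)_+\pi(u)_- = s_\vauu\big(\varepsilon_\vauu(\pi(u))\big)$, forces the $V$-leg down to $1_\vauu$; a final use of counitality restores $u_+ b u_-$. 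Organising this bookkeeping cleanly -- rather than the computation itself, which is routine once arranged -- is the delicate point, and is precisely why the full list of identities \eqref{Sch1}--\eqref{Sch9} was recorded.
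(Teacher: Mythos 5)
Your proposal is correct in substance and, for part \ref{item:wki} and the formal action/measuring axioms in part \ref{item:wkii}, it is essentially the paper's own argument. Reading \eqref{mandarino} as coinvariance, your closure computation $\delta_\uhhu(bb') = \delta_\uhhu(b)\delta_\uhhu(b')$ is the paper's serial equaliser diagram rephrased in comodule-algebra language, and your derivation of \eqref{ohyes} (compute $\delta_\uhhu\big(t_\uhhu(a)b\big)$ and $\delta_\uhhu\big(b\,t_\uhhu(a)\big)$, equate, retract by the counit) is a mild reshuffle of the paper's manipulation, which multiplies the coinvariance relation by $1_\uhhu \otimes_\ahha t_\vauu(a)$ and applies $\id_\uhhu \otimes_\ahha \gve_\vauu$ with the identification $u \otimes_\ahha a \mapsto t_\uhhu(a)u$. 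Note only that the retraction of $\delta_\uhhu$ must be the $t$-form of \eqref{beethoven3}, $u \otimes_\ahha v \mapsto t_\uhhu\big(\gve_\vauu(v)\big)u$; since you merely cancel a common $\delta_\uhhu$-image, any retraction suffices, but the one you wrote is not well defined on $U_\ract \otimes_\ahha \due V \lact {}$ with the more familiar $s$-identification. The use of \eqref{Sch6}, \eqref{Sch7}, and \eqref{Sch9} for the action, unitality, and the $\Ae$-balanced descent matches the paper verbatim.

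Where you genuinely diverge is the key invariance claim $u_+bu_- \in B$. The paper stays entirely inside $U$: after collapsing $b$ by coinvariance, it applies \eqref{Sch5} (replacing $u_+ \otimes_\Aopp u_{-(1)} \otimes_\ahha u_{-(2)}$ by $u_{++} \otimes_\Aopp u_- \otimes_\ahha u_{+-}$) and then \eqref{Sch2} for $u_+$, so the $V$-leg telescopes to $1_\vauu$ in two moves, with no use of \eqref{krach2} or of the Hopf structure of $V$ at all. Your detour through $V$ does work: regrouping by \eqref{Sch4} and \eqref{Sch5} yields $u_{(1)}\, b\, u_{(2)-} \otimes_\ahha \pi\big(u_{(2)++}u_{(2)+-}\big)$, where \eqref{Sch7} applied already in $U$ gives $u_{(2)++}u_{(2)+-} = s_\uhhu\gve_\uhhu(u_{(2)+})$ — so \eqref{krach2} is actually superfluous — and sliding $s_\vauu\gve_\uhhu(u_{(2)+})$ across $\otimes_\ahha$ (as \emph{left} multiplication by $t_\uhhu\gve_\uhhu(u_{(2)+})$, since $\ract$ here is left multiplication by the target) followed by one more application of \eqref{Sch4} and the $t\gve$-form of counitality gives $t_\uhhu\gve_\uhhu(u_{(2)+})\,u_{(1)} \otimes_\Aopp u_{(2)-} = u_+ \otimes_\Aopp u_-$, whence $u_+bu_- \otimes_\ahha 1_\vauu$. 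This is longer than the paper's route, and not accidentally: the expression you produce is exactly the shape of the map $\varphi$ in the proof of Theorem \ref{thm:sigmatwisted}, so your argument anticipates that later, harder computation, whereas the paper keeps the lemma minimal (no splitting, no projectivity). Two corrections. First, the identity you invoke for $u \rightslice (bb')$, namely $u_{(1)+} \otimes_\Aopp u_{(1)-}u_{(2)+} \otimes_\Aopp u_{(2)-} = u_+ \otimes_\Aopp 1_\uhhu \otimes_\Aopp u_-$, is a consequence of \eqref{Sch4} and \eqref{Sch3} (apply the translation map to the first leg of \eqref{Sch4} and multiply the middle legs), not of \eqref{Sch5}; these are precisely the paper's steps $(*)$ and $(**)$. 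Second, the bookkeeping you defer is not cosmetic: each regrouping must be realised by an assignment that is well defined on the relevant balanced tensor products — this is exactly where \eqref{ohyes} gets consumed and what the paper's explicit auxiliary maps are for — and in your route the extraction of $\gve_\uhhu(u_{(2)+})$ from a middle leg requires such a check that the paper's shorter path avoids.
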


\begin{proof} 
\ref{item:wki}:
That $s_\uhhu\colon A \to B$ defines on $B$ the structure of an $A$-ring with well-defined multiplication $m_\behhe\colon B_\bract \otimes_\ahha  \due B \lact {} \to B$ directly follows from the $A$-linearity of the coproduct and the fact that it is an algebra morphism as in \eqref{beethoven2}.
Namely, the following diagram commutes serially
\[
\xymatrix@C=27pt{ 
B \ot B \ar@{.>}[d]_-{m_B'} \ar[r]^-{e\, \ot\, e} & U \ot U \ar[d]_-{m_\uhhu} \ar@<+0.7ex>[rrrr]^-{(U \otimes_{\ahha} \pi)\Delta_\uhhu  \, \ot \, (U \otimes_{\ahha} \pi)\Delta_\uhhu } \ar@<-0.7ex>[rrrr]_-{(U \otimes_{\ahha} s_\vauu) \, \otimes \, (U \otimes_{\ahha} s_\vauu)} & & & & (U \tak{\ahha} V) \ot (U \tak{\ahha} V) \ar[d]^-{m_{{}_{U \tak{\ahha} V}}} \\ 
B \ar[r]^-{e} & U \ar@<+0.7ex>[rrrr]^-{(U \otimes_{\ahha} \pi)\Delta_\uhhu } \ar@<-0.7ex>[rrrr]_-{U \otimes_{\ahha} s_\vauu} & & & & U \tak{\ahha} V}
\]
where $e$ is the map introduced in \eqref{eq:ee}. Hence there exists a unique $m_B'$ which factors through $m_\behhe\colon B_\bract \otimes_\ahha  \due B \lact {} \to B$.
Moreover, concerning the commutativity between $B$ and $t_\uhhu(A)$, by definition of $B$ we have that
\[
b_{(1)} \otimes_\ahha  \pi\left(b_{(2)}\right) = b \otimes_{\ahha} 1_\vauu\]
in $U \otimes_\ahha  V$, for all $b \in B$, and hence also
\[
b_{(1)} \otimes_\ahha  \pi\left(b_{(2)}t_\uhhu(a)\right) = b_{(1)} \otimes_\ahha  \pi\left(b_{(2)}\right)t_\vauu(a) = b \otimes_{\ahha} t_\vauu(a),
\]
by multiplying on the right with $1_\uhhu \otimes_\ahha  t_\vauu(a)$. By applying $\id_\uhhu \otimes_{\ahha} \gve_\vauu$ to both sides of the latter equality and identifying $U \otimes_\ahha  A \simeq U$ by means of $u \otimes_\ahha  a \mapsto t_\uhhu(a) u$, we find that
\[
(t_\uhhu\circ\gve_\vauu\circ\pi)\big(b_{(2)}t_\uhhu(a)\big) b_{(1)} =
(t_\uhhu\circ\gve_\vauu\circ t_\vauu)(a) b = t_\uhhu(a) b 
\]
in $U$, where we used $\gve_\vauu\circ t_\vauu=\id_\ahha$ in the second equality.
Now, counitality \eqref{beethoven3} along with the multiplicativity and $A$-bilinearity of $\Delta_\uhhu $ \eqref{beethoven2} and $\gve_\uhhu = \gve_\vauu \circ \pi$ since
$\pi$ is a morphism of left Hopf algebroids, allow to conclude that
\[
b\,t\uhhu(a) = (t_\uhhu\circ\gve_\uhhu)\left( b_{(2)}t_\uhhu(a)\right)b_{(1)} = (t_\uhhu\circ\gve_\vauu\circ\pi)\left(b_{(2)}t_\uhhu(a)\right)b_{(1)}
= t_\uhhu(a)\,b,
\]
which proves \eqref{ohyes}. By this, the expression \eqref{adjointaction} is well-defined. 

\ref{item:wkii}: That the expression \eqref{adjointaction} indeed gives a left $U$-action now simply follows from \eqref{Sch6}, once ensured that its image lands in $B$ again, which one may explicitly check for all $u \in U$ and $b \in B$:
\begin{equation*}
\begin{split}
(U \otimes_\ahha  \pi)\gD_\uhhu(u_+bu_-) 
& \stackrel{\phantom{(**)}}{=} u_{+(1)} b_{(1)} u_{-(1)} \otimes_\ahha  \pi\big(u_{+(2)} b_{(2)} u_{-(2)} \big)
\\
& \stackrel{\phantom{(**)}}{=} \left(u_{+(1)} \otimes_\ahha  \pi\left(u_{+(2)}\right)\right)\cdot\left(b_{(1)} \otimes_\ahha  \pi\left(b_{+(2)}\right)\right)\cdot\left(u_{-(1)} \otimes_\ahha  \pi\left(u_{-(2)}\right)\right) \\
& \stackrel{\phantom{(**)}}{=} \left(u_{+(1)} \otimes_\ahha  \pi\left(u_{+(2)}\right)\right)\cdot\left(b \otimes_\ahha  1_\vauu\right)\cdot\left(u_{-(1)} \otimes_\ahha  \pi\left(u_{-(2)}\right)\right) \\
& \stackrel{\phantom{(**)}}{=} u_{+(1)}bu_{-(1)} \otimes_\ahha  \pi\left(u_{+(2)}\right) \pi\left(u_{-(2)}\right) \\
& \stackrel{\scriptscriptstyle (*)}{\mathmakebox[\widthof{$\stackrel{(**)}{=}$}]{=}} u_{++(1)} b u_{-} \otimes_\ahha  \pi(u_{++(2)} u_{+-})
\\
& \stackrel{\scriptscriptstyle (**)}{=} u_{+} b u_{-} \otimes_\ahha  1_\vauu,
\end{split}
\end{equation*}
where $(*)$ follows from \eqref{Sch5} and the fact that the assignment 
\[
\due U \blact {} \otimes_\Aopp (\due U {} \ract \otimes_\ahha  \due U \lact \ract) \to \due U {} \ract \otimes_\ahha  \due V \lact {}, \qquad u \otimes_\Aopp v \otimes_\ahha  w \mapsto u_{(1)}bv \otimes_\ahha  \pi\left(u_{(2)}w\right)
\]
is well-defined, while $(**)$ follows from \eqref{Sch2} and the fact that the assignment
\[
(\due U \blact \ract \otimes_\ahha  \due U \lact {}) \otimes_\Aopp \due U {} \ract \to \due U {} \ract \otimes_\ahha  \due V \lact {}, \qquad u \otimes_\ahha  v \otimes_\Aopp w \mapsto ubw \otimes_\ahha  \pi(v)
\]
is well-defined by means of \eqref{ohyes}.
Hence, $u_+bu_- \in B$ if $b \in B$, as claimed.
The second statement in \ref{item:wkii} follows from \eqref{Sch9}, and the very last claim in \ref{item:wkii} is proven as follows. We already know that \eqref{adjointaction} defines an action. Let us verify that it also fulfils the conditions \eqref{vendeeglobe}.
As for the first one, for $u \in U$ one has 
$$
u \rightslice 1_\behhe \stackrel{\scriptscriptstyle\eqref{adjointaction}}{=} u_+  u_- \stackrel{\scriptscriptstyle\eqref{Sch7}}{=} s_\uhhu\big(\gve_\uhhu(u)\big) ,
$$
considering that $1_\behhe = 1_\uhhu$, whereas the second condition in \eqref{vendeeglobe} is proven by
\begin{equation*}
\begin{split}
(u_{(1)} \rightslice b)(u_{(2)} \rightslice b') &  \stackrel{\scriptscriptstyle\eqref{adjointaction}}{=} u_{(1)+} b u_{(1)-}  u_{(2)+}  b' u_{(2)-} \stackrel{\scriptscriptstyle (*)}{=} u_{+(1)+} b u_{+(1)-}  u_{+(2)}  b' u_{-} \\
& \stackrel{\scriptscriptstyle (**)}{=} u_{+} b b' u_{-}  \stackrel{\scriptscriptstyle\eqref{adjointaction}}{=}  u \rightslice (bb')
\end{split}
\end{equation*}
for all $u \in U$ and $b, b' \in B$, where $(*)$ follows from \eqref{Sch4} and the fact that the assignment
\[
(\due U {} \ract \otimes_\ahha  \due U {\substack{\lact}{\blact}} {}) \otimes_\Aopp \due U {} \ract \to U, \qquad u \otimes_\ahha  v \otimes_\Aopp w \mapsto u_{+}bu_{-}vb'w
\]
is well-defined by \eqref{ohyes} again, and $(**)$ follows from \eqref{Sch3} in the form
\[u_{+(1)+} \otimes_\Aopp u_{+(1)-} u_{+(2)} \otimes_\Aopp u_{-} = u_{+} \otimes_\Aopp 1 \otimes_\Aopp u_{-} \ \in \ \due U \blact {} \otimes_\Aopp \due U \blact \ract \otimes_\Aopp \due U {} \ract\]
and from the fact that the assignment
\[\due U \blact {} \otimes_\Aopp \due U \blact \ract \otimes_\Aopp \due U {} \ract \to U, \qquad u \otimes_\Aopp v \otimes_\Aopp w \mapsto ubvb'w,\]
is well-defined.
Hence, \eqref{adjointaction} defines the structure of a left $U$-module algebra on $B$.
\end{proof}

\begin{remark}
Observe that for an arbitrary $U$-bimodule $M$, the above definition \eqref{adjointaction} of an adjoint action is not well-defined as $u_+ \otimes_\Aopp u_- \in \due U \blact {} \otimes_\Aopp U_\ract$, and it is precisely \eqref{ohyes} that makes things work for the left Hopf kernel.
\end{remark}

The following is then an adaption of \cite[Thm.~4.14]{BlattnerCohenMontgomery} to our needs. In view of Proposition \ref{GT3}, the idea is now pretty simple: given a surjective morphism of left Hopf algebroids $\pi\colon U\to V$ over $A$, we exhibit a cocycle with values in the left Hopf kernel $B$ and we show that $B$ is a twisted $\gs$-module with respect to the left adjoint action \eqref{adjointaction}. This allows to form a crossed product $B \hash{\gs} V$, which is then shown to be isomorphic, by an explicit map, to the left Hopf algebroid $U$.  

Recall that a morphism between the underlying corings of two left Hopf algebroids over the same base is a map $\gamma$ that 
fulfils, in particular,
\begin{equation}
\label{vanilla}
\gamma(a \lact v) = a \lact \gamma(v), \qquad \gamma(v \ract a) = \gamma(v) \ract a, \qquad \forall v \in V, \ a \in A,
\end{equation}
and in general
commutes with almost all of the structure maps as in \eqref{krach1} except for the first that speaks with the multiplication. 
In other words, it is in general neither a morphism of rings nor of the Hopf structure in the sense of \eqref{krach2}.

\begin{theorem}
\label{thm:sigmatwisted}
Let $(U,A)$ and $(V,A)$ be two left Hopf algebroids over the same base algebra $A$ and 
let $U \stackrel{\pi}{\to} V \to 0$ be an exact sequence of left Hopf algebroids which splits as an $A$-coring sequence, that is, there exists a morphism $\gamma\colon V \to U$ of $A$-corings such that $\pi \circ \gamma = \id_\vauu$. Assume furthermore that $U_{\ract}$ is projective as a right $A$-module and that
\begin{equation}
\label{vanilly}
\gamma(1_\vauu) = 1_\uhhu, \qquad \gamma(a \blact v) = a \blact \gamma(v), \qquad \gamma(v \bract a) = \gamma(v) \bract a, \qquad \forall v \in V, \ a \in A. 
\end{equation}
Let $B$ be the left Hopf kernel of $\pi$ as in Definition \ref{boing}.
Then
\begin{equation}
\label{chesorpresa}
\gs\colon V \otimes_\Aopp V \to B, \quad v \otimes_\Aopp w \mapsto t_\uhhu \gve_\uhhu\big(\gamma(v_{(2)} w_{(2)} )_+\big) \gamma(v_{(1)})\gamma(w_{(1)})\gamma(v_{(2)} w_{(2)} )_-
\end{equation}
is a cocycle in the sense of Definition \ref{GT2} such that $B$ becomes a $\gs$-twisted left $V$-module with respect to the weak left $V$-action 
\begin{equation}
\label{weakadjoint}
v \rightslice b  \coloneqq \gamma(v)_+ b\, \gamma(v)_-, \qquad v \in V, \ b \in B,
\end{equation}
on $B$
induced by \eqref{adjointaction}. In particular, there is an isomorphism
\begin{equation}\label{eq:Phi}
\Phi\colon B \hash{\gs} V \to U, \quad b \hash{} v \mapsto b\,\gamma(v)
\end{equation}
of $\Ae$-rings and of right $V$-comodule algebras, with inverse given by
\begin{equation}\label{eq:Psi}
\Psi\colon U \to B \hash{\gs} V, \quad u \mapsto  
(t_\uhhu\circ \gve_\uhhu)\pig(\gamma\big(\pi(u_{(2)})\big)_+\pig)
u_{(1)} \gamma\big(\pi(u_{(2)})\big)_- \hash{} \pi(u_{(3)}),
\end{equation}
where $B \hash{\gs} V$ is an $\Ae$-ring via
\[
\Ae \to B \hash{\gs} V, \quad a \ot a' \mapsto s_\uhhu(a)\hash{} t_\vauu(a'),
\]
and a right $V$-comodule algebra via
\[
B \hash{\gs} V \to \left(B \hash{\gs} V\right) \otimes_\ahha  V, \quad b\hash{}v \mapsto \left(b\hash{}v_{(1)} \right) \otimes_\ahha  v_{(2)}.
\]
\end{theorem}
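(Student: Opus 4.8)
The plan is to transport the adjoint $U$-module algebra structure on the left Hopf kernel $B$, already established in Lemma \ref{wasserkessel}, along the coring section $\gamma$, and then to read off the crossed product decomposition from two ``straightening'' identities that push any $b \in B$, and any product $\gamma(v)\gamma(w)$, to the right of the remaining $\gamma$-factors. First I would record the properties of $\gamma$: being a counital morphism of $A$-corings it satisfies $\gve_\uhhu \circ \gamma = \gve_\vauu$ and $\gD_\uhhu \circ \gamma = (\gamma \otimes_\ahha \gamma)\circ \gD_\vauu$, and by \eqref{vanilla} and \eqref{vanilly} it is compatible with all four $A$-actions. With these in hand, the weak action \eqref{weakadjoint} is exactly the pullback along $\gamma$ of the adjoint action \eqref{adjointaction}. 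That it is a measuring is immediate: $v \rightslice 1_\behhe = \gamma(v)_+\gamma(v)_- = s_\uhhu(\gve_\uhhu(\gamma(v))) = s_\uhhu(\gve_\vauu(v))$ by \eqref{Sch7} and counitality of $\gamma$, while $v \rightslice (bb') = (\gamma(v)_{(1)}\rightslice b)(\gamma(v)_{(2)}\rightslice b') = (v_{(1)}\rightslice b)(v_{(2)}\rightslice b')$ follows from comultiplicativity of $\gamma$ and the fact (Lemma \ref{wasserkessel}) that $B$ is a $U$-module algebra; finally $1_\vauu \rightslice b = b$ since $\gamma(1_\vauu) = 1_\uhhu$. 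Thus \eqref{weakadjoint} is a weak left action.

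The technical heart are the two factorisations
\[
\gamma(v)\, b = (v_{(1)} \rightslice b)\,\gamma(v_{(2)}), \qquad \gamma(v)\gamma(w) = \gs(v_{(1)}, w_{(1)})\,\gamma(v_{(2)} w_{(2)}),
\]
for $b \in B$ and $v,w \in V$. The first follows from \eqref{Sch3} applied to $\gamma(v)$ (after replacing $\gamma(v_{(1)}) \otimes_\ahha \gamma(v_{(2)})$ by $\gamma(v)_{(1)} \otimes_\ahha \gamma(v)_{(2)}$), together with the commutation relation \eqref{ohyes} which lets $b$ pass through the $t_\uhhu(A)$-balancing. The second is where most of the work sits: expanding the definition \eqref{chesorpresa} of $\gs$, collapsing the $t_\uhhu\gve_\uhhu$-factor, and contracting $\gamma(v_{(2)} w_{(2)})_- \gamma(v_{(3)} w_{(3)})$ by means of \eqref{Sch3} and counitality shows that the right-hand side reduces to $\gamma(v)\gamma(w)$. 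The same computation, read against the equaliser description of $B$ in Remark \ref{rem:equalizer}, simultaneously shows that $\gs$ takes values in $B$.

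With the factorisations available, I would define $\Phi$ and $\Psi$ as in \eqref{eq:Phi}--\eqref{eq:Psi}. Well-definedness of $\Phi$ on $B_\bract \otimes_\ahha \due V \lact{}$ is clear since $\gamma(a \lact v) = a \lact \gamma(v) = s_\uhhu(a)\gamma(v)$ by \eqref{vanilla} respects the balancing. A Sweedler computation using \eqref{Sch2}, \eqref{Sch3} and the coring-map identities yields the reconstruction formula $u = (\text{$B$-part of }\Psi(u_{(1)}))\,\gamma(\pi(u_{(2)}))$, which is precisely $\Phi \circ \Psi = \id_\uhhu$; the defining property $b_{(1)} \otimes_\ahha \pi(b_{(2)}) = b \otimes_\ahha 1_\vauu$ of $B$ then gives $\Psi \circ \Phi = \id$. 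That $\Phi$ intertwines the crossed-product multiplication of Definition \ref{GT4} with that of $U$ is now a two-line consequence of the two straightening identities. Since $U$ is associative and unital and $\Phi$ is a bijective, multiplicative, unital $\K$-linear map, the crossed-product multiplication on $B \hash{\gs} V$ is associative and unital; by the ``only if'' direction of Proposition \ref{GT3} this forces $\gs$ to be a $2$-cocycle and $B$ to be a $\gs$-twisted left $V$-module, discharging conditions \ref{item:wa1}--\ref{item:wa5} without a head-on cocycle computation. The $\Ae$-ring and right $V$-comodule algebra compatibilities are then routine: $\Phi(s_\uhhu(a) \hash{} t_\vauu(a')) = s_\uhhu(a)\gamma(t_\vauu(a')) = s_\uhhu(a)t_\uhhu(a')$ by \eqref{vanilly}, and $\Phi$ commutes with the coactions because $\gamma$ is comultiplicative and $\pi \circ \gamma = \id_\vauu$.

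The hard part will be the second straightening identity, a fairly long manipulation of the translation-map relations \eqref{Sch1}--\eqref{Sch9}, and, more essentially, the well-definedness of $\Psi$ as a map landing in $B \hash{\gs} V$ rather than merely in $U \otimes_\ahha V$: it is here that the hypothesis that $U_\ract$ is projective as a right $A$-module enters, guaranteeing that the tensor products over $A$ are computed correctly and that the $B$-component of $\Psi(u)$ is unambiguously determined. Once this is settled, the remainder is bookkeeping with the Schauenburg identities.
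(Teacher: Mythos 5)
Your proposal is correct and follows essentially the same route as the paper's proof: the weak action is the adjoint action of Lemma \ref{wasserkessel} pulled back along $\gamma$, the maps $\Phi$ and $\Psi$ are shown to be mutually inverse via the Schauenburg identities (with projectivity of $U_\ract$ entering, exactly as you say, through the Takeuchi-product identifications needed to make the $B$-valued component of $\Psi$ well defined), and the cocycle axioms together with the twisted-module conditions are obtained for free from the \emph{only if} direction of Proposition \ref{GT3} after transporting the associative unital multiplication of $U$ across the bijection. Your two straightening identities are precisely the paper's computations $(\dag)$ and $(\ddag)$ read back through $\Phi$; the one point to watch is that your sketched direct contraction of $\gamma(v_{(2)}w_{(2)})_{-}\,\gamma(v_{(3)}w_{(3)})$ via \eqref{Sch3} must be routed through the Takeuchi products as in the paper (the factor $\gamma(v_{(1)})\gamma(w_{(1)})$ sitting between $t_\uhhu\gve_\uhhu(\cdot)$ and the contraction does not lie in $B$, so the naive assignment on ${}_{\blact}U\otimes_{\Aopp}U_{\ract}$ is not obviously balanced), and $\Psi\circ\Phi=\id$ requires, besides the defining property of $B$ and \eqref{ohyes}, the identity $\varphi\circ\gamma=s_\uhhu\circ\gve_\vauu$ coming from \eqref{Sch4}, counitality, and \eqref{Sch7} --- precisely the bookkeeping the paper carries out.
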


\begin{proof}
To start with, and for reference in this proof, let us write down again what it explicitly means for $\gamma$ to be a morphism of $A$-corings for the underlying left bialgebroids $(U, A, \gD_\uhhu, \gve_\uhhu, s_\uhhu, t_\uhhu)$ and $(V, A, \gD_\vauu, \gve_\vauu, s_\vauu, t_\vauu)$: we have
\begin{equation}
\label{krach3}
\begin{array}{cccc}
(\gamma \otimes _\ahha \gamma) \circ \gD_\uhhu = \gD_\vauu \circ \gamma, \quad & \gve_\vauu = \gve_\uhhu \circ \gamma,  \quad & \gamma \circ s_\uhhu = s_\vauu, \quad &  \gamma \circ t_\uhhu = t_\vauu.
\end{array}
   \end{equation}
Observe that in contrast to $\pi$, the map $\gamma$ in general is neither a morphism of algebras nor of Hopf structures in the sense of \eqref{krach2}.

Next, let us introduce a couple of maps that will be instrumental in proving that \eqref{eq:Phi} and \eqref{eq:Psi} are inverse to each other.
Since both $\pi$ and $\gamma$ are $A$-coring morphisms which are compatible with the black actions (see \eqref{pergolesi}),
\[
\delta_\uhhu  \coloneqq \left(U \otimes_\ahha  \gamma\right)\circ (U \otimes_\ahha  \pi) \circ \Delta_\uhhu,\qquad u\mapsto u_{(1)}\otimes_\ahha \gamma\big(\pi(u_{(2)})\big) 
\]
defines an additional right $U$-coaction on $U$ itself. If $\xi\colon U \to U \times_{\Aopp} U$ denotes the translation map, that is to say, $\xi(u) = \beta^{-1}(u \otimes_\ahha  1)$ for all $u \in U$, and 
\[\alpha\colon(U \times_\ahha U) \times_{\Aopp} U \to U \times_\ahha U \times_{\Aopp} U \qquad \text{and} \qquad \alpha'\colon U \times_\ahha (U \times_{\Aopp} U) \to U \times_\ahha U \times_{\Aopp} U\]
denote the canonical maps associated to the double Takeuchi $\times$-product, then by mimicking the argument used in \cite[proof of Thm.~4.1.1]{CheGavKow:DFOLHA}, we observe that there are two well-defined maps
\[
\begin{array}{rrcl}
\phi \coloneqq (U \times_\ahha \varepsilon_\uhhu) \times_{\Aopp} U \colon  & 
(U \times_\ahha U) \times_{\Aopp} U &\to& U \times_{\Aopp} U, 
\\
\psi \coloneqq \alpha'\circ (U \times_\ahha \xi) \circ \delta_\uhhu  \colon  &  U  & \to & U \times_\ahha U \times_{\Aopp} U,
\end{array}
\]
where by $\gamma(v \bract a) \stackrel{\scriptscriptstyle\eqref{vanilly}}{=} \gamma(v) \bract a$ one deduces that $\delta_\uhhu $ takes values in $U \times_\ahha U$.
Since $U_{\ract}$ is projective as right $A$-module, by \cite[Prop.~1.7]{Takeuchi} both $\alpha$ and $\alpha'$ are isomorphisms and hence we can consider
\[
\varphi \coloneqq \left(\mu_\uhhu \circ \phi \circ \alpha^{-1} \circ \psi\right) \colon  U \to U, \qquad u \mapsto t_\uhhu \gve_\uhhu\big(\gamma\pi(u_{(2)})_+\big)\,
u_{(1)}\, \gamma\pi(u_{(2)})_-,
\]
where $\mu_\uhhu \colon  U \times_{\Aopp} U \to U$ is induced by the multiplication of $U$.
We claim that the map $\varphi$ just introduced maps $U$ to the left Hopf kernel $B$, inducing an $A$-bilinear morphism $\varphi \colon  \due U \lact \ract \to \due B \lact \bract$. In fact, for all $u \in U$ we have
$$
\Delta_\uhhu\big(\varphi(u)\big)
  \stackrel{ \eqref{beethoven2}}{=}
    u_{(1)}\, \gamma\big(\pi(u_{(3)})\big)_{-(1)}
  \otimes_\ahha 
    t_\uhhu \gve_\uhhu\pig(\gamma\big(\pi(u_{(3)})\big)_{+}\pig)\,
  u_{(2)}\,\gamma\left(\pi(u_{(3)})\right)_{-(2)}
$$
and therefore
\begin{small}
\begin{align*}
    &
 (U \otimes_\ahha  \pi) \gD_\uhhu \varphi(u)
  \stackrel{ \eqref{krach1}}{=}
    u_{(1)} \gamma\big(\pi(u_{(3)})\big)_{-(1)}
  \otimes_\ahha 
    t_\vauu \gve_\uhhu\pig(\gamma\big(\pi(u_{(3)})\big)_{+}\pig)\,
  \pi(u_{(2)})\,\pi\left( \gamma\pig(\pi(u_{(3)})\pig)_{-(2)}\right)
  \\
  &
  \stackrel{ \eqref{krach1}}{\mathmakebox[\widthof{$\stackrel{\scriptscriptstyle\eqref{Sch7}, \eqref{mandarino}}{=}$}]{=}}
    u_{(1)} \gamma\big(\pi(u_{(2)})_{(2)}\big)_{-(1)}
  \otimes_\ahha 
    t_\vauu \gve_\uhhu\big(\gamma(\pi(u_{(2)})_{(2)})_{+}\big)
  \pi(u_{(2)})_{(1)}\pi\pig( \gamma\big(\pi(u_{(2)})_{(2)}\big)_{-(2)}\pig)
  \\
  &
  \stackrel{(\pi\gamma = \id_\vauu)}{\mathmakebox[\widthof{$\stackrel{\scriptscriptstyle\eqref{Sch7}, \eqref{mandarino}}{=}$}]{=}}
    u_{(1)} \gamma\big(\pi(u_{(2)})_{(2)}\big)_{-(1)}
  \otimes_\ahha 
    t_\vauu \gve_\uhhu\big(\gamma(\pi(u_{(2)})_{(2)})_{+}\big)
  \pi\pig(\gamma\big(\pi(u_{(2)})_{(1)}\big)\pig)\pi\pig( \gamma\big(\pi(u_{(2)})_{(2)}\big)_{-(2)}\pig)
  \\
  &
  \stackrel{\scriptscriptstyle\eqref{krach3}}{\mathmakebox[\widthof{$\stackrel{\scriptscriptstyle\eqref{Sch7}, \eqref{mandarino}}{=}$}]{=}}
    u_{(1)} \gamma\big(\pi(u_{(2)})\big)_{(2)-(1)}
  \otimes_\ahha 
    t_\vauu \gve_\uhhu\big(\gamma(\pi(u_{(2)}))_{(2)+}\big)
  \pi\pig(\gamma\big(\pi(u_{(2)})\big)_{(1)}\pig)\pi\pig( \gamma\big(\pi(u_{(2)})\big)_{(2)-(2)}\pig)
  \\
    &
  \stackrel{\scriptscriptstyle\eqref{Sch4},(*)}{\mathmakebox[\widthof{$\stackrel{\scriptscriptstyle\eqref{Sch7}, \eqref{mandarino}}{=}$}]{=}}
    u_{(1)} \gamma\big(\pi(u_{(2)})\big)_{-(1)}
  \otimes_\ahha 
  \pi\pig(\gamma\big(\pi(u_{(2)})\big)_{+}\gamma\big(\pi(u_{(2)})\big)_{-(2)}\pig)
  \\
  &
  \stackrel{\scriptscriptstyle\eqref{Sch5},(**)}{\mathmakebox[\widthof{$\stackrel{\scriptscriptstyle\eqref{Sch7}, \eqref{mandarino}}{=}$}]{=}}
      u_{(1)} \gamma\big(\pi(u_{(2)})\big)_{-}
  \otimes_\ahha 
  \pi\pig(\gamma\big(\pi(u_{(2)})\big)_{++} \gamma\big(\pi(u_{(2)})\big)_{+-}\pig)
  \\
  &
  \stackrel{\scriptscriptstyle\eqref{Sch7}, \eqref{mandarino}}{=}
 t_\uhhu \gve_\uhhu \big(\gamma\left(\pi(u_{(2)})\right)_{+}\big)
  u_{(1)} \gamma\big(\pi(u_{(2)})\big)_{-}
  \otimes_\ahha 
  1_\vauu = \varphi(u) \otimes_\ahha  1_\vauu,
\end{align*}
\end{small}
where in $(*)$ we applied \eqref{Sch4} as follows: notice that
\[
U \times_\ahha U \times_{\Aopp} U \xrightarrow{\alpha^{-1}} \big(U \times_\ahha U\big) \times_{\Aopp} U \xrightarrow{(U \times_\ahha \varepsilon_\uhhu) \times_{\Aopp} U} U \times_{\Aopp} U \xrightarrow{\subseteq } U \otimes_{\Aopp} U
\]
is well-defined and left $A$-linear, whence from the identity \eqref{Sch4} we deduce that
\begin{align*}
& u_{(1)} \otimes_\ahha  t_\uhhu\varepsilon_\uhhu\big(\gamma\pi(u_{(2)})_{(2)+}\big)\gamma\pi(u_{(2)})_{(1)} \otimes_{\Aopp} \gamma\pi(u_{(2)})_{(2)-} \\
& \stackrel{\phantom{\scriptscriptstyle\eqref{beethoven3}}}{=} u_{(1)} \otimes_\ahha  t_\uhhu\varepsilon_\uhhu\big(\gamma\pi(u_{(2)})_{+(2)}\big)\gamma\pi(u_{(2)})_{+(1)} \otimes_{\Aopp} \gamma\pi(u_{(2)})_{-} \\
& \stackrel{\scriptscriptstyle\eqref{beethoven3}}{=} u_{(1)} \otimes_\ahha  \gamma\pi(u_{(2)})_{+} \otimes_{\Aopp} \gamma\pi(u_{(2)})_{-}.
\end{align*}
Moreover, since $\gD_\uhhu u \in U \times_\ahha U$ along with the specific $A$-linearities \eqref{krach1} as well as \eqref{vanilly} of $\pi$ resp.\ $\gamma$, we can further deduce that
\begin{align*}
& u_{(1)}\gamma\pi(u_{(2)})_{(2)-(1)} \otimes_\ahha  t_\uhhu\varepsilon_\uhhu\big(\gamma\pi(u_{(2)})_{(2)+}\big)\gamma\pi(u_{(2)})_{(1)} \gamma\pi(u_{(2)})_{(2)-(2)} \\
& =  u_{(1)}\gamma\pi(u_{(2)})_{-(1)} \otimes_\ahha  \gamma\pi(u_{(2)})_{+} \gamma\pi(u_{(2)})_{-(2)},
\end{align*}
and hence $(*)$ follows. Similarly, in $(**)$ we applied \eqref{Sch5} as follows: one has
\begin{align*} 
& u_{(1)} \otimes_\ahha  \gamma\pi(u_{(2)})_{+} \otimes_{\Aopp} \gamma\pi(u_{(2)})_{-(1)} \otimes_{\ahha} \gamma\pi(u_{(2)})_{-(2)} \\
&= u_{(1)} \otimes_\ahha  \gamma\pi(u_{(2)})_{++} \otimes_{\Aopp} \gamma\pi(u_{(2)})_{-} \otimes_{\ahha} \gamma\pi(u_{(2)})_{+-} 
\end{align*}
in $U \times_\ahha \left(U \times_{\Aopp} U \times_\ahha U\right) \simeq U \times_\ahha U \times_{\Aopp} U \times_\ahha U$ because the leftmost $U_{\ract}$ is projective. Finally, by performing
\[
U \times_\ahha U \times_{\Aopp} U \times_\ahha U \to U \times_\ahha U \otimes_{\Aopp} U \times_\ahha U \xrightarrow{\mu_{U \times_\ahha U}} U \times_\ahha U,
\]
we conclude that
\[ 
u_{(1)}\gamma\pi(u_{(2)})_{-(1)} \otimes_\ahha  \gamma\pi(u_{(2)})_{+} \gamma\pi(u_{(2)})_{-(2)} = u_{(1)}\gamma\pi(u_{(2)})_{-} \otimes_\ahha  \gamma\pi(u_{(2)})_{++} \gamma\pi(u_{(2)})_{+-}
\]
which is $(**)$. 

The $\K$-linear maps \eqref{eq:Phi} and \eqref{eq:Psi} can now be seen as a composition of well-defined maps
\[
\Psi \coloneqq \left(U \xrightarrow{\delta_\uhhu } \due U {} \ract \otimes_\ahha  \due U \lact {} \xrightarrow{\varphi~\otimes_\ahha ~\pi} B_\bract \otimes_\ahha  \due V \lact {}\right),
\]
and
\[
\Phi \coloneqq \left(B_\bract \otimes_\ahha  \due V \lact {} \xrightarrow{i~\otimes_\ahha ~\gamma} \due U {} \bract \otimes_\ahha  \due U \lact {} \xrightarrow{\mu_\uhhu} U\right),
\]
where $i \colon  B \to U$ is simply the inclusion. Let us verify that they are inverse to each other, indeed.

Since $\pi$ and $\gamma$ are morphisms of $A$-corings, we have
\begin{align*}
    \Phi\big(\Psi(u)\big) & \stackrel{\phantom{\scriptscriptstyle\eqref{Sch3},(\star)}}{=} t_\uhhu\varepsilon_\uhhu\big(\gamma\pi(u_{(2)})_+\big)u_{(1)} \gamma\pi(u_{(2)})_-\gamma\pi(u_{(3)}) \\
    & \stackrel{\phantom{\scriptscriptstyle\eqref{Sch3},(\star)}}{=} t_\uhhu\varepsilon_\uhhu\big(\gamma\pi(u_{(2)})_{(1)+}\big)u_{(1)} \gamma\pi(u_{(2)})_{(1)-}\gamma\pi(u_{(2)})_{(2)} \\
    & \stackrel{\scriptscriptstyle\eqref{Sch3},(\star)}{=}  t_\uhhu\varepsilon_\uhhu\big(\gamma\pi(u_{(2)})\big)u_{(1)} = u,
\end{align*}
that is, $\Phi\circ\Psi=\id_\uhhu$ as desired, where in $(\star)$ we used \eqref{Sch3} to obtain
\[
u_{(1)} \otimes_\ahha  \gamma\pi(u_{(2)})_{(1)+} \otimes_\Aopp \gamma\pi(u_{(2)})_{(1)-}\gamma\pi(u_{(2)})_{(2)} = u_{(1)} \otimes_\ahha  \gamma\pi(u_{(2)}) \otimes_\Aopp 1_\uhhu 
\]
in $U \times_\ahha (U \otimes_\Aopp U)$. Since $U_{\ract}$ is projective as an $A$-module, $(U \times_\ahha  U) \otimes_\Aopp U \subseteq U \otimes_\ahha  U \otimes_\Aopp U$ and its image coincides with $U \times_\ahha (U \otimes_\Aopp U)$, whence applying
\[
U \times_\ahha (U \otimes_\Aopp U) \simeq (U \times_\ahha  U) \otimes_\Aopp U \xrightarrow{(U \times_\ahha \varepsilon_\uhhu) \otimes_\Aopp U} U \otimes_\Aopp U \xrightarrow{\mu_\uhhu} U
\] 
to the last equality yields $(\star)$.

Secondly, we check that $\Psi\circ\Phi=\id_{\scriptscriptstyle B \otimes_\ahha  V}$.
Observe first that for all $b \in B$ and all $u \in U$ 
\[
\begin{aligned}
\delta_\uhhu (bu) & = b_{(1)}u_{(1)} \otimes_\ahha  \gamma\big(\pi(b_{(2)})\pi(u_{(2)})\big) = bu_{(1)} \otimes_\ahha  \gamma\pi(u_{(2)})
\end{aligned}
\]
holds
by the very definition of the left Hopf kernel $B$ of $\pi$ as in \eqref{mandarino}, along with the unitality of $\pi$.
Therefore, since $B$ commutes with the image of $t_\uhhu$ as in \eqref{ohyes}, the map $\delta_\uhhu $ and hence $\psi$, $\varphi$, $\Psi$, and $\Phi$ are left $B$-linear with respect to the regular $B$-module structure.
Moreover, 
\[
\Psi \circ \gamma = (\varphi \otimes_\ahha  \pi)\circ \delta_\uhhu  \circ \gamma = (\varphi \otimes_\ahha  \pi)\circ (\gamma \otimes_\ahha  \gamma) \circ \Delta_\vauu = (\varphi\gamma \otimes_\ahha  V)\circ \Delta_\vauu
\]
and since
\[
\begin{aligned}
    \varphi \circ \gamma & \stackrel{\phantom{\scriptscriptstyle\eqref{Sch4}}}{=} \mu_\uhhu \circ \phi \circ \alpha^{-1} \circ \psi \circ \gamma \\
    & \stackrel{\phantom{\scriptscriptstyle\eqref{Sch4}}}{=} \mu_\uhhu \circ \left((U \times_\ahha \varepsilon_\uhhu) \times_{\Aopp} U\right) \circ \alpha^{-1} \circ \alpha'\circ (U \times_\ahha \xi) \circ \delta_\uhhu  \circ \gamma \\
    & \stackrel{\phantom{\scriptscriptstyle\eqref{Sch4}}}{=} \mu_\uhhu \circ \left((U \times_\ahha \varepsilon_\uhhu) \times_{\Aopp} U\right) \circ \alpha^{-1} \circ \alpha'\circ (U \times_\ahha \xi) \circ \delta_\uhhu  \circ \gamma \\
    & \stackrel{\scriptscriptstyle\eqref{Sch4}}{=} \mu_\uhhu \circ \left((U \times_\ahha \varepsilon_\uhhu) \times_{\Aopp} U\right) \circ \alpha^{-1} \circ \alpha'\circ (\delta_\uhhu  \times_\Aopp U) \circ \xi \circ \gamma \\
    & \stackrel{\phantom{\scriptscriptstyle\eqref{Sch4}}}{=} \mu_\uhhu \circ \xi \circ \gamma \stackrel{\scriptscriptstyle\eqref{Sch7}}{=} s_\uhhu \circ \varepsilon_\uhhu \circ \gamma  = s_\uhhu \circ \varepsilon_\vauu,
\end{aligned}
\]
it follows that $\Psi \circ \gamma = s_\uhhu \otimes_\ahha  V$. Therefore, for all $b \in B$ and $v \in V$, we see that
\[
\Psi \Phi(b \otimes_\ahha  v) = \Psi(b\gamma(v)) = b\Psi(\gamma(v)) = b\otimes_\ahha  v.
\]
Summing up, $U \simeq B \otimes_\ahha  V$ as $\K$-vector spaces.
The various statements in Theorem \ref{thm:sigmatwisted} now follow by transport of structure along this isomorphism. Indeed, notice that the assignment \eqref{weakadjoint} is obtained as the composition 
\[V \otimes B \xrightarrow{\gamma\otimes B} U \otimes B \xrightarrow{\rightslice} B,\]
where the latter map is \eqref{adjointaction} for the $U$-module algebra structure on $B$ from Lemma \ref{wasserkessel}. Therefore, since by hypothesis $\gamma$ is a morphism of $A$-corings and since it satisfies $\gamma(1_\vauu) = 1_\uhhu $, the above is a weak left $V$-action in the sense of Definitions \ref{GT1} and \ref{GT2}. By transport of structure we can define a multiplication on $B \otimes_\ahha  V$ by
\[(b \otimes_\ahha  v)\cdot (b' \otimes_\ahha  w) \coloneqq \Psi\big(\,b\,\gamma(v)\,b'\gamma(w)\,\big).\]
Also observe that,
for every $b \in B$, the assignment 
\[
\mu_{b} \colon \due U \blact {} \otimes_\Aopp \due U {} \ract \to U,\quad u \otimes_\Aopp u'\mapsto ubu',
\] 
is well-defined once more by \eqref{ohyes}. Consequently, \eqref{Sch3} implies
\[\gamma(v)\,b' = \gamma(v)_{(1)+}b'\gamma(v)_{(1)-}\gamma(v)_{(2)}\]
and hence
\begin{small}
\begin{eqnarray*}
& & (b \otimes_\ahha  v)\cdot (b' \otimes_\ahha  w) = \Psi\big(b\gamma(v)b'\gamma(w)\big)
\\
&=& b\gamma(v)_{(1)+}b'\gamma(v)_{(1)-}\Psi\big(\gamma(v)_{(2)}\gamma(w)\big)\\
&\stackrel{\scriptscriptstyle\eqref{krach3}}{=}& b\gamma(v_{(1)})_{+}b'\gamma(v_{(1)})_{-}\Psi\big(\gamma(v_{(2)})\gamma(w)\big)
\\
&\stackrel{\scriptscriptstyle\eqref{weakadjoint}}{=} &
b\big(v_{(1)} \rightslice b'\big)\Psi\big(\gamma(v_{(2)})\gamma(w)\big) 
\\
& \stackrel{\scriptscriptstyle (\dag)}{=} &
b\big(v_{(1)} \rightslice b'\big)t_\uhhu\varepsilon_\uhhu\big(\gamma\pi(\gamma(v_{(3)})\gamma(w)_{(2)})_{+}\big)\gamma(v_{(2)})\gamma(w)_{(1)}\gamma\pi(\gamma(v_{(3)})\gamma(w)_{(2)})_{-} \otimes_\ahha  \pi\left(\gamma(v_{(4)})\gamma(w)_{(3)}\right) 
\\
& \stackrel{\scriptscriptstyle (\ddag)}{=} &
b(v_{(1)}\rightslice b')t_\uhhu\varepsilon_\uhhu\big(\gamma(v_{(3)}w_{(2)})_{+}\big)\gamma(v_{(2)})\gamma(w_{(1)})\gamma(v_{(3)}w_{(2)})_{-} \otimes_\ahha  v_{(4)}w_{(3)},
\end{eqnarray*}
\end{small}
where in $(\dag)$ we applied \eqref{krach3} as follows: since $\gamma$ is a map of $A$-corings, 
we have
\[
\big((\mu_{b'} \otimes_\ahha  \Psi) \circ (\xi \otimes_\ahha  U)\big)\big(\gamma(v)_{(1)} \otimes_\ahha  \gamma(v)_{(2)}\gamma(w)\big) = \big((\mu_{b'} \otimes_\ahha  \Psi) \circ (\xi \otimes_\ahha  U)\big)\big(\gamma(v_{(1)}) \otimes_\ahha  \gamma(v_{(2)})\gamma(w)\big),
\]
which entails $(\dag)$. In $(\ddag)$ we applied the fact that $\pi$ is a morphism of $\Ae$-rings, $\gamma$ is a morphism of $A$-corings and $\pi \circ \gamma = \id_\vauu$.
Finally, defining
\[
\sigma \colon  V \times V \to B, \qquad (v,w) \mapsto \left(B \otimes_\ahha  \varepsilon_\vauu\right)\pig(\big(1_\behhe  \otimes_\ahha  v\big)\cdot \big(1_\behhe  \otimes_\ahha  w\big)\pig),
\]
one obtains the expression \eqref{chesorpresa} that factors through the tensor product $\due V \blact {} \otimes_\Aopp \due V {} \ract$ because of $\Delta\big(t(a)\big) = 1 \otimes_{\ahha} t(a)$ for all $a \in A$.
We conclude that $\gs$  is a Hopf cocycle by Proposition \ref{GT3}.
\end{proof}


\section{Smash and crossed product decomposition of universal enveloping algebras}\label{sec:weakcase}

In this section, we will see how to use the just obtained Theorem \ref{thm:sigmatwisted} in the context of various crossed product decompositions arising from short exact sequences of Lie-Rinehart algebras.


\subsection{Short exact sequences of Lie-Rinehart algebras and left Hopf kernels}\label{ssec:sesHopfker}

Assume that we have a short exact sequence 
\[
0 \to \mf{n} \xrightarrow{\iota} \mf{g} \xrightarrow{\pi} \mf{h} \to 0
\]
of Lie-Rinehart algebras over $A$ which are projective as (left) $A$-modules. 
By functoriality of $\cU_\ahha (-)\colon \mathsf{LieRin}_\ahha  \to \mathsf{Bialgd}_\ahha $, the maps $\iota$ and $\pi$ induce morphisms of left Hopf algebroids
\begin{equation}\label{eq:I}
I\colon U_{\ahha}(\mf{n}) \to \cU_\ahha(\mf{g}) \qquad \text{and} \qquad \Pi\colon \cU_\ahha(\mf{g}) \to \cU_\ahha(\mf{h}). 
\end{equation}
Notice that, by a direct check, for all $a \in A$ and $X \in \mf{n}$
\[
(\Pi\circ I)(\iota_{\mf{n}}(X)) = \Pi\left(\iota_{\mf{g}}\left(\iota(X)\right)\right) = \iota_{\mf{h}}\left(\pi\left(\iota(X)\right)\right) = 0 \qquad
\text{and}\qquad
(\Pi\circ I)(a) = \Pi\left(a\right) = a
\]
because they are morphisms of $A$-rings.  Therefore, by the universal property of $U_{\ahha}(\mf{n})$,
we obtain $\Pi \circ I = \varepsilon$. 
Moreover, since $\pi$ is surjective and $\cU_\ahha(\mf{h})$ is generated by $A$ and $\mf{h}$ as an algebra, $\Pi$ is surjective as well.

\begin{lemma}\label{lem:allinjnew}
Being $\mf{n}$ and $\mf{g}$ projective as left $A$-modules, the map $I$ is an injective morphism.
\end{lemma}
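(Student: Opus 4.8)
The plan is to reduce the statement to the left $\cU_\ahha(\mf{n})$-module decomposition already established in Proposition~\ref{prop:anothersplitting}. Indeed, the inclusion $\iota\colon \mf{n} \hookrightarrow \mf{g}$ satisfies all the hypotheses of that proposition: $\mf{n}$ and $\mf{g}$ are projective as left $A$-modules by assumption, and the quotient $A$-module $\mf{g}/\mf{n}$ is projective because $\pi$ induces an isomorphism $\mf{g}/\mf{n} \simeq \mf{h}$ and $\mf{h}$ is projective. Hence there is an isomorphism of left $\cU_\ahha(\mf{n})$-modules
\[
\Theta\colon \cU_\ahha(\mf{n}) \otimes_\ahha \cS_\ahha(\mf{g}/\mf{n}) \xrightarrow{\ \sim\ } \cU_\ahha(\mf{g}),
\]
and $I = \cU_\ahha(\iota)$ is precisely the functorial map entering the construction of $\Theta$.

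The key observation is that $I$ factors through a canonical coaugmentation. Directly from the construction of $\Theta$ in the proof of Proposition~\ref{prop:anothersplitting}, one has $\Theta(u \otimes_\ahha 1) = I(u)$ for every $u \in \cU_\ahha(\mf{n})$: the map $\Theta$ is built by multiplying $\cU_\ahha(\iota)(u) = I(u)$ against the image of the second tensor factor under a composite of \emph{unital} maps (the symmetrisation map $\textsc{S}_\mf{g}$ and $\cS_\ahha$ of the chosen section), which sends the unit $1 \in \cS_\ahha(\mf{g}/\mf{n})$ to $1 \in \cU_\ahha(\mf{g})$. Therefore $I$ equals the composite
\[
\cU_\ahha(\mf{n}) \xrightarrow{\ u\, \mapsto\, u \otimes_\ahha 1\ } \cU_\ahha(\mf{n}) \otimes_\ahha \cS_\ahha(\mf{g}/\mf{n}) \xrightarrow{\ \Theta\ } \cU_\ahha(\mf{g}).
\]

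To conclude, I would note that the left-hand map is split injective. The augmentation $\varepsilon_\cS\colon \cS_\ahha(\mf{g}/\mf{n}) \to A$ projecting onto the degree-zero component $\cS_\ahha^0(\mf{g}/\mf{n}) = A$ is left $A$-linear, so $\id \otimes_\ahha \varepsilon_\cS$ followed by the canonical identification $\cU_\ahha(\mf{n}) \otimes_\ahha A \simeq \cU_\ahha(\mf{n})$ provides a retraction $u \otimes_\ahha s \mapsto u\,\varepsilon_\cS(s)$, which sends $u \otimes_\ahha 1$ back to $u$. Being the composite of a split monomorphism with the isomorphism $\Theta$, the map $I$ is injective, as claimed.

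Since Proposition~\ref{prop:anothersplitting} is itself a consequence of the PBW theorem, the substance of the argument is the classical fact that, under PBW, the associated graded map $\gr(I)$ coincides with $\cS_\ahha(\iota)\colon \cS_\ahha(\mf{n}) \to \cS_\ahha(\mf{g})$, which is split injective because $\iota$ is a split monomorphism of projective $A$-modules (the sequence splits over $A$ as $\mf{g}\simeq\mf{n}\oplus\mf{h}$) and $\cS_\ahha(-)$ preserves split monomorphisms. In a from-scratch proof bypassing Proposition~\ref{prop:anothersplitting}, this graded injectivity, together with the exhaustiveness, separatedness, and discreteness of the PBW filtrations, would be the crux, lifting injectivity of $\gr(I)$ to injectivity of $I$ via \cite[Ch.~D]{Nastasescu-vanOystaeyen}; this lifting is the only genuinely technical point, and it is exactly what Proposition~\ref{prop:anothersplitting} has already packaged for us.
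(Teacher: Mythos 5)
Your proof is correct, but it takes a genuinely different route from the paper's. The paper argues coalgebra-theoretically: it writes $U_{\ahha}(\mf{n}) = A \oplus \overline{U_{\ahha}(\mf{n})}$ with $\overline{U_{\ahha}(\mf{n})} = \ker(\varepsilon)$ a graded projective cocomplete cocommutative (non-counital) coalgebra whose primitive filtration has degree-one part $\mf{n}$, and then invokes a Heyneman--Radford-type theorem (\cite[Lem.~A.1]{MoerdijkLie}; see also \cite[Thm.~4.13]{Saracco}), by which a morphism of such coalgebras that is injective on the degree-one part of the filtration is injective; here $\overline{I}_1 = \iota$ is injective by hypothesis. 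You instead reduce to the module-theoretic decomposition of Proposition \ref{prop:anothersplitting}, and your verifications check out: the proposition does apply (since $\mf{g}/\mf{n} \simeq \mf{h}$ is projective), the identity $\Theta(u \otimes_\ahha 1) = I(u)$ holds because $\cS_\ahha(\sigma)$ is an $A$-algebra map and $\textsc{S}_{\mf{g}}$ restricts to the identity in degree zero, the retraction $u \otimes_\ahha s \mapsto u\,\varepsilon_{\cS}(s)$ is $A$-balanced since $\varepsilon_{\cS}$ is left $A$-linear and $A$ is commutative, and there is no circularity, as Proposition \ref{prop:anothersplitting} (proved in \S\ref{ssec:sym} via PBW and the filtered-module results of \cite{Nastasescu-vanOystaeyen}) is independent of \S\ref{ssec:sesHopfker}. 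The trade-off is as follows: your argument uses the projectivity of the quotient $\mf{g}/\mf{n} \simeq \mf{h}$, which is harmless here because the standing hypotheses of \S\ref{ssec:sesHopfker} assume all three terms of the sequence projective, but it is strictly more than the paper's proof needs --- the Heyneman--Radford route uses only the projectivity of $\mf{n}$ and $\mf{g}$ and in fact works for an arbitrary injective morphism of projective Lie--Rinehart algebras, with no condition on the cokernel. What your route buys is elementarity: once Proposition \ref{prop:anothersplitting} is in place, injectivity drops out of a split monomorphism composed with an isomorphism, with no coalgebra machinery; and your closing observation --- that the substance is the split injectivity of $\gr(I) = \cS_\ahha(\iota)$ lifted through the discrete, exhaustive filtrations --- is an accurate distillation of what both approaches ultimately rest on.
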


\begin{proof}
The statement follows from the injectivity of $\iota$ and of $\iota_{\mf{g}}$, along with a variation of the Heyneman-Radford Theorem (see \cite[Thm.~4.13]{Saracco} or \cite[Lem.~A.1]{MoerdijkLie}).

As in \cite[\S2]{MoerdijkLie}, since $\mf{n}$ is projective, $\overline{U_{\ahha}(\mf{n})}  \coloneqq \ker(\varepsilon)$ is a graded projective cocomplete non-counital cocommutative coalgebra. Its natural filtration (whose $n$-th term is composed by all elements that can be written as a product of at most $n$ elements of $\mf{n}$) coincides with its primitive filtration and it is composed by projective $A$-modules. In particular, in the notation of \cite{MoerdijkLie}, $\overline{\cU_{\ahha}(\mf{n})}_1 = \mf{n}$ and $\overline{\cU_\ahha(\mf{g})}_1 = \mf{g}$. Now, Lemma A.1 in {\em op.\ cit.}~states that if $\overline{I}\colon \overline{\cU_{\ahha}(\mf{n})} \to \overline{\cU_\ahha(\mf{g})}$ is injective when (co)restricted to $\overline{I}_1\colon \overline{\cU_{\ahha}(\mf{n})}_1 \to \overline{\cU_\ahha(\mf{g})}_1$, then it is injective. Here,  $\overline{I}_1 =  \iota$, which is injective, indeed. To conclude, injectivity of $I$ follows from the fact that $\cU_{\ahha}(\mf{n}) = A \oplus \overline{\cU_{\ahha}(\mf{n})}$ and $I$ is already injective on $A$ because it is a morphism of $A$-rings.
\end{proof}

The following proposition provides us with an effective way to check if an element in $\cU_\ahha(\mf{g})$ belongs to $U_{\ahha}(\mf{n})$ or not.

\begin{proposition}
\label{prop:neverendingprop}
Via the morphism $I\colon U_{\ahha}(\mf{n}) \to \cU_\ahha(\mf{g})$ from \eqref{eq:I}, we have that 
\[
U_{\ahha}(\mf{n}) \simeq B_{\ahha}(\mf{g})  \coloneqq \pig\{x \in \cU_\ahha(\mf{g}) ~\pig|~ \big(\cU_\ahha(\mf{g}) \otimes_{\ahha} \Pi\big)\left(\Delta_{\cU_\ahha(\mf{g})}(x)\right) = x \otimes_{\ahha} 1 \pig\}.
\]
In particular, up to the canonical morphism $I$, we may identify $U_{\ahha}(\mf{n})$ with $B_{\ahha}(\mf{g})$.
\end{proposition}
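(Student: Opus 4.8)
The plan is to show that the canonical morphism $I\colon U_{\ahha}(\mf{n}) \to \cU_\ahha(\mf{g})$ corestricts to an isomorphism onto the left Hopf kernel $B_{\ahha}(\mf{g})$. Since $I$ is already injective by Lemma \ref{lem:allinjnew}, two things remain: that the image of $I$ is contained in $B_{\ahha}(\mf{g})$, which is a formal consequence of $I$ being a bialgebroid morphism, and that this inclusion is onto, which is the substantial point and which I would settle by a PBW-filtration argument reducing the question to a transparent computation in the symmetric algebra.

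For the containment, I would first record that $\Pi \circ I = \iota_\ahha \circ \varepsilon$, where $\iota_\ahha\colon A \to \cU_\ahha(\mf{h})$ is the unit and $\varepsilon$ the counit of $U_{\ahha}(\mf{n})$: indeed $\Pi \circ I$ is an algebra map carrying $A$ to $\iota_\ahha(A)$ and annihilating $\mf{n}$ (as computed just before the statement), and since $\varepsilon(X_1\cdots X_k) = 0$ for $X_1,\dots,X_k \in \mf{n}$ with $k \geq 1$, the two maps agree on a $\K$-spanning set of PBW monomials. Using that $I$ intertwines the comultiplications, for $x \in U_{\ahha}(\mf{n})$ one then gets
\[
\big(\cU_\ahha(\mf{g}) \otimes_\ahha \Pi\big)\big(\Delta I(x)\big) = I(x_{(1)}) \otimes_\ahha \iota_\ahha\big(\varepsilon(x_{(2)})\big) = I(x) \otimes_\ahha 1,
\]
where the second equality moves $\iota_\ahha(\varepsilon(x_{(2)}))$ across the balanced tensor product, uses that $I$ commutes with the structure maps, and invokes counitality \eqref{beethoven3} in the form $\iota_\ahha(\varepsilon(x_{(2)}))\,x_{(1)} = x$. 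Hence $I(x) \in B_{\ahha}(\mf{g})$.

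For surjectivity I would filter $\cU_\ahha(\mf{g})$, $U_{\ahha}(\mf{n})$ and $\cU_\ahha(\mf{h})$ by their PBW filtrations and put $B_n \coloneqq B_{\ahha}(\mf{g}) \cap F_n(\cU_\ahha(\mf{g}))$. The coaction $\delta \coloneqq (\cU_\ahha(\mf{g}) \otimes_\ahha \Pi)\circ \Delta$ is filtered, and by Theorem \ref{thm:isocoring} together with the graded tensor identification already exploited in the proof of Proposition \ref{prop:anothersplitting}, its associated graded is the coaction $\delta^{\cS} \coloneqq (\cS_\ahha(\mf{g}) \otimes_\ahha \cS_\ahha(\pi))\circ \Delta_{\cS}$ on $\cS_\ahha(\mf{g}) \simeq \gr(\cU_\ahha(\mf{g}))$. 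I would then compute the $\delta^{\cS}$-coinvariants directly: fixing a left $A$-linear section $\gamma_0\colon \mf{h} \to \mf{g}$ of $\pi$ yields $\cS_\ahha(\mf{g}) \simeq \cS_\ahha(\mf{n}) \otimes_\ahha \cS_\ahha(\mf{h})$, on which $\delta^{\cS}$ is an algebra map with $\delta^{\cS}(X) = X \otimes_\ahha 1$ for $X \in \mf{n}$ and $\delta^{\cS}(\gamma_0(Y)) = \gamma_0(Y) \otimes_\ahha 1 + 1 \otimes_\ahha Y$ for $Y \in \mf{h}$; expanding a product along the induced bigrading and comparing $\cS_\ahha(\mf{h})$-components shows that the coinvariants are precisely $\cS_\ahha(\mf{n})$.

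Feeding this back through the filtration finishes the proof. For $x \in B_n$, applying the degree-$n$ component of the filtered identity $\delta(x) = x \otimes_\ahha 1$ gives $\delta^{\cS}(\bar x) = \bar x \otimes_\ahha 1$ for the symbol $\bar x \in \cS_\ahha^n(\mf{g})$, whence $\bar x \in \cS_\ahha(\mf{n})$ and so $\gr(B_{\ahha}(\mf{g})) \subseteq \cS_\ahha(\mf{n})$. On the other hand $\gr(I) = \cS_\ahha(\iota)$ is injective (the inclusion $\mf{n} \hookrightarrow \mf{g}$ splits $A$-linearly), so $I$ is a strict filtered monomorphism with $\gr(I(U_{\ahha}(\mf{n}))) = \cS_\ahha(\mf{n})$; combined with the containment of the image of $I$ in $B_{\ahha}(\mf{g})$ from the first step, this forces equality of the associated graded modules. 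Since all filtrations involved are exhaustive and discrete, hence separated and complete, an isomorphism on associated graded lifts to an isomorphism \cite[Ch.\ D, Cor.\ III.5]{Nastasescu-vanOystaeyen}, giving $I(U_{\ahha}(\mf{n})) = B_{\ahha}(\mf{g})$. The main obstacle is exactly this surjectivity half: correctly identifying the associated graded of the coaction $\delta$ and carrying out the coinvariant computation in $\cS_\ahha(\mf{g})$, since the containment of the image of $I$ in $B_{\ahha}(\mf{g})$ is essentially formal.
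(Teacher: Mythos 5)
Your proof is correct, but it takes a genuinely different route from the paper's. The paper splits the statement into a free case and a descent step: when $\mf{n}$, $\mf{g}$, $\mf{h}$ are free over $A$, it runs an explicit PBW-basis computation with shuffles inside $\cU_\ahha(\mf{g})$ (Lemma \ref{lem:iso2}), and then reduces the projective case to the free one by localising at every prime $\mf{p}$ of $A$, proving $U_\ahha(\mf{n})_{\mf{p}} \simeq U_{A_\mf{p}}(\mf{n}_\mf{p})$ via the smash-product description of Proposition \ref{prop:iso3} (Lemmas \ref{lem:iso4}, \ref{lem:iso5}) and $B_\ahha(\mf{g})_\mf{p} \simeq B_{A_\mf{p}}(\mf{g}_\mf{p})$ via flatness of localisation applied to the equaliser presentation (Lemma \ref{lem:iso6}), before invoking that bijectivity is a local property. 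You bypass the entire localisation machinery: you pass to associated graded objects, identify $\gr(\delta)$ with the symmetric-algebra coaction using Theorem \ref{thm:isocoring} and the graded tensor decomposition already established in the proof of Proposition \ref{prop:anothersplitting}, compute the coinvariants of $\cS_\ahha(\mf{g}) \simeq \cS_\ahha(\mf{n}) \otimes_\ahha \cS_\ahha(\mf{h})$ to be $\cS_\ahha(\mf{n})$, and then lift $\gr(I(U_\ahha(\mf{n}))) = \gr(B_\ahha(\mf{g}))$ to $I(U_\ahha(\mf{n})) = B_\ahha(\mf{g})$ via completeness of the discrete exhaustive filtrations. In effect, your graded coinvariant computation is a basis-free avatar of the paper's Lemma \ref{lem:iso2}: where the paper compares shuffle terms against a PBW basis (which is why it needs freeness, hence localisation), you compare bidegree components in the symmetric algebra, which makes sense directly under projectivity. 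What the paper's route buys is self-containedness at the level of elementary module theory; what yours buys is brevity and the elimination of three lemmas, at the cost of leaning harder on the coring isomorphism $\textsc{S}$ and on the identification $\gr\big(\cU_\ahha(\mf{g}) \otimes_\ahha \cU_\ahha(\mf{h})\big) \simeq \gr\big(\cU_\ahha(\mf{g})\big) \otimes_\ahha \gr\big(\cU_\ahha(\mf{h})\big)$ — which, fortunately, the paper itself justifies under projectivity.

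Two small points to nail down in a write-up. First, the counitality you need is the right counit axiom of the $A$-coring, $x_{(1)} \bract \varepsilon(x_{(2)}) = x$, since the balanced relation in $\cU_\ract \otimes_\ahha {}_\lact \cU$ moves $\iota_\ahha(\varepsilon(x_{(2)}))$ onto the \emph{right} of the first tensorand; your displayed form $\iota_\ahha(\varepsilon(x_{(2)}))\,x_{(1)} = x$ is the other half of \eqref{beethoven3} and agrees here only because $\cU_\ahha(\mf{n})$ is cocommutative with $s = t$. Second, the step "comparing $\cS_\ahha(\mf{h})$-components" at the top $\mf{h}$-degree $q$ uses injectivity of the canonical map $\cS_\ahha^p(\mf{n}) \otimes_\ahha \cS_\ahha^q(\mf{h}) \to \cS_\ahha(\mf{g}) \otimes_\ahha \cS_\ahha^q(\mf{h})$, which holds because $\cS_\ahha(\iota)$ is a split $A$-linear injection and $\cS_\ahha^q(\mf{h})$ is projective, hence flat; this is where projectivity of \emph{both} $\mf{n}$ and $\mf{h}$ enters your argument and should be said explicitly.
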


We divide the proof into a number of steps, reported in the forthcoming lemmata. We first show 
that the morphism $I\colon U_{\ahha}(\mf{n}) \to \cU_\ahha(\mf{g})$ from \eqref{eq:I} lands in $B_{\ahha}(\mf{g})$, and second, that the induced morphism is an isomorphism whenever $\mf{n}$, $\mf{g}$, as well as $\mf{h}$ are free over $A$. Then we prove that for every prime ideal $\mf{p}$ in $A$, we have $U_{\ahha}(\mf{n})_{\mf{p}}\simeq U_{\scriptscriptstyle{A_{\mf{p}}}}(\mf{n}_{\mf{p}})$ and $B_{\ahha}(\mf{g})_{\mf{p}} \simeq B_{\scriptscriptstyle{A_{\mf{p}}}}(\mf{g}_{\mf{p}})$.
Finally, we conclude by recalling that bijectivity for a morphism is a local property.

\begin{lemma}
\label{lem:iso1}
The morphism $I\colon U_{\ahha}(\mf{n}) \to \cU_\ahha(\mf{g})$ from \eqref{eq:I} maps $U_{\ahha}(\mf{n})$ into $B_{\ahha}(\mf{g})$.
\end{lemma}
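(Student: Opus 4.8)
The plan is to exploit directly that $I$ is a morphism of left bialgebroids (hence of the underlying $A$-corings) together with the identity $\Pi\circ I=\varepsilon$ established just before the statement. Concretely, to prove that $I(y)\in B_{\ahha}(\mf{g})$ for every $y\in U_{\ahha}(\mf{n})$ it suffices to verify the defining equation
\[
\big(\cU_\ahha(\mf{g})\otimes_\ahha\Pi\big)\big(\Delta_{\cU_\ahha(\mf{g})}(I(y))\big)=I(y)\otimes_\ahha 1 \quad\text{in }\cU_\ahha(\mf{g})_\ract\otimes_\ahha\due{\cU_\ahha(\mf{h})}\lact{}.
\]
First I would rewrite the left-hand side using comultiplicativity of $I$, namely $\Delta_{\cU_\ahha(\mf{g})}\circ I=(I\otimes_\ahha I)\circ\Delta_{U_\ahha(\mf{n})}$, so that, writing $\Delta_{U_\ahha(\mf{n})}(y)=y_{(1)}\otimes_\ahha y_{(2)}$, the expression becomes $I(y_{(1)})\otimes_\ahha\Pi\big(I(y_{(2)})\big)$.

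Next I would substitute $\Pi\circ I=\varepsilon$, understood via the inclusion $s_\vauu\colon A\hookrightarrow\cU_\ahha(\mf{h})$ as $\Pi\circ I=s_\vauu\circ\varepsilon$, turning the expression into $I(y_{(1)})\otimes_\ahha s_\vauu\big(\varepsilon(y_{(2)})\big)$. The key manipulation is then to transport the scalar $\varepsilon(y_{(2)})\in A$ across the tensor product: recalling that the relevant balancing in $\cU_\ahha(\mf{g})_\ract\otimes_\ahha\due{\cU_\ahha(\mf{h})}\lact{}$ reads $u\ract a\otimes_\ahha v=u\otimes_\ahha a\lact v$, that is $t_\uhhu(a)\,u\otimes_\ahha v=u\otimes_\ahha s_\vauu(a)\,v$ by \eqref{pergolesi}, one obtains
\[
I(y_{(1)})\otimes_\ahha s_\vauu\big(\varepsilon(y_{(2)})\big)=t_\uhhu\big(\varepsilon(y_{(2)})\big)\,I(y_{(1)})\otimes_\ahha 1.
\]

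Finally I would collapse the first tensor factor using that $I$ is an algebra morphism compatible with the target maps, so that $I\circ t=t_\uhhu$ (where $t$ is the target of $U_\ahha(\mf{n})$), together with the right counit axiom \eqref{beethoven3} of $U_\ahha(\mf{n})$, which in Sweedler form gives $t\big(\varepsilon(y_{(2)})\big)\,y_{(1)}=y$. Thus
\[
t_\uhhu\big(\varepsilon(y_{(2)})\big)\,I(y_{(1)})=I\big(t\big(\varepsilon(y_{(2)})\big)\,y_{(1)}\big)=I(y),
\]
and the required equation follows, whence $I(y)\in B_{\ahha}(\mf{g})$. I do not expect a genuine obstacle here: the content is simply that the image of a bialgebroid morphism lands in the coinvariants of the induced comodule-algebra coaction $\delta_\uhhu=(\cU_\ahha(\mf{g})\otimes_\ahha\Pi)\circ\Delta_{\cU_\ahha(\mf{g})}$. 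The only point requiring care is bookkeeping the four $A$-actions of \eqref{pergolesi}, so as to invoke the correct balancing relation and the correct (target) counit identity; everything else is a direct Sweedler computation.
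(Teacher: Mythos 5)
Your proof is correct, but it takes a genuinely different route from the paper's. The paper proves the lemma by a second application of the universal property of $U_\ahha(\mf{n})$: it observes that both $\big(\cU_\ahha(\mf{g}) \tak{\ahha} \Pi\big)\circ\Delta_{\cU_\ahha(\mf{g})}\circ I$ and $j\colon u\mapsto I(u)\otimes_\ahha 1$ are $\K$-algebra morphisms into the Takeuchi product $\cU_\ahha(\mf{g})\tak{\ahha}\cU_\ahha(\mf{h})$, checks that they agree on $A$ and on the generators $\iota_{\mf{n}}(\mf{n})$, and concludes that they coincide by uniqueness. You instead take as sole input the identity $\Pi\circ I = s_\vauu\circ\varepsilon$ (which the paper derives, also via the universal property, just before the lemma) and run a purely coalgebraic Sweedler computation: comultiplicativity of $I$, the balancing relation $u\otimes_\ahha s_\vauu(a)\,v = t_\uhhu(a)\,u\otimes_\ahha v$, and the right counit axiom, whose second half in \eqref{beethoven3} is exactly the identity $t\big(\varepsilon(y_{(2)})\big)\,y_{(1)}=y$ you invoke. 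Your manipulations are legitimate: the scalar transport is harmless because each equality is a balancing relation applied termwise to a fixed lift of $\Delta(y)$, while the starting element $\big(\cU_\ahha(\mf{g})\otimes_\ahha\Pi\big)\big(\Delta(I(y))\big)$ is intrinsically defined, so lift-independence is automatic. As for what each approach buys: the paper's argument sidesteps all Sweedler bookkeeping over $\otimes_\ahha$ but is tied to the enveloping-algebra structure (generation by $A$ and primitives), whereas yours proves the stronger, structure-free statement that for \emph{any} morphisms of left bialgebroids $I\colon W\to U$ and $\Pi\colon U\to V$ over $A$ with $\Pi\circ I = s_\vauu\circ\varepsilon_\ikks$ (counit of $W$), the image of $I$ lands in the left Hopf kernel of $\Pi$, i.e., in the coinvariants of $\delta_\uhhu=(U\otimes_\ahha\Pi)\circ\Delta_\uhhu$. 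One cosmetic remark: in the situation at hand the bialgebroids are cocommutative over commutative $A$, so $s=t=\iota_\ahha$ and the source/target distinctions you carefully track collapse; your bookkeeping is nonetheless the correct one in the general setting.
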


\begin{proof}
It is straightforward to check that the $\K$-algebra morphisms $\left(\cU_\ahha(\mf{g}) \tak{\ahha} \Pi\right) \circ \Delta_{\cU_\ahha(\mf{g})} \circ I$ and
\[j\colon U_{\ahha}(\mf{n}) \to \cU_\ahha(\mf{g}) \tak{\ahha} \cU_\ahha(\mf{h}), \qquad u \mapsto I(u) \otimes_{\ahha} 1,\]
are well-defined and satisfy
\begin{eqnarray*}
\pig(\big(\cU_\ahha(\mf{g}) \tak{\ahha} \Pi\big) \circ \Delta_{\cU_\ahha(\mf{g})} \circ I\pig)(a) \!\!\!&=\!\!\!& a \otimes_{\ahha} 1 = j(a) \qquad \text{as well as} 
\\
\pig(\big(\cU_\ahha(\mf{g}) \tak{\ahha} \Pi\big) \circ \Delta_{\cU_\ahha(\mf{g})} \circ I\pig)\big(\iota_{\mf{n}}(U)\big) \!\!\!&=\!\!\!& \iota_{\mf{g}}\big(\iota(U)\big) \otimes_{\ahha} 1 + 1 \otimes_{\ahha} \iota_{\mf{h}}\pig(\pi\big(\iota(U)\big)\pig) = j\big(\iota_{\mf{n}}(U)\big)
\end{eqnarray*}
for all $a \in A, U \in \mf{n}$. Therefore, by the uniqueness part of the universal property of $U_{\ahha}(\mf{n})$, $\left(\cU_\ahha(\mf{g}) \tak{\ahha} \Pi\right) \circ \Delta_{\cU_\ahha(\mf{g})} \circ I = j$ and so $I$ lands in $B_{\ahha}(\mf{g})$, as claimed.
\end{proof}

\begin{lemma}
\label{lem:iso2}
If $\mf{n}$, $\mf{g}$, and $\mf{h}$ are free left $A$-modules, then the morphism $U_{\ahha}(\mf{n}) \to B_{\ahha}(\mf{g})$ induced by $I$ from Lemma \ref{lem:iso1} is an isomorphism. In particular, we may identify $U_{\ahha}(\mf{n})$ with $B_{\ahha}(\mf{g})$.
\end{lemma}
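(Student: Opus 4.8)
The plan is to combine the two facts already in hand---that $I\colon U_{\ahha}(\mf{n}) \to \cU_\ahha(\mf{g})$ is injective (Lemma \ref{lem:allinjnew}) and lands inside $B_{\ahha}(\mf{g})$ (Lemma \ref{lem:iso1})---with a Poincar\'e--Birkhoff--Witt computation that pins down the coinvariants. Thus everything reduces to proving that the corestriction $U_{\ahha}(\mf{n}) \to B_{\ahha}(\mf{g})$ is \emph{surjective}, that is, that every $x \in B_{\ahha}(\mf{g})$ already lies in the image of $I$. First I would exploit freeness directly. Choosing ordered $A$-bases $\{U_i\}$ of $\mf{n}$ and $\{X_j\}$ of $\mf{h}$ and a left $A$-linear section $\gamma\colon \mf{h} \to \mf{g}$ of $\pi$, the family $\{\iota(U_i)\} \cup \{\gamma(X_j)\}$ is a free $A$-basis of $\mf{g} \simeq \mf{n} \oplus \mf{h}$. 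Ordering the $\mf{n}$-part before the $\mf{h}$-part, the PBW theorem (equivalently, Proposition \ref{prop:anothersplitting} in the present free setting) presents $\cU_\ahha(\mf{g})$ as a \emph{free left $U_{\ahha}(\mf{n})$-module} on the ordered monomials $w_{\underline{j}} \coloneqq \gamma(X_{j_1})\cdots \gamma(X_{j_q})$, $j_1 \le \cdots \le j_q$. Hence, identifying $U_{\ahha}(\mf{n})$ with its image under the injective map $I$, each $x \in \cU_\ahha(\mf{g})$ has a unique expansion $x = \sum_{\underline{j}} b_{\underline{j}}\, w_{\underline{j}}$ with $b_{\underline{j}} \in U_{\ahha}(\mf{n})$.

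Next I would feed this expansion into the right $\cU_\ahha(\mf{h})$-coaction $\rho \coloneqq (\cU_\ahha(\mf{g}) \otimes_\ahha \Pi)\circ \Delta_{\cU_\ahha(\mf{g})}$ that defines $B_{\ahha}(\mf{g})$. Since $\rho$ is the comodule-algebra coaction associated with the bialgebroid morphism $\Pi$, it is multiplicative, and since each $b_{\underline{j}}$ is coinvariant one has $\rho(b\,w) = (b \otimes_\ahha 1)\,\rho(w)$. Each $\gamma(X_j)$ is primitive and $\Pi(\gamma(X_j)) = \iota_{\mf{h}}(X_j)$, so expanding $\rho(w_{\underline{j}}) = \prod_i \big(\gamma(X_{j_i}) \otimes_\ahha 1 + 1 \otimes_\ahha \iota_{\mf{h}}(X_{j_i})\big)$ shows that the unique summand whose second tensorand has PBW-degree $q = |\underline{j}|$ is $1 \otimes_\ahha \Pi(w_{\underline{j}})$, every other summand having strictly smaller degree in the $\cU_\ahha(\mf{h})$-factor. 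Reading everything off against the PBW $A$-basis $\{\Pi(w_{\underline{k}})\}$ of the \emph{free} $A$-module $\cU_\ahha(\mf{h})$, the degree-$q$ component of $\rho(x)$ is therefore $\sum_{|\underline{j}| = q} b_{\underline{j}} \otimes_\ahha \Pi(w_{\underline{j}})$. Now assume $x \in B_{\ahha}(\mf{g})$, so that $\rho(x) = x \otimes_\ahha 1$ is concentrated in $\cU_\ahha(\mf{g}) \otimes_\ahha A$. If $q$ denotes the largest degree occurring in the expansion of $x$ and $q \ge 1$, then comparing degree-$q$ components gives $\sum_{|\underline{j}| = q} b_{\underline{j}} \otimes_\ahha \Pi(w_{\underline{j}}) = 0$, whence the $A$-linear independence of the $\Pi(w_{\underline{j}})$ forces $b_{\underline{j}} = 0$ for all $|\underline{j}| = q$, contradicting maximality. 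Hence $q = 0$, i.e.\ $x = b_{\emptyset} \in I\big(U_{\ahha}(\mf{n})\big)$, which yields surjectivity and therefore the claimed isomorphism $U_{\ahha}(\mf{n}) \simeq B_{\ahha}(\mf{g})$.

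The step I expect to be the crux is the control of the ``leading term'' of the coaction: checking that the top PBW-degree part of $\rho(w_{\underline{j}})$ in the $\cU_\ahha(\mf{h})$-factor is exactly $1 \otimes_\ahha \Pi(w_{\underline{j}})$, and that reading off coefficients against $\{\Pi(w_{\underline{k}})\}$ is legitimate. Both rely essentially on freeness---through PBW for $\cU_\ahha(\mf{h})$ and the resulting decomposition of $\cU_\ahha(\mf{g}) \otimes_\ahha \cU_\ahha(\mf{h})$ into a direct sum of copies of $\cU_\ahha(\mf{g})$---which is precisely why this lemma is isolated for free modules before the general projective case is reached by localisation. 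As an alternative, one could run the same argument on associated graded objects: passing to $\gr$ turns $\rho$ into the coaction $s \mapsto s_{(1)} \otimes_\ahha \cS_\ahha(\pi)(s_{(2)})$ on $\cS_\ahha(\mf{g}) \simeq \cS_\ahha(\mf{n}) \otimes_\ahha \cS_\ahha(\mf{h})$, whose coinvariants are immediately seen to be $\cS_\ahha(\mf{n})$; since $\gr I$ is the PBW isomorphism onto $\cS_\ahha(\mf{n})$, the filtered-to-graded criterion \cite[Ch.\ D, Cor.\ III.5]{Nastasescu-vanOystaeyen} used repeatedly above promotes this to the desired isomorphism.
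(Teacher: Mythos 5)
Your argument is correct, and it is a genuine variant of the paper's proof rather than a reproduction of it, although both rest on the same mechanism: a top-degree analysis, in PBW coordinates, of the coinvariance equation defining $B_{\ahha}(\mf{g})$. The paper fixes one arbitrary ordered $A$-basis of $\mf{g}$, expands $x \in B_{\ahha}(\mf{g})$ with coefficients in $A$, extracts from the top-degree part of the coinvariance condition the identities $a_N\,\pi(X_{N,k})=0$, deduces (for $a_N \neq 0$) that the top-degree letters lie in $\mf{n}$, and then strips the leading term and iterates; it must even pause over repeated letters and invoke characteristic zero. You instead choose an $A$-linear section $\gamma$ of $\pi$ (available since $\mf{h}$ is free, hence projective) and a basis of $\mf{g}$ adapted to $\mf{g}\simeq\mf{n}\oplus\gamma(\mf{h})$, so that PBW — with the $\mf{n}$-part of the basis ordered first — exhibits $\cU_\ahha(\mf{g})$ as a free left $U_{\ahha}(\mf{n})$-module on the ordered monomials $w_{\underline{j}}$; the coefficients $b_{\underline{j}}$ then live in $U_{\ahha}(\mf{n})$, are coinvariant by Lemma \ref{lem:iso1}, and the vanishing of the top-degree component of $\rho(x)$ kills all $b_{\underline{j}}$ with $|\underline{j}|=q\geq 1$ at one stroke, since $\cU_\ahha(\mf{h})$ is free as a left $A$-module on its PBW monomials. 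What your organisation buys is that the conclusion $b_{\underline{j}}=0$ is a direct-sum projection, so you never need the paper's implication ``$a_N\,\pi(X_{N,k})=0$ and $a_N\neq 0$ imply $\pi(X_{N,k})=0$'' nor its discussion of repetitions; the price is the (easy, in the free case) preliminary verification of the free $U_{\ahha}(\mf{n})$-module decomposition — note that your parenthetical appeal to Proposition \ref{prop:anothersplitting} is only morally accurate, since that result produces the basis via symmetrisation rather than the ordered monomials you actually use, but the direct PBW argument you give is what carries the weight. Your closing graded sketch is also viable and is closest in spirit to the paper's surrounding machinery (Lemma \ref{lem:symmetrisation} and \cite[Ch.\ D, Cor.\ III.5]{Nastasescu-vanOystaeyen}); to run it rigorously one should record that the symbol of a coinvariant is coinvariant for the graded coaction, which uses the isomorphism $\gr\big(\cU_\ahha(\mf{g}) \otimes_\ahha \cU_\ahha(\mf{h})\big) \simeq \gr\,\cU_\ahha(\mf{g}) \otimes_\ahha \gr\,\cU_\ahha(\mf{h})$, valid here by freeness.
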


\begin{proof}
Assume that $\mf{n}$, $\mf{g}$, and $\mf{h}$ are free left $A$-modules and pick an element
\[
a_0 + \sum_{n = 1}^{N} a_nX_{n,1} \cdots X_{n,n} \in B_{\ahha}(\mf{g}), 
\]
where $X_{n,1}\leq \cdots \leq X_{n,n}$ are elements of an ordered basis of $\mf{g}$ and where $a_i \in A$ for all $i$. The condition of belonging to $B_{\ahha}(\mf{g})$ entails that
\[a_0 \otimes_{\ahha} 1 + \sum_{n = 1}^{N} a_nX_{n,1} \cdots X_{n,n}\otimes_{\ahha} 1 = a_0 \otimes_{\ahha} 1 + \sum_{n = 1}^{N}\sum_{s+t=n} a_nX_{n,i_1} \cdots X_{n,i_t} \otimes_{\ahha} \Pi\left(X_{n,j_1} \cdots X_{n,j_s}\right),\]
where on the right-hand side the second sum involves only shuffles of $X_{n,1}\leq \cdots \leq X_{n,n}$, that is to say, $X_{n,i_1} \leq \cdots \leq X_{n,i_t}$ and $X_{n,j_1} \leq \cdots \leq X_{n,j_s}$ still hold. Therefore,
\[0 = \sum_{n = 1}^{N}\sum_{\substack{ s+t=n \\ s\geq 1}} a_nX_{n,i_1} \cdots X_{n,i_t} \otimes_{\ahha} \pi\left(X_{n,j_1}\right) \cdots \pi\left(X_{n,j_s}\right).\]
By the Poincar\'e-Birkhoff-Witt theorem, the left-hand side tensorands are still elements of a basis of $\cU_\ahha(\mf{g})$ over $A$, whence, in particular,
\[a_N\pi\left(X_{N,k}\right) = 0\]
for all $k = 1, \ldots, N$ because these are the right-hand tensorands of the summands of the form $a_NX_{N,i_1} \cdots X_{N,i_{N-1}} \otimes_{\ahha} \pi\left(X_{N,k}\right)$. In principle, one may have repetitions among the $X_{N,k}$ (as it happens for $X^2\otimes_{\ahha} 1$ with $X \in \mf{n}$, which gives rise to the identity $0 = 2X \otimes_{\ahha} \pi(X) + 1 \otimes_{\ahha} \pi(X)^2$ for instance, from which one deduces that $2\pi(X) = 0$), but since we are working over a field of characteristic $0$, this does not affect the argument. As a consequence, if $a_N\neq 0$, then $\pi(X_{N,k}) = 0$ for all $k = 1,\ldots,N$ (because $\mf{h}$ is free over $A$), and hence $X_{N,k} \in \mf{n}$. In view of the latter, we may consider further the element
\[
a_0 + \sum_{n = 1}^{N} a_nX_{n,1} \cdots X_{n,n} - a_NX_{N,1}\cdots X_{N,N} = a_0 + \sum_{n = 1}^{N-1} a_nX_{n,1} \cdots X_{n,n} \in B_{\ahha}(\mf{g})
\]
and iterate the argument before to conclude that $a_0 + \sum_{n = 1}^{N} a_nX_{n,1} \cdots X_{n,n}$ is, in fact, the image of an element in $U_{\ahha}(\mf{n})$ by means of $I$.
\end{proof}

See \cite[Ex.~4.20]{BlattnerCohenMontgomery} for an indirect proof of the same claim (notice that, even if $\K$ is assumed to be a field therein, their results still hold for $\K$ a commutative ring, provided that the Lie algebras under consideration are free over $\K$).

Summing up, we proved Proposition \ref{prop:neverendingprop} in case the involved Lie-Rinehart algebras are free as left $A$-modules. Let us go back to the general case in which $\mf{n}$, $\mf{g}$, and $\mf{h}$ are projective $A$-modules.

\begin{lemma}
\label{lem:iso4}
Let $\mf{p}$ be a prime ideal in $A$. For any Lie-Rinehart algebra $(A,\mf{h},\omega)$ over $A$, the localisation $\mf{h}_{\mf{p}}$ of the left $A$-module $\mf{h}$ at $\mf{p}$ admits the Lie-Rinehart algebra structure $\left(A_{\mf{p}}, A_{\mf{p}} \rtimes_{\ahha} \mf{h}\right)$ over $A_{\mf{p}}$ introduced in Proposition \ref{prop:iso3}\ref{item:iso3b}, and this construction induces a functor $\ms{LieRin}_\ahha \to \ms{LieRin}_{A_{\mf{p}}}$ given on objects resp.\ morphisms by
\[
(A,\mf{h},\omega) \mapsto (A_{\mf{p}}, \mf{h}_{\mf{p}}, \omega_\mf{p}) \qquad \text{and} \qquad f \mapsto A_{\mf{p}} \otimes_\ahha  f,
\]
where $\omega_{\mf{p}}(X)\left(a/b\right) = \big(\omega(X)(a)b - a\omega(X)(b)\big)/b^2$ for all $a/b \in A_{\mf{p}}$ and $X \in \mf{h}$.
\end{lemma}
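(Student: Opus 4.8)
The plan is to present the localised anchor as an instance of the data required by Proposition \ref{prop:iso3}, to apply Proposition \ref{prop:iso3}\ref{item:iso3b} with $R = A_{\mf{p}}$ in order to obtain the Lie-Rinehart structure on $\mf{h}_{\mf{p}}$, and finally to read off functoriality from that of the base change $A_{\mf{p}} \otimes_\ahha -$. The sole technical input, which I would record first and which I expect to be the main obstacle, is the standard commutative-algebra fact that every $D \in \Der_{\,\K}(A)$ extends uniquely to a derivation $D_{\mf{p}} \in \Der_{\,\K}(A_{\mf{p}})$, necessarily given by the quotient rule $D_{\mf{p}}(a/b) = \big(D(a)\,b - a\,D(b)\big)/b^2$. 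Uniqueness forces this formula, since $D_{\mf{p}}(1) = 0$ imposes $D_{\mf{p}}(b^{-1}) = -b^{-2}D_{\mf{p}}(b)$; well-definedness on equivalence classes and the Leibniz rule are routine. Moreover $D \mapsto D_{\mf{p}}$ is a morphism of Lie algebras $\Der_{\,\K}(A) \to \Der_{\,\K}(A_{\mf{p}})$, because both $[D,E]_{\mf{p}}$ and $[D_{\mf{p}}, E_{\mf{p}}]$ are derivations of $A_{\mf{p}}$ restricting to $[D,E]$ on $A$, hence coincide by the uniqueness just established.

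I would then set $\eta\colon A \to A_{\mf{p}}$, $a \mapsto a/1$, which is a $\K$-algebra morphism, and $\rho \coloneqq \big(\,\mf{h} \xrightarrow{\omega} \Der_{\,\K}(A) \to \Der_{\,\K}(A_{\mf{p}})\,\big)$, $X \mapsto \omega(X)_{\mf{p}}$, which is a Lie algebra morphism as a composition of such. The hypotheses of Proposition \ref{prop:iso3} are then immediate: since derivations annihilate $1$, one has $\rho(X)(\eta(a)) = \omega(X)(a)/1 = \eta\big(\omega(X)(a)\big)$, so $\eta$ intertwines $\rho$ and $\omega$; and the $A$-linearity $\omega(a\cdot X) = a\cdot\omega(X)$ of the anchor yields, through the quotient rule, $\rho(a\cdot X) = \eta(a)\,\rho(X)$. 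Proposition \ref{prop:iso3}\ref{item:iso3b} applied to $R = A_{\mf{p}}$ now equips $A_{\mf{p}} \otimes_\ahha \mf{h} = \mf{h}_{\mf{p}}$ with a Lie-Rinehart structure over $A_{\mf{p}}$, denoted $(A_{\mf{p}}, A_{\mf{p}} \rtimes_\ahha \mf{h})$, whose anchor \eqref{eq:anchorp} sends $1 \otimes_\ahha X$ to $\rho(X) = \omega(X)_{\mf{p}} = \omega_{\mf{p}}(X)$, which is exactly the displayed formula. I would note here that part \ref{item:iso3b} rests only on the base-change construction of \cite[Prop.~1.16]{Huebschmann}, so that no projectivity of $\mf{h}$ is required at this stage, in accordance with the statement being made for \emph{any} Lie-Rinehart algebra.

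For functoriality, the action on morphisms is $f \mapsto A_{\mf{p}} \otimes_\ahha f$, and preservation of identities and of composition is inherited verbatim from the functoriality of $A_{\mf{p}} \otimes_\ahha -$ on modules. It thus remains to check that, for a Lie-Rinehart morphism $f\colon (A,\mf{h},\omega) \to (A,\mf{h}',\omega')$ over $A$, the localised map $A_{\mf{p}} \otimes_\ahha f \colon \mf{h}_{\mf{p}} \to \mf{h}'_{\mf{p}}$ is a morphism over $A_{\mf{p}}$. It is $A_{\mf{p}}$-linear by construction, and the defining relation $\omega' \circ f = \omega$ localises to $\rho'\big(f(X)\big) = \omega(X)_{\mf{p}} = \rho(X)$. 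Substituting this, together with $f[X,Y] = [f(X),f(Y)]'$, into the bracket and anchor formulae of Proposition \ref{prop:iso3}\ref{item:iso3b} shows at once that $A_{\mf{p}} \otimes_\ahha f$ preserves brackets and satisfies $\omega'_{\mf{p}} \circ (A_{\mf{p}} \otimes_\ahha f) = \omega_{\mf{p}}$, which completes the verification. In short, once the extension of derivations to the localisation is in hand, everything reduces to a formal consequence of Proposition \ref{prop:iso3} and of the functoriality of localisation.
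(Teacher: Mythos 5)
Your proposal is correct and follows the same route as the paper, whose proof simply notes $\mf{h}_{\mf{p}} \simeq A_{\mf{p}} \otimes_\ahha \mf{h}$, invokes Proposition \ref{prop:iso3}\ref{item:iso3b} with $R = A_{\mf{p}}$, and leaves functoriality to the reader; you merely supply the routine verifications (unique extension of derivations to $A_{\mf{p}}$ via the quotient rule, the hypothesis checks for $\eta$ and $\rho$, and the localisation of morphisms) that the paper omits. Your observation that part \ref{item:iso3b} needs no projectivity of $\mf{h}$, so the lemma indeed holds for \emph{any} Lie-Rinehart algebra as stated, is a correct and worthwhile point of care.
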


\begin{proof}
For $\mf{p}$ a prime ideal in $A$, we can consider the localisations $A_{\mf{p}}$ and $\mf{h}_{\mf{p}}$ of $A$ and $\mf{h}$ at $\mf{p}$, respectively. Recall that 
$\mf{h}_{\mf{p}} \simeq A_{\mf{p}} \otimes_{\ahha} \mf{h}$ 
as left $A_{\mf{p}}$-modules. 
Therefore, the first claim follows directly from point of Proposition \ref{prop:iso3}\ref{item:iso3b} with $R = A_{\mf{p}}$.
We leave it to the reader to verify the functoriality of the construction.
\end{proof}

\begin{lemma}
\label{lem:iso5}
Let $\mf{p}$ be a prime ideal in $A$. Then, $\cU_\ahha(\mf{h})_{\mf{p}}$ 
admits the structure of a smash product via the isomorphism $\cU_\ahha(\mf{h})_{\mf{p}} \simeq A_{\mf{p}} \otimes_{\ahha} \cU_\ahha(\mf{h})$ as well as 
\[
\cU_{A_{\mf{p}}}\left(\mf{h}_{\mf{p}}\right) \simeq  \cU_\ahha(\mf{h})_{\mf{p}}\simeq A_{\mf{p}}\!\hash{}\!\, \cU_\ahha(\mf{h})
\] 
as cocommutative left Hopf algebroids over $A_{\mf{p}}$, where the first isomorphism arises via the unique morphism induced by
\[\iota_{A_{\mf{p}}}\left(\frac{a}{b}\right) \mapsto \frac{\iota_\ahha (a)}{b}  \qquad \text{and} \qquad \iota_{\mf{h}_{\mf{p}}}\left(\frac{X}{b}\right) \mapsto  \frac{\iota_{\mf{h}}(X)}{b}.\]
\end{lemma}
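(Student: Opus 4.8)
The plan is to obtain the statement as a direct application of Proposition~\ref{prop:iso3} and Lemma~\ref{lem:iso4} to the commutative $\K$-algebra $R = A_\mf{p}$. Concretely, I would take $\eta\colon A \to A_\mf{p}$ to be the canonical localisation morphism and $\rho\colon \mf{h} \to \Der_\K(A_\mf{p})$ to be the localised anchor $\rho(X)(a/b) = \big(\omega(X)(a)b - a\,\omega(X)(b)\big)/b^2$ appearing in Lemma~\ref{lem:iso4}. That lemma already guarantees that $\rho$ is a Lie algebra morphism and that $\big(A_\mf{p}, A_\mf{p}\rtimes_\ahha \mf{h}\big)$ is a Lie-Rinehart algebra; the remaining two hypotheses of Proposition~\ref{prop:iso3}, namely $\rho(X)\circ\eta = \eta\circ\omega(X)$ and $\rho(a\cdot X) = \eta(a)\rho(X)$, are immediate from the explicit formula (the first because $\omega(X)(1)=0$, the second because the anchor is left $A$-linear).

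With these data in place, Proposition~\ref{prop:iso3}\ref{item:iso3a} shows that $A_\mf{p}$ is a left $\cU_\ahha(\mf{h})$-module algebra and hence that $A_\mf{p} \hash{} \cU_\ahha(\mf{h})$ carries a smash product left Hopf algebroid structure over $A_\mf{p}$, while Proposition~\ref{prop:iso3}\ref{item:iso3c} yields an isomorphism $\cU_{A_\mf{p}}\big(A_\mf{p}\rtimes_\ahha \mf{h}\big) \simeq A_\mf{p} \hash{} \cU_\ahha(\mf{h})$ of cocommutative left Hopf algebroids over $A_\mf{p}$, induced by $\iota_{A_\mf{p}}(r) \mapsto r \hash{} 1$ and $\iota_{A_\mf{p}\rtimes_\ahha\mf{h}}(r \otimes_\ahha X) \mapsto r \hash{} \iota_{\mf{h}}(X)$. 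Since Lemma~\ref{lem:iso4} identifies the Lie-Rinehart algebra $A_\mf{p}\rtimes_\ahha \mf{h}$ with the localisation $\mf{h}_\mf{p}$, this already provides the isomorphism $\cU_{A_\mf{p}}(\mf{h}_\mf{p}) \simeq A_\mf{p} \hash{} \cU_\ahha(\mf{h})$.

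It then remains to relate the smash product to the localisation $\cU_\ahha(\mf{h})_\mf{p}$. Here I would invoke that localising the left $A$-module $\cU_\ahha(\mf{h})$ at $\mf{p}$ is nothing but extension of scalars, so that there is a canonical isomorphism $\theta\colon A_\mf{p} \otimes_\ahha \cU_\ahha(\mf{h}) \xrightarrow{\sim} \cU_\ahha(\mf{h})_\mf{p}$ of $A_\mf{p}$-modules sending $(a/b)\otimes_\ahha u$ to $\iota_\ahha(a)\,u/b$. Transporting the smash product structure along $\theta$ equips $\cU_\ahha(\mf{h})_\mf{p}$ with the asserted left Hopf algebroid structure, and composing $\theta$ with the isomorphism of the previous paragraph gives $\cU_{A_\mf{p}}(\mf{h}_\mf{p}) \simeq \cU_\ahha(\mf{h})_\mf{p}$. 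Tracing the two algebra and Lie generators through the composite confirms that this isomorphism is the one induced, via the universal property of $\cU_{A_\mf{p}}(\mf{h}_\mf{p})$, by $\iota_{A_\mf{p}}(a/b) \mapsto \iota_\ahha(a)/b$ and $\iota_{\mf{h}_\mf{p}}(X/b) \mapsto \iota_{\mf{h}}(X)/b$.

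The step that I expect to require the most care is this last identification, because $\iota_\ahha(A)$ is not central in $\cU_\ahha(\mf{h})$, so a priori it is not obvious that the module localisation $A_\mf{p}\otimes_\ahha \cU_\ahha(\mf{h})$ carries a ring structure compatible with that of $\cU_\ahha(\mf{h})$. The clean way around this is to observe that the commutation relation $\iota_{\mf{h}}(X)\iota_\ahha(a) - \iota_\ahha(a)\iota_{\mf{h}}(X) = \iota_\ahha\big(X(a)\big)$ lets one move the denominators from $A\setminus\mf{p}$ past elements of $\mf{h}$, so that $\iota_\ahha(A\setminus\mf{p})$ is a (two-sided) Ore set and the associated Ore localisation is realised precisely by the smash product multiplication transported along $\theta$. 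Everything else---checking the hypotheses of Proposition~\ref{prop:iso3} through the formula of Lemma~\ref{lem:iso4} and bookkeeping the explicit maps on generators---is routine.
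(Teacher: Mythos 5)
Your proof is correct and follows essentially the same route as the paper: the paper's entire proof is the one-line observation that, in view of Lemma \ref{lem:iso4}, the statement follows from Proposition \ref{prop:iso3} with $R = A_{\mf{p}}$, which is precisely your argument with the hypothesis checks and the transport of structure along $\theta$ spelled out. Your closing remark on the Ore condition for $\iota_\ahha(A\setminus\mf{p})$ is a correct consistency check but strictly unnecessary, since the lemma defines the ring structure on $\cU_\ahha(\mf{h})_{\mf{p}}$ precisely by transport along the module isomorphism $\cU_\ahha(\mf{h})_{\mf{p}} \simeq A_{\mf{p}} \otimes_{\ahha} \cU_\ahha(\mf{h})$.
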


\begin{proof}
In view of Lemma \ref{lem:iso4}, the statement follows from Proposition \ref{prop:iso3} with $R = A_{\mf{p}}$.
\end{proof}

\begin{lemma}
\label{lem:iso6}
Let $\mf{p}$ be a prime ideal in $A$. Then there is a canonical isomorphism
\[B_{A_{\mf{p}}}(\mf{g}_{\mf{p}}) \simeq A_{\mf{p}} \otimes_{\ahha} B_{\ahha}(\mf{g})\]
as left $A_{\mf{p}}$-modules.
\end{lemma}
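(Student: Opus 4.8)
The plan is to present $B_\ahha(\mf{g})$ as the kernel of an explicit left $A$-linear map, and then to exploit the flatness of localisation together with the identifications already secured in Lemmata \ref{lem:iso4} and \ref{lem:iso5}. Concretely, following Remark \ref{rem:equalizer} I would introduce the two maps
\[\theta_1 \coloneqq \big(\cU_\ahha(\mf{g}) \otimes_\ahha \Pi\big)\circ \Delta_{\cU_\ahha(\mf{g})} \quad\text{and}\quad \theta_2\colon x \mapsto x \otimes_\ahha 1_{\cU_\ahha(\mf{h})},\]
both going from $\cU_\ahha(\mf{g})$ to $\cU_\ahha(\mf{g}) \otimes_\ahha \cU_\ahha(\mf{h})$, where I regard the Takeuchi product $\cU_\ahha(\mf{g}) \tak{\ahha} \cU_\ahha(\mf{h})$ as a submodule of the full tensor product, which does not affect the kernel. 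Their left $A$-linearity with respect to $\due {\cU_\ahha(\mf{g})} \lact {}$ follows at once from \eqref{beethoven2}, and by Proposition \ref{prop:neverendingprop} one has $B_\ahha(\mf{g}) = \ker(\theta_1 - \theta_2)$.

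Next I would apply the functor $A_\mf{p} \otimes_\ahha -$. Since $A$ is commutative, this functor is exact (being a localisation) and monoidal, so that it preserves kernels and satisfies $(M \otimes_\ahha N)_\mf{p} \simeq M_\mf{p} \otimes_{A_\mf{p}} N_\mf{p}$ naturally in the $A$-modules $M,N$. Applying it to the exact sequence $0 \to B_\ahha(\mf{g}) \to \cU_\ahha(\mf{g}) \xrightarrow{\theta_1 - \theta_2} \cU_\ahha(\mf{g}) \otimes_\ahha \cU_\ahha(\mf{h})$ therefore yields
\[A_\mf{p} \otimes_\ahha B_\ahha(\mf{g}) \simeq \ker\Big(A_\mf{p} \otimes_\ahha (\theta_1 - \theta_2)\Big),\]
where the localised difference is now a map $\cU_\ahha(\mf{g})_\mf{p} \to \cU_\ahha(\mf{g})_\mf{p} \otimes_{A_\mf{p}} \cU_\ahha(\mf{h})_\mf{p}$.

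Finally I would transport this kernel along the isomorphisms of Lemma \ref{lem:iso5}, namely $\cU_\ahha(\mf{g})_\mf{p} \simeq \cU_{A_\mf{p}}(\mf{g}_\mf{p})$ and $\cU_\ahha(\mf{h})_\mf{p} \simeq \cU_{A_\mf{p}}(\mf{h}_\mf{p})$ as cocommutative left Hopf algebroids over $A_\mf{p}$. Because these are isomorphisms of left Hopf algebroids, they intertwine the comultiplications and the unit maps, so that $A_\mf{p} \otimes_\ahha \theta_2$ becomes $x \mapsto x \otimes_{A_\mf{p}} 1$; and by the functoriality recorded in Lemma \ref{lem:iso4}, $A_\mf{p} \otimes_\ahha \Pi$ is identified with the morphism $\Pi_\mf{p}\colon \cU_{A_\mf{p}}(\mf{g}_\mf{p}) \to \cU_{A_\mf{p}}(\mf{h}_\mf{p})$ induced by the localised short exact sequence $0 \to \mf{n}_\mf{p} \to \mf{g}_\mf{p} \to \mf{h}_\mf{p} \to 0$. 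Hence $A_\mf{p} \otimes_\ahha (\theta_1 - \theta_2)$ is precisely the difference map whose kernel is, by definition, $B_{A_\mf{p}}(\mf{g}_\mf{p})$, and chaining the three displays gives the asserted canonical isomorphism $A_\mf{p} \otimes_\ahha B_\ahha(\mf{g}) \simeq B_{A_\mf{p}}(\mf{g}_\mf{p})$ of left $A_\mf{p}$-modules.

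The step I expect to require the most care is this last one: one must verify that the Hopf-algebroid isomorphisms of Lemma \ref{lem:iso5} are \emph{simultaneously} compatible with $\Delta$, with the units, and with the projections $\Pi$, so that the two localised difference maps agree on the nose rather than merely up to conjugation by those isomorphisms. This coherence is exactly what is guaranteed by Lemma \ref{lem:iso5}, since an isomorphism of left Hopf algebroids respects all the structure data at once, together with the functoriality of $\cU_{(-)}(-)$ under localisation from Lemma \ref{lem:iso4}; everything else is the routine bookkeeping of flat base change.
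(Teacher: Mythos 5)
Your proposal is correct and follows essentially the same route as the paper: the paper likewise realises $B_{\ahha}(\mf{g})$ as the equaliser of $(\id \otimes_{\ahha} \Pi)\circ\Delta$ and $\id \otimes_{\ahha} 1$ (noting, as you do, that the Takeuchi product sits inside the full tensor product so this does not affect the equaliser), uses flatness of $A_{\mf{p}}$ over $A$ to localise the equaliser diagram, and then transports it along the Hopf algebroid isomorphisms of Lemma \ref{lem:iso5} together with the localised short exact sequence furnished by the functoriality of Lemma \ref{lem:iso4}. Your phrasing via the kernel of the difference map $\theta_1-\theta_2$ is an equivalent reformulation of the paper's equaliser argument, and your concluding compatibility check matches the commuting diagram the paper exhibits at the end of its proof.
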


\begin{proof}
Recall that
 \[
 B_{\ahha}(\mf{g}) = \pig\{x \in \cU_\ahha(\mf{g}) ~\pig|~ \big(\cU_\ahha(\mf{g}) \otimes_{\ahha} \Pi\big)\left(\Delta_{\cU_\ahha(\mf{g})}(x)\right) = x \otimes_{\ahha} 1\pig\}.
 \]
by definition. Therefore, as left $A$-module, $B_{\ahha}(\mf{g})$ is also the equaliser in $\Lmod{A}$ of
 \[
 \xymatrix @C=90pt{
 \cU_\ahha(\mf{g}) \ar@<+.5ex>[r]^-{(\id \, \otimes_{\ahha}\, \Pi) \, \circ \, \Delta} \ar@<-.5ex>[r]_-{\id \, \otimes_{\ahha}\, 1} & \cU_\ahha(\mf{g}) \otimes_{\ahha} \cU_\ahha(\mf{h})
 }
 \]
(since the $\times_\ahha$-product is a submodule of the tensor product over $A$, this is the same as the equaliser from Remark \ref{rem:equalizer}).
 As $A_{\mf{p}}$ is flat over $A$, the rows of the following commutative diagram are equaliser diagrams as well
  \[
  \xymatrix @C=40pt @R=25pt{
 A_{\mf{p}}\otimes_{\ahha}B_{\ahha}(\mf{g}) \ar@{<->}[d] \ar[r] &  A_{\mf{p}}\otimes_{\ahha}\cU_\ahha(\mf{g}) \ar@{<->}[d] \ar@<+.5ex>[rrr]^-{ \id\,\otimes_\ahha \,\left((\id \, \otimes_\ahha  \,\Pi) \, \circ \, \Delta\right)} \ar@<-.5ex>[rrr]_-{ \id \,\otimes_\ahha \,\left(\id \,\otimes_\ahha \, 1\right)} &&&  A_{\mf{p}}\otimes_\ahha \big(\cU_\ahha(\mf{g}) \otimes_{\ahha} \cU_\ahha(\mf{h})\big) \ar@{<->}[d] \\
 B_{\ahha}(\mf{g})_{\mf{p}} \ar[r] & \cU_\ahha(\mf{g})_{\mf{p}} \ar@<+.5ex>[rrr]^-{ (\id \,\otimes_\ahha \, \Pi_{\mf{p}} ) \, \circ \, \Delta_{\mf{p}}} \ar@<-.5ex>[rrr]_-{ \id \,\otimes_{{\scriptscriptstyle A_{\mf{p}}}}\, 1} &&& \cU_\ahha(\mf{g})_{\mf{p}} \otimes_{\scriptscriptstyle A_{\mf{p}}} \cU_\ahha(\mf{h})_{\mf{p}}.
 }
 \]
Furthermore, since the assignment $\mf{h} \mapsto \mf{h}_{\mf{p}}$ by Lemma \ref{lem:iso4} is functorial, there is a short exact sequence of Lie-Rinehart algebras
 \[ 0 \to \mf{n}_{\mf{p}} \xrightarrow{\iota_{\mf{p}}} \mf{g}_{\mf{p}} \xrightarrow{\pi_\mf{p}} \mf{h}_{\mf{p}} \to 0\]
(which still splits as sequence of left $A_{\mf{p}}$-modules),  
whence we may consider the equaliser diagram
 \[
 \xymatrix@C=40pt{
 B_{A_{\mf{p}}}(\mf{g}_{\mf{p}}) \ar[r] & \cU_{\scriptscriptstyle A_{\mf{p}}}(\mf{g}_{\mf{p}}) \ar@<+.5ex>[rr]^-{ (\id  \,\otimes_{{\scriptscriptstyle A_{\mf{p}}}}\, \Pi' ) \, \circ \, \Delta} \ar@<-.5ex>[rr]_-{ \id \, \otimes_{{\scriptscriptstyle A_{\mf{p}}}}\, 1} && \cU_{\scriptscriptstyle A_{\mf{p}}}(\mf{g}_{\mf{p}}) \otimes_{\scriptscriptstyle A_{\mf{p}}} \cU_{\scriptscriptstyle A_{\mf{p}}}(\mf{h}_{\mf{p}})
 }
 \]
 of left $A_{\mf{p}}$-modules. 
Now, the isomorphism $\cU(\mf{g})_{\mf{p}} \simeq \cU_{\scriptscriptstyle A_{\mf{p}}}(\mf{g}_{\mf{p}})$  of left Hopf algebroids over $A_{\mf{p}}$ from Lemma \ref{lem:iso5} implies that the diagram
 \[
 \xymatrix @C=40pt @R=25pt{
 B_{\ahha}(\mf{g})_{\mf{p}} \ar[r] & \cU_\ahha(\mf{g})_{\mf{p}} \ar@{<->}[d]
\ar@<+.5ex>[rrr]^-{ \big(\id \, \otimes_{\ahha} \,\Pi_{\mf{p}}\big)\circ \Delta_{\mf{p}}} \ar@<-.5ex>[rrr]_-{ \id \, \otimes_{{\scriptscriptstyle A_{\mf{p}}}}\, 1} &&& \cU_\ahha(\mf{g})_{\mf{p}} \otimes_{\scriptscriptstyle A_{\mf{p}}} \cU_\ahha(\mf{h})_{\mf{p}} \ar@{<->}[d] \\ B_{A_{\mf{p}}}(\mf{g}_{\mf{p}}) \ar[r] & \cU_{\scriptscriptstyle A_{\mf{p}}}(\mf{g}_{\mf{p}}) \ar@<+.5ex>[rrr]^-{ \big(\id \, \otimes_{{\scriptscriptstyle A_{\mf{p}}}} \,\Pi'\big)\circ \Delta} \ar@<-.5ex>[rrr]_-{ \id \, \otimes_{{A_{\mf{p}}}} \, 1} &&& \cU_{\scriptscriptstyle A_{\mf{p}}}(\mf{g}_{\mf{p}}) \otimes_{\scriptscriptstyle A_{\mf{p}}} \cU_{\scriptscriptstyle A_{\mf{p}}}(\mf{h}_{\mf{p}})
 }
 \]
 commutes, from which we deduce $B_{\ahha}(\mf{g})_{\mf{p}} \simeq B_{A_{\mf{p}}}(\mf{g}_{\mf{p}})$ as claimed. 
\end{proof}

We are now in a position to prove Proposition \ref{prop:neverendingprop}.

\begin{proof}[Proof of Proposition \ref{prop:neverendingprop}]
By Lemma \ref{lem:iso1}, the injective morphism $I \colon U_\ahha (\mf{n}) \to \cU_\ahha (\mf{g})$ lands in $B_{\ahha}(\mf{g})$, inducing therefore an injective morphism $I \colon U_\ahha (\mf{n}) \to B_{\ahha}(\mf{g})$. 
For every prime ideal $\mf{p}$ in $A$, we may consider the isomorphism of cocommutative left Hopf algebroids $J_{\mf{p}}^{\mf{g}} \colon \cU_\ahha(\mf{g})_{\mf{p}} \simeq \cU_{\scriptscriptstyle A_{\mf{p}}}(\mf{g}_{\mf{p}})$ from Lemma \ref{lem:iso5} and we may look at
\[
\xymatrix{
U_\ahha (\mf{n})_{\mf{p}} \ar[r]^-{I_{\mf{p}}} \ar[d]_-{J_{\mf{p}}^{\mf{n}}} & B_{\ahha}(\mf{g})_{\mf{p}} \ar[d]^-{J_{\mf{p}}^{\mf{g}}} \\
U_{\scriptscriptstyle A_{\mf{p}}}(\mf{n}_{\mf{p}}) \ar[r]_-{I'} & B_{\scriptscriptstyle A_\mf{p}}(\mf{g}_{\mf{p}}).
}
\]
In view of Lemma \ref{lem:iso2}, the bottom arrow $I'$ is an isomorphism (the localisation of a projective module is a projective module over the localised ring and every projective module over a local ring is free). By Lemma \ref{lem:iso5}, the left vertical arrow $J_{\mf{p}}^{\mf{n}}$ is an isomorphism, and Lemma \ref{lem:iso6} implies that the right vertical arrow $J_{\mf{p}}^{\mf{g}}$ is an isomorphism as well. Hence, we are left to check that the diagram commutes. Set $J^{\scriptscriptstyle \bullet} \coloneqq J^{\scriptscriptstyle \bullet}_{\mf{p}}\left(1_{\scriptscriptstyle A_\mf{p}} \otimes_{\ahha} -\right)$. Since all the morphisms are left $A_{\mf{p}}$-linear, it is enough to check that
\[
\left(I'\circ J^{\mf{n}}_{\mf{p}}\right)\left(1_{\scriptscriptstyle A_\mf{p}} \otimes_{\ahha} x\right) = \left(I'\circ J^{\mf{n}}\right)\left(x\right) \quad \text{equals} \quad \left(J^{\mf{g}}_{\mf{p}} \circ I_{\mf{p}}\right)\left(1_{\scriptscriptstyle A_\mf{p}} \otimes_{\ahha} x\right) = \left(J^{\mf{g}} \circ I\right)\left(x\right)
\]
for all $x \in U_\ahha(\mf{n})$.
Since all the latter morphisms are morphisms of $\K$-algebras (and of $A$-rings, in fact) and since
\begin{equation*}
\begin{split}
I'\left(J^{\mf{n}}(\iota_{\mf{n}}(U))\right) \stackrel{\scriptscriptstyle\eqref{eq:Jiotas}}{=} I'\left(\iota_{\mf{n}_{\mf{p}}}\left(\jmath_{\mf{n}}(U)\right)\right) 
&= \iota_{\mf{g}_{\mf{p}}}\left(\iota_{\mf{p}}\left(\jmath_{\mf{n}}(U)\right)\right) \\
&= \iota_{\mf{g}_{\mf{p}}}\left(\jmath_{\mf{g}}\left(\iota(U)\right)\right) \stackrel{\scriptscriptstyle\eqref{eq:Jiotas}}{=} J^{\mf{g}}\left(\iota_{\mf{g}}\left(\iota(U)\right)\right) = J^{\mf{g}}\left(I\left(\iota_{\mf{n}}(U)\right)\right)
\end{split}
\end{equation*}
for all $U \in \mf{n}$, the above diagram commutes by the universal property of $U_\ahha(\mf{n})$ and so $I_{\mf{p}}$ is bijective for every prime ideal $\mf{p}$ of $A$. As bijectivity is a local property, we conclude that $I$ is bijective.
\end{proof}

\subsection{Short exact sequences of Lie-Rinehart algebras and the crossed product decomposition}\label{ssec:crossdec}

Assume that $0 \to \mf{n} \xrightarrow{\iota} \mf{g} \xrightarrow{\pi} \mf{h} \to 0$ is a short exact sequence of Lie-Rinehart algebras, which are projective as left $A$-modules. Let $\gamma \colon \mf{h} \to \mf{g}$ be a section of $\pi$ as a left $A$-linear map. We can now consider the following composition
\begin{equation}
\label{eq:Gammadef}
\Gamma \colon \cU_\ahha(\mf{h}) \xrightarrow{\textsc{S}_{\mf{h}}^{-1}} \cS_\ahha (\mf{h}) \xrightarrow{\cS_\ahha (\gamma)} \cS_\ahha (\mf{g}) \xrightarrow{\textsc{S}_{\mf{g}}} \cU_\ahha(\mf{g}),
\end{equation}
where $\textsc{S}_{\mf{h}}$ and $\textsc{S}_{\mf{g}}$ denote the symmetrisation maps \eqref{eq:symmetr} for the Lie-Rinehart algebras $\mf{h}$ and $\mf{g}$, respectively.
The map $\Gamma$ above is a morphism of $A$-corings thanks to Theorem \ref{thm:isocoring} and to functoriality of $\cS_\ahha (-)$ (in fact, $\cS_\ahha (\gamma)$ is a morphism of left bialgebroids over $A$).

\begin{proposition}\label{prop:Gamma}
The $A$-coring morphism $\Gamma$ induced by $\gamma$ is a section of the left $A$-bialgebroid morphism $\Pi \coloneqq \cU_\ahha (\pi)$. Moreover, 
\[
\Gamma(a \bla u \bra b) = a \bla \Gamma(u) \bra b
\]
for all $a,b \in A$, $ u  \in \cU_\ahha(\mf{h})$.
\end{proposition}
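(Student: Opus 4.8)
The statement has two halves — that $\Pi\circ\Gamma=\id_{\cU_\ahha(\mf h)}$ (i.e.\ $\Gamma$ is a section of $\Pi$) and that $\Gamma$ commutes with the black actions — and I would arrange the argument so that the second half follows cleanly from the first. I would begin by recording the cheap facts about $\Gamma=\textsc{S}_{\mf g}\circ\cS_\ahha(\gamma)\circ\textsc{S}_{\mf h}^{-1}$: it is a morphism of $A$-corings, it fixes $A$ and sends $\iota_{\mf h}(X)$ to $\iota_{\mf g}(\gamma(X))$, and it satisfies $\varepsilon_{\mf g}\circ\Gamma=\varepsilon_{\mf h}$; dually $\Pi=\cU_\ahha(\pi)$ is a bialgebroid morphism, so $\varepsilon_{\mf h}\circ\Pi=\varepsilon_{\mf g}$, $\Pi\circ s_{\mf g}=s_{\mf h}$, and $\Pi$ is multiplicative. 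Since $s=t=\iota_\ahha$ on both enveloping algebras, the combined black action is $a\bla u\bra b=u\,\iota_\ahha(ab)$, so the second half is equivalent to the single identity $\Gamma\big(u\,\iota_\ahha(a)\big)=\Gamma(u)\,\iota_\ahha(a)$ for all $u$ and $a$.

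For the first half, since $\Gamma=\textsc{S}_{\mf g}\circ\cS_\ahha(\gamma)\circ\textsc{S}_{\mf h}^{-1}$ and $\pi\circ\gamma=\id$, functoriality of $\cS_\ahha(-)$ reduces $\Pi\circ\Gamma=\id$ to the naturality identity $\cU_\ahha(\pi)\circ\textsc{S}_{\mf g}=\textsc{S}_{\mf h}\circ\cS_\ahha(\pi)$, after which $\Pi\circ\Gamma=\textsc{S}_{\mf h}\circ\cS_\ahha(\pi\circ\gamma)\circ\textsc{S}_{\mf h}^{-1}=\id$. To prove this naturality I would use that, through the symmetrisation isomorphism (Theorem~\ref{thm:isocoring}) and PBW, $\cU_\ahha(\mf h)$ plays the role of the cofree conilpotent cocommutative $A$-coalgebra cogenerated by $\mf h$, with canonical projection $p_{\mf h}=\mathrm{pr}_{\mf h}\circ\textsc{S}_{\mf h}^{-1}$ onto the cogenerators ($\mathrm{pr}_{\mf h}\colon\cS_\ahha(\mf h)\to\mf h$ the degree-one projection); consequently two coring morphisms into $\cU_\ahha(\mf h)$ coincide once their corestrictions along $p_{\mf h}$ agree. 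Both sides of the naturality identity are such morphisms, and a short computation of corestrictions reduces the claim to $p_{\mf h}\circ\cU_\ahha(\pi)=\pi\circ p_{\mf g}$. This last identity is the technical heart: it says the cogenerator projection is natural for the bialgebroid morphism $\Pi$, which holds because in characteristic zero $p$ coincides with the canonical projection onto primitives (the first Eulerian idempotent, i.e.\ the convolution logarithm of the identity), and the latter manifestly commutes with morphisms of cocommutative bialgebroids. A more computational alternative is induction on the PBW degree via \eqref{eq:symmetr}, the only subtlety being the anchor-induced lower-order corrections produced when commuting $\iota_\ahha(A)$ past the generators.

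Granting $\Pi\circ\Gamma=\id$, the second half becomes a short computation. Fixing $a\in A$, I would set $\delta(u)\coloneqq\Gamma\big(u\,\iota_\ahha(a)\big)-\Gamma(u)\,\iota_\ahha(a)$ and show $\delta=0$. Writing $\iota_\ahha(a)=s(a)$ and combining $\Delta\big(u\,s(a)\big)=\big(u_{(1)}s(a)\big)\otimes_\ahha u_{(2)}$ from \eqref{beethoven2} with the coring-morphism property of $\Gamma$ gives $\Delta_{\mf g}\big(\delta(u)\big)=\delta\big(u_{(1)}\big)\otimes_\ahha\Gamma\big(u_{(2)}\big)$. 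Next I would check $\varepsilon_{\mf g}\circ\delta=0$: the quantity $\varepsilon_{\mf g}\big(\Gamma(u)\,\iota_\ahha(a)\big)$ is the value at $a$ of the $\cU_\ahha(\mf g)$-action \eqref{eq:UAact} on $A$ at $\Gamma(u)$, while $\varepsilon_{\mf g}\big(\Gamma(u\,\iota_\ahha(a))\big)=\varepsilon_{\mf h}\big(u\,\iota_\ahha(a)\big)$ is the value at $a$ of the $\cU_\ahha(\mf h)$-action at $u$; since the $\cU_\ahha(\mf g)$-action precomposed with $\Pi$ equals the $\cU_\ahha(\mf h)$-action (from $\varepsilon_{\mf g}=\varepsilon_{\mf h}\circ\Pi$, multiplicativity of $\Pi$ and $\Pi\circ s_{\mf g}=s_{\mf h}$) and $\Pi\circ\Gamma=\id$, the two values agree. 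Counitality \eqref{beethoven3} then yields $\delta(u)=s_{\mf g}\big(\varepsilon_{\mf g}\delta(u_{(1)})\big)\,\Gamma\big(u_{(2)}\big)=0$. Because $s_{\mf h}=t_{\mf h}$ and $s_{\mf g}=t_{\mf g}$, the vanishing of $\delta$ for every $a$ delivers at once $\Gamma(a\bla u)=a\bla\Gamma(u)$ and $\Gamma(u\bra b)=\Gamma(u)\bra b$, hence $\Gamma(a\bla u\bra b)=a\bla\Gamma(u)\bra b$.

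The main obstacle is thus entirely concentrated in the naturality identity of the first half; once it is in hand, the cofree-coalgebra rigidity and the counit computation make everything else formal. I therefore expect the bulk of the work to lie in controlling the anchor corrections in the symmetrisation map — equivalently, in identifying its cogenerator projection with the natural projection onto primitives — whereas the passage from the first to the second half is essentially automatic.
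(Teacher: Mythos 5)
Your proposal is correct, and on both halves it takes a genuinely different route from the paper's own proof. For the section property $\Pi\circ\Gamma=\id$, the paper argues concretely: it transports the projective basis along $\pi$ and $\gamma$ (setting $\theta_i\coloneqq\pi(\chi_i)$ and $\phi_i\coloneqq\varphi_i\circ\gamma$), checks that $\{\theta_i,\phi_i\}$ is a dual basis for $\mf{h}$, and verifies $\Pi\circ\textsc{S}_{\mf{g}}\circ\cS_\ahha(\gamma)=\textsc{S}_{\mf{h}}$ by hand on homogeneous elements. Your route via cofreeness rigidity and naturality of the cogenerator projection is basis-free and, as a by-product, would show that $\textsc{S}$ is independent of the chosen projective basis --- a point the paper's computation leaves implicit, since its last step identifies the result with the symmetrisation map built from the \emph{induced} basis $\{\theta_i,\phi_i\}$. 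Your two debts are real but payable: the uniqueness-from-corestriction principle is exactly the cofreeness statement of \cite[Prop.~A.2]{MoerdijkLie} that the paper itself invokes in Proposition \ref{prop:seccorresp} (restrict to the kernels of the counits and use cocompleteness and projectivity), and the identification of $\mathrm{pr}_{\mf{h}}\circ\textsc{S}_{\mf{h}}^{-1}$ with the Eulerian idempotent $\log^{\ast}(\id)$ does survive the anchor: the convolution series is left $s(A)$-linear, the coefficients in \eqref{eq:symmetr} all sit on the left, and on words of primitives the coproduct is the anchor-free unshuffle formula, so the classical characteristic-zero computation transfers verbatim (local finiteness of the logarithm follows from PBW/conilpotency); this identification is the one step you assert rather than prove, and it, or your fallback PBW induction, must actually be written out. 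For the black-action compatibility your argument is not merely correct but dramatically shorter than the paper's, which proves $\Gamma(ua)=\Gamma(u)a$ via the long shuffle computation of \S\ref{sssec:prop3.7} resting on Lemma \ref{lem:maancheno}; your counit trick --- apply $m_\uhhu\circ(s\gve\otimes_\ahha\id)\circ\gD$ to both $\Gamma(u\bract a)$ and $\Gamma(u)\bract a$, using \eqref{beethoven2}, \eqref{beethoven3}, $\gve_{\mf{g}}\circ\Gamma=\gve_{\mf{h}}$, $\gve_{\mf{g}}=\gve_{\mf{h}}\circ\Pi$, and $\Pi\circ\Gamma=\id$ --- is a five-line formal verification (all intermediate maps are well defined on $\otimes_\ahha$ by the white bilinearity \eqref{vanilla} of $\Gamma$). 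It in fact proves more than Proposition \ref{prop:Gamma}: for \emph{any} left bialgebroid surjection $\pi\colon U\to V$ with a (counital) $A$-coring section $\gamma$, the identities $\gamma(v\bract a)=\gamma(v)\bract a$ and $\gamma(a\blact v)=a\blact\gamma(v)$ are automatic, the mirror case using the second counitality in \eqref{beethoven3} and $\pi\circ t_\uhhu=t_\vauu$; thus of the hypotheses \eqref{vanilly} in Theorem \ref{thm:sigmatwisted} only $\gamma(1_\vauu)=1_\uhhu$ carries independent content, $\gamma(1_\vauu)$ being a priori merely a grouplike. In short: the paper buys explicitness at the cost of two heavy computations, while your approach buys conceptual economy and stronger by-products, at the cost of one still-to-be-verified (but true) identification.
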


\begin{proof}
Fix a dual basis $\{\chi_i,\varphi_i\mid i \in I\}$ of $\mf{g}$ and consider the elements $\theta_i \coloneqq \pi(\chi_i) \in \mf{h}$ and $\phi_i \coloneqq \varphi_i \circ \gamma \in \Hom{}{}{\ahha}{}{\mf{h}}{A}$ for all $i \in I$. For every $X \in \mf{h}$, one has 
\[
\textstyle\sum\limits_{i \in I}\phi_i(X)\theta_i = \sum\limits_{i \in I}\varphi_i\big(\gamma(X)\big)\pi\left(\chi_i\right) = \pi\pig(\sum\limits_{i \in I}\varphi_i\big(\gamma(X)\big)\chi_i\pig) = \pi(\gamma(X)) = X,
\]
whence $\{\theta_i,\phi_i\mid i \in I\}$ forms a dual basis for $\mf{h}$. To prove that $\Pi \circ \Gamma = \id_{\cU_\ahha(\mf{h})}$, we show that $\Pi \circ \Gamma \circ \textsc{S}_{\mf{h}} = \textsc{S}_{\mf{h}}$. Equivalently, we  are going to show that $\Pi \circ \textsc{S}_{\mf{g}} \circ \cS_\ahha (\gamma) = \textsc{S}_{\mf{h}}$. Since all involved maps are (left) $A$-linear, it is enough to check it on a homogeneous element of the form $X_1\cdots X_k$:
\begin{align*}
\big(\Pi \circ \textsc{S}_{\mf{g}} \circ \cS_\ahha  & (\gamma)\big)\left(X_1\cdots X_k\right) = \left(\Pi \circ \textsc{S}_{\mf{g}}\right)\left(\gamma\left(X_1\right)\cdots \gamma\left(X_k\right)\right)  \\
 & \stackrel{\scriptscriptstyle\eqref{eq:symmetr}}{=} \Pi\Big(\frac{1}{k!}\sum_{\substack{ j_1,\ldots,j_k \in I \\ \sigma\in\mf{S}_k} }\varphi_{j_1}\left(\gamma\left(X_1\right)\right)\cdots\varphi_{j_k}\left(\gamma\left(X_k\right)\right)\chi_{j_{\sigma(1)}}\cdots\chi_{j_{\sigma(k)}}\Big) \\
  & \stackrel{\phantom{\eqref{eq:symmetr}}}{=} \frac{1}{k!}\sum_{\substack{ j_1,\ldots,j_k \in I \\ \sigma\in\mf{S}_k} }\phi_{j_1}\left(X_1\right)\cdots\phi_{j_k}\left(X_k\right)\pi\left(\chi_{j_{\sigma(1)}}\right)\cdots\pi\left(\chi_{j_{\sigma(k)}}\right) \\
  & \stackrel{\phantom{\eqref{eq:symmetr}}}{=} \frac{1}{k!}\sum_{\substack{ j_1,\ldots,j_k \in I \\ \sigma\in\mf{S}_k} }\phi_{j_1}\left(X_1\right)\cdots\phi_{j_k}\left(X_k\right)\theta_{j_{\sigma(1)}}\cdots\theta_{j_{\sigma(k)}} \stackrel{\scriptscriptstyle\eqref{eq:symmetr}}{=} \textsc{S}_{\mf{h}} \left(X_1\cdots X_k\right).
\end{align*}
Concerning the last claim, since $a \bla u \bra b = uab$ as elements in $\cU_\ahha(\mf{h})$ for all $a,b \in A$, $u \in \cU_\ahha(\mf{h})$, it would be enough to check that $\Gamma(ua) = \Gamma(u)a$ for $a \in A$, $u \in \cU_\ahha(\mf{h})$. If we denote by $r_a$ the endomorphism (both of $\cU_\ahha(\mf{h})$ and $\cU_\ahha(\mf{g})$) given by right multiplication by $a \in A$, it suffices to check that $\Gamma \circ r_a \circ \textsc{S}_{\mf{h}} = r_a \circ \Gamma \circ \textsc{S}_{\mf{h}}$ or, equivalently, that $\Gamma \circ r_a \circ \textsc{S}_{\mf{h}} = r_a \circ \textsc{S}_{\mf{g}} \circ \cS_\ahha (\gamma)$ on an homogeneous element of the form $X_1\cdots X_k \in \cS_\ahha (\mf{h})$. For the sake of readability, we postpone a detailed proof of it to \S\ref{sssec:prop3.7}.
\end{proof}

Let us provide an example of why $\Gamma \circ r_a \circ \psi_{\mf{h}} = r_a \circ \psi_{\mf{g}} \circ \cS_\ahha (\gamma)$.

\begin{example}
With the same conventions as in the proof of Proposition \ref{prop:Gamma},  we have
{\small
\begin{align*}
& \Gamma(\textsc{S}_{\mf{h}}(XY)a) = \frac{1}{2}\Gamma\pig(\sum_{i,j}\phi_i(X)\phi_j(Y)(\theta_i\theta_j + \theta_j\theta_i)a\pig) \\
& \stackrel{\phantom{\eqref{eq:Gammadef}}}{=} \frac{1}{2}\Gamma\Big(\sum_{i,j}\phi_i(X)\phi_j(Y)\pig(a\big(\theta_i\theta_j + \theta_j\theta_i\big) + 2\omega_{\mf{h}}(\theta_i)(a)\theta_j + 2\omega_{\mf{h}}(\theta_j)(a)\theta_i + \big[\omega_\mathfrak{h}(\theta_i),\omega_\mathfrak{h}(\theta_j)\big](a)\pig)\Big) \\
& \stackrel{\phantom{\eqref{eq:Gammadef}}}{=} a\,\Gamma\big(\textsc{S}_{\mf{h}}(XY)\big) + \omega_{\mf{h}}(X)(a)\Gamma\big(\textsc{S}_{\mf{h}}(Y)\big) + \omega_{\mf{h}}(Y)(a)\Gamma\big(\textsc{S}_{\mf{h}}(X)\big) + \frac{1}{2}\sum_{i,j}\phi_i(X)\phi_j(Y)\big[\omega_{\mf{h}}(\theta_i),\omega_{\mf{h}}(\theta_j)\big](a) \\
& \stackrel{\scriptscriptstyle\eqref{eq:Gammadef}}{=} a\,\textsc{S}_{\mf{g}}\big(\gamma(X)\gamma(Y)\big) + \omega_{\mf{h}}(X)(a)\textsc{S}_{\mf{g}}\big(\gamma(Y)\big) + \omega_{\mf{h}}(Y)(a)\textsc{S}_{\mf{g}}\big(\gamma(X)\big) \, + \\
& \hspace{5cm} + \, \frac{1}{2}\sum_{i,j}\varphi_i\big(\gamma(X)\big)\varphi_j\big(\gamma(Y)\big)\big[\omega_{\mf{h}}\big(\pi(\chi_i)\big),\omega_{\mf{h}}\big(\pi(\chi_j)\big)\big](a) \\
& \stackrel{\phantom{\eqref{eq:Gammadef}}}{=} \frac{1}{2}\sum_{i,j}\varphi_i\big(\gamma(X)\big)\varphi_j\big(\gamma(Y)\big)\Big(a\big(\chi_i\chi_j + \chi_j\chi_i\big) + 2\omega_{\mf{g}}(\chi_i)(a)\chi_j + 2\omega_{\mf{g}}(\chi_j)(a)\chi_i + \big[\omega_{\mf{g}}(\chi_i),\omega_{\mf{g}}(\chi_j)\big](a)\Big) \\
& \stackrel{\phantom{\eqref{eq:Gammadef}}}{=} \frac{1}{2}\sum_{i,j}\varphi_i\big(\gamma(X)\big)\varphi_j\big(\gamma(Y)\big)\Big(\big(\chi_i\chi_j + \chi_j\chi_i\big)a\Big) = \textsc{S}_{\mf{g}}\left(\cS_\ahha (\gamma)(XY)\right)a.
\end{align*}}%
for all $X,Y \in \mf{h}$ and all $a \in A$.
\end{example}

By applying Theorem \ref{thm:sigmatwisted} and Proposition \ref{prop:neverendingprop} we may conclude the following.

\begin{theorem}
\label{thm:mainthmA}
Let $0 \to \mf{n} \xrightarrow{\iota} \mf{g} \xrightarrow{\pi} \mf{h} \to 0$ be a short exact sequence of Lie-Rinehart algebras, which are projective as left $A$-modules. Then we have an isomorphism 
\[
\cU_\ahha(\mf{g}) \simeq U_{\ahha}(\mf{n}) \hash{\sigma} \cU_\ahha(\mf{h})
\]
of $A$-rings and right $\cU_\ahha(\mf{h})$-comodule algebras,
where $\sigma$ is defined as in \eqref{chesorpresa}.
\end{theorem}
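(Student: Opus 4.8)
The plan is to recognise the desired decomposition as a direct instance of the abstract Blattner--Cohen--Montgomery theorem, Theorem \ref{thm:sigmatwisted}, applied to the surjection $\Pi = \cU_\ahha(\pi)\colon \cU_\ahha(\mf{g}) \to \cU_\ahha(\mf{h})$ from \eqref{eq:I}, and then to translate the resulting left Hopf kernel into $U_\ahha(\mf{n})$ by means of Proposition \ref{prop:neverendingprop}. Thus the body of the argument consists in verifying that the hypotheses of Theorem \ref{thm:sigmatwisted} are met with $U = \cU_\ahha(\mf{g})$, $V = \cU_\ahha(\mf{h})$ and the $A$-coring section $\Gamma$ of \eqref{eq:Gammadef}, after which the conclusion is a matter of transport of structure.

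First I would record that $\Pi$ is a morphism of left Hopf algebroids which is surjective, since $\pi$ is surjective and $\cU_\ahha(\mf{h})$ is generated as an algebra by the images of $A$ and $\mf{h}$; this is already observed right after \eqref{eq:I}. Next, Proposition \ref{prop:Gamma} supplies precisely what is needed on the side of the section: $\Gamma$ is a morphism of $A$-corings with $\Pi \circ \Gamma = \id_{\cU_\ahha(\mf{h})}$, and it satisfies $\Gamma(a \blact u \bract b) = a \blact \Gamma(u) \bract b$ for all $a,b \in A$ and $u \in \cU_\ahha(\mf{h})$; specialising $b = 1_\ahha$ and $a = 1_\ahha$ yields the two black-action compatibilities in \eqref{vanilly}, while $\Gamma(1) = 1$ follows from the unitality of the symmetrisation maps $\textsc{S}_\mf{h}, \textsc{S}_\mf{g}$ and of $\cS_\ahha(\gamma)$. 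It remains to check that $\cU_\ahha(\mf{g})_\ract$ is projective as a right $A$-module. Since $\cU_\ahha(\mf{g})$ is cocommutative we have $s_\uhhu = t_\uhhu = \iota_\ahha$ (see Example \ref{ex:UEAbialgd}), so through the commutativity of $A$ the right action $\ract$ has the same underlying module as the left regular structure $\due{\cU_\ahha(\mf{g})}{\lact}{}$; by Lemma \ref{lem:symmetrisation} the latter is isomorphic to $\cS_\ahha(\mf{g})$, which is projective over $A$ because $\mf{g}$ is. Hence all hypotheses of Theorem \ref{thm:sigmatwisted} hold.

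With these in place, Theorem \ref{thm:sigmatwisted} produces a $B$-valued Hopf $2$-cocycle $\sigma$, namely \eqref{chesorpresa} with $\gamma$ replaced by $\Gamma$, where $B = B_\ahha(\mf{g})$ is the left Hopf kernel of $\Pi$ as in \eqref{mandarino}, together with an isomorphism
\[
\Phi\colon B_\ahha(\mf{g}) \hash{\sigma} \cU_\ahha(\mf{h}) \xrightarrow{\ \sim\ } \cU_\ahha(\mf{g}), \qquad b \hash{} u \mapsto b\,\Gamma(u),
\]
of $\Ae$-rings and of right $\cU_\ahha(\mf{h})$-comodule algebras. Because the base $A$ is commutative and $s_\uhhu = t_\uhhu = \iota_\ahha$, the $\Ae$-ring structures of Theorem \ref{thm:sigmatwisted} reduce to $A$-ring structures here. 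Finally I would invoke Proposition \ref{prop:neverendingprop}, which identifies $B_\ahha(\mf{g})$ with $U_\ahha(\mf{n})$ as $A$-rings via the canonical morphism $I$ of \eqref{eq:I}; transporting $\sigma$ and the crossed product structure along this identification turns $\Phi$ into the sought isomorphism $U_\ahha(\mf{n}) \hash{\sigma} \cU_\ahha(\mf{h}) \simeq \cU_\ahha(\mf{g})$ of $A$-rings and right $\cU_\ahha(\mf{h})$-comodule algebras.

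I do not expect a genuine obstacle at this stage, since the conceptually delicate points have already been isolated and settled in the preceding results: the construction of the cocycle and the inverse maps in Theorem \ref{thm:sigmatwisted}, the coring-splitting behaviour of the symmetrisation-based $\Gamma$ in Proposition \ref{prop:Gamma}, and above all the localisation argument identifying the Hopf kernel $B_\ahha(\mf{g})$ with $U_\ahha(\mf{n})$ in Proposition \ref{prop:neverendingprop}. The only point demanding a moment's care is the projectivity of $\cU_\ahha(\mf{g})_\ract$, which reduces cleanly to the symmetrisation isomorphism and the projectivity of $\cS_\ahha(\mf{g})$; everything else is bookkeeping in the transport of structure.
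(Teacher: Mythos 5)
Your proposal is correct and follows essentially the same route as the paper: the paper likewise applies Theorem \ref{thm:sigmatwisted} to $\Pi = \cU_\ahha(\pi)$ with the symmetrisation-based coring section $\Gamma$ of Proposition \ref{prop:Gamma}, identifies the left Hopf kernel with $U_\ahha(\mf{n})$ via Proposition \ref{prop:neverendingprop}, and notes that the $\Ae$-ring structures collapse to $A$-ring structures since $s = t = \iota_\ahha$. Your explicit verifications of the hypotheses of Theorem \ref{thm:sigmatwisted} (unitality of $\Gamma$, the black-action compatibilities, and the projectivity of $\cU_\ahha(\mf{g})_\ract$ via Lemma \ref{lem:symmetrisation}) are all sound and merely spell out what the paper leaves implicit.
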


\begin{proof}
The sole detail that deserves to be highlighted is that Theorem \ref{thm:sigmatwisted} entails that $\cU_\ahha(\mf{g}) \simeq U_{\ahha}(\mf{n}) \hash{\sigma} \cU_\ahha(\mf{h})$ as $\Ae$-rings, where the structures of rings over $\Ae$ are given by
\begin{equation*}
\begin{array}{rclrcl}
A \ot A &\to& \cU_\ahha(\mf{g}), & a \ot b &\mapsto& \iota_\ahha (ab), 
\\ 
A \ot A &\to& U_\ahha(\mf{n}) \hash{\sigma} \cU_\ahha(\mf{h}), & a \ot b &\mapsto& \iota_\ahha (a) \otimes_\ahha  \iota_\ahha (b) = 1 \ot_\ahha \iota_\ahha (ab) = \iota_\ahha (ab)\ot_\ahha 1.
\end{array}
\end{equation*}
It is then clear that $\cU_\ahha(\mf{g}) \simeq U_{\ahha}(\mf{n}) \hash{\sigma} \cU_\ahha(\mf{h})$ as $A$-rings as well.
\end{proof}

From Proposition \ref{prop:semidirect} we directly deduce:

\begin{corollary}
\label{cor:mainthmB}
If $\mf{g} \simeq \mf{n} \niplus_\tau \mf{h}$ is a curved semi-direct sum of the $A$-Lie algebra $(A,\mf{n})$ and of the Lie-Rinehart algebra $(A,\mf{h},\omega)$, both projective as $A$-modules, then we have an isomorphism 
\[
\cU_\ahha (\mf{n} \niplus_\tau \mf{h}) \simeq U_{\ahha}(\mf{n}) \hash{\sigma} \cU_\ahha(\mf{h})
\]
of $A$-rings and right $\cU_\ahha(\mf{h})$-comodule algebras.
\end{corollary}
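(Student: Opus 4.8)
The plan is to exhibit the curved semi-direct sum $\mf{n} \niplus_\tau \mf{h}$ as the middle term of a short exact sequence of projective Lie-Rinehart algebras and then to invoke Theorem \ref{thm:mainthmA}. Almost all of the work has been carried out in the preceding results, so the proof reduces to checking that the hypotheses of that theorem are in force.

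First I would recall, from Proposition \ref{prop:curvedsdp} and Definition \ref{def:curvedsdp}, that the underlying left $A$-module of $\mf{n} \niplus_\tau \mf{h}$ is $\mf{n} \oplus \mf{h}$. Since $\mf{n}$ and $\mf{h}$ are projective left $A$-modules by assumption, their direct sum is projective as well, so $\mf{g} \simeq \mf{n} \niplus_\tau \mf{h}$ is a projective Lie-Rinehart algebra. Next, the datum of the connection $\rho$ together with the isomorphism $\mf{n} \niplus_\tau \mf{h} \simeq \mf{g}$ is precisely condition \ref{item:CP1} of Proposition \ref{prop:semidirect}; by the equivalence \ref{item:CP1} $\Leftrightarrow$ \ref{item:CP3} this furnishes a short exact sequence of Lie-Rinehart algebras
\[
0 \to \mf{n} \xrightarrow{\iota} \mf{g} \xrightarrow{\pi} \mf{h} \to 0
\]
for which $\pi$ admits a left $A$-linear section. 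Concretely, this section is the canonical inclusion $\mf{h} \to \mf{n} \oplus \mf{h}$, $X \mapsto (0,X)$; alternatively, projectivity of $\mf{h}$ already forces $\pi$ to split $A$-linearly.

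Finally, applying Theorem \ref{thm:mainthmA} to this short exact sequence yields the isomorphism
\[
\cU_\ahha (\mf{n} \niplus_\tau \mf{h}) \simeq U_{\ahha}(\mf{n}) \hash{\sigma} \cU_\ahha(\mf{h})
\]
of $A$-rings and right $\cU_\ahha(\mf{h})$-comodule algebras, where $\sigma$ is the Hopf $2$-cocycle of \eqref{chesorpresa}. I do not expect any genuine obstacle here: the whole argument is the bookkeeping that the hypotheses of Theorem \ref{thm:mainthmA} are satisfied, the only substantive point being the projectivity of $\mf{g}$, which I would record as above. The identification of $\sigma$ with the Lie cocycle $\tau$ entering the curved sum is a separate matter, treated by Theorem \ref{rain&fog}, and is not needed for the present statement.
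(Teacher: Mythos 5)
Your proposal is correct and matches the paper's own (very brief) argument: the paper deduces Corollary \ref{cor:mainthmB} directly from the equivalence \ref{item:CP1} $\Leftrightarrow$ \ref{item:CP3} of Proposition \ref{prop:semidirect}, which realises $\mf{n} \niplus_\tau \mf{h}$ as the middle term of a short exact sequence admitting a left $A$-linear section, followed by an application of Theorem \ref{thm:mainthmA}. Your additional bookkeeping (projectivity of $\mf{n} \oplus \mf{h}$, the explicit section $X \mapsto (0,X)$) is exactly the routine verification the paper leaves implicit, and your remark that identifying $\sigma$ with $\tau$ is a separate matter handled by Theorem \ref{rain&fog} is accurate.
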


Summing up, any left $A$-linear splitting $\gamma \colon  \mf{h} \to \mf{g}$ of a short exact sequence $ 0 \to \mf{n} \xrightarrow{\iota} \mf{g} \xrightarrow{\pi} \mf{h} \to 0$ of Lie-Rinehart algebras over $A$ gives rise to an $A$-coring splitting $\Gamma_\gamma \coloneqq \textsc{S}_{\mf{g}} \circ \cS_\ahha (\gamma) \circ \textsc{S}_{\mf{h}}^{-1}$ of the surjection $\cU_\ahha (\pi) \colon \cU_\ahha(\mf{g}) \to \cU_\ahha(\mf{h})$ such that $\Gamma_\gamma$ is also right $A$-linear and $\Gamma_\gamma(1) = 1$.

\begin{lemma}\label{lem:seccorresp}
Let $\pi \colon \mf{g} \to \mf{h}$ be an epimorphism of Lie-Rinehart algebras over $A$ which are projective as left $A$-modules. Assume to have an $A$-coring splitting $\Gamma$ of the surjection $\cU_\ahha (\pi) \colon \cU_\ahha(\mf{g}) \to \cU_\ahha(\mf{h})$ such that $\Gamma(1) = 1$. Then $\Gamma$ induces a left $A$-linear splitting $\gamma_\Gamma$ of $\pi \colon \mf{g} \to \mf{h}$. Moreover, if $\Gamma = \Gamma_\gamma$ for some $\gamma \colon \mf{h} \to \mf{g}$, then $\gamma_\Gamma = \gamma$.
\end{lemma}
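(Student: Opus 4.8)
The plan is to define $\gamma_\Gamma$ as the restriction of $\Gamma$ to primitive elements and to exploit the identification of the primitives of $\cU_\ahha(\mf{g})$ and $\cU_\ahha(\mf{h})$ with $\mf{g}$ and $\mf{h}$, respectively. Since we work over a field of characteristic $0$ and both $\mf{g}$ and $\mf{h}$ are projective, the PBW theorem --- equivalently, the $A$-coring isomorphism $\textsc{S}$ of Theorem \ref{thm:isocoring} --- identifies the space of primitives $\cP\big(\cU_\ahha(\mf{g})\big) = \big\{u \mid \Delta_\uhhu(u) = u \otimes_\ahha 1 + 1 \otimes_\ahha u\big\}$ with $\mf{g}$, and likewise for $\mf{h}$; see also \cite[\S3]{MoerdijkLie}. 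The crucial observation is then that $\Gamma$ \emph{preserves primitives}: as $\Gamma$ is a morphism of $A$-corings with $\Gamma(1) = 1$, for every $X \in \mf{h}$ one computes, using \eqref{mainsomma1} and comultiplicativity of $\Gamma$,
\[
\Delta_\uhhu\big(\Gamma(X)\big) = (\Gamma \otimes_\ahha \Gamma)\big(\Delta_\vauu(X)\big) = \Gamma(X) \otimes_\ahha 1 + 1 \otimes_\ahha \Gamma(X),
\]
so that $\Gamma(X) \in \cP\big(\cU_\ahha(\mf{g})\big) = \mf{g}$. This allows us to define $\gamma_\Gamma \colon \mf{h} \to \mf{g}$ as the corestriction of $\Gamma\big|_{\mf{h}}$.

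Next I would verify the two required properties. Left $A$-linearity of $\gamma_\Gamma$ is immediate, since every morphism of $A$-corings is in particular a morphism of $A$-bimodules, hence left $A$-linear. For the section property, I use that $\Gamma$ splits $\Pi \coloneqq \cU_\ahha(\pi)$, that is $\Pi \circ \Gamma = \id_\vauu$; restricting to $\mf{h}$ and recalling that $\Pi$ is induced by $\pi$, so $\Pi(\iota_{\mf{g}}(Y)) = \iota_{\mf{h}}(\pi(Y))$ for all $Y \in \mf{g}$ (see \eqref{eq:I}), one finds $\iota_{\mf{h}}\big(\pi(\gamma_\Gamma(X))\big) = \Pi\big(\Gamma(X)\big) = X$ for every $X \in \mf{h}$. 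Injectivity of $\iota_{\mf{h}}$ (again a consequence of PBW and projectivity) then yields $\pi \circ \gamma_\Gamma = \id_{\mf{h}}$. Compatibility with the anchors, $\omega_{\mf{g}} \circ \gamma_\Gamma = \omega_{\mf{h}}$, follows formally from $\omega_{\mf{h}} \circ \pi = \omega_{\mf{g}}$ and $\pi \circ \gamma_\Gamma = \id_{\mf{h}}$.

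Finally, for the compatibility $\gamma_{\Gamma_\gamma} = \gamma$ I would simply unwind the definition \eqref{eq:Gammadef} of $\Gamma_\gamma = \textsc{S}_{\mf{g}} \circ \cS_\ahha(\gamma) \circ \textsc{S}_{\mf{h}}^{-1}$ on a primitive element. Because $\textsc{S}_1 = \iota$ and $\textsc{S}$ is a counital $A$-coring isomorphism, $\textsc{S}_{\mf{h}}^{-1}$ sends the primitive $X \in \mf{h} \subseteq \cU_\ahha(\mf{h})$ to $X \in \cS^1_\ahha(\mf{h})$; applying $\cS_\ahha(\gamma)$ gives $\gamma(X) \in \cS^1_\ahha(\mf{g})$, and $\textsc{S}_{\mf{g}}$ carries it back to $\iota_{\mf{g}}(\gamma(X))$. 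Hence $\Gamma_\gamma(X) = \gamma(X)$ in $\mf{g}$, i.e.\ $\gamma_{\Gamma_\gamma} = \gamma$, as desired.

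The main (and essentially the only) obstacle is the identification of the primitives of $\cU_\ahha(\mf{g})$ with $\mf{g}$ in the bialgebroid setting: one must ensure that no nonzero element of $A = \cU_\ahha(\mf{g})_{\leq 0}$ is primitive --- which holds since $\Delta_\uhhu(\iota_\ahha(a)) = \iota_\ahha(a) \otimes_\ahha 1$, so that $1 \otimes_\ahha \iota_\ahha(a) \neq 0$ forbids primitivity --- and that the characteristic-$0$ PBW/coring isomorphism pins the primitives down to exactly $\mf{g}$. Once this is in place, the remaining verifications are the short diagram-chases sketched above.
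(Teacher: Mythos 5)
Your proof is correct and follows essentially the same route as the paper's: you show that $\Gamma$ preserves primitives via its comultiplicativity, identify $\cP_\ahha\big(\cU_\ahha(\mf{g})\big) = \mf{g}$ through the PBW/coring isomorphism, obtain $\gamma_\Gamma$ by (co)restriction, deduce left $A$-linearity and $\pi \circ \gamma_\Gamma = \id_{\mf{h}}$ from the $A$-coring morphism and splitting properties (using injectivity of $\iota_{\mf{h}}$), and get $\gamma_{\Gamma_\gamma} = \gamma$ by evaluating $\Gamma_\gamma$ on degree-one elements, exactly as in the paper. Your two added verifications --- that no nonzero element of $A$ is primitive, and the anchor compatibility $\omega_{\mf{g}} \circ \gamma_\Gamma = \omega_{\mf{h}}$ --- are correct refinements that the paper leaves implicit (the latter is recorded separately in Proposition \ref{prop:semidirect}).
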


\begin{proof}
Notice that for every $X \in \mf{h}$ we have
\[
\Delta_{\mf{g}}(\Gamma(X)) = (\Gamma \otimes_\ahha  \Gamma)\big(\Delta_{\mf{h}}(X)\big) = \Gamma(X) \otimes_\ahha  1 + 1 \otimes_\ahha  \Gamma(X),
\]
which means that $\Gamma(X) \in \cP_\ahha(\cU_\ahha(\mf{g})) = \mf{g}$ is a primitive element. Therefore, there exists a unique $\gamma_\Gamma \colon \mf{h} \to \mf{g}$ such that
\[
\xymatrix{
\cU_\ahha(\mf{h}) \ar[r]^-{\Gamma} & \cU_\ahha(\mf{g}) \\
\mf{h} \ar[u]^-{\iota_{\mf{h}}} \ar[r]_-{\gamma_\Gamma} & \mf{g} \ar[u]_-{\iota_{\mf{g}}}
}
\]
commutes. Since
\[\iota_{\mf{g}}\big(\gamma_\Gamma(a \cdot X)\big) = \Gamma\big(\iota_{\mf{h}}(a \cdot X)\big) = \iota_\ahha (a)\cdot \Gamma\big(\iota_{\mf{h}}(X)\big) = \iota_\ahha (a) \cdot \iota_{\mf{g}}\big(\gamma(X)\big) = \iota_{\mf{g}}\big(a \cdot \gamma_\Gamma(X)\big)\]
for all $a \in A$, $X \in \mf{h}$, it follows that $\gamma_\Gamma$ is left $A$-linear. Furthermore,
\[\iota_{\mf{h}}\big(\pi(\gamma_\Gamma(X))\big) = \cU_\ahha (\pi)\big(\iota_{\mf{g}}(\gamma_\Gamma(X))\big) = \cU_\ahha (\pi)\big(\Gamma(\iota_{\mf{h}}(X))\big) = \iota_{\mf{h}}(X)\]
for all $X \in \mf{h}$, which implies that $\pi \circ \gamma_\Gamma = \id_{\mf{h}}$.
The final claim follows from observing that $\Gamma_\gamma(X) = \gamma(X)$ for all $X \in \mf{h}$.
\end{proof}

\begin{remark}
\label{rem:notbij}
  There is no reason to expect that the correspondence in Lemma \ref{lem:seccorresp} is a bijective correspondence, since the assignment $\gamma \mapsto \Gamma$ depends on an arbitrary choice of (what physicists loosely call) ``an ordering''. Consider, for instance, the following toy example over $A = \R$, the real numbers. Let $\mf{g}$ be the Heisenberg algebra over $\R$ defined as $\mf{g} = \R q \oplus \R p \oplus \R c$, endowed with the Lie bracket (the so-called ``canonical commutation relations'' in physics)
  \[
  [p,q] =c \qquad \text{and} \qquad [p,c] = 0 = [q,c],
  \]
  and let $\mf{h}$ be the two-dimensional abelian Lie algebra $\R x \oplus \R y$. The linear morphism
  \[
  \pi \colon \mf{g} \to \mf{h}, \qquad 
  \begin{cases} 
  q \mapsto x \\ p \mapsto y \\ c \mapsto 0
  \end{cases}
  \]
  is a surjective Lie algebra morphism with kernel the one-dimensional central Lie subalgebra $\mf{n}=\R c$. In other words, the Heisenberg algebra $\mf{g}$ is the central extension of the abelian two-dimensional algebra $\mf{h}$ by the one-dimensional abelian algebra $\mf{n}$. 
  In this case, 
  \[U(\mf{n}) \simeq \R[C], \qquad U(\mf{h}) \simeq \R[X,Y], \qquad  \text{and} \qquad U(\mf{g}) \simeq \R[Q,C][P;\delta],\] 
  the Ore extension (also known as skew polynomial ring) of the polynomial algebra $\R[Q,C]$ with respect to the derivation $\delta\in\Der\big(\R[Q,C]\big)$,
  uniquely determined by 
  \[
  \delta(Q) = C \qquad \text{and} \qquad \delta(C) = 0. 
  \]
  The Hopf algebra morphism $U(\pi) \colon U(\mf{g}) \to U(\mf{h})$ is uniquely determined by 
  \[\pi(Q) = X, \qquad \pi(P) = Y \qquad \text{and} \qquad \pi(C) = 0.\]
  Now, the $\R$-linear maps
  \[
  \Gamma \colon U(\mf{h}) \to U(\mf{g}), \qquad X^nY^m \mapsto Q^nP^m
  \]
  and
  \[
  \Gamma' \colon U(\mf{h}) \to U(\mf{g}), \qquad X^nY^m \mapsto P^mQ^n
  \]
  are both coalgebra sections of the projection $U(\pi)$ (the first one corresponds to the mapping in \cite[Ex.~4.20]{BlattnerCohenMontgomery} by considering the basis $\{x,y\}$ of $\mf{h}$ ordered by $x < y$ and the second one is simply obtained by considering it ordered by $y < x$). Both of them, when restricted to $\mf{h}$, give rise to the $\R$-linear section
  \[
  \gamma \colon \mf{h} \to \mf{g}, \qquad \begin{cases}x \mapsto q \\ y \mapsto p\end{cases}
  \]
  of $\pi$. However,
  \[
  \Gamma'(XY) = PQ = QP + C \neq QP = \Gamma(XY).
  \] 
  Note that the differential star product 
  \be\label{eq:star}
  f(Q,P)\star g(Q,P) = \pig(\exp\pig(C\frac{\partial}{\partial p}\frac{\partial}{\partial q}\pig)\big(f(Q,P+p)g(Q+q,P)\big)\pig)\pig\vert_{q=p=0}
  \ee
  on $\R[Q,P]$ endows the associated graded space $\gr\,U(\mf{g})=\R[Q,P,C]$ with the structure of an associative algebra isomorphic to $U(\mf{g})$, in formulas
  \[\big(U(\mf{n})\hash{\sigma}U(\mf{h}),\cdot\big) \cong \big(U(\mf{g}),\star\big),\]
  and where the image of $\Gamma$ identifies with the subspace $\R[Q,P]$. The counit is the evaluation at the origin and the coproduct can be realised via the formula $f(Q_1+Q_2,P_1+P_2)=f_{(1)}(Q_1,P_1)f_{(2)}(Q_2,P_2)$, where the isomorphism $\R[Q,P]\otimes\R[Q,P]\simeq\R[Q_1,P_1,Q_2,P_2]$ is understood. 
  This allows to compute the explicit expression for the Hopf cocycle $\sigma \colon U(\mf{h})\otimes U(\mf{h})\to U(\mf{n})$,
  \be\label{Hopf}
  \sigma(f,g)=(f\star g)\mid_{Q=0=P}, \quad \forall f,g\in\R[Q,P].
  \ee
 In practice, the only non-zero values are $\sigma(P^n,Q^n)=n!\,C^n$ for $n\in\mathbb{N}$.
\end{remark}

Recall that the universal enveloping algebra $\cU_\ahha(\mf{g})$ of a Lie-Rinehart algebra $(A,\mf{g},\omega)$ which is projective over $A$ is filtered with respect to the canonical filtration, where $\cU_\ahha(\mf{g})_{-1} \coloneqq 0$, $\cU_\ahha(\mf{g})_{0} \coloneqq A$, and for $p > 0$, $\cU_\ahha(\mf{g})_{p}$ is the left $A$-submodule of $\cU_\ahha(\mf{g})$ generated by $\iota_\mf{g}(\mf{g})^p$ (that is, products of the image of $\mf{g}$ in $\cU_\ahha(\mf{g})$ of length at most $p$).
Using this fact, and in spite of Remark \ref{rem:notbij}, one may prove the following refinement of Lemma \ref{lem:seccorresp}.

\begin{proposition}
\label{prop:seccorresp}
Let $\pi \colon \mf{g} \to \mf{h}$ be an epimorphism of Lie-Rinehart algebras over $A$ which are projective as left $A$-modules. On the set $\Sigma$ of unital $A$-coring splittings $\Gamma \colon \cU_\ahha(\mf{h}) \to \cU_\ahha(\mf{g})$ of the surjection $\cU_\ahha(\pi) \colon \cU_\ahha(\mf{g}) \to \cU_\ahha(\mf{h})$ which are filtered with respect to the canonical filtrations, define the equivalence relation
\[
\Gamma \sim \Gamma' \quad \Leftrightarrow \quad \gr(\Gamma) = \gr(\Gamma'),
\]
where $\gr(\Gamma) \colon \gr\,\cU_\ahha(\mf{h}) \to\gr\,\cU_\ahha(\mf{g})$ denotes the graded map associated to $\Gamma$.
Then there is a bijective correspondence between $\Sigma/{\sim}$ and the set of left $A$-linear splittings $\gamma \colon \mf{h}\to\mf{g}$ of $\pi \colon \mf{g} \to \mf{h}$.
\end{proposition}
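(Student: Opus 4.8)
The plan is to show that the assignment $\Gamma \mapsto \gamma_\Gamma$ of Lemma~\ref{lem:seccorresp} descends to the claimed bijection, the decisive point being the identification of $\gr(\Gamma)$ with the symmetric extension of $\gamma_\Gamma$. First I would assemble the two directions. By Lemma~\ref{lem:seccorresp}, each $\Gamma \in \Sigma$ yields a left $A$-linear splitting $\gamma_\Gamma$ of $\pi$, giving a map $\Phi\colon \Sigma \to \{\text{left } A\text{-linear splittings of } \pi\}$, $\Gamma \mapsto \gamma_\Gamma$. Conversely, for any such splitting $\gamma$ the morphism $\Gamma_\gamma \coloneqq \textsc{S}_{\mf{g}} \circ \cS_\ahha(\gamma) \circ \textsc{S}_{\mf{h}}^{-1}$ is a unital $A$-coring splitting of $\cU_\ahha(\pi)$ by Theorem~\ref{thm:isocoring} and Proposition~\ref{prop:Gamma}, and it is filtered, being a composite of filtered maps ($\textsc{S}_{\mf{h}}^{-1}$ and $\textsc{S}_{\mf{g}}$ are filtered by Lemma~\ref{lem:symmetrisation} and $\cS_\ahha(\gamma)$ is graded). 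Hence $\Gamma_\gamma \in \Sigma$, and Lemma~\ref{lem:seccorresp} gives $\gamma_{\Gamma_\gamma} = \gamma$; in particular $\Phi$ is surjective. It then suffices to prove that for $\Gamma, \Gamma' \in \Sigma$ one has $\gr(\Gamma) = \gr(\Gamma')$ if and only if $\gamma_\Gamma = \gamma_{\Gamma'}$.

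The key step is the claim that, under the PBW identifications $\gr\,\cU_\ahha(\mf{h}) \simeq \cS_\ahha(\mf{h})$ and $\gr\,\cU_\ahha(\mf{g}) \simeq \cS_\ahha(\mf{g})$ of graded $A$-corings (Lemma~\ref{lem:symmetrisation} and Theorem~\ref{thm:isocoring}), one has $\gr(\Gamma) = \cS_\ahha(\gamma_\Gamma)$. Since $\Gamma$ is a filtered morphism of $A$-corings, $\gr(\Gamma)\colon \cS_\ahha(\mf{h}) \to \cS_\ahha(\mf{g})$ is a graded, unital morphism of $A$-corings, and its degree-$1$ component is $\gamma_\Gamma$: indeed $\Gamma$ maps the primitives $\mf{h}$ into $\cP_\ahha(\cU_\ahha(\mf{g})) = \mf{g}$ with $\Gamma|_{\mf{h}} = \iota_{\mf{g}} \circ \gamma_\Gamma$, by the very definition of $\gamma_\Gamma$. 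Both $\gr(\Gamma)$ and $\cS_\ahha(\gamma_\Gamma)$ are graded $A$-coring morphisms agreeing in degree $0$ (both restrict to $\id_A$) and in degree $1$ (both equal $\gamma_\Gamma$), so I would argue by induction on $n$ that they agree on $\cS_\ahha^n(\mf{h})$. For $n \geq 2$ and $x \in \cS_\ahha^n(\mf{h})$, the reduced comultiplication satisfies $\Delta(x) - x \otimes_\ahha 1 - 1 \otimes_\ahha x \in \bigoplus_{p=1}^{n-1} \cS_\ahha^p(\mf{h}) \otimes_\ahha \cS_\ahha^{n-p}(\mf{h})$. Writing $\delta \coloneqq \gr(\Gamma) - \cS_\ahha(\gamma_\Gamma)$ and using that both maps are unital coring morphisms together with the inductive hypothesis on the lower-degree tensor factors (whose images cancel), one gets
\[
\Delta\big(\delta(x)\big) = \delta(x) \otimes_\ahha 1 + 1 \otimes_\ahha \delta(x),
\]
so $\delta(x)$ is a primitive element; being homogeneous of degree $n \geq 2$, it lies in $\cS_\ahha^n(\mf{g}) \cap \mf{g} = 0$. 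Hence $\delta = 0$, proving the claim.

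Finally, $\gr(\Gamma) = \cS_\ahha(\gamma_\Gamma)$ shows that $\gr(\Gamma)$ is determined by $\gamma_\Gamma$, while $\gamma_\Gamma$ is recovered as the degree-$1$ component of $\gr(\Gamma)$; therefore $\gr(\Gamma) = \gr(\Gamma')$ if and only if $\gamma_\Gamma = \gamma_{\Gamma'}$. Consequently $\Phi$ descends to a well-defined injective map $\Sigma/{\sim} \to \{\text{left } A\text{-linear splittings of } \pi\}$, which is surjective by the first paragraph; this is the desired bijection.

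The main obstacle is precisely the inductive identification $\gr(\Gamma) = \cS_\ahha(\gamma_\Gamma)$, that is, showing a graded unital $A$-coring morphism between symmetric-algebra corings is the functorial symmetric power of its linear part. This rests on the primitives of $\cS_\ahha(\mf{g})$ being concentrated in degree $1$, which I would obtain by transporting $\cP_\ahha(\cU_\ahha(\mf{g})) = \mf{g}$ along the coring isomorphism $\textsc{S}_{\mf{g}}$ of Theorem~\ref{thm:isocoring}, a fact valid because $\K$ has characteristic $0$ and $\mf{g}$ is projective as a left $A$-module.
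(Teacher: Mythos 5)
Your proposal is correct, and its skeleton coincides with the paper's: both use the correspondence $\Gamma \mapsto \gamma_\Gamma$, $\gamma \mapsto \Gamma_\gamma = \textsc{S}_{\mf{g}} \circ \cS_\ahha(\gamma) \circ \textsc{S}_{\mf{h}}^{-1}$ of Lemma~\ref{lem:seccorresp}, recover $\gamma_\Gamma$ as the degree-one component of $\gr(\Gamma)$ under the PBW identifications, and reduce everything to the key identity $\gr(\Gamma) = \cS_\ahha(\gamma_\Gamma)$. The genuine difference lies in how that identity is established. The paper passes to the reduced (non-counital) corings $\bar{\cS}_\ahha(\mf{h})$ and $\bar{\cS}_\ahha(\mf{g})$, checks that $\gr(\textsc{S}_{\mf{g}}^{-1}\circ\Gamma\circ\textsc{S}_{\mf{h}})$ and $\cS_\ahha(\gamma_\Gamma)$ have the same corestriction along the projections $\varpi$ onto the degree-one components, and then invokes the uniqueness part of \cite[Prop.~A.2]{MoerdijkLie}, i.e., the couniversal property of the symmetric coring. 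You instead inline a self-contained induction on the degree: writing $\delta = \gr(\Gamma) - \cS_\ahha(\gamma_\Gamma)$, unitality and the inductive hypothesis make the cross-terms of $\bar{\Delta}(x)$ cancel, so $\delta(x)$ is primitive; since $\cP_\ahha\big(\cS_\ahha(\mf{g})\big) = \cS^1_\ahha(\mf{g})$ (transported from $\cP_\ahha\big(\cU_\ahha(\mf{g})\big) = \mf{g}$ along the coring isomorphism of Theorem~\ref{thm:isocoring}, valid in characteristic zero with $\mf{g}$ projective), a homogeneous primitive of degree $n \geq 2$ vanishes. Your inductive step is sound — the reduced coproduct of a homogeneous element of $\cS^n_\ahha(\mf{h})$ indeed lies in $\bigoplus_{p=1}^{n-1}\cS^p_\ahha(\mf{h}) \otimes_\ahha \cS^{n-p}_\ahha(\mf{h})$, and $A$-bilinearity of the coring morphisms makes the tensor comparison over $\otimes_\ahha$ legitimate. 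What each approach buys: the paper's citation is shorter and situates the statement within the Cartier--Milnor--Moore framework of \cite{MoerdijkLie}, while your argument is elementary and makes explicit exactly which structural inputs are used (unitality, concentration of primitives in degree one), essentially reproving the relevant uniqueness assertion in this special case. The only point you gloss over — that $\gr(\Gamma)$ is again an $A$-coring morphism, which requires $\gr\big(\cU_\ahha \otimes_\ahha \cU_\ahha\big) \simeq \gr\,\cU_\ahha \otimes_\ahha \gr\,\cU_\ahha$ via projectivity, as in the proof of Proposition~\ref{prop:anothersplitting} — is treated equally implicitly in the paper, so this is not a gap relative to it.
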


\begin{proof}
In Lemma \ref{lem:seccorresp}, we constructed a (non-bijective) correspondence
\begin{equation}
  \begin{array}{rcl}
  \label{eq:bigcorresp}
  \Sigma & \mathrel{\substack{\xrightarrow{\phantom{\quad}} \\[-1pt] \xleftarrow{\phantom{\quad}}}} & \{\gamma \colon \mf{h} \to \mf{g} \mid \pi\circ\gamma = \id_{\mf{h}} \},
  \\[1mm]
  \Gamma & \longmapsto & \gamma_\Gamma,
  \\
  \Gamma_\gamma \coloneqq \textsc{S}_\mf{g} \circ \cS_\ahha(\gamma) \circ \textsc{S}_{\mf{h}}^{-1} & \longmapsfrom & \gamma,
\end{array}
  \end{equation}
because $\Gamma_\gamma$ is filtered since $\textsc{S}_{\mf{h}}$, $\textsc{S}_{\mf{g}}$, and $\cS_\ahha(\gamma)$ are all filtered as well with respect to the canonical filtrations.
Moreover, if $\gamma \colon \mf{h} \to \mf{g}$ is a section of $\pi$, then $\Gamma_\gamma \coloneqq \textsc{S}_{\mf{g}} \circ \cS_\ahha(\gamma) \circ \textsc{S}_{\mf{h}}^{-1}$ satisfies $\Gamma_\gamma \circ \iota_\mf{h} = \iota_{\mf{g}} \circ \gamma$, independently of the projective bases chosen for $\mf{g}$ and $\mf{h}$ to construct $\textsc{S}_{\mf{g}}$ and $\textsc{S}_{\mf{h}}$. Therefore, $\gamma_{\Gamma_\gamma} = \gamma$ (because $\iota_\mf{g}$ is injective by hypothesis).

As already observed in the proof of Lemma \ref{lem:symmetrisation}, $\gr(\textsc{S}_{\mf{g}})$ satisfies
\[
\cS_\ahha ^k(\mf{g}) \simeq \gr_k\big(\cS_\ahha (\mf{g})\big) \xrightarrow{\gr_k(\textsc{S}_{\mf{g}})} \gr_k\big(\cU_\ahha(\mf{g})\big), \qquad X_1\cdots X_k \mapsto X_1\cdots X_k + \cU_\ahha(\mf{g})_{\leq k-1}
\]
for all $k \geq 0$, and $0$ elsewhere. That is to say, $\gr(\textsc{S}_\mf{g})$ is the isomorphism of the PBW theorem, independently of the projective basis chosen for $\mf{g}$ to construct $\textsc{S}_{\mf{g}}$. Since for $\Gamma \in \Sigma$ the composition
\begin{equation}
\label{eq:gluglu}
\mf{h} = \cS_\ahha^1(\mf{h}) \simeq \gr_1\big(\cS_\ahha(\mf{h})\big) \xrightarrow{\gr_1(\textsc{S}_{\mf{h}})} \gr_1\big(\cU_\ahha(\mf{h})\big) \xrightarrow{\gr_1(\Gamma)} \gr_1\big(\cU_\ahha(\mf{g})\big) \xrightarrow{\gr_1(\textsc{S}_{\mf{g}})^{-1}} \gr_1\big(\cS_\ahha(\mf{g})\big) \simeq \cS^1_\ahha(\mf{g}) = \mf{g} 
\end{equation}
maps $X \in \mf{h}$ to $\Gamma(X) \in \mf{g}$, that is to say, it is exactly $\gamma_\Gamma$, it is clear that sections of $\cU_\ahha(\pi)$ with the same graded associated give rise to the same section of $\pi$. Hence, the correspondence \eqref{eq:bigcorresp} factors through the quotient $\Sigma/{\sim}$. Denoting by $[\Gamma]$ the equivalence class of $\Gamma$ in $\Sigma/{\sim}$, it is still true that $\gamma_{[\Gamma_\gamma]} = \gamma$.

Let now $\Gamma$ be a section in $\Sigma$. We want to show that $\gr(\Gamma) = \gr(\Gamma_{\gamma_\Gamma})$ or, equivalently, 
\[
\gr(\textsc{S}_{\mf{g}}^{-1}) \circ \gr(\Gamma) \circ \gr(\textsc{S}_{h}) = \gr\pig(\cS_\ahha(\gamma_\Gamma)\pig) = \cS_\ahha(\gamma_\Gamma)
\]
as maps of $A$-corings from $\cS_\ahha(\mf{h})$ to $\cS_\ahha(\mf{g})$. Set ${\cal S}'\coloneqq \textsc{S}_{\mf{g}}^{-1} \circ \Gamma \circ \textsc{S}_{h}$. Write $\cS_\ahha(\mf{g}) = A \oplus \bar{\cS}_\ahha(\mf{g})$ and $\cS_\ahha(\mf{h}) = A \oplus \bar{\cS}_\ahha(\mf{h})$ as left $A$-modules, where both $\bar{\cS}_\ahha(\mf{g})$ and $\bar{\cS}_\ahha(\mf{h})$ are the kernels of the respective counits, and endow $\bar{\cS}_\ahha(\mf{g})$ and $\bar{\cS}_\ahha(\mf{h})$ with the non-counital comultiplication $\bar{\Delta}(x) \coloneqq \Delta(x) - x \otimes_\ahha  1 - 1 \otimes_\ahha  x$ for $x\in\bar{\cS}_\ahha(\mf{h})$. Denote by $\varpi_{\mf{g}} \colon \bar{\cS}_\ahha(\mf{g}) \to \mf{g}$ the projection on the first component. Then both $\gr({\cal S}')$ and $\cS_\ahha(\gamma_\Gamma)$ restrict to morphisms of non-counital $A$-corings from $\bar{\cS}_\ahha(\mf{h})$ to $\bar{\cS}_\ahha(\mf{g})$ such that
\[
\varpi_\mf{g} \circ \gr({\cal S}') = \gr_1({\cal S}') \circ \varpi_{\mf{h}} \stackrel{\scriptscriptstyle (*)}{=} \gamma_\Gamma \circ \varpi_{\mf{h}} = \cS_\ahha^1(\gamma_\Gamma)\circ \varpi_{\mf{h}} = \varpi_{\mf{g}} \circ \cS_\ahha(\gamma_\Gamma),
\]
where in $(*)$ we used what we observed with \eqref{eq:gluglu}. Therefore, by the uniqueness part of \cite[Prop.~A.2]{MoerdijkLie}, $\gr({\cal S}') = \cS_\ahha(\gamma_\Gamma)$ and so $\left[\Gamma_{\gamma_{[\Gamma]}}\right] = [\Gamma]$, which completes the proof.
\end{proof}

In more prosaic words, Proposition \ref{prop:seccorresp} states that the correspondence between $A$-linear sections of $\pi \colon \mf{g} \to \mf{h}$ and filtered unital $A$-coring sections of $\cU_\ahha(\pi) \colon \cU_\ahha(\mf{g}) \to \cU_\ahha(\mf{h})$ is bijective up to considering the graded associated.

\subsection{Semi-direct sum Lie-Rinehart algebras and the smash product decomposition}

Let $(A,\mf{h},\omega)$ be a Lie-Rinehart algebra and $\mf{n}$ be an $A$-Lie algebra such that both $\mf{h}$ and $\mf{n}$ are projective as $A$-modules. Assume there is an action $\rho\colon \mf{h} \to \Der_{\,\K}(\mf{n})$ as in Definition \ref{def:LRaction} and set $\mf{g} \coloneqq \mf{n} \niplus \mf{h}$; equivalently, assume that the short exact sequence \eqref{eq:ses} splits as a sequence of Lie-Rinehart algebras, in the sense that $\pi$ admits a section $\gamma\colon\mf{h} \to \mf{g}$ which is a morphism of Lie-Rinehart algebras. By functoriality of the universal enveloping algebra construction, $\gamma$ induces a morphism of left Hopf algebroids $\Gamma\colon\cU_\ahha(\mf{h}) \to \cU_\ahha(\mf{g})$ and $\Pi \circ \Gamma = \cU_\ahha (\pi) \circ \cU_\ahha (\gamma) = \cU_\ahha (\pi \circ \gamma) = \id_{\cU_\ahha(\mf{h})}$. Hence, $\Gamma$ is a left Hopf algebroid section of $\Pi$, and, in particular, it is an $A$-coring section. In addition, being a morphism of $A$-rings as well, it satisfies
\[
\Gamma\left(1_{\cU_\ahha(\mf{h})}\right) = 1_{\cU_\ahha(\mf{g})} \qquad \text{and} \qquad \Gamma\left(a \bla u \bra b\right) = \Gamma\big(u\,\iota_\ahha (ab)\big) = \Gamma(u)\,\iota_\ahha (ab) = a \bla \Gamma(u) \bra b
\]
for all $a,b \in A$, $u \in \cU_\ahha(\mf{h})$. Therefore, we can apply Theorem \ref{thm:sigmatwisted} together with Proposition \ref{prop:neverendingprop} to claim that 
\[
U_{\ahha}(\mf{n}) \hash{\gs} \cU_\ahha(\mf{h}) \simeq \cU_\ahha(\mf{g})
\]
by means of $b \hash{} u \mapsto b\,\Gamma(u)$, where $\gs\colon\due {\, \cU_\ahha(\mf{h})} \blact {} \otimes_\Aopp {\cU_\ahha(\mf{h})\,}{}_\ract \to U_{\ahha}(\mf{n})$ is as in \eqref{chesorpresa}.

\begin{proposition}
If a short exact sequence of projective Lie-Rinehart algebras as in \eqref{eq:ses} splits as a sequence of Lie-Rinehart algebras, then the map $\gs$ of \eqref{chesorpresa} is trivial, that is, $\gs\left(u \otimes_\Aopp v \right) = \varepsilon\left(uv\right)$ for all $u,v \in \cU_\ahha(\mf{h})$. Moreover, the weak left $\cU_\ahha(\mf{h})$-action \eqref{weakadjoint} is a proper left action turning $U_{\ahha}(\mf{n})$ into a left $\cU_\ahha(\mf{h})$-module algebra.
\end{proposition}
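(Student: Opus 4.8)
The plan is to exploit the additional structure available in the split situation: the section $\Gamma\colon\cU_\ahha(\mf{h})\to\cU_\ahha(\mf{g})$ induced by $\gamma$ is here a genuine morphism of left Hopf algebroids, hence it is multiplicative and commutes with \emph{all} the structure maps --- source, target, counit, comultiplication and, crucially, the translation map in the sense of \eqref{krach2} --- and not merely the $A$-coring section required in Theorem \ref{thm:sigmatwisted}. Throughout I write $V=\cU_\ahha(\mf{h})$, $U=\cU_\ahha(\mf{g})$ and $B=U_\ahha(\mf{n})$. I would treat the two assertions separately: the statement about the action is a formal consequence of the multiplicativity of $\Gamma$, whereas the triviality of the cocycle \eqref{chesorpresa} rests on a short manipulation of the translation map.

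First I would observe that the weak action \eqref{weakadjoint} is nothing but the restriction along $\Gamma$ of the left adjoint $U$-action \eqref{adjointaction}: indeed $v\rightslice b=\Gamma(v)_+\,b\,\Gamma(v)_-$ is exactly $\Gamma(v)$ acting on $b$ through \eqref{adjointaction}, and by Lemma \ref{wasserkessel}\ref{item:wkii} the latter makes $B$ into a left $U$-module algebra. Since $\Gamma$ is unital and multiplicative, $1_\vauu\rightslice b=\Gamma(1_\vauu)\rightslice b=b$ and
\[
(vw)\rightslice b=\Gamma(vw)\rightslice b=\bigl(\Gamma(v)\Gamma(w)\bigr)\rightslice b=\Gamma(v)\rightslice\bigl(\Gamma(w)\rightslice b\bigr)=v\rightslice(w\rightslice b),
\]
so the weak action is a genuine left $V$-action. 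Moreover, as $\Gamma$ is comultiplicative and counital --- so that $\Gamma(v)_{(1)}\otimes_\ahha\Gamma(v)_{(2)}=\Gamma(v_{(1)})\otimes_\ahha\Gamma(v_{(2)})$ and $\gve_\uhhu\circ\Gamma=\gve_\vauu$ --- the module-algebra axioms of \S\ref{ssec:modalgs} for the $V$-action are inherited directly from those for the $U$-action. This proves that $B$ is a left $\cU_\ahha(\mf{h})$-module algebra.

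For the cocycle I would substitute the defining properties of $\Gamma$ into \eqref{chesorpresa}. Using $\Gamma\circ t_\vauu=t_\uhhu$, $\gve_\uhhu\circ\Gamma=\gve_\vauu$, the multiplicativity of $\Gamma$, and the compatibility \eqref{krach2} with the translation map in the form $\Gamma(x)_+\otimes_\Aopp\Gamma(x)_-=\Gamma(x_+)\otimes_\Aopp\Gamma(x_-)$, the whole right-hand side of \eqref{chesorpresa} becomes $\Gamma$ applied to the element
\[
E(v,w)=t_\vauu\gve_\vauu\bigl((v_{(2)}w_{(2)})_+\bigr)\,v_{(1)}w_{(1)}\,(v_{(2)}w_{(2)})_-\in V.
\]
Since $\gD_\vauu$ is multiplicative, $v_{(1)}w_{(1)}\otimes_\ahha v_{(2)}w_{(2)}=y_{(1)}\otimes_\ahha y_{(2)}$ for $y=vw$, whence $E(v,w)=t_\vauu\gve_\vauu(y_{(2)+})\,y_{(1)}\,y_{(2)-}$. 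Now \eqref{Sch4} rewrites $y_{(1)}\otimes_\ahha y_{(2)+}\otimes_\Aopp y_{(2)-}=y_{+(1)}\otimes_\ahha y_{+(2)}\otimes_\Aopp y_-$, counitality \eqref{beethoven3} gives $t_\vauu\gve_\vauu(y_{+(2)})\,y_{+(1)}=y_+$, and finally \eqref{Sch7} yields $y_+y_-=s_\vauu\bigl(\gve_\vauu(y)\bigr)$. Hence $E(v,w)=s_\vauu\gve_\vauu(vw)$ and, using $\Gamma\circ s_\vauu=s_\uhhu$,
\[
\gs(v\otimes_\Aopp w)=\Gamma\bigl(s_\vauu\gve_\vauu(vw)\bigr)=s_\uhhu\gve_\vauu(vw)=\iota_\ahha\bigl(\gve(vw)\bigr),
\]
which is precisely the trivial cocycle; consequently $B\hash{\gs}V$ coincides with the honest smash product $B\hash{}V$.

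The main obstacle I anticipate is the bookkeeping of the four commuting $A$-actions \eqref{pergolesi} and of the several Takeuchi products involved: one must check that each intermediate map --- in particular the ``multiply-and-apply-$\gve$'' map used after the substitution $\Gamma(x)_\pm=\Gamma(x_\pm)$, and the reshuffling via \eqref{Sch4} --- is well defined on the balanced tensor product where it is applied. These verifications are of exactly the same nature as those carried out in the proof of Theorem \ref{thm:sigmatwisted}, relying on the commutation relation \eqref{ohyes} and on the projectivity of $U_\ract$, so they may be invoked rather than reproved. The only genuinely new ingredient beyond Theorem \ref{thm:sigmatwisted} is that here $\Gamma$ commutes with the translation maps, i.e.\ it is a morphism of left Hopf algebroids and not merely of $A$-corings; this is precisely what collapses $\gs$ to its trivial value.
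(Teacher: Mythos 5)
Your proposal is correct and follows essentially the same route as the paper: both arguments hinge on $\Gamma$ being a full morphism of left Hopf algebroids, collapse the cocycle to $\varepsilon(uv)$ via \eqref{Sch4}, counitality \eqref{beethoven3}, and \eqref{Sch7}, and obtain the genuine action and module-algebra structure from unitality and multiplicativity of $\Gamma$ (the paper checks associativity directly with \eqref{Sch6}, which amounts to your restriction of the adjoint $U$-action along $\Gamma$). The only cosmetic difference is that you pull $\Gamma$ out front using \eqref{krach2} and compute inside $\cU_\ahha(\mf{h})$, whereas the paper sets $x=\Gamma(uv)$ and runs the identical chain of identities inside $\cU_\ahha(\mf{g})$.
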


\begin{proof}
Recall that
\[
\gs\colon \cU_\ahha(\mf{h}) \otimes_\Aopp \cU_\ahha(\mf{h}) \to U_{\ahha}(\mf{n}), \quad
u \otimes_\Aopp v \mapsto \gve \big(\Gamma(u_{(2)} v_{(2)})_+\big) \Gamma(u_{(1)})\Gamma(v_{(1)})\Gamma(u_{(2)} v_{(2)})_-.
\]
As $\Gamma$ is a morphism of left Hopf algebroids, it is multiplicative, comultiplicative, counital, and also a morphism of left Hopf structures, {\em i.e.}, Eqs.~\eqref{krach1} and \eqref{krach2} hold. In particular, if we set $x \coloneqq \Gamma(uv)$, then
\[
\gs\left(u \otimes_\Aopp v\right) = \varepsilon\left(x_{(2)+}\right)x_{(1)}x_{(2)-} \stackrel{\scriptscriptstyle\eqref{Sch4}}{=} \varepsilon\left(x_{+(2)}\right)x_{+(1)}x_{-} = x_{+}x_{-} \stackrel{\scriptscriptstyle\eqref{Sch7}}{=} \varepsilon(x) = \varepsilon(\Gamma(uv)) = \varepsilon\left(uv\right),
\]
where we suppressed the source map, that is, the canonical injection $A 
\to U_{\ahha}(\mf{n})$ in the latter terms.
Furthermore,
\begin{eqnarray*}
u \rightslice \left(v \rightslice b\right) \!\!\!& =\!\!\!& u \rightslice \big(\Gamma(v)_+ b\, \Gamma(v)_-\big) = \Gamma(u)_+\Gamma(v)_+ b\, \Gamma(v)_-\Gamma(u)_- \\
\!\!\!&
 \stackrel{\scriptscriptstyle\eqref{Sch6}}{=}\!\!\! & \Gamma(uv)_+b\,\Gamma(uv)_- = (uv) \rightslice b,
\end{eqnarray*}
and $1 \rightslice b = \Gamma(1)_+b\,\Gamma(1)_- = b$ for all $u,v \in \cU_\ahha(\mf{h})$ and $b \in U_{\ahha}(\mf{n})$, whence $\rightslice$ is a left $\cU_\ahha(\mf{h})$-action. 
To conclude, it is enough to observe that \eqref{vendeeglobe} entails that $\rightslice$ turns $U_{\ahha}(\mf{n})$ into a monoid in the monoidal category of left $\cU_\ahha(\mf{h})$-modules.
\end{proof}

It follows that $U_{\ahha}(\mf{n}) \hash{\gs} \cU_\ahha(\mf{h})$ coincides, in fact, with the smash product 
$U_{\ahha}(\mf{n}) \hash{} \cU_\ahha(\mf{h})$ 
in the sense of \S\ref{ssec:modalgs}.
Summing up, we proved the following.

\begin{theorem}
\label{thm:mainthmC}
If $\mf{g} \simeq \mf{n} \niplus \mf{n}$ is a semi-direct sum of the $A$-Lie algebra $\mf{n}$ and of the Lie-Rinehart algebra $(A,\mf{h},\omega)$, both projective over $A$, then we have an isomorphism 
\[
\cU_\ahha(\mf{g}) \simeq \cU_\ahha (\mf{n} \niplus \mf{h}) \simeq U_{\ahha}(\mf{n}) \hash{} \cU_\ahha(\mf{h})
\]
of $A$-rings and right $\cU_\ahha(\mf{h})$-comodule algebras.
\end{theorem}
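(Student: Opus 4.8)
The plan is to realise the desired smash product as the special case of the crossed product decomposition of Theorem~\ref{thm:mainthmA} in which the Hopf cocycle $\gs$ becomes trivial, the extra input being precisely the flatness of the splitting.

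First I would record that, by the equivalence \ref{item:SP1} $\Leftrightarrow$ \ref{item:SP3} of Proposition~\ref{prop:semidirect}, writing $\mf{g} \simeq \mf{n} \niplus \mf{h}$ amounts to saying that the short exact sequence \eqref{eq:ses} admits a section $\gamma\colon \mf{h} \to \mf{g}$ that is itself a morphism of Lie-Rinehart algebras, and not merely left $A$-linear. Functoriality of $\cU_\ahha(-)$ then promotes $\gamma$ to a morphism of left Hopf algebroids $\Gamma \coloneqq \cU_\ahha(\gamma)\colon \cU_\ahha(\mf{h}) \to \cU_\ahha(\mf{g})$ which splits $\Pi = \cU_\ahha(\pi)$. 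In particular $\Gamma$ is unital, is $A$-bilinear for both the white and the black actions (being also a map of $A$-rings), and $\cU_\ahha(\mf{g})_\ract$ is projective over $A$ by the PBW theorem together with the projectivity of $\mf{g}$, so that all the hypotheses of Theorem~\ref{thm:sigmatwisted} are met.

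Applying Theorem~\ref{thm:sigmatwisted} and identifying the left Hopf kernel of $\Pi$ with $U_\ahha(\mf{n})$ via Proposition~\ref{prop:neverendingprop} yields $\cU_\ahha(\mf{g}) \simeq U_\ahha(\mf{n}) \hash{\gs} \cU_\ahha(\mf{h})$. The one step carrying genuine content is then to show that, because $\Gamma$ is now a morphism of the \emph{full} Hopf structure (so that it intertwines the translation maps, cf.~\eqref{krach2}), the cocycle \eqref{chesorpresa} collapses to the trivial one: setting $x \coloneqq \Gamma(uv)$ one computes $\gs(u \otimes_\Aopp v) = \gve(x_{(2)+})\,x_{(1)}\,x_{(2)-} = x_+ x_- = \gve(\Gamma(uv)) = \gve(uv)$, where the middle equality combines the compatibility \eqref{Sch4} of the comultiplication with the translation map and counitality, and the penultimate one is \eqref{Sch7}. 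By the same token, multiplicativity of the translation map \eqref{Sch6} turns the weak action \eqref{weakadjoint} into a genuine left $\cU_\ahha(\mf{h})$-action making $U_\ahha(\mf{n})$ a module algebra.

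With $\gs$ trivial and the action strict, Proposition~\ref{GT3} and the discussion of \S\ref{ssec:modalgs} identify $U_\ahha(\mf{n}) \hash{\gs} \cU_\ahha(\mf{h})$ with the honest smash product $U_\ahha(\mf{n}) \hash{} \cU_\ahha(\mf{h})$, while the isomorphism of $A$-rings and of right $\cU_\ahha(\mf{h})$-comodule algebras is transported from Theorem~\ref{thm:sigmatwisted} exactly as in the proof of Theorem~\ref{thm:mainthmA}. I expect the cocycle-triviality computation to be the main obstacle: its decisive input is that $\gamma$ is a Lie algebra morphism, equivalently that the connection $\rho$ is flat, since it is precisely this flatness that forces $\Gamma$ to commute with the translation maps; without it the Schauenburg identities \eqref{Sch4} and \eqref{Sch7} cannot be brought to bear to kill $\gs$.
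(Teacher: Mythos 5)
Your proposal is correct and follows essentially the same route as the paper: the paper likewise uses the Lie-Rinehart section $\gamma$ to produce, by functoriality, a left Hopf algebroid section $\Gamma = \cU_\ahha(\gamma)$ of $\Pi$, invokes Theorem \ref{thm:sigmatwisted} together with Proposition \ref{prop:neverendingprop}, and then proves triviality of the cocycle by the identical computation $\gs(u \otimes_\Aopp v) = \gve\left(x_{(2)+}\right)x_{(1)}x_{(2)-} = x_+x_- = \gve(uv)$ with $x = \Gamma(uv)$, via \eqref{Sch4} and \eqref{Sch7}, and upgrades the weak action to a genuine module-algebra action via \eqref{Sch6}, exactly as you describe. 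Your diagnosis of the decisive input --- that flatness of the splitting makes $\Gamma$ a morphism of the full left Hopf structure in the sense of \eqref{krach2}, without which the Schauenburg identities cannot be applied to collapse $\gs$ --- matches the paper's argument precisely.
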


 Recall that any Lie-Rinehart splitting $\gamma \colon \mf{h} \to \mf{g}$ of a short exact sequence $ 0 \to \mf{n} \xrightarrow{\iota} \mf{g} \xrightarrow{\pi} \mf{h} \to 0$ of Lie-Rinehart algebras over $A$ gives rise to a splitting $\Gamma \coloneqq \cU_\ahha (\gamma)$ of the surjection $\cU_\ahha (\pi) \colon \cU_\ahha(\mf{g}) \to \cU_\ahha(\mf{h})$ as left Hopf algebroid map. Thus, analogously to what we observed at the end of \S\ref{ssec:crossdec}, we have the following result.
 
\begin{proposition}\label{prop:1-1splittings}
Let $\pi\colon\mf{g} \to \mf{h}$ be an epimorphism of Lie-Rinehart algebras over $A$ which are projective as left $A$-modules. Then there is a bijective correspondence between splittings $\Gamma$ of the surjection $\cU_\ahha (\pi) \colon \cU_\ahha(\mf{g}) \to \cU_\ahha(\mf{h})$ as map of left Hopf algebroids and splittings $\gamma$ of $\pi\colon\mf{g} \to \mf{h}$ as Lie-Rinehart algebra map.
\end{proposition}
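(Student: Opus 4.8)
The plan is to make the correspondence explicit in both directions and then verify that the two assignments are mutually inverse, the only genuinely new input being that a left Hopf algebroid splitting remembers enough of the multiplicative structure to reconstruct a \emph{Lie} morphism. First I would treat the easy direction: given a Lie-Rinehart splitting $\gamma\colon\mf{h}\to\mf{g}$ of $\pi$, functoriality of the universal enveloping algebra construction produces a morphism of left Hopf algebroids $\cU_\ahha(\gamma)\colon\cU_\ahha(\mf{h})\to\cU_\ahha(\mf{g})$, and $\pi\circ\gamma=\id_{\mf{h}}$ yields $\cU_\ahha(\pi)\circ\cU_\ahha(\gamma)=\cU_\ahha(\pi\circ\gamma)=\id_{\cU_\ahha(\mf{h})}$, so $\cU_\ahha(\gamma)$ is a splitting of $\cU_\ahha(\pi)$ as a left Hopf algebroid map.

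For the converse, let $\Gamma$ be a splitting of $\cU_\ahha(\pi)$ as a left Hopf algebroid morphism. In particular $\Gamma$ is a unital morphism of $A$-corings (it preserves $\Delta$, $\varepsilon$, $s$, $t$, and $\Gamma(1)=1$ because it is an algebra map), so Lemma \ref{lem:seccorresp} applies and furnishes a left $A$-linear section $\gamma_\Gamma\colon\mf{h}\to\mf{g}$ of $\pi$, uniquely determined by $\iota_{\mf{g}}\circ\gamma_\Gamma=\Gamma\circ\iota_{\mf{h}}$, obtained from the fact that $\Gamma$ carries the primitives $\iota_{\mf{h}}(\mf{h})$ into $\cP_\ahha(\cU_\ahha(\mf{g}))=\mf{g}$. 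Anchor compatibility is then automatic: since $\pi$ is a Lie-Rinehart morphism we have $\omega_{\mf{h}}\circ\pi=\omega_{\mf{g}}$, hence $\omega_{\mf{g}}\circ\gamma_\Gamma=\omega_{\mf{h}}\circ\pi\circ\gamma_\Gamma=\omega_{\mf{h}}$.

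The crucial point---and the only place where the full algebra (not merely coring) structure of $\Gamma$ is used---is compatibility of $\gamma_\Gamma$ with the Lie brackets. Here I would run the computation, using that $\iota_{\mf{h}}$ and $\iota_{\mf{g}}$ are Lie algebra morphisms into the commutator brackets of $\cU_\ahha(\mf{h})$ and $\cU_\ahha(\mf{g})$, that $\Gamma$ is multiplicative, and the defining relation of $\gamma_\Gamma$:
\begin{align*}
\iota_{\mf{g}}\big(\gamma_\Gamma([X,Y]_{\mf{h}})\big) &= \Gamma\big(\iota_{\mf{h}}([X,Y]_{\mf{h}})\big) = \Gamma\big(\iota_{\mf{h}}(X)\iota_{\mf{h}}(Y)-\iota_{\mf{h}}(Y)\iota_{\mf{h}}(X)\big) \\
&= \Gamma\big(\iota_{\mf{h}}(X)\big)\Gamma\big(\iota_{\mf{h}}(Y)\big)-\Gamma\big(\iota_{\mf{h}}(Y)\big)\Gamma\big(\iota_{\mf{h}}(X)\big) \\
&= \iota_{\mf{g}}\big(\gamma_\Gamma(X)\big)\iota_{\mf{g}}\big(\gamma_\Gamma(Y)\big)-\iota_{\mf{g}}\big(\gamma_\Gamma(Y)\big)\iota_{\mf{g}}\big(\gamma_\Gamma(X)\big) = \iota_{\mf{g}}\big([\gamma_\Gamma(X),\gamma_\Gamma(Y)]_{\mf{g}}\big)
\end{align*}
for all $X,Y\in\mf{h}$. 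Since $\mf{g}$ is projective over $A$, the PBW theorem gives that $\iota_{\mf{g}}$ is injective, so $\gamma_\Gamma([X,Y]_{\mf{h}})=[\gamma_\Gamma(X),\gamma_\Gamma(Y)]_{\mf{g}}$, and $\gamma_\Gamma$ is a morphism of Lie-Rinehart algebras splitting $\pi$.

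Finally I would check that the two assignments invert each other. For $\gamma_{\cU_\ahha(\gamma)}=\gamma$, naturality of $\iota$ gives $\cU_\ahha(\gamma)\circ\iota_{\mf{h}}=\iota_{\mf{g}}\circ\gamma$, so the section associated to $\cU_\ahha(\gamma)$ satisfies $\iota_{\mf{g}}\circ\gamma_{\cU_\ahha(\gamma)}=\iota_{\mf{g}}\circ\gamma$, whence equality by injectivity of $\iota_{\mf{g}}$. For $\cU_\ahha(\gamma_\Gamma)=\Gamma$, I would observe that both are $\K$-algebra morphisms $\cU_\ahha(\mf{h})\to\cU_\ahha(\mf{g})$ whose precomposition with $\iota_\ahha$ equals the structure map $\iota_\ahha$ of $\cU_\ahha(\mf{g})$ (for $\cU_\ahha(\gamma_\Gamma)$ because it is an $A$-ring map, for $\Gamma$ because it is a bialgebroid map so $\Gamma\circ s=s$) and whose precomposition with $\iota_{\mf{h}}$ equals $\iota_{\mf{g}}\circ\gamma_\Gamma$ (by naturality, resp.\ by the defining relation of $\gamma_\Gamma$); the uniqueness in the universal property of $\cU_\ahha(\mf{h})$ as an $A$-ring recalled in \S\ref{regenradar} then forces $\cU_\ahha(\gamma_\Gamma)=\Gamma$. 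The main obstacle is the bracket step above, but as shown it collapses to a one-line identity leveraging multiplicativity of $\Gamma$ and injectivity of $\iota_{\mf{g}}$; everything else is bookkeeping with universal properties and naturality.
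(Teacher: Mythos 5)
Your proof is correct, but it takes a genuinely different route from the paper. The paper disposes of the statement in two lines by citing the Cartier--Milnor--Moore theorem of Moerdijk--Mr\v{c}un (\cite[Thm.~3.1]{MoerdijkLie}): since $\cU_\ahha$ is fully faithful on projective Lie-Rinehart algebras, one has a bijection $\mathsf{LieRin}_\ahha(\mf{h},\mf{g}) \simeq \ms{Bialgd}_\ahha\big(\cU_\ahha(\mf{h}),\cU_\ahha(\mf{g})\big)$, $\gamma \mapsto \cU_\ahha(\gamma)$, under which sections of $\pi$ obviously correspond to sections of $\cU_\ahha(\pi)$. You instead re-prove by hand exactly the fragment of full faithfulness that is needed: Lemma \ref{lem:seccorresp} (primitives of $\cU_\ahha(\mf{g})$ equal $\mf{g}$, so a coring section restricts to a left $A$-linear section $\gamma_\Gamma$), the commutator computation plus PBW-injectivity of $\iota_{\mf{g}}$ to upgrade $\gamma_\Gamma$ to a Lie morphism, anchor compatibility from $\omega_{\mf{h}}\circ\pi = \omega_{\mf{g}}$, and the uniqueness clause of the universal property of \S\ref{regenradar} to force $\cU_\ahha(\gamma_\Gamma)=\Gamma$ (legitimate, since both are $\K$-algebra maps agreeing on the generators $\iota_\ahha(A)$ and $\iota_{\mf{h}}(\mf{h})$, and the compatibility relations \eqref{dumdidum} for $\iota_{\mf{g}}\circ\gamma_\Gamma$ follow from the $A$-linearity, bracket, and anchor properties you establish first). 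What each approach buys: the paper's is shorter and situates the result inside the general equivalence of categories; yours is self-contained, avoids the machinery of cocomplete graded projective bialgebroids, and makes transparent a point the categorical argument hides --- namely that only the bialgebroid structure of $\Gamma$ (multiplicativity and comultiplicativity) is ever used, the translation-map compatibility \eqref{krach2} playing no role, consistent with the paper's remark after \eqref{krach2} that bialgebroid morphisms between left Hopf algebroids are automatically Hopf morphisms.
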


\begin{proof}
In view of \cite[Thm.~3.1]{MoerdijkLie}, the functor $\cU_\ahha\colon\mathsf{LieRin}_\ahha \to \mathsf{Bialgd}_\ahha$  induces an equivalence of categories between the full subcategory of Lie-Rinehart algebras over $A$ which are projective as left $A$-modules and the category of cocomplete graded projective $A$-bialgebroids, which entails that we have a bijection
\[
\mathsf{LieRin}_\ahha(\mf{h},\mf{g}) \xrightarrow{\simeq} \ms{Bialgd}_\ahha(\cU_\ahha(\mf{h}),\cU_\ahha(\mf{g})), \qquad \gamma \mapsto \cU_\ahha(\gamma).
\]
By this, it is clear that $\gamma$ is a section of $\pi$ if and only if $\cU_\ahha(\gamma)$ is a section of $\cU_\ahha(\pi)$.
\end{proof}

\subsection{Equivalences of crossed product decompositions}
In this subsection,
in the spirit of \cite[Thm.~2.8]{Mon:CPOHAAEA},
we would like to relate the
$\gs$-twisted crossed product construction from Definition \ref{GT4} to the universal enveloping algebra of the
curved semi-direct sum of Lie-Rinehart algebras from Definition \ref{def:curvedsdp}, or rather to a certain quotient of it. 

More precisely, let  $(A,\mf{h})$ be a Lie-Rinehart algebra and $\iota\colon A \to R$ be an $A$-algebra, seen as a Lie-Rinehart algebra with trivial anchor, denoted $(A,\Lie{R})$. Assume there exists an $A$-module map $\mf{h} \to \Der_{\,\K}(R), \ X \mapsto \nabla_X,$ and a $2$-cochain $\tau \in \mathrm{Hom}_\ahha (\textstyle\bigwedge^2_\ahha  \mf{h}, R)$ subject to $\nabla \tau = 0$, and such that Eq.~\eqref{prawda} for the curvature \eqref{beatport} is fulfilled. Then we may consider the curved semi-direct sum Lie-Rinehart algebra $\Lie{R} \niplus_\tau \mf{h}$ as in Definition \ref{def:curvedsdp} and the canonical inclusion $\jmath\colon\Lie{R} \to \Lie{R} \niplus_\tau \mf{h} , \ r \mapsto (r,0),$ induces an injective $A$-ring morphism $U_{\!\ahha}\left(\Lie{R}\right) \to \cU_\ahha\left(\Lie{R} \niplus_\tau \mf{h} \right)$, which we will treat, by slight abuse of notation, as an inclusion.
  
 \begin{definition}
 \label{def:GaloisDescent}
  Under the standing assumptions, we define
  \begin{equation}
  \label{allergodil}
R \times_\tau \cU_\ahha(\mf{h}) \coloneqq  \cU_\ahha(\Lie{R} \niplus_\tau \mf{h} )/\mathscr{I},
\end{equation}
the quotient of the universal enveloping algebra of the curved semi-direct sum $\Lie{R} \niplus_\tau \mf{h} $ 
by the two-sided ideal $\mathscr{I}$ generated by
\begin{equation}
  \label{brillantschwarz}
\mathscr{S} \coloneqq \{1_{\scriptscriptstyle{U_A (\Lie{R})}} - 1_{{\scriptscriptstyle{R}}}, \quad r \cdot_{\scriptscriptstyle{U_A (\Lie{R})}} r' - r \cdot_{\scriptscriptstyle{R}} r', \quad \forall \ r, r' \in R\}
\end{equation} 
in $\cU_\ahha  (\Lie{R} \niplus_\tau \mf{h} )$.
\end{definition}
The subsequent theorem generalises \cite[Thm.~2.8]{Mon:CPOHAAEA} to the realm of Lie-Rinehart algebras.

    \begin{theorem}
      \label{rain&fog}
      Let $\iota \colon A \to R$ be an $A$-algebra with associated $A$-Lie algebra $(A, \Lie{R})$
      and let $(A, \mf{h})$ be a Lie-Rinehart algebra such that both $R$ and $\mf{h}$ are projective as left $A$-modules.
      Then for an $A$-ring $S$
            the following are equivalent:
\begin{enumerate}[label=(\roman*),ref={\it(\roman*)}]
\item\label{item:moni}
$S \simeq R \times_\tau \cU_\ahha(\mf{h})$ in the sense of Definition \ref{def:GaloisDescent}.
\item\label{item:monii}
  $S \simeq R  \hash{\gs}  \cU_\ahha(\mf{h})$ in the sense of Definition \ref{GT4}.
  \end{enumerate}
  
    \end{theorem}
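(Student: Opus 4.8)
The plan is to reduce the biconditional to a single isomorphism of $A$-rings $R \times_\tau \cU_\ahha(\mf{h}) \simeq R \hash{\gs} \cU_\ahha(\mf{h})$, under a correspondence that turns the Lie data $(\nabla,\tau)$ into the Hopf data $(\rightslice,\gs)$ and back. For $\ref{item:moni}\Rightarrow\ref{item:monii}$, I start from the connection $\nabla$ and the closed Lie cocycle $\tau$, form the curved semi-direct sum $\Lie{R}\niplus_\tau\mf{h}$ of Definition \ref{def:curvedsdp}, and apply Theorem \ref{thm:mainthmA} to the short exact sequence $0 \to \Lie{R} \to \Lie{R}\niplus_\tau\mf{h} \to \mf{h} \to 0$ of projective Lie-Rinehart algebras, whose $A$-linear splitting is $X \mapsto (0,X)$. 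This gives $\cU_\ahha(\Lie{R}\niplus_\tau\mf{h}) \simeq U_{\ahha}(\Lie{R}) \hash{\gs'} \cU_\ahha(\mf{h})$ as $A$-rings, where by Proposition \ref{prop:neverendingprop} the left Hopf kernel is $U_{\ahha}(\Lie{R})$ and $\gs'$ is the $U_{\ahha}(\Lie{R})$-valued Hopf cocycle of \eqref{chesorpresa} associated to the weak action \eqref{weakadjoint}.

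Let $p\colon U_{\ahha}(\Lie{R}) \to R$ be the canonical surjective $A$-ring morphism induced by $\id\colon\Lie{R}\to\Lie{R}$ through the universal property; its kernel $K$ is exactly the ideal generated by $\mathscr{S}$ inside $U_{\ahha}(\Lie{R})$, so that under the isomorphism $\Phi(b \hash{} v)=b\,\Gamma(v)$ the ideal $\mathscr{I}$ of Definition \ref{def:GaloisDescent} corresponds to the ideal of $U_{\ahha}(\Lie{R})\hash{\gs'}\cU_\ahha(\mf{h})$ generated by $\mathscr{S}\subseteq U_{\ahha}(\Lie{R})$. The key lemma is that $K$ is invariant under the weak left action \eqref{weakadjoint} (which by Lemma \ref{wasserkessel} is the restriction along $\Gamma$ of the adjoint action, and which $p$ intertwines because $p$ is an algebra map), whence the generated ideal is precisely $K \otimes_\ahha \cU_\ahha(\mf{h})$ and is two-sided. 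Since $p$ is an $A$-ring morphism, the weak action and $\gs'$ descend to a weak action $\rightslice$ on $R$ and a cocycle $\gs \coloneqq p\circ\gs'$, and the axioms \ref{item:wa1}--\ref{item:wa5} of Definition \ref{GT2} push forward along $p$; passing to the quotient yields $R \times_\tau \cU_\ahha(\mf{h}) \simeq R \hash{\gs}\cU_\ahha(\mf{h})$.

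For $\ref{item:monii}\Rightarrow\ref{item:moni}$, I reconstruct the Lie-Rinehart data from a given $\gs$-twisted $\cU_\ahha(\mf{h})$-module algebra structure on $R$. Setting $\nabla_X r \coloneqq X \rightslice r$ for $X\in\mf{h}$, the measuring axioms \eqref{vendeeglobe} make each $\nabla_X$ a $\K$-derivation of $R$, and using \eqref{eq:Xfactor} together with the fact that the measuring restricts to the canonical action of $\cU_\ahha(\mf{h})$ on $\iota(A)\subseteq R$ one checks that $\nabla$ is a connection in the sense of \eqref{izvestiya}. Defining $\tau(X,Y)\coloneqq\gs(X,Y)-\gs(Y,X)$ produces an alternating $A$-bilinear $R$-valued $2$-form: evaluating the twisted-module axiom \ref{item:wa5} on primitive $X,Y\in\mf{h}$ gives the curvature identity $\nabla_X\nabla_Y-\nabla_Y\nabla_X-\nabla_{[X,Y]}=[\tau(X,Y),-]$ of \eqref{prawda}--\eqref{beatport}, while the cocycle condition \ref{item:wa3} yields $D\tau=0$. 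Thus $(\nabla,\tau)$ satisfies the hypotheses of Proposition \ref{prop:curvedsdp}, one forms $R\times_\tau\cU_\ahha(\mf{h})$, and the forward construction applied to this $(\nabla,\tau)$ gives $R\times_\tau\cU_\ahha(\mf{h})\simeq R\hash{\gs''}\cU_\ahha(\mf{h})$; it then remains to identify $\gs''=\gs$, which follows because both are normalised Hopf cocycles for the same weak action agreeing on $\mf{h}\otimes_\Aopp\mf{h}$ by construction, and an induction along the PBW filtration using conditions \ref{item:wa1}--\ref{item:wa3} forces them to coincide on all of $\cU_\ahha(\mf{h})$.

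The main obstacle lies at the two matching points. In the forward direction it is the verification that $K$ is stable under the weak adjoint action, so that $K\otimes_\ahha\cU_\ahha(\mf{h})$ is a genuine two-sided ideal and the quotient crossed product is well defined; in the backward direction it is the uniqueness argument identifying the reconstructed cocycle $\gs''$ with the prescribed $\gs$. Both hinge on the explicit form \eqref{chesorpresa}, on the translation-map identities \eqref{Sch1}--\eqref{Sch9}, and on the PBW theorem driving the inductive comparison; once these are established, transport of the $A$-ring and right $\cU_\ahha(\mf{h})$-comodule algebra structures along the isomorphisms is routine, and the equivalence $\ref{item:moni}\Leftrightarrow\ref{item:monii}$ follows.
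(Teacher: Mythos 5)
Your forward direction follows the paper's architecture (apply Theorem \ref{thm:mainthmA} to $0 \to \Lie{R} \to \Lie{R}\niplus_\tau\mf{h} \to \mf{h} \to 0$, identify the left Hopf kernel with $U_\ahha(\Lie{R})$ via Proposition \ref{prop:neverendingprop}, quotient by the ideal generated by $\mathscr{S}$, push the cocycle forward), but your ``key lemma'' is not proved: the justification that $K=\ker p$ is stable under the weak action ``because $p$ intertwines, being an algebra map'' is circular, since the action on $R$ is only defined once stability of $K$ is known. Moreover, for a \emph{weak} action stability does not propagate from $\mf{h}$ to all of $\cU_\ahha(\mf{h})$ by composition, because $u \rightslice (v \rightslice b)$ and $uv \rightslice b$ differ by $\gs$-corrections (condition \ref{item:wa5} of Definition \ref{GT2}) and the section $\Gamma$ is not multiplicative. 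The paper instead proves the ideal identity $\mathscr{I} = \mathscr{I}_0\,\cU_\ahha(\LL)$ directly in $\cU_\ahha(\LL)$ via the explicit bracket computation $\Lie{L}\,\mathscr{S} \subseteq \mathscr{S}\,\Lie{L} + \mathscr{S}$ (using \eqref{baratti&milano} and $\nabla_X \in \Der_{\,\K}(R)$), together with a Snake Lemma argument exploiting that $\cU_\ahha(\LL)$ is projective over $U_\ahha(\Lie{R})$; stability of $\mathscr{I}_0$ under the measuring is then a \emph{consequence}, not a hypothesis. Your argument is fixable, but only by importing essentially this computation.

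The backward direction has a genuine gap that cannot be repaired as stated: the identification $\gs'' = \gs$ is false in general. The cocycle $\gs''$ produced by the forward construction depends on the choice of coring section; with the symmetrisation section $\Gamma = \textsc{S}_{\mf{g}} \circ \cS_\ahha(\gamma) \circ \textsc{S}_{\mf{h}}^{-1}$, a direct computation with \eqref{chesorpresa} gives $\gs''(X,Y) = \tfrac{1}{2}\tau(X,Y)$ for $X,Y \in \mf{h}$, which is antisymmetric, whereas the given $\gs$ need not be — so even your weaker claim that the two cocycles ``agree on $\mf{h}\otimes_\Aopp\mf{h}$ by construction'' fails. The paper's own Remark \ref{rem:notbij} is a counterexample to your uniqueness argument: for the Heisenberg extension, the normal-ordering section yields $\gs(P,Q)=C$, $\gs(Q,P)=0$ (see \eqref{Hopf}), while the Weyl-ordering section yields $\gs''(P,Q)=C/2 = -\gs''(Q,P)$; these are two distinct normalised Hopf cocycles for the same (here trivial) weak action with the same antisymmetrisation $\tau(P,Q)=C$, and both crossed products are isomorphic to $U(\mf{g})$. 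Hence no induction along the PBW filtration using \ref{item:wa1}--\ref{item:wa3} can force $\gs''=\gs$: the data $(\rightslice,\tau)$ determine $\gs$ at best up to a (symmetric) coboundary. The paper avoids comparing cocycles altogether: it constructs $\overline{\Phi}\colon \cU_\ahha(\LL)/\mathscr{I} \to S$ from the universal property applied to $\phi_{\LL}(r,X) = r\otimes_\ahha 1 + 1 \otimes_\ahha X$, and an inverse $\overline{\Psi}$ defined recursively on PBW generators by $r \otimes_\ahha uX \mapsto \Psi(r \otimes_\ahha u)\Psi(1 \otimes_\ahha X) - \Psi\big(r\gs(u_{(1)},X) \otimes_\ahha u_{(2)}\big)$, verifying by induction that they are mutually inverse $A$-ring maps. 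Your proof needs this (or a genuine cleft-extension equivalence statement for cohomologous cocycles, which neither you nor the paper establishes) to close the backward implication.
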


    \begin{proof}
      \ref{item:moni} $\Rightarrow$ \ref{item:monii}:
        in this implication, we shall strictly follow an argument of the corresponding part in the proof of \cite[Thm.~2.8]{Mon:CPOHAAEA}, which seems to go back to \cite{BorGabRen:PIEALA, McC:ROSLAATGKC}. 

      Assume $S \simeq  R \times_\tau \cU_\ahha(\mf{h})$ as in \eqref{allergodil}, which yields a short exact sequence
      $$
0 \to \Lie{R} \to \Lie{R} \niplus_\tau \mf{h} \to \mf{h} \to 0
      $$
of Lie-Rinehart algebras so that we are in the situation of Theorem \ref{thm:mainthmA}: one obtains the isomorphism
\[
\xymatrix{
\cU_\ahha( \Lie{R} \niplus_\tau \mf{h}) \ar@<+0.5ex>[r]^-{\Psi} & U_\ahha(\Lie{R}) \hash{\sigma} \cU_\ahha(\mf{h}) \ar@<+0.5ex>[l]^-{\Phi}
}
\]
of $A$-rings, where $\gs\colon \cU_\ahha(\mf{h}) \otimes_\Aop \cU_\ahha(\mf{h}) \to U_\ahha(\Lie{R})$ is a Hopf $2$-cocycle, $\Phi$ and $\Psi$ are explicitly given by \eqref{eq:Phi} and \eqref{eq:Psi} in Theorem \ref{thm:sigmatwisted}, respectively, and $U_\ahha(\Lie{R})$ constitutes the left Hopf kernel in the sense of \eqref{mandarino} with the property that $\Psi(r) = r \hash{\gs} 1$ for all $r \in  U_\ahha(\Lie{R})$.

Let us abbreviate $\LL  \coloneqq \Lie{R} \niplus_\tau \mf{h}$.
A closer look at \eqref{eq:Phi} reveals that both $\Phi$ and $\Psi$ are left $U_\ahha(\Lie{R})$-linear morphisms with respect to the ordinary left multiplication in $\cU_\ahha( \LL)$ and the regular left $U_\ahha(\Lie{R})$-module structure on $U_\ahha(\Lie{R}) \hash{\sigma} \cU_\ahha(\mf{h})$, respectively. In particular, we have an isomorphism $\cU_\ahha( \LL) \simeq U_\ahha(\Lie{R}) \otimes_\ahha  \cU_\ahha(\mf{h})$ as left $U_\ahha(\Lie{R})$-modules which entails that $\cU_\ahha( \LL)$ is a projective left $U_\ahha(\Lie{R})$-module since $\mf{h}$ and hence $\cU_\ahha(\mf{h})$ are projective left $A$-modules.

Next, as in \cite[Thm.~2.8]{Mon:CPOHAAEA}, let $\mathscr{I}_0$ be the two-sided ideal in $U_\ahha(\Lie{R})$ generated by the set $\mathscr{S}$ in \eqref{brillantschwarz}. One then clearly has $U_\ahha(\Lie{R})/\mathscr{I}_0 \simeq R$, whence $0 \to \mathscr{I}_0 \xrightarrow{i} U_\ahha(\Lie{R}) \xrightarrow{q} R \to 0$ is a short exact sequence of $U_\ahha(\Lie{R})$-bimodules. Moreover, by some diagram chasing and in view of the Snake Lemma one may check that the following diagram of left $U_\ahha(\Lie{R})$-modules has exact rows and is commutative
\[
\xymatrix{
0 \ar[r] & \mathscr{I}_0 \otimes_{\ahha} \cU_\ahha(\mf{h}) \ar[d]_{\simeq} \ar[r] & U_\ahha(\Lie{R}) \otimes_{\ahha} \cU_\ahha(\mf{h}) \ar[d]^-{\simeq} \ar[r] & R \otimes_{\ahha} \cU_\ahha(\mf{h}) \ar[d]^{\simeq} \ar[r] & 0 \\
0 \ar[r] & \mathscr{I}_0 \otimes_{\scriptscriptstyle U_\ahha(\Lie{R})} \cU_\ahha(\LL) \ar[d]_{\simeq} \ar[r] & U_\ahha(\Lie{R}) \otimes_{\scriptscriptstyle U_\ahha(\Lie{R})} \cU_\ahha(\LL) \ar[d]^-{\simeq} \ar[r] & R \otimes_{\scriptscriptstyle U_\ahha(\Lie{R})} \cU_\ahha(\LL) \ar[d]^{\simeq} \ar[r] & 0 \\
0 \ar[r] & \mathscr{I}_0\,\cU_\ahha(\LL) \ar[r] & \cU_\ahha(\LL) \ar[r] & \cU_\ahha(\LL)/\mathscr{I}_0\,\cU_\ahha(\LL) \ar[r] & 0
}
\]
where the upper vertical arrows are induced by $\Phi$ and the lower central vertical arrow is the canonical isomorphism given by the $U_\ahha(\Lie{R})$-module structure of $\cU_\ahha(\LL)$ ({\em i.e.},  the composition of the central vertical arrows is $\Phi$). That is,
\[
R \otimes_{\ahha} \cU_\ahha(\mf{h}) \simeq \cU_\ahha(\LL)/\mathscr{I}_0\,\cU_\ahha(\LL)
\]
as left $U_\ahha(\Lie{R})$-modules.
Below, we show that $\mathscr{I} = \mathscr{I}_0\,\cU_\ahha(\LL)$. Then $R \otimes_{\ahha} \cU_\ahha(\mf{h})$ inherits a $\K$-algebra structure as quotient of $U_\ahha(\Lie{R}) \hash{\sigma} \cU_\ahha(\mf{h})$ by the ideal $\mathscr{I}_0 \hash{\sigma} \cU_\ahha(\mf{h}) \coloneqq \mathscr{I}_0 \otimes_{\ahha} \cU_\ahha(\mf{h})$. In particular, $\mathscr{I}_0$ becomes closed under the measuring of $\cU_\ahha(\mf{h})$ on $U_\ahha(\Lie{R})$ and hence $R$ becomes a $\cU_\ahha(\mf{h})$-measured $A$-ring. It follows that $R \otimes_{\ahha} \cU_\ahha(\mf{h})$ becomes the $\bar\sigma$-twisted crossed product $R \hash{\,\bar\sigma} \cU_\ahha(\mf{h})$, where $\bar\gs\colon \cU_\ahha(\mf{h}) \otimes_\Aop \cU_\ahha(\mf{h}) \to R$ is $\gs$ under the quotient map, which is still a Hopf $2$-cocycle, and
\[
R \hash{\,\bar\sigma} \cU_\ahha(\mf{h}) \simeq \cU_\ahha(\LL)/\mathscr{I}.
\]
We are left with showing that
$\mathscr{I} = \mathscr{I}_0 \, \cU_\ahha ( \Lie{L})$,
which is done completely analogous to \cite[Thm.~2.8]{McC:ROSLAATGKC}.
One obviously has $\mathscr{I}_0 \subseteq \mathscr{I}$, hence
$\mathscr{I}_0 \, \cU_\ahha ( \Lie{L}) 
\subseteq \mathscr{I} \, \cU_\ahha ( \Lie{L})
= \mathscr{I}$, whereas, on the other side, for all $r,s,t \in R$ and $ X \in \mf{h}$, 
\begin{equation*}
\begin{split}
  (r,X)\big((s,0)(t,0) - (st, 0) \big)
  &= (s,0)(t,0)(r,X) + (s,0)([r,t],0) + (s,0)(\nabla_X t, 0)
  \\
  &
\quad + ([r,s],0)(t,0) + (\nabla_X s, 0)(t,0)
- (st,0)(r,X)
\\
&
\quad
-  (s[r,t],0)
- ([r,s]t,0)
- (\nabla_X (s)t, 0) - (s\nabla_X (t), 0)
\\
  &= \big((s,0)(t,0) - (st,0) \big) (r,X) + \mathscr{S},
\end{split}
\end{equation*}
using \eqref{baratti&milano} and the fact that $\nabla_X \in \Der_{\,\K}(R)$ by construction, and
where we denoted the product in
$\cU_\ahha ( \Lie{L})$ by juxtaposition.
Likewise, one sees that 
$
(r,X) (1_{\scriptscriptstyle{U_A (\Lie{R})}} - 1_{{\scriptscriptstyle{R}}})
=  (1_{\scriptscriptstyle{U_A (\Lie{R})}} - 1_{{\scriptscriptstyle{R}}}) (r,X). 
$
As a consequence,
$
\Lie{L} \, \mathscr{S} \subseteq
\mathscr{S} \, \Lie{L} + \mathscr{S},
$
and therefore
$\cU_\ahha ( \Lie{L})\mathscr{I}_0  \subseteq \mathscr{I}_0 \, \cU_\ahha ( \Lie{L})$, from which we obtain 
$$
\mathscr{I} = \cU_\ahha ( \Lie{L} ) \, \mathscr{S} \, \cU_\ahha ( \Lie{L})
\subseteq
\cU_\ahha ( \Lie{L}) \, \mathscr{I}_0 \, \cU_\ahha ( \Lie{L}) \subseteq
\mathscr{I}_0 \, \cU_\ahha ( \Lie{L})
$$
as $\mathscr{S} \subseteq \mathscr{I}_0$, which ends the proof of this implication.

   \ref{item:monii} $\Rightarrow$ \ref{item:moni}:
   if  $S \simeq R \hash{\gs} \cU_\ahha(\mf{h})$ with respect to an $R$-valued $2$-cocycle $\gs\colon \cU_\ahha(\mf{h}) \otimes_\Aop  \cU_\ahha(\mf{h}) \to R$, then $R$ is in particular a $\gs$-twisted left $\cU_\ahha(\mf{h})$-module in the sense of Definition \ref{GT2} by means of the measuring $\rightslice\colon  \cU_\ahha(\mf{h}) \otimes_\K R \to R$. Restricting this measuring to generators yields a map
   \[
   \nabla\colon \mf{h} \otimes_\K R \to R, \qquad X \otimes_\K r \mapsto \nabla_Xr \coloneqq X \rightslice r,
   \]
   such that $\nabla_X \in \Der_{\,\K}(R)$ for any $X \in \mf{h}$, as follows from \eqref{vendeeglobe} along with \eqref{mainsomma1}. Moreover,  
  it is easy to see that $\nabla$ defines a connection in the sense of Eqs.~\eqref{izvestiya} with curvature
   \begin{equation}
     \label{variazionisuuntemadibach}
\gO(X,Y) = [\tau(X,Y), \,\cdot\, ]_{\Lie{R}},
  \end{equation}
where
$$
\tau(X,Y) \coloneqq  \gs(X,Y) - \gs(Y,X)
$$
defines a (Lie-Rinehart) $2$-cocycle in the sense of Eq.~\eqref{fassbinder},
and
such that the curvature formula \eqref{prawda} is fulfilled: indeed, for all $a \in A$ and $X \in \mf{h}$, we have 
\begin{equation}
\begin{aligned}
X \rightslice \iota(a) & \stackrel{\phantom{\scriptscriptstyle\eqref{vendeeglobe}}}{=} \left(R \otimes_\ahha  \varepsilon\right)\big((1 \otimes_\ahha  X)(\iota(a)\otimes_\ahha  1)\big) = \left(R \otimes_\ahha  \varepsilon\right)\big((1 \otimes_\ahha  X)(1\otimes_\ahha  s(a))\big) \\
 & \stackrel{\scriptscriptstyle\eqref{vendeeglobe}}{=} \iota\big(\varepsilon(X \bract a)\big) \stackrel{\scriptscriptstyle\eqref{mainsomma1}}{=} X(a)
\end{aligned}\label{eq:mimancava}
\end{equation}
and, by using the $A$-bilinearity of the measuring as in Definition \ref{GT1}, along with \eqref{vendeeglobe} and \eqref{mainsomma1},
\begin{equation*}
  \begin{split}
    \nabla_{aX}r & \stackrel{\phantom{\scriptscriptstyle\eqref{mainsomma1}}}{=} (aX) \rightslice r = \iota(a)( X \rightslice r) = a\nabla_Xr,
    \\
    \nabla_X(ar) & \stackrel{\phantom{\scriptscriptstyle\eqref{mainsomma1}}}{=} X \rightslice \big( \iota(a)r\big) \stackrel{\scriptscriptstyle\eqref{vendeeglobe}}{=}  \big(X_{(1)} \rightslice  \iota(a)\big)   \big(X_{(2)} \rightslice r\big)
\\
   & \stackrel{\scriptscriptstyle\eqref{mainsomma1}}{=}  \big(X \rightslice  \iota(a)\big) r  + \iota(a) (X \rightslice r) \stackrel{\scriptscriptstyle\eqref{eq:mimancava}}{=} X(a)r + a \nabla_X r,
  \end{split}
\end{equation*}
which are Eqs.~\eqref{izvestiya} that define a left $(A, \mf{h})$-connection.
  Moreover, observe that
for $X, Y \in \mf{h}$
  from property \eqref{item:wa5} in Definition \ref{GT2} follows, 
  again with Eq.~\eqref{mainsomma1},
  $$
X \rightslice (Y \rightslice r) = \gs(X,Y)r - r\gs(X,Y) + (XY) \rightslice r= [\gs(X,Y),r]_{\Lie{R}} + (XY) \rightslice r, 
  $$
where we used $\gs(1,1) = 1_\erre$ as well. Hence, 
\begin{equation*}
  \begin{split}
 \nabla_X \nabla_Y r - \nabla_Y \nabla_X r - \nabla_{[X,Y]} r
&= 
X \rightslice (Y \rightslice r) - Y \rightslice (X \rightslice r)
- (XY) \rightslice r +  (YX) \rightslice r
\\
\quad 
&=
[\gs(X,Y),r]_{\Lie{R}} - [\gs(Y,X),r]_{\Lie{R}} = [\tau(X,Y),r]_{\Lie{R}}
\end{split}
  \end{equation*}
which proves \eqref{variazionisuuntemadibach}. Finally, that $\tau \in \Hom{}{}{\ahha}{}{\textstyle\bigwedge_\ahha^n \! \mf{h}}{N}$ is a $2$-cocycle, {\em i.e.}, an $A$-bilinear map that fulfils $\nabla \tau = 0$ is seen as follows:  taking once more Eq.~\eqref{mainsomma1} into account,  properties \eqref{item:wa1} and \eqref{item:wa2} in Definition \ref{GT2} guarantee the $A$-bilinearity of $\gs$ when restricted to elements in $\mf{h}$; hence the $A$-bilinearity of $\tau$. Furthermore, applying property \eqref{item:wa3} in Definition \ref{GT2} to three elements $X,Y, Z \in \mf{h}$, yields the familiar cocycle condition
$$
X \rightslice \gs(Y,Z) - \gs(XY, Z) + \gs(X, YZ) = 0,
$$
with the help of which and $\nabla_X r = X \rightslice r$ it is a longish but straightforward check that
\begin{equation*}
  \begin{split}
    \nabla\tau(X,Y,Z) &=
    \nabla_X \tau(Y,Z)
    +
    \nabla_Y \tau(Z,X)
    +
    \nabla_Z \tau(X,Y)
    \\
    &\quad
    -
    \tau([X,Y],Z)
-
\tau([Z,X],Y)
-
\tau([Y,Z],X)
 \end{split}
  \end{equation*}
  vanishes.
Hence, as in Proposition \ref{prop:curvedsdp}, we can build the Lie-Rinehart algebra $(A, \LL ) = (A, \Lie{R} \niplus_\tau \mf{h})$ and  its enveloping algebra $\cU_\ahha  (\LL )$.
Assuming $S= R \hash{\gs} \cU_\ahha(\mf{h})$,
define the following maps
\begin{equation*}
  \label{kindaweird}
  \arraycolsep=1pt\def\arraystretch{1.5}
\begin{array}{rclcrcl}
  \phi_{\LL }\colon \LL  & \to & \Lie{S},   & \qquad & (r, X)   &\mapsto& r \otimes_\ahha  1 + 1 \otimes_\ahha  X, \\
  \phi_{\ahha}\colon A       & \to & S,         & \qquad & a        &\mapsto& \iota(a) \otimes_\ahha  1.
  \end{array}
\end{equation*}
It is a straightforward check
that $\phi_{\LL}$ is a map of $\K$-Lie algebras and $\phi_\ahha$ is a map of $\K$-algebras, and that they fulfil Eqs.~\eqref{dumdidum}.
Hence, as in \S\ref{regenradar}, by the universal property they induce a morphism
$
\Phi\colon \cU_\ahha (\LL ) \to S
$
of $\K$-algebras and, in view of Remark \ref{enhancement}, even $A$-rings.
As obviously $\mathscr{I} \subseteq \ker \Phi$, this descends to a well-defined map
$$
\overline\Phi\colon \cU_\ahha (\LL )/\mathscr{I} \to S.
$$
On the other hand, define a map (of $R$-modules)
$
\Psi\colon S \to  \cU_\ahha (\LL )
$
by setting 
\begin{equation*}
r \otimes_\ahha  1 \mapsto (r,0),
  \qquad
  r \otimes_\ahha  X \mapsto (r,0)(0,X),
\end{equation*}
in degree zero and one, for $r \in R$ and $X \in \mf{h}$, and on general PBW generators recursively by
\begin{equation*}
  \label{orkney2}
r \otimes_\ahha  uX \mapsto \Psi(r \otimes_\ahha  u)\Psi(1 \otimes_\ahha  X) - \Psi\big(r\gs(u_{(1)},X) \otimes_\ahha  u_{(2)} \big)
\end{equation*}
for $u \in \cU_\ahha(\mf{h}), \, X \in \mf{h}, \, r \in R$. It is a simple check ({\em e.g.}, in degree zero) that $\Psi$ is not a morphism of $A$-rings, but it induces one if followed by the projection $\cU_\ahha(\LL) \twoheadrightarrow \cU_\ahha (\LL )/\mathscr{I}$, that is,
$$
\overline{\Psi} \colon S \to  \cU_\ahha (\LL )/\mathscr{I},
$$
is a morphism of $A$-rings by mere construction. That $\overline{\Psi}$ inverts $\overline{\Phi}$ follows by an equally simple induction argument: in degree $1$ this is straightforward, and as for the induction step, we have in degree $n+1$, considering that the coproduct is a degree zero map:
\begin{equation*}
  \begin{split}
    \left(\overline{\Phi} \circ \overline{\Psi}\right)(r \otimes_\ahha  uX)
    &=
    \left(\overline{\Phi} \circ \overline{\Psi}\right)
    (r \otimes_\ahha  u)
    \left(\overline{\Phi} \circ \overline{\Psi}\right)
    (1 \otimes_\ahha  X)
    - 
\left(\overline{\Phi} \circ \overline{\Psi}\right)
    \big(r\gs(u_{(1)},X) \otimes_\ahha  u_{(2)} \big)
    \\
     &=
(r \otimes_\ahha  u)(1 \otimes_\ahha  X)
    - 
r\gs(u_{(1)},X) \otimes_\ahha  u_{(2)}  
    \\
    &= r \otimes_\ahha  uX,
  \end{split}
\end{equation*}
hence $\overline{\Phi} \circ \overline{\Psi} = \id$, which along with the similarly straightforward check of
$\overline{\Psi} \circ \overline{\Phi} = \id$
concludes the proof.
    \end{proof}

\section{Geometric examples}
\label{geomex}

In this section, we present a few geometric instances of our main results in the framework of transformation Lie algebroids, of Atiyah algebroids, and of foliations.
Let us begin by discussing a straightforward algebraic analogue of transformation and Atiyah algebroids in the language of Lie-Rinehart algebras. 
Throughout the section, let us fix a commutative  $\K$-algebra $A$, a Lie algebra $\mf{h}$ and a representation of $\mathfrak{h}$ on $A$ by derivations, {\em i.e.}, a Lie algebra morphism
\begin{equation}\label{representation}
r \colon \mathfrak{h}\to\Der(A), \qquad v\mapsto v^\sharp.
\end{equation}

\subsection{Transformation Lie algebroids}

To start with, we provide the definition of a transformation Lie-Rinehart algebra by finding inspiration from the geometric notion of a transformation Lie algebroid.

\begin{definition}[Transformation Lie-Rinehart algebra]\label{def:transform}
Consider the Lie algebra $\mf{h}$ as a Lie-Rinehart algebra over $\K$ of the form $(\K,\mf{h},0)$. The free $A$-module $A\otimes\mathfrak{h}$ endowed with the structure of a Lie-Rinehart algebra over $A$ as in Proposition \ref{prop:iso3}\ref{item:iso3b}, that is, via the $A$-linear extension
\begin{equation*}
A\otimes\mathfrak{h}\to\Der(A), \qquad f\otimes v\mapsto f\,v^\sharp
\end{equation*}
of the representation $r$ in \eqref{representation}, as anchor,
and via
\begin{equation}\label{actionLRalg}
[f\otimes v,g\otimes w]\, \coloneqq\,(fg)\otimes[v,w]\,+\,\big(f\, v^\sharp(g)\big)\otimes w\,-\,\big(g\, w^\sharp(f)\big)\otimes v,\qquad \forall f,g\in A,\quad\forall v,w\in \mathfrak{h},    
\end{equation}
as Lie bracket, will be denoted $A\rtimes\mathfrak{h}$ and called a \emph{transformation Lie-Rinehart algebra}.
\end{definition}

\begin{example}[Transformation Lie algebroid]\label{ex:TransLieAlgd}
Consider the action of a Lie algebra $\mathfrak{h}$ on a smooth manifold $M$. The corresponding transformation Lie algebroid (see, for example, \cite[Prop.\ 4.1.2]{Mackenziebook-new}) will be denoted $M\rtimes\mathfrak{h}$. It is nothing but the trivial vector bundle $M\times\mathfrak{h}$ endowed with the structure of a Lie algebroid such that the space of its smooth global sections is the transformation Lie-Rinehart algebra $\Gamma(M\rtimes\mathfrak{h})=C^\infty(M)\rtimes\mathfrak{h}$.
\end{example}

As a side remark, let us mention the following corollary of Proposition \ref{prop:iso3} which, despite of its appearance, is not related to our main Theorem \ref{thm:sigmatwisted}.

\begin{proposition}
\label{propArtimesh}
The universal enveloping algebra of a transformation Lie-Rinehart algebra $A\rtimes\mathfrak{h}$ is isomorphic to the smash product
\be
\label{Artimesh}
\mathcal{U}_\ahha(A\rtimes\mathfrak{h}) \simeq A \hash{} U(\mathfrak{h})
\ee
of the commutative algebra $A$ with the universal enveloping algebra
of the Lie algebra $\mathfrak{h}$.
\end{proposition}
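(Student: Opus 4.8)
The plan is to obtain this as an immediate specialisation of Proposition \ref{prop:iso3}, taking the base algebra there to be the ground field $\K$. Explicitly, I would apply Proposition \ref{prop:iso3} to the Lie-Rinehart algebra $(\K,\mf{h},0)$ over $\K$ --- that is, the Lie algebra $\mf{h}$ equipped with the necessarily trivial anchor, since $\Der_\K(\K)=0$ --- with the commutative target algebra $R\coloneqq A$, the connection $\rho\coloneqq r\colon\mf{h}\to\Der_\K(A)$ of \eqref{representation}, and $\eta\colon\K\to A$ the structural unit of the $\K$-algebra $A$. The projectivity requirement is automatic here, as $\mf{h}$ is a $\K$-vector space and hence free over $\K$.

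First I would check that these data satisfy the hypotheses of Proposition \ref{prop:iso3}. That $\eta$ is a $\K$-algebra map and $\rho$ a Lie algebra map is clear, and the condition $\rho(\lambda\cdot X)=\eta(\lambda)\rho(X)$ for $\lambda\in\K$ is just the $\K$-linearity of $r$. The one point deserving a word is the intertwining condition, which here reads $\rho(X)\circ\eta=\eta\circ 0=0$: this holds because each $\rho(X)=X^\sharp$ is a $\K$-linear derivation of $A$ and therefore annihilates $\eta(\lambda)=\lambda 1_A$.

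Next I would identify the output of the proposition with the transformation Lie-Rinehart algebra. By Proposition \ref{prop:iso3}\ref{item:iso3b}, the $A$-module $A\otimes\mf{h}$ carries a Lie-Rinehart structure over $A$; substituting $R=A$ and $\rho(X)=X^\sharp$ into the bracket of $(A,A\rtimes_\ahha\mf{h})$ and into the anchor \eqref{eq:anchorp} reproduces verbatim the bracket \eqref{actionLRalg} and the anchor of Definition \ref{def:transform}, so that $(A,A\rtimes_\ahha\mf{h})$ is precisely $A\rtimes\mf{h}$. Proposition \ref{prop:iso3}\ref{item:iso3c} then supplies an isomorphism $\cU_\ahha(A\rtimes\mf{h})\simeq A\hash{}\cU_\K(\mf{h})$ of cocommutative left Hopf algebroids over $A$, and in particular of $\K$-algebras. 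Finally, since the base $\K$-Lie algebra $(\K,\mf{h},0)$ has trivial anchor, its Lie-Rinehart enveloping algebra coincides with the ordinary universal enveloping algebra $U(\mf{h})$ of $\mf{h}$, as recorded in \S\ref{regenradar}; replacing $\cU_\K(\mf{h})$ by $U(\mf{h})$ yields exactly \eqref{Artimesh}.

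As every step is merely a verification against the already proven Proposition \ref{prop:iso3}, no real obstacle arises; the only care required is in bookkeeping the two different bases ($\K$ underlying $\mf{h}$, and $A$ underlying the transformation algebra) and in confirming that the specialised formulas match Definition \ref{def:transform} exactly.
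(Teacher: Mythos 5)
Your proposal is correct and follows exactly the route the paper intends: the paper presents Proposition \ref{propArtimesh} precisely as a corollary of Proposition \ref{prop:iso3} (with Definition \ref{def:transform} already built via Proposition \ref{prop:iso3}\ref{item:iso3b}), and your specialisation $(\K,\mf{h},0)$, $R=A$, $\rho=r$, $\eta\colon\K\to A$ is the intended instantiation. Your verification of the hypotheses --- in particular that derivations annihilate $\eta(\lambda)=\lambda 1_A$, that projectivity over $\K$ is automatic, and that $\cU_\K(\mf{h})=U(\mf{h})$ for the trivial anchor --- supplies correctly the details the paper leaves implicit.
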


\begin{example}[Differential operators on a Lie group]
\label{ex:LieGroup}
The universal enveloping algebra of the Lie algebra $\mathfrak{g}$ of a Lie group $G$ can be defined geometrically as the associative algebra of invariant differential operators on $G$. More precisely, first recall that the tangent bundle of $G$ admits a trivialisation  $TG\simeq G\times\mathfrak{g}$ via the Maurer-Cartan one-form. As the Lie group regularly acts on itself via the (left or right) multiplication, the Lie algebra $\mathfrak{g}$ acts on $G$ and the above trivialisation induces the isomorphism of Lie-Rinehart algebras
\begin{equation}
\label{XGCG}
\mathfrak{X}(G)\simeq C^\infty(G)\rtimes \mathfrak{g},
\end{equation}
which encodes the fact that the Lie algebra of vector fields on $G$ that are invariant under (right, resp.\ left) translations is isomorphic to $\mathfrak{g}$, that is, $\mathfrak{X}(G)^G\simeq\mathfrak{g}$.
On the other hand, for any smooth manifold $M$, there is an isomorphism (see, for example,  \cite[p.~133]{Nistor})
\begin{equation}
\label{UEAXMDM}
\cU_{\scriptscriptstyle C^{\infty}(M)}\big(\mf{X}(M)\big)\simeq{\cal D}(M),
\end{equation} 
as follows from applying \cite[Thm.~3]{Nistor} to \cite[Ex.~1]{Nistor}, 
where $\mf{X}(M)$ denotes the Lie-Rinehart algebra of smooth vector fields and ${\cal D}(M)$ the algebra of differential operators on $M$. Applying this to $M = G$, one obtains
\begin{equation}\label{DGiso}
{\cal D}(G)\simeq \cU_{\scriptscriptstyle C^{\infty}(G)}\big(\mf{X}(G)\big)
\simeq C^\infty(G)\hash{} U(\mathfrak{g}),
\end{equation}
as follows from Proposition \ref{propArtimesh} putting $A=C^\infty(G)$ in \eqref{Artimesh}.
Therefore, we recover the well-known fact that the associative algebra of invariant differential operators on a Lie group is isomorphic to the universal enveloping algebra of its Lie algebra, that is, $\mathcal{D}(G)^G\simeq U(\mathfrak{g})$.
\end{example}

\subsection{Atiyah algebroid of a principal bundle}

Recall that the algebra $A$ is assumed to be commutative and an $\mf{h}$-module via the representation $r$ from \eqref{representation}.

\begin{definition}[Atiyah algebra] 
\label{def:AtiyahAlg}
Let $\mathfrak{h}^\sharp\subset \Der(A)$ denote the image of the representation $r$. The centraliser of
$\mathfrak{h}^\sharp$ inside the Lie algebra $\Der(A)$ will be denoted 
\begin{equation}\label{Ati}
\Der(A)^\mathfrak{h} \coloneqq \{X\in\Der(A) \mid [v^\sharp,X]=0,\,\forall v\in\mathfrak{h}\},
\end{equation}
and called the \emph{Atiyah algebra} of the $\mathfrak{h}$-module $A$.
\end{definition}

\begin{proposition}
Let 
\begin{equation}\label{Ah}
A^\mathfrak{h}\,\coloneqq\,\{f\in A \mid v^\sharp(f)=0,\,\forall v\in\mathfrak{h}\}    
\end{equation}
denote the commutative subalgebra of $A$ spanned by elements that are invariant under the action of the Lie algebra $\mathfrak{h}$.
The commutative subalgebra $A^\mathfrak{h}\subseteq A$ is a module over the Lie subalgebra $\Der(A)^\mathfrak{h}\subseteq\Der(A)$.
The Atiyah algebra is a Lie-Rinehart algebra over $A^\mathfrak{h}$ whose anchor
\begin{equation}\label{restrictionAh}
\rho\colon\Der(A)^\mathfrak{h}\to\Der(A^\mathfrak{h}),\quad X\mapsto X|_{A^\mathfrak{h}},
\end{equation}
is defined by the above restriction. 
\end{proposition}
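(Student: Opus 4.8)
The plan is to verify the three assertions in turn, each of which reduces to a standard closure property of centralisers and of invariant subspaces under the commutator bracket on $\Der(A)$. First I would check that $A^{\mathfrak{h}}=\bigcap_{v\in\mathfrak{h}}\ker\big(v^{\sharp}\big)$ is a subalgebra: it is a $\K$-subspace, being an intersection of kernels of $\K$-linear maps, and since each $v^{\sharp}$ is a derivation one has $v^{\sharp}(1)=0$ and $v^{\sharp}(fg)=v^{\sharp}(f)\,g+f\,v^{\sharp}(g)$, so $1\in A^{\mathfrak{h}}$ and $A^{\mathfrak{h}}$ is closed under products. Next, that $\Der(A)^{\mathfrak{h}}$ is a Lie subalgebra of $\Der(A)$ is exactly the fact that the centraliser of the subset $\mathfrak{h}^{\sharp}$ is a Lie subalgebra: it is a $\K$-subspace, and for $X,Y\in\Der(A)^{\mathfrak{h}}$ the Jacobi identity gives $[v^{\sharp},[X,Y]]=[[v^{\sharp},X],Y]+[X,[v^{\sharp},Y]]=0$ for all $v\in\mathfrak{h}$, whence $[X,Y]\in\Der(A)^{\mathfrak{h}}$.

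The key (and only genuinely load-bearing) computation is that $\Der(A)^{\mathfrak{h}}$ preserves $A^{\mathfrak{h}}$. For $X\in\Der(A)^{\mathfrak{h}}$ and $f\in A^{\mathfrak{h}}$, rewriting $v^{\sharp}\circ X=X\circ v^{\sharp}+[v^{\sharp},X]$ yields
\[
v^{\sharp}\big(X(f)\big)=X\big(v^{\sharp}(f)\big)+[v^{\sharp},X](f)=X(0)+0=0,
\]
so $X(f)\in A^{\mathfrak{h}}$. This single identity simultaneously shows that the assignment $X\cdot f\coloneqq X(f)$ makes $A^{\mathfrak{h}}$ a module over the Lie algebra $\Der(A)^{\mathfrak{h}}$ (the representation axiom $[X,Y]\cdot f=X\cdot(Y\cdot f)-Y\cdot(X\cdot f)$ being the restriction to $A^{\mathfrak{h}}$ of the defining commutator identity in $\Der(A)$), and that the restriction $\rho(X)=X|_{A^{\mathfrak{h}}}$ is a well-defined derivation of $A^{\mathfrak{h}}$, which settles the second assertion and the well-definedness of the anchor $\rho$ in \eqref{restrictionAh}.

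It then remains to assemble the Lie-Rinehart structure over $A^{\mathfrak{h}}$ by checking the axioms of \S\ref{sec:LieRin}. I would first observe that $\Der(A)^{\mathfrak{h}}$ is a left $A^{\mathfrak{h}}$-module: for $f\in A^{\mathfrak{h}}$ and $X\in\Der(A)^{\mathfrak{h}}$, the element $fX$ is a derivation of $A$ with $[v^{\sharp},fX]=v^{\sharp}(f)\,X+f\,[v^{\sharp},X]=0$, so $fX\in\Der(A)^{\mathfrak{h}}$. The anchor $\rho$ is a Lie algebra morphism because restriction is compatible with the commutator, $\rho([X,Y])=[X,Y]|_{A^{\mathfrak{h}}}=[\rho(X),\rho(Y)]$, and it is $A^{\mathfrak{h}}$-linear, $\rho(fX)=f\,\rho(X)$; finally the Leibniz compatibility $[X,fY]=f\,[X,Y]+\rho(X)(f)\,Y$ for $f\in A^{\mathfrak{h}}$ is precisely the derivation-commutator identity $[X,fY]=X(f)\,Y+f\,[X,Y]$ combined with $X(f)=\rho(X)(f)$. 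The main obstacle is pedagogical rather than mathematical: there is no hard step, and the entire argument hinges on the one-line interchange $v^{\sharp}\circ X=X\circ v^{\sharp}$ valid on $A^{\mathfrak{h}}$ for $X$ in the centraliser, with the remaining verifications being routine instances of the standard identity $[D,fE]=D(f)\,E+f\,[D,E]$ on $\Der(A)$.
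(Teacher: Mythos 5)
Your proof is correct and follows essentially the same route as the paper's: the same one-line interchange $v^\sharp\big(X(f)\big)=X\big(v^\sharp(f)\big)=0$ establishing that $\Der(A)^{\mathfrak{h}}$ preserves $A^{\mathfrak{h}}$, the same computation $[v^\sharp,fX]=v^\sharp(f)\,X+f\,[v^\sharp,X]=0$ giving the $A^{\mathfrak{h}}$-module structure on the Atiyah algebra, and the same use of the identity $[X,fY]=X(f)\,Y+f\,[X,Y]$ together with $X(f)=\rho(X)(f)$ for the Leibniz axiom. The only difference is that you additionally spell out the routine closure properties (that $A^{\mathfrak{h}}$ is a subalgebra and $\Der(A)^{\mathfrak{h}}$ a Lie subalgebra) which the paper leaves implicit in its definitions.
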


\begin{proof}
Let $X\in\Der(A)^\mathfrak{h}$ be an $\mathfrak{h}$-invariant derivation on the commutative algebra $A$, that is, $X\in\Der(A)$ with $[v^\sharp,X]=0$ for any $v\in\mathfrak{h}$. 
Let $f\in A^\mathfrak{h}$, that is, $f\in A$ with $v^\sharp(f)=0$ for any $v\in\mathfrak{h}$.
One can explicitly see that $X(f)\in A^\mathfrak{h}$ since
\begin{equation}
v^\sharp\big(X(f)\big)=(v^\sharp X)(f)=(X v^\sharp)(f)=X\big(v^\sharp(f)\big)=0.
\end{equation}
It is then straightforward to check that the commutative subalgebra $A^\mathfrak{h}\subseteq A$ is indeed a module of the Lie subalgebra $\Der(A)^\mathfrak{h}\subseteq\Der(A)$.
One can also explicitly check that $f X\in\Der(A)^\mathfrak{h}$ since
\begin{equation}
[v^\sharp,fX]\,=\,v^\sharp(f)\,X\,+\,f\,[v^\sharp,X]=0.    
\end{equation}
The Atiyah algebra $\Der(A)^\mathfrak{h}$ is therefore an $A^\mathfrak{h}$-module and 
the $A^\mathfrak{h}$-linear map \eqref{restrictionAh} is defined as the restriction to the commutative subalgebra $A^\mathfrak{h}\subseteq A$.
Let $Y\in\Der(A)^\mathfrak{h}$ be another $\mathfrak{h}$-invariant derivation. 
The Leibniz rule holds since
\begin{equation}
[X,fY]\,=\,X(f)\,Y\,+\,f\,[X,Y]
\end{equation}
for any derivations $X, Y$, but $X(f)=\rho(X)(f)$ since $f\in A^\mathfrak{h}$. This proves that the Atiyah algebra $\Der(A)^\mathfrak{h}$ is a Lie-Rinehart algebra over $A^\mathfrak{h}$.
\end{proof}

\begin{example}[Atiyah algebroid of a principal bundle]
\label{principalcase}
Let $P$ be the total space of a principal $H$-bundle over $P/H$; equivalently, the Lie group $H$ acts freely (and properly) on the total space $P$ (from the right, traditionally) and the base $P/H$ is the space of orbits. 
We will assume from now on that the structure group $H$ is connected, so that if we are given a vector bundle $E$ over $P$ with an action of $H$ on $E$ projecting to the given action on $P$, then the equivariance of a global section of this vector bundle $E$ over $P$ under the action of the Lie group $H$ is equivalent to its (infinitesimal) invariance under the corresponding action of the Lie algebra $\mathfrak h$ (this is a direct corollary of \cite[Thm.\ 2.10]{Kosmann-Schwarzbach}). In particular, the commutative subalgebra of $A=C^\infty(P)$ spanned by the $H$-invariant functions on the total space is $A^\mathfrak{h}=C^\infty(P)^\mathfrak{h}$.
It is isomorphic to the commutative algebra of functions on the base space: $C^\infty(P)^\mathfrak{h}\simeq C^\infty(P/H)$. 
The Lie group $H$ acts freely on the tangent bundle $TP$ of the total space, thus one may consider the space of orbits $TP/H$, which is a Lie algebroid over $P/H$, called the \textit{Atiyah algebroid of the principal} $H$-\textit{bundle} $P$. Its global sections are the $H$-invariant vector fields on $P$, which span the Atiyah algebra $\Gamma(TP/H)=\mathfrak{X}(P)^\mathfrak{h}$.
\end{example}

\begin{definition}[Transitive Lie-Rinehart algebra]
A Lie-Rinehart algebra such that its anchor is surjective will be called a \emph{transitive Lie-Rinehart algebra}.
\end{definition}

This choice of terminology is motivated by the fact that the space of global sections of a transitive Lie algebroid (see, for example, \cite[p.~100]{Mackenziebook} or \cite[Def.~3.3.1]{Mackenziebook-new}) over a smooth manifold $M$ is a transitive Lie-Rinehart algebra over $C^\infty(M)$ in the above sense.

\begin{lemma}
Consider a transitive Lie-Rinehart algebra $\mathfrak{g}$ over $A$ with anchor $\rho$. There is a short exact sequence of Lie-Rinehart algebras
\be\label{Atiyahsequencebydef}
0 \to \ker\rho \hookrightarrow \mathfrak{g} \stackrel{\rho}\twoheadrightarrow \Der(A) \to 0,
\ee
where the kernel of the anchor is a Lie-Rinehart ideal (as in Example \ref{ex:LRideal}).
\end{lemma}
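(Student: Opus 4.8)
The plan is to recognise the anchor $\rho$ itself as a morphism of Lie-Rinehart algebras and then to invoke Example~\ref{ex:LRideal} together with the very definition of a short exact sequence of Lie-Rinehart algebras recalled in \S\ref{sec:LieRin}. The whole argument is then essentially definitional; the one thing worth isolating is the observation that the datum of an anchor is exactly the datum of a Lie-Rinehart morphism to a tautological target.

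First I would note that $\Der(A)$ carries a canonical structure of Lie-Rinehart algebra over $A$, namely $\big(A,\Der(A),\id\big)$: it is a Lie algebra under the commutator, a left $A$-module via $(a\cdot X)(b)=a\,X(b)$ (which again lands in $\Der(A)$ because $A$ is commutative), and the two Lie-Rinehart axioms of \S\ref{sec:LieRin} reduce, for the identity anchor, to the trivial identity $a\cdot X=a\cdot X$ and to the Leibniz rule $[X,a\cdot Y]=a\cdot[X,Y]+X(a)\cdot Y$, valid for any pair of derivations. Next I would verify that the anchor $\rho\colon\mf{g}\to\Der(A)$ of a transitive Lie-Rinehart algebra $(A,\mf{g},\rho)$ is a morphism of Lie-Rinehart algebras onto $\big(A,\Der(A),\id\big)$: it is a Lie algebra morphism by the definition of anchor, it is left $A$-linear because the first Lie-Rinehart axiom reads precisely $\rho(a\cdot X)=a\cdot\rho(X)$, and it is compatible with the anchors since $\id\circ\rho=\rho=\omega_{\mf{g}}$.

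With this in hand the conclusion follows without further work. By Example~\ref{ex:LRideal}, $\ker\rho$ is a Lie-Rinehart ideal of $\mf{g}$; in particular it is an $A$-submodule (as $\rho$ is $A$-linear), a Lie ideal (since $\rho([X,Y])=[\rho(X),\rho(Y)]=0$ whenever $Y\in\ker\rho$), and it has trivial anchor, for $\omega_{\mf{g}}(Y)=\rho(Y)=0$ on $\ker\rho$, so that $\ker\rho$ is an $A$-Lie algebra. The inclusion $\ker\rho\hookrightarrow\mf{g}$ is thus a morphism of Lie-Rinehart algebras, its anchor compatibility $\omega_{\mf{g}}|_{\ker\rho}=0=\omega_{\ker\rho}$ holding trivially. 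It then remains only to check exactness as a sequence of $A$-modules: injectivity of the inclusion is immediate, the image of the inclusion equals $\ker\rho$ by construction, and surjectivity of $\rho$ onto $\Der(A)$ is exactly the transitivity hypothesis.

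The hard part, such as it is, is conceptual rather than computational: once one sees that the anchor of \emph{any} Lie-Rinehart algebra is automatically a Lie-Rinehart morphism to the tautological algebra $\big(A,\Der(A),\id\big)$, the sequence \eqref{Atiyahsequencebydef} is just the canonical kernel--cokernel sequence of that morphism, and nothing beyond Example~\ref{ex:LRideal} and the bookkeeping of the anchor-compatibility conditions is needed.
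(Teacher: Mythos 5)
Your proof is correct and follows exactly the argument the paper leaves implicit: the lemma is stated without proof, with the parenthetical reference to Example~\ref{ex:LRideal} signalling precisely your route --- view $\rho$ as a Lie-Rinehart morphism onto the tautological algebra $\big(A,\Der(A),\id\big)$, apply Example~\ref{ex:LRideal} to get the ideal, and use transitivity (surjectivity of the anchor) for exactness on the right. Your explicit verifications of the Lie-Rinehart structure on $\Der(A)$ and of the anchor-compatibility conditions are all accurate and complete the bookkeeping the paper omits.
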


\begin{example}[Transitive Lie algebroid]
Remarkably, the kernel of the anchor of a transitive Lie algebroid is a Lie algebra bundle. This property is not obvious (see, for example, \cite[Thm.\  IV.1.4]{Mackenziebook} for an elegant proof). This vector subbundle is called the adjoint algebroid and its typical fibre (a Lie algebra) is called the isotropy algebra. If a transitive algebroid is integrable then it identifies with the Atiyah algebroid $TP/H$ of a principal bundle $P$ with structure group $H$ whose Lie algebra is the isotropy algebra $\mathfrak{h}$. 
\end{example} 

\begin{definition}
Let us set
\begin{equation}
\mathfrak{ann}(A^\mathfrak{h}) \coloneqq \{X\in\Der(A)^\mathfrak{h} \mid X(f)=0,\,\forall f\in A^\mathfrak{h}\},
\end{equation} 
the kernel of $\rho$ from \eqref{restrictionAh}, and call it the {\em annihilator} of the module $A^\mathfrak{h}$ over the Atiyah algebra $\Der(A)^\mathfrak{h}$.
\end{definition}

 With these notations, we always have an exact sequence of Lie-Rinehart algebras
\be
\label{Atiyahsequenceobvious0}
0 \to \mathfrak{ann}(A^\mathfrak{h}) \hookrightarrow \Der(A)^\mathfrak{h} \to \Der(A^\mathfrak{h}).
\ee
If the Atiyah algebra is transitive, then $\Der(A)^\mathfrak{h}$ is a Lie-Rinehart algebra extension of $\Der(A^\mathfrak{h})$ by $\mathfrak{ann}(A^\mathfrak{h})$, that is, there is a
short exact sequence of Lie-Rinehart algebras
\be
\label{Atiyahsequenceobvious}
0 \to \mathfrak{ann}(A^\mathfrak{h}) \hookrightarrow \Der(A)^\mathfrak{h} \twoheadrightarrow \Der(A^\mathfrak{h}) \to 0
\ee
in the sense of Equation \eqref{eq:ses}. 
 By Theorem \ref{thm:mainthmA}, any $A$-linear splitting (which we may call an \emph{invariant Ehresmann connection}, by finding inspiration from the geometric setting, see Example \ref{ex:atiyah} below) of this short exact sequence leads to a factorisation as a $\sigma$-twisted crossed product
\begin{equation}
\label{factorPH}
\cU_{A^\mf{h}}\big(\Der(A)^\mathfrak{h} \big) \simeq {U}_{A^\mf{h}}\big(\mathfrak{ann}(A^\mathfrak{h})\big) \hash{\gs} \cU_{A^\mf{h}}\big(\Der(A^\mathfrak{h})\big).
\end{equation}
In fact, up to the relation from Proposition \ref{prop:seccorresp}, invariant Ehresmann connections on $\Der(A)^\mathfrak{h}$ correspond to decompositions of $\cU_{A^\mf{h}}\big(\Der(A)^\mathfrak{h} \big)$ as in \eqref{factorPH}. Analogously, Lie-Rinehart algebra splittings (which we may call \emph{flat invariant Ehresmann connections}) of \eqref{Atiyahsequenceobvious} correspond to factorisations as smash product
\begin{equation}
\label{factorPH2}
\cU_{A^\mf{h}}\big(\Der(A)^\mathfrak{h} \big) \simeq {U}_{A^\mf{h}}\big(\mathfrak{ann}(A^\mathfrak{h})\big) \hash{} \cU_{A^\mf{h}}\big(\Der(A^\mathfrak{h})\big),
\end{equation}
by Proposition \ref{prop:1-1splittings}, that is, \eqref{factorPH} with respect to a trivial cocycle $\gs$.

\begin{example}[Atiyah sequence of a principal bundle] 
\label{ex:atiyah}
Continuing Example \ref{principalcase} and using the notation introduced in Example \ref{ex:TransLieAlgd}, the transformation Lie algebroid $P\rtimes\mathfrak{h}$ for the free action of $H$ on $P$ is endowed with a fibrewise injective anchor.
The vertical distribution $VP\subset TP$ is canonically isomorphic, as a Lie algebroid over the total space $P$, to the transformation Lie algebroid: $VP\simeq P\rtimes\mathfrak{h}$.
The Lie group $H$ also acts freely on the vertical distribution $VP$ of the total space; thus, one may consider the space of orbits $VP/H$. This gives rise to the short exact sequence 
\begin{equation}
\label{Asequ}
0 \to {VP}/{H} \hookrightarrow {TP}/{H} \twoheadrightarrow T\left({P}/{H}\right) \to 0   
\end{equation}
of Lie algebroids over $P/H$,
which is called the \emph{Atiyah sequence} of the principal $H$-bundle $P$ (see \cite[\S3.2]{Mackenziebook-new}). Let $\mathfrak{V}(P)$ denote the space $\Gamma(VP)$ of vertical vector fields on $P$ endowed with the structure of a Lie-Rinehart algebra over $C^\infty(P)$. The isomorphism $VP\simeq P\rtimes\mathfrak{h}$ of Lie algebroids over $P$ induces the isomorphism
\begin{equation}\label{VPiso}
\mathfrak{V}(P)\simeq C^\infty(P)\rtimes\mathfrak{h}    
\end{equation}
of Lie-Rinehart algebras over $C^\infty(P)$. 
The global sections of the vector bundle $VP/H$ are $H$-invariant vertical vector fields on $P$. The space $\Gamma(VP/H)$ of such global sections is, by construction, the annihilator $\mathfrak{ann}\big(C^\infty(P/H)\big)$ of the module $C^\infty(P)^{\mathfrak{h}}\simeq C^\infty(P/H)$ of the Atiyah algebra. 
The vector bundle $P\times_{Ad}\mathfrak{h}$ associated via the adjoint representation of $H$ on $\mathfrak{h}$ to the principal $H$-bundle $P$ is canonically isomorphic to $VP/H$ as a Lie algebroid over $P/H$. To summarise,
\begin{equation}\label{VP/H}
\mathfrak{ann}\big(C^\infty(P/H)\big)=\Gamma(VP/H)=\mathfrak{V}(P)^{\mf{h}}\simeq\Gamma(P\times_{Ad}\mathfrak{h})=\Gamma(P\rtimes\mathfrak{h})^\mathfrak{h}.
\end{equation}
In particular, \eqref{Atiyahsequenceobvious0} becomes exact as in \eqref{Atiyahsequenceobvious}. More explicitly, the Atiyah sequence \eqref{Asequ} implies the short exact sequence
\begin{equation}
\label{AsequLR}
0 \to \mathfrak{V}(P)^\mathfrak{h}\hookrightarrow \mathfrak{X}(P)^\mathfrak{h} \twoheadrightarrow \mathfrak{X}\left({P}/{H}\right) \to 0   
\end{equation}
of Lie-Rinehart algebras over $C^\infty(P/H)$.
In this setting, invariant Ehresmann connections (\emph{i.e.}, splittings of \eqref{Asequ} or, equivalently, of \eqref{AsequLR}) give rise to decompositions as crossed products
\[
\cU_{\scriptscriptstyle C^{\infty}(P/H)}\big(\mf{X}(P)^{\mf{h}}\big) \simeq U_{\scriptscriptstyle C^{\infty}(P/H)}\big(\mathfrak{V}(P)^{\mf{h}}\big) \hash{\gs} {\cal D}(P/H)
\]
for a certain cocycle $\sigma$ constructed as in Theorem \ref{thm:sigmatwisted}, where $\cD$ has the same meaning as in \eqref{UEAXMDM}. 
In the same way, flat invariant Ehresmann connections give rise to decompositions as smash products
(that is, with respect to a trivial cocycle $\gs$).
In other words, a (flat or curved) invariant Ehresmann connection on a principal bundle provides a factorisation of the associative algebra generated by the invariant vector fields on the total space as a (smash or crossed) product of the algebra generated by the vertical ones and the algebra of differential operators on the base manifold.
We will come back to this topic, in more detail, in \S\ref{ssec:Atiyah2}.
\end{example}

\subsection{Atiyah algebroid of a vector bundle}
\label{ssec:AtiyahVB}

The Atiyah algebroid of the frame bundle of a vector bundle is a principal bundle over the same base with the general linear group as structure group. It is often called the {\em Atiyah algebroid of the vector bundle}.

More precisely, let $E$ be a vector bundle of rank $n$ over a manifold $M$. A frame at a point $m$ of $M$ is a basis of the fibre at $m$. The frame bundle $FE$ of the vector bundle $E$ over $M$ is a principal bundle with structure group $H=GL(n)$, total space $P=FE$, and base $P/H=M$. The corresponding bundle $VP/H$ is sometimes denoted $\mathfrak{gl}(E)$. This is motivated by the isomorphism $VP/H\simeq P\times_{Ad}\mathfrak{h}$, which reads here $\mathfrak{gl}(E)\simeq FE\times_{Ad}\mathfrak{gl}(n)$. In fact, its fibres are isomorphic to the general linear algebra of the fibres of the vector bundle $E$.

The Atiyah algebroid of a vector bundle admits a celebrated equivalent (more algebraic) definition in terms of covariant derivatives, which we will review in what follows. The details about the equivalence between these two definitions are also reported, {\em cf.}\ Diagram \eqref{eq:AalgdVB}.

\begin{definition}[Covariant derivative]
Let $\textsc V$ be an $A$-module.
A pair $(X,\nabla)$ consisting of a derivation $X\in\Der(A)$ of the algebra $A$ and an endomorphism $\nabla$ of the underlying vector space of $\textsc V$ obeying the Leibniz rule
\be\label{forallsigma}
{\nabla}(f \sigma)={ X}(f)\,\sigma + f \,{\nabla}\sigma, \quad \forall f\in{A},\,\,\forall\sigma\in \textsc V,
\ee
can be interpreted as an \emph{infinitesimal automorphism} of the $A$-module $\textsc V$, and will be called
a \emph{covariant derivative} on the $A$-module $\textsc V$ along the derivation ${X}$.
\end{definition}

In \cite[\S1]{Kosmann-SchwarzbachMackenzie} and \cite[p.~72]{Huebschmann}, co\-variant de\-rivatives were called \emph{derivative endomorphisms} and \emph{infinitesimal gauge transformations}, respectively.

\begin{example}[Covariant derivative along a vector field] Consider a vector bundle $E$ over a manifold $M$ and $\textsc{V}=\Gamma(E)$ its vector space of sections. A covariant derivative on the ${\cal C}^\infty(M)$-module 
$\Gamma(E)$ along the derivation ${ X}\in{\mathfrak{X}}(M)$ is a ${\cal C}^\infty(M)$-linear map
\be\label{Kconn}
\nabla_{{X}} \colon \Gamma(E)\to \Gamma(E), \qquad \sigma\mapsto\nabla_{{X}}\sigma,
\ee
obeying the following Leibniz rule:
\be
\label{Leibnitzalong}
\nabla_{{X}} (f \sigma) = X(f)\,\sigma + f\,\nabla_{{X}} \sigma , \qquad\forall f\in{\cal C}^\infty(M), \quad\forall \sigma\in\Gamma(E).
\ee
It will be called a \textit{covariant derivative acting on the vector bundle} $E$
 along the vector field ${X}$. 
\end{example}
 
\begin{definition}[Atiyah algebra of a module]
The Lie algebra $\mathfrak{cder}_{\ahha}(\textsc{V})$ of all covariant derivatives ${\nabla}$ on the $A$-module $\textsc V$ is called the {\em Atiyah algebra of the $A$-module} $\textsc V$.
\end{definition}

\begin{proposition}\label{Atiyvectb}
The Atiyah algebra of the A-module $\textsc V$ is endowed with the structure of a Lie-Rinehart algebra over $A$ by means of the anchor 
\be
\label{surjanchorsigma}
\sigma \colon \mathfrak{cder}_{\ahha}(\textsc V)\to \Der({A}), \qquad (X,\nabla) \mapsto {X},
\ee
which maps a covariant derivative $(X,\nabla)$ along a derivation $X$ onto the latter derivation. 
\end{proposition}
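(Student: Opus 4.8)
The plan is to equip $\mathfrak{cder}_{\ahha}(\textsc V)$ with a componentwise bracket and $A$-action and then to verify, by direct computation, the two defining identities of a Lie--Rinehart algebra recalled in \S\ref{sec:LieRin}. Concretely, I would define on $\mathfrak{cder}_{\ahha}(\textsc V)$ the bracket
\[
\big[(X,\nabla),(Y,\nabla')\big] \coloneqq \big([X,Y],\, \nabla\nabla' - \nabla'\nabla \big),
\]
inherited from the product $\K$-Lie algebra $\Der(A) \times \End{\K}{\textsc V}$ (the latter with its commutator bracket), together with the $A$-action $a\cdot(X,\nabla) \coloneqq (aX, a\nabla)$, where $aX\in\Der(A)$ is the usual $A$-module structure on derivations and $a\nabla\in\End{\K}{\textsc V}$ acts by $\xi\mapsto a(\nabla\xi)$. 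The anchor is the first projection $\sigma\colon(X,\nabla)\mapsto X$. With these choices, $A$-linearity of the anchor, $\sigma(a\cdot(X,\nabla)) = aX = a\cdot\sigma(X,\nabla)$, and the fact that $\sigma$ is a morphism of Lie algebras are immediate from the componentwise definitions, so that the first Lie--Rinehart axiom holds at once.

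The first substantive point is \emph{closure}: I must check that the bracket and the $A$-action of covariant derivatives are again covariant derivatives, that is, that their second components satisfy the Leibniz rule \eqref{forallsigma} along the corresponding first components. For the $A$-action this is a one-line computation, $(a\nabla)(f\xi) = a\big(X(f)\xi + f\nabla\xi\big) = (aX)(f)\,\xi + f\,(a\nabla)\xi$, using commutativity of $A$. For the bracket, expanding $(\nabla\nabla'-\nabla'\nabla)(f\xi)$ by applying \eqref{forallsigma} twice yields the desired terms $[X,Y](f)\,\xi + f\,(\nabla\nabla'-\nabla'\nabla)\xi$ together with the cross-terms $Y(f)\nabla\xi + X(f)\nabla'\xi$ coming from one ordering and the identical cross-terms coming from the other; these cancel, so $\nabla\nabla'-\nabla'\nabla$ is a covariant derivative along $[X,Y]$. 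Antisymmetry and the Jacobi identity are then inherited componentwise from $\Der(A)$ and from the commutator bracket on $\End{\K}{\textsc V}$, so $\mathfrak{cder}_{\ahha}(\textsc V)$ is a $\K$-Lie subalgebra of $\Der(A)\times\End{\K}{\textsc V}$.

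The heart of the argument is the remaining Lie--Rinehart compatibility,
\[
\big[(X,\nabla),\, a\cdot(Y,\nabla')\big] = a\cdot\big[(X,\nabla),(Y,\nabla')\big] + \big(\sigma(X,\nabla)(a)\big)\cdot(Y,\nabla').
\]
I would establish it componentwise. For the first component I use the standard identity $[X,aY] = X(a)\,Y + a\,[X,Y]$ in $\Der(A)$. For the second component I claim the analogous identity
\[
\nabla(a\nabla') - (a\nabla')\nabla = X(a)\,\nabla' + a\,(\nabla\nabla'-\nabla'\nabla),
\]
which I would prove by evaluating on $\xi\in\textsc V$ and applying \eqref{forallsigma} to $\nabla\big(a(\nabla'\xi)\big) = X(a)\,\nabla'\xi + a\,\nabla\nabla'\xi$. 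Comparing, the left-hand side of the compatibility equals $\big(X(a)Y + a[X,Y],\, X(a)\nabla' + a(\nabla\nabla'-\nabla'\nabla)\big)$, which is exactly the right-hand side once one recalls $\sigma(X,\nabla)(a) = X(a)$ and the componentwise definitions of the bracket and the $A$-action.

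I do not expect a serious obstacle: every step reduces to the Leibniz rule \eqref{forallsigma} and to the commutativity of $A$. The only point demanding care is the closure verification, ensuring that both structure maps land back in $\mathfrak{cder}_{\ahha}(\textsc V)$ rather than merely in the ambient product $\Der(A)\times\End{\K}{\textsc V}$; once this is in place, both Lie--Rinehart axioms follow formally from the two displayed commutator identities, completing the proof of Proposition \ref{Atiyvectb}.
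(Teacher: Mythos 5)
Your proof is correct and is precisely the routine componentwise verification the paper has in mind: the paper gives no argument at all, stating only that ``the proof is a routine check,'' and your choices --- the commutator bracket and $A$-action inherited from $\Der(A)\times\End{\K}{\textsc V}$, closure via the Leibniz rule \eqref{forallsigma}, and the componentwise check of the two Lie--Rinehart axioms --- are exactly the standard ones intended. Your computations (cancellation of the cross-terms $X(f)\nabla'\xi+Y(f)\nabla\xi$ in the closure check, and the identity $[\nabla,a\nabla']=X(a)\nabla'+a[\nabla,\nabla']$ for the compatibility) all check out.
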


The proof is a routine check. In fact, Proposition \ref{Atiyvectb} is well-known in the Lie algebroid case, see Example \ref{Atiyvectbundle} below.

\begin{definition}[General linear algebra of a module]

The associative algebra $End_\ahha (\textsc{V})$ of all $A$-linear morphisms on the $A$-module $\textsc V$, endowed with the composition as product, is an $A$-algebra.
The corresponding Lie algebra, denoted $\mathfrak{gl}_\ahha (\textsc{V})$, of all $A$-linear morphisms on the $A$-module $\textsc V$, endowed with the commutator as bracket, is an $A$-Lie algebra, which will be called the {\em general linear algebra of the $A$-module} $\textsc V$.
\end{definition}

It is clear that $A$-linear morphisms on the $A$-module $\textsc V$ coincide with covariant derivatives along the trivial derivation $X=0$. 

\begin{example}[Atiyah algebroid of a vector bundle]\label{Atiyvectbundle}
Let $A=C^\infty(M)$ and $\textsc{V}=\Gamma(E)$ be the module of sections of a vector bundle $E$ over a smooth manifold $M$. The Lie-Rinehart algebra $\mathfrak{cder}_{\ahha}(\textsc V)$ defines a transitive Lie algebroid (see, for example, \cite[Thm.\ 1.4]{Kosmann-SchwarzbachMackenzie}) called the \textit{Atiyah algebroid of the vector bundle} $E$. In this case, 
the Atiyah algebra $\mathfrak{cder}_{\ahha}(\textsc{V})$ of the $A$-module $\textsc V$ is the Lie-Rinehart algebra extension of the derivation algebra $\Der({A})$ by the general linear algebra $\mathfrak{gl}_\ahha (\textsc{V})$. In other words, there is a short exact sequence of Lie-Rinehart algebras
\be
\label{shortexactvectLieR}
0\to\mathfrak{gl}_\ahha (\textsc{V})\hookrightarrow\mathfrak{cder}_{\ahha}(\textsc{V})\twoheadrightarrow\Der({A})\to 0,
\ee
which is called the \textit{Atiyah sequence of the vector bundle} $E$.\qedhere
\end{example}

A left $(A,\mf{cder}_\ahha (\textsc{V}))$-connection, in the sense of \S\ref{ssec:LRconnection}, on the $A$-module $\textsc V$ is equivalent to a splitting of $A$-modules
\begin{equation}
\label{Koszul}
\Der({A})\hookrightarrow\mathfrak{cder}_{\ahha}({\textsc V}), \qquad X \mapsto \nabla_X    
\end{equation}
of the short exact sequence \eqref{shortexactvectLieR}.
The connection is flat if and only if the section \eqref{Koszul} is a morphism of Lie-Rinehart algebras.
In the geometric case of a vector bundle, this algebraic point of view reproduces the modern textbook definition (dating back to Koszul in 1950, {\em cf.}\ \cite{Ko}) of a linear connection.
 In view of Theorem \ref{thm:mainthmA}, Proposition \ref{prop:seccorresp}, and Proposition \ref{prop:1-1splittings}, the datum of a linear connection is essentially the same as the datum of a crossed product decomposition
\[
\cU_\ahha\big(\mathfrak{cder}_{\ahha}(\textsc V)\big) \simeq U_\ahha\big(\mathfrak{gl}_\ahha (\textsc{V})\big) \hash{\gs} \cU_\ahha\big(\Der(A)\big)
\]
for some cocycle $\gs$ (modulo the equivalence relation described in Proposition \ref{prop:seccorresp}), while the datum of a flat linear connection is equivalent to the datum of a smash product decomposition.

Furthermore, in view of Theorem \ref{rain&fog}, the quotient of $\cU_\ahha\big(\mathfrak{cder}_{\ahha}(\textsc V)\big)$ by the two-sided ideal $\mathscr{I}$, as in Definition \ref{def:GaloisDescent}, is isomorphic to the crossed product 
\begin{equation}
\mathcal{U}_\ahha\big(\mathfrak{cder}_{\ahha}(\textsc{V})\big)\,/\,\mathscr{I}\simeq End_\ahha(\textsc{V})
\hash{\sigma}
\mathcal{U}_\ahha\big(\Der(A)\big)
\end{equation}
by setting $R=End_\ahha (\textsc{V})$, $\mathfrak{R}=\mathfrak{gl}_\ahha (\textsc{V})$, and $\mathfrak{h}=\Der(A)$.

\begin{remark}
In general, the surjectivity of the anchor \eqref{surjanchorsigma} cannot be given for granted. If $\textsc V$ is an arbitrary $A$-module, then  $\mathfrak{cder}_{\ahha}(\textsc{V})$ coincides with the Lie-Rinehart algebra $\mathrm{DO}(A,\Der(A),\textsc{V})$
introduced in \cite[p.~72]{Huebschmann} (see also \cite[\S4.3]{Saracco2}). In the general case, one may always construct an exact sequence
\begin{equation}\label{eq:surjanchorsigma2}
0\to\mathfrak{gl}_{\ahha}(\textsc{V})\hookrightarrow\mathfrak{cder}_{\ahha}(\textsc{V})\to\Der({A})
\end{equation}
as in \cite[Eq.\ (2.11.8)]{Huebschmann}, and $\textsc V$ is called {\em $\Der(A)$-normal} if the right-most morphism is surjective. Hence $\textsc V$ admits an $(A,\Der(A))$-connection in the sense of \S\ref{ssec:LRconnection} if and only if it is $\Der(A)$-normal, and the short exact sequence splits as $A$-modules, see also \cite[Rem.\ 2.16]{Huebschmann}.
\end{remark}

\subsection{Vector bundle associated to a principal bundle}

In order to show the equivalence between the two constructions of the Atiyah algebroid of a vector bundle, let us discuss the construction of the vector bundle associated to a principal bundle.

As before, consider a commutative $\K$-algebra $A$ which is also an $\mathfrak{h}$-module via the representation $r$ from \eqref{representation}. 

\begin{definition}[Associated module] 
Let $W$ be an $\mathfrak{h}$-module.
The free $A$-module $A\otimes W$ is an $\mathfrak{h}$-module as well and the subspace $(A\otimes W)^\mathfrak{h}$ of $\mathfrak{h}$-invariant elements is an $A^\mathfrak{h}$-module. 
It will be called the {\em $A^\mathfrak{h}$-module associated to the $\mathfrak{h}$-module} $W$. 
\end{definition}

\begin{lemma}
There is an induced representation of the Atiyah algebra $\Der(A)^\mathfrak{h}$ on the associated module $(A\otimes W)^\mathfrak{h}$, that is, a morphism of Lie-Rinehart algebras over $A^\mathfrak{h}$ from
$\Der(A)^\mathfrak{h}$ to the Atiyah algebra $\mathfrak{cder}_{A^\mathfrak{h}}\big((A\otimes W)^\mathfrak{h}\big)$ of the associated module.
\end{lemma}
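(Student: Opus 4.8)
The plan is to produce the representation explicitly, as the action of $X$ on the first tensor factor of $A\otimes W$, and then to check that this operator descends to the subspace of invariants and assembles into a morphism of Lie–Rinehart algebras. First I would make the $\mathfrak{h}$-module structure on the free $A$-module $A\otimes W$ explicit: for $v\in\mathfrak{h}$ it is the diagonal operator
\[
\lambda_v \coloneqq v^\sharp\otimes\id_W + \id_A\otimes\varrho(v)\ \in\ \End{\K}{A\otimes W},
\]
where $\varrho(v)$ denotes the action of $v$ on $W$, so that $(A\otimes W)^\mathfrak{h}=\bigcap_{v\in\mathfrak{h}}\ker\lambda_v$; set $\textsc{V}\coloneqq(A\otimes W)^\mathfrak{h}$. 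The same one-line computation already recorded for $A^\mathfrak{h}$ shows that $\textsc{V}$ is stable under multiplication by $A^\mathfrak{h}$, hence is an $A^\mathfrak{h}$-module. The candidate map is
\[
D\colon \Der(A)^\mathfrak{h}\to\mathfrak{cder}_{A^\mathfrak{h}}(\textsc{V}), \qquad X\mapsto\big(\rho(X),\,(X\otimes\id_W)|_{\textsc{V}}\big),
\]
with $\rho(X)=X|_{A^\mathfrak{h}}$ the anchor from \eqref{restrictionAh}. I must then verify that $X\otimes\id_W$ genuinely preserves $\textsc{V}$, that $D(X)$ is a covariant derivative, and that $D$ is a morphism of Lie–Rinehart algebras over $A^\mathfrak{h}$ (which is exactly the asserted ``representation'', by the identification of a representation with a Lie–Rinehart morphism into the covariant-derivative algebra).

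The crux — and the only genuinely nontrivial point — is that $X\otimes\id_W$ maps $\textsc{V}$ into itself. I would establish this by showing $X\otimes\id_W$ commutes with every $\lambda_v$, so that it preserves each $\ker\lambda_v$. Splitting the commutator over the two tensor slots gives
\[
\big[\,X\otimes\id_W,\,\lambda_v\,\big]=[X,v^\sharp]\otimes\id_W+\big[\,X\otimes\id_W,\ \id_A\otimes\varrho(v)\,\big]=[X,v^\sharp]\otimes\id_W,
\]
the second bracket vanishing because its two factors act on different tensor slots. This last expression is zero precisely because $X$ lies in the Atiyah algebra, i.e.\ $[v^\sharp,X]=0$ for all $v$. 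This is exactly where the defining condition \eqref{Ati} of $\Der(A)^\mathfrak{h}$ enters, and it is the reason the construction cannot be carried out for an arbitrary derivation of $A$; I expect this invariance-preservation to be the main (indeed, essentially the sole) substantive step.

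It then remains to assemble the routine verifications. The Leibniz rule for $D(X)$ follows from $X$ being a $\K$-derivation of $A$ together with the fact, established earlier in the text, that $X(f)\in A^\mathfrak{h}$ whenever $f\in A^\mathfrak{h}$: for $\xi=\sum_i a_i\otimes w_i\in\textsc{V}$ and $f\in A^\mathfrak{h}$ one computes $(X\otimes\id_W)(f\xi)=\rho(X)(f)\,\xi+f\,(X\otimes\id_W)\xi$, so that $D(X)\in\mathfrak{cder}_{A^\mathfrak{h}}(\textsc{V})$. That $D$ is $A^\mathfrak{h}$-linear and anchor-compatible is immediate from the construction: since $fX\otimes\id_W=f\cdot(X\otimes\id_W)$ and $\rho$ is $A^\mathfrak{h}$-linear one gets $D(fX)=f\,D(X)$, while the derivation component of $D(X)$ is $\rho(X)$, i.e.\ $\sigma\circ D=\rho$ for the anchor $\sigma$ of $\mathfrak{cder}_{A^\mathfrak{h}}(\textsc{V})$ from \eqref{surjanchorsigma}.

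Finally, $D$ is a Lie algebra morphism because $X\mapsto X\otimes\id_W$ is an associative algebra homomorphism $\End{\K}{A}\to\End{\K}{A\otimes W}$ and therefore sends commutators to commutators, while the endomorphism part of the bracket in $\mathfrak{cder}_{A^\mathfrak{h}}(\textsc{V})$ is precisely the commutator of $\K$-endomorphisms; combined with $\rho([X,Y])=[\rho(X),\rho(Y)]$ (the anchor being a Lie map) for the derivation component, this yields $D([X,Y])=[D(X),D(Y)]$. Hence $D$ is a morphism of Lie–Rinehart algebras over $A^\mathfrak{h}$, that is, a representation of the Atiyah algebra $\Der(A)^\mathfrak{h}$ on the associated module $\textsc{V}=(A\otimes W)^\mathfrak{h}$, as claimed.
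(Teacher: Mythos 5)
Your proposal is correct and follows essentially the same route as the paper: you define the action as $X\otimes\id_W$ and show it preserves the invariants $(A\otimes W)^\mathfrak{h}$ by checking it commutes with the diagonal $\mathfrak{h}$-action $\lambda_v=v^\sharp\otimes\id_W+\id_A\otimes\varrho(v)$, the commutator reducing to $[X,v^\sharp]\otimes\id_W=0$ precisely by the defining condition of the Atiyah algebra, and then verify the Leibniz rule. If anything, you are slightly more thorough than the paper, which only establishes $\hat{X}\in\mathfrak{cder}_{A^\mathfrak{h}}\big((A\otimes W)^\mathfrak{h}\big)$ and leaves the $A^\mathfrak{h}$-linearity, anchor compatibility, and Lie-bracket compatibility of the assignment $X\mapsto\hat{X}$ as implicit routine checks, whereas you spell them out.
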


\begin{proof}
Let $v\in\mathfrak{h}$ be an element of the Lie algebra.
Its action on the free $A$-module $A\otimes W$ is defined by $\hat{v}(a\otimes w)  \coloneqq v^\sharp(a)\otimes w+a\otimes (v\triangleright w)$ for any $a\in A$ and $w\in W$. 

Let $X\in\Der(A)^\mathfrak{h}$, that is, a derivation of $A$ commuting with all elements of $\mathfrak{h}^\sharp$. Its action on the free $A$-module $A\otimes W$ is defined by $\hat{X}(a\otimes w)  \coloneqq X(a)\otimes w$ for any $a\in A$ and $w\in W$. 
One can check explicitly that $\hat{v}$ and $\hat{X}$ in $\mathfrak{gl}(A\otimes W)$ commute for any $v\in\mathfrak{h}$ and any $X\in\Der(A)^\mathfrak{h}$.
Therefore, $X$ preserves the associated module, that is, $\hat{X}$ sends $\mathfrak{h}$-invariant elements to $\mathfrak{h}$-invariant elements, hence
$\hat{X}\in\mathfrak{gl}\big((A\otimes W)^\mathfrak{h}\big)$.
What is more, $\hat{X}\in\mathfrak{cder}_{A^\mathfrak{h}}\big((A\otimes W)^\mathfrak{h}\big)$
since
$$
\hat{X}\big( f\cdot(a\otimes w) \big)
=\hat{X}\big( (f\cdot a)\otimes w \big)
=X(f\cdot a)\otimes w
=\big( X(f)\cdot a +f\cdot X(a) \big)\otimes w
=X(f)\cdot(a\otimes w)+f\cdot \hat{X}(a\otimes w)
$$ 
for any $f,a\in A$ and $w\in W$.
\end{proof}

There is a morphism between the corresponding exact sequences of Lie-Rinehart algebras \eqref{Atiyahsequenceobvious0} and \eqref{eq:surjanchorsigma2}, in the sense that all arrows in the commutative diagram
\begin{equation}
\label{eq:sesss}
\begin{gathered}
\xymatrix @R=15pt{
0 \ar[r] & \mathfrak{ann}(A^\mathfrak{h}) \ar[d] \ar[r] & \Der(A)^\mathfrak{h} \ar[d] \ar[r] & \Der(A^\mathfrak{h}) \ar@{=}[d] \\
0 \ar[r] & \mathfrak{gl}_{A^\mathfrak{h}}\big((A\otimes W)^\mathfrak{h}\big) \ar[r] & \mathfrak{cder}_{A^\mathfrak{h}}\big((A\otimes W)^\mathfrak{h}\big) \ar[r] & \Der(A^\mathfrak{h})
}
\end{gathered}
\end{equation}
are Lie-Rinehart algebra morphisms.

\begin{example}[Principal bundle] 
In case of a principal $H$-bundle $P$ together with a representation of $H$ on a vector space $W$, one can introduce the associated vector bundle $P\times_H W$. This geometric example corresponds to the case $A=C^\infty(P)$ and $(A\otimes W)^\mathfrak{h}=\Gamma(P\times_H W)$. In this setting, recall from Example \ref{ex:atiyah} that the first line of \eqref{eq:sesss} turns out to be a short exact sequence. The above construction reproduces the canonical representation of the Atiyah algebroid $TP/H$ of $P$ on its associated vector bundle $P\times_H W$.
An invariant Ehresmann connection on $P$ is equivalent to a linear splitting of the Atiyah sequence \eqref{Asequ}. Furthermore, there is a morphism from the  Atiyah sequence \eqref{AsequLR} of the principal bundle to the Atiyah sequence \eqref{shortexactvectLieR} of the associated vector bundle.
Accordingly, as is well-known, an invariant Ehresmann connection on the principal bundle induces a linear connection on the associated vector bundle via \eqref{eq:sesss}.
\end{example}

\begin{example}[Vector bundle] Consider a vector bundle $E$ over $M$ with typical fibre $V$.
The general linear algebra of the $C^\infty(M)$-module $\Gamma(E)$ is the space
$$\mathfrak{gl}_{C^\infty(M)}\big(\Gamma(E)\big)=\Gamma\big(\mathfrak{gl}(E)\big)$$ of sections of a Lie algebra bundle $\mathfrak{gl}(E)$ over $M$ with typical fibre $\mathfrak{gl}(V)$. This Lie algebra bundle is isomorphic to the associated bundle to the frame bundle $FE$ for the adjoint representation of the general linear group $GL(V)$ on its Lie algebra $\mathfrak{gl}(V)$, that is, $\mathfrak{gl}(E)\simeq FE\times_{Ad} \mathfrak{gl}(V)$ as at the beginning of this subsection.
The Atiyah algebroid $TFE/GL(V)$ of the frame bundle $FE$ is isomorphic to the Atiyah algebroid of $E$, so these two formulations are equivalent. In fact, the induced representation as above provides an isomorphism between the corresponding Atiyah sequences, because the first terms are isomorphic (since $FE\times_{Ad} \mathfrak{gl}(V)\simeq\mathfrak{gl}(E)$) and the last terms coincide. As a diagram,
\begin{equation}\label{eq:AalgdVB}
\begin{gathered}
\xymatrix @R=15pt{
0 \ar[r] & \Gamma\left(FE\times_{Ad} \mathfrak{gl}(V)\right) \ar[d]_-{\simeq} \ar[r] & \Gamma\left(\frac{TFE}{GL(V)}\right) \ar[d] \ar[r] & \mathfrak{X}(M) \ar[r] \ar@{=}[d] & 0 \\
0 \ar[r] & \mathfrak{gl}_{C^\infty(M)}\big(\Gamma(E)\big) \ar[r] & \mathfrak{cder}_{C^\infty(M)}\big(\Gamma(E)\big) \ar[r] & \mathfrak{X}(M) \ar[r] & 0 
}
\end{gathered}
\end{equation}
as claimed.
\end{example}

\subsection{Foliations}
\label{foli}

An involutive distribution $D$ on $M$ can be thought as a Lie algebroid $D$ over $M$ with injective anchor $\rho\colon D\hookrightarrow TM$. More precisely, the image $\rho(D)\subseteq TM$ of such an injective anchor is an involutive distribution on $M$ (see, for example, \cite[Ex.\ 3.3.5]{Mackenziebook-new}). The generalisation by Sussmann \cite{Sussmann} of the Frobenius Theorem to involutive distributions of locally finite type ensures that this distribution $\rho(D)$ is integrable, {\em i.e.}, it arises from a foliation.
More precisely, there is a one-to-one correspondence between:
\begin{enumerate}
    \item\label{foliation} a foliation of $M$,
    \item\label{invdistr} an involutive distribution on $M$,
    \item\label{Liesubalg} a Lie subalgebroid over $M$ of the tangent bundle $TM$,
    \item\label{LRalginjanch} a finitely generated projective Lie-Rinehart algebra over $C^\infty(M)$ with injective anchor,
    \item\label{subBialg} a graded-projective sub-bialgebroid with a finitely generated space of primitives over $C^\infty(M)$
    of the cocommutative bialgebroid ${\cal D}(M)$ spanned by differential operators on $M$.
\end{enumerate}
Note that other characterisations of foliations exist, {\em e.g.}, in terms of Lie groupoids \cite{Debord, MoerdijkCrainic}.
Let us also mention that, as any vector bundle, the distributions and Lie algebroids above are implicitly assumed to be locally of finite rank. However, they need not be of constant rank. Accordingly, the foliation in $(\ref{foliation})$ is allowed to be mildly singular (see \cite[\S1]{Debord} for a precise statement).

The equivalences $(\ref{foliation})\Leftrightarrow(\ref{invdistr})\Leftrightarrow(\ref{Liesubalg})\Leftrightarrow(\ref{LRalginjanch})$ are immediate. Indeed, the first ones were explained above (see also \cite[Ex.~1.3 \& 1.6]{Debord}) and the last one $(\ref{Liesubalg})\Leftrightarrow(\ref{LRalginjanch})$ follows from the equivalence between the category of Lie algebroids $E$ over $M$ and the category of finitely generated projective Lie-Rinehart algebras $\Gamma(E)$ over $C^\infty(M)$. 
Finally, the equivalence $(\ref{LRalginjanch})\Leftrightarrow (\ref{subBialg})$
is ensured by the equivalence of the categories of projective Lie-Rinehart algebras and of cocomplete graded-projective left bialgebroids, 
{\em cf.}\  the Cartier-Milnor-Moore type theorem of \cite[Thm.~4.1]{MoerdijkLie}.

\vspace{1mm}
The sub-bialgebroid over $C^\infty(M)$  in $(\ref{subBialg})$, corresponding to an involutive distribution $D\subset TM$, will be denoted \begin{equation}
\label{TD}
{\cal T}(D)=\cU_{C^\infty(M)}\pig(\Gamma\big(\rho(D)\big)\pig).
\end{equation}
The elements of the latter algebra are differential operators on $M$ which will be said {\em tangential to the leaves} (in particular, the sections of $D\subset TM$ are by definition vector fields on $M$ everywhere tangent to the leaves of the foliation). 
For any involutive distribution $D\subset TM$, there is a short exact sequence of vector bundles over $M$,
\[
0 \to D \hookrightarrow TM \twoheadrightarrow N \to 0,
\]
where $N$ is the quotient bundle made of the normal bundles of the leaves. Let $SN=\oplus_{k=0}^\infty S^kN$ stand for the bundle of symmetric tensor products of the normal bundle.
Applying Proposition \ref{prop:anothersplitting} in the present setting gives that
\[
{\cal D}(M)\simeq {\cal T}(D)\otimes_{C^\infty(M)} \Gamma(SN),
\]
as a module over the associative algebra ${\cal T}(D)$.

\begin{example}[Vertical foliation of a fibre bundle]
\label{verticalfoliation}
For a fibre bundle $\pi\colon E\twoheadrightarrow M$, the base manifold $M$ can be identified with the space of vertical leaves ({\em i.e.}, the fibres).
The Lie algebroid over the total space $E$ associated to the vertical foliation is the vertical distribution $VE=\ker\pi_*$ on $E$ with the embedding $i\colon VE\hookrightarrow TE$ as anchor.
Note that the vertical distribution can also be seen as a Lie algebra bundle over the base $M$, {\em i.e.}, a Lie algebroid over $M$ with trivial anchor $\pi_*\circ i\colon VE\to TM$.
The Lie-Rinehart algebra $\mathfrak{V}(E)\subset\mathfrak{X}(E)$ over $C^\infty(E)$ is spanned by vector fields on the total space $E$ that are vertical, {\em i.e.}, tangential to the fibres. The sub-bialgebroid over $C^\infty(E)$ corresponding to the vertical distribution will be denoted
\begin{equation}
\label{VD}
{\cal V}(E)=\cU_{C^\infty(E)}\big(\mathfrak{V}(E)\big).
\end{equation}
The pullback $\pi^*\colon C^\infty(M)\hookrightarrow C^\infty(E)$ provides an embedding of the base algebra inside the total algebra, hence the algebra ${\cal D}(E)$ of differential operators on $E$ is a $C^\infty(M)$-ring.
A vector field on $E$, seen as a (first-order) differential operator on $E$, is vertical if and only if it is $C^\infty(M)$-linear. Consequently, the algebra ${\cal V}(E)$ of differential operators tangential to the fibres is a $C^\infty(M)$-algebra.
The pullback vector bundle $\pi^*TM$ fits into the following short exact sequence
\[
0 \to VE \hookrightarrow TE \twoheadrightarrow \pi^*TM \to 0
\]
of vector bundles over $E$, giving rise to the following exact sequence 
\[
0 \to \mf{V}(E) \hookrightarrow \mf{X}(E) \twoheadrightarrow C^{\infty}(E) \otimes_{\scriptscriptstyle C^{\infty}(M)} \mf{X}(M) \to 0
\]
of finitely generated projective $C^{\infty}(E)$-modules. Therefore, in view of Proposition \ref{prop:anothersplitting} and the isomorphism 
\eqref{UEAXMDM}, the algebra $\cD(E)$ of differential operators on the total space $E$ satisfies 
\[
\cD(E) \simeq {\cal V}(E) \otimes_{\scriptscriptstyle C^{\infty}(M)} \Gamma(S\,TM)
\] 
as left ${\cal V}(E)$-modules. 
\end{example}

\begin{example}[Vertical foliation of a principal bundle] Recall from Example \ref{ex:atiyah} that the vertical distribution $VP$ of a principal $H$-bundle with total space $P$ is canonically isomorphic, as a Lie algebroid over $P$, to the transformation Lie algebroid $P\rtimes\mathfrak{h}$. The corresponding isomorphism \eqref{VPiso} together with Proposition \ref{propArtimesh} and Example \ref{verticalfoliation} imply that 
$$
{\cal V}(P)\simeq  C^\infty(P)\hash{} U(\mathfrak{h}).
$$
In other words, the sub-bialgebroid ${\cal V}(P)\subset{\cal D}(P)$ spanned by the  differential operators on $P$ tangential to the fibres is isomorphic to the smash product of the commutative algebra $C^\infty(P)$ with the universal enveloping algebra of $\mathfrak{h}$.
\end{example}

\subsection{More on Atiyah algebroids}
\label{ssec:Atiyah2}\label{invcom}
The Atiyah algebra $\Gamma\left(TP/H\right)\simeq\mathfrak{X}(P)^\mathfrak{h}$ of a principal $H$-bundle $P$ is spanned by the $H$-invariant vector fields on $P$. 
A natural question would be when its universal enveloping algebra is spanned by the $H$-invariant differential operators on $P$, too, that is, when
\begin{equation}\label{UAtiyah}
\cU_{C^\infty(P/H)}\big(\mathfrak{X}(P)^\mathfrak{h}\big)\stackrel{?}{\simeq}{\cal D}(P)^\mathfrak{h}\simeq\cU_{C^\infty(P)}\big(\mathfrak{X}(P)\big)^\mathfrak{h},
\end{equation}
or, more generally, when
\begin{equation}\label{wildguess}
{\cal U}_{A^\mathfrak{h}}\big(\Der(A)^\mathfrak{h}\big)\,\stackrel{?}{\simeq}\,
{\cal U}_{\ahha}\big(\Der(A)\big)^\mathfrak{h}
\end{equation}
for all $A$ and $\mf{h}$. A favourable case is given by a Klein geometry.

\begin{example}[Klein geometry]\label{Kleinex} 
Consider a Klein geometry, {\em i.e.}, a pair made of a Lie group $G$ and a closed Lie subgroup $H\subset G$ such that the coset space $G/H$ is connected. It defines a principal $H$-bundle $G$ over $G/H$.
In this case, the commutative algebra $A=C^\infty(G)$ is the algebra of functions on the Lie group $G$ and the representation \eqref{representation} is given (up to isomorphism) by the inclusion of $\mf{h}$ into $\mathfrak{X}(G)$ as left $G$-invariant vector fields. This representation $r$ is the infinitesimal counterpart of the right action of $H$ on $G$ via multiplication on the right. A function on the space $G/H$ of left cosets $gH$ is equivalent to a function $f$ on $G$ which is right $H$-invariant (that is, $f(gh)=f(g)$ for all $g\in G$ and $h\in H$).
For simplicity ({\em cf.}\ the remark in Example \ref{principalcase}), the Lie group $H$ is assumed to be connected, thus $A^\mathfrak{h}\simeq C^\infty(G/H)$ for the commutative algebra defined in \eqref{Ah}.
The Lie-Rinehart algebra $\Der(A)$ of derivations on $A$ is here the algebra $\mathfrak{X}(G)\simeq C^\infty(G)\rtimes \mathfrak{g}$ of vector fields on the Lie group $G$, see \eqref{XGCG},  where $\mf{g}$ is (up to isomorphism) the Lie algebra of (left or right) invariant vector fields on $G$. In particular, $\mf{der}(A)$ is the free left $A$-module generated (up to isomorphism) by (left or right) invariant vector fields. While $\mf{g}$ is usually realised as the Lie algebra of left invariant vector fields on $G$, for our purposes it is more convenient to use right invariant ones, which are the infinitesimal generators for the left action of $G$ on itself via left multiplication. This induces a left $G$-action on $G/H$; accordingly, there is a non-trivial representation of $\mathfrak{g}$ on $C^\infty(G/H)$, and the corresponding transformation Lie-Rinehart algebra will be denoted $C^\infty(G/H)\rtimes \mathfrak{g}$. The Atiyah algebra $\Der(A)^\mathfrak{h}$ of right $H$-invariant vector fields on $G$ is isomorphic to the transformation Lie-Rinehart algebra of the $\mathfrak h$-module $C^\infty(G/H)$: 
\begin{equation}
\label{CGH}
\mathfrak{X}(G)^\mathfrak{h}\simeq C^\infty(G/H)\rtimes \mathfrak{g},
\end{equation}
as follows from \eqref{XGCG} and 
the fact just mentioned that $\mathfrak{g}$ is spanned by right $G$-invariant vector fields on $G$.  In fact, consider a vector field $X=\sum_i a_i X_i \in \mf{X}(G)$ with $a_i\in C^\infty(G)$ and $X_i \in \mf{g}$ for all $i$. Note that $X \in \mf{X}(G)^{\mf{h}}$ if and only if $v^\sharp(a_i)=0$ for all $v\in\mathfrak{h}$ since
\[
0 = \textstyle\left[v^\sharp,\sum_i a_i X_i\right] = \sum_i a_i \left[v^\sharp,X_i\right] + \sum_i v^\sharp\left(a_i\right) X_i = \sum_i v^\sharp\left(a_i\right) X_i,
\] 
where we used that right $G$-invariant vector fields always commute with left $G$-invariant ones.
Since $\mf{g}$ is realised, as said, as right $G$-invariant vector fields on $G$, the universal enveloping algebra $U(\mathfrak{g})$ is realised as right $G$-invariant differential operators on $G$. The isomorphism \eqref{CGH} and Proposition \ref{Artimesh} imply that the universal enveloping algebra of the Atiyah algebra of a Klein geometry $H\subset G$ is isomorphic to the smash product of the commutative algebra of functions on the coset space $G/H$ with the universal enveloping algebra of $\mathfrak{g}$:
\begin{equation}
{\cal U}_{C^\infty(G/H)}\big(\mathfrak{X}(G)^\mathfrak{h}\big)\simeq C^\infty(G/H)\hash{} U(\mathfrak{g}).
\end{equation}
Moreover, the isomorphism \eqref{DGiso} implies
$$
{\cal D}(G)^\mathfrak{h} \simeq {\cal U}_{C^\infty(G)}\big(\mathfrak{X}(G)\big)^\mathfrak{h}
\simeq C^\infty(G/H)\hash{} U(\mathfrak{g}),
$$
where the last isomorphism follows from the fact that the universal enveloping algebra $U(\mathfrak{g})$ is freely generated by right $G$-invariant differential operators on $G$, which provide a basis for ${\cal U}_{C^\infty(G)}\big(\mathfrak{X}(G)\big)$ as a left $A$-module and which already commute with the images $v^\sharp$ of the elements $v$ from $\mf{h}$. This proves that \eqref{UAtiyah} holds for Klein geometries: 
$$
{\cal U}_{C^\infty(G/H)}\big(\mathfrak{X}(G)^\mathfrak{h}\big)\simeq{\cal D}(G)^\mathfrak{h}\simeq {\cal U}_{C^\infty(G)}\big(\mathfrak{X}(G)\big)^\mathfrak{h}.
$$
In particular, the universal enveloping algebra of the Atiyah algebra of a Klein geometry $H\subset G$ is spanned by $H$-invariant differential operators on $G$.
\end{example}

In general, the isomorphism \eqref{wildguess} holds when $\mf{der}(A)$ is freely generated by elements from $\mf{der}(A)^{\mf{h}}$ as a left $A$-module, for essentially the same reason as in Example \ref{Kleinex}.
In general, we always have a canonical $A^{\mf{h}}$-ring morphism from the left-hand to the right-hand side:

\begin{proposition}\label{prop:wildguess}
Let $\mf{h}$ be a Lie algebra acting by derivations on a commutative algebra $A$ via \eqref{representation}. There always exists a morphism of $A^{\mf{h}}$-rings
\[
\Phi \colon \cU_{A^{\mf{h}}} \left(\mf{der}(A)^{\mf{h}}\right) \to \cU_\ahha\big(\mf{der}(A)\big)^{\mf{h}}
\]
induced by the universal property of $\cU_{A^{\mf{h}}}\left(\mf{der}(A)^{\mf{h}}\right)$.
\end{proposition}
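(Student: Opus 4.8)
The plan is to produce $\Phi$ by invoking the universal property of $\cU_{A^{\mf h}}\big(\mf{der}(A)^{\mf h}\big)$ as an $A^{\mf h}$-ring, exactly as recalled in \S\ref{regenradar}. First I would pin down the target. The Lie algebra $\mf{h}$ acts on $\cU_\ahha\big(\mf{der}(A)\big)$ through the inner derivations $\ad_{\iota_{\mf{der}(A)}(v^\sharp)} = \big[\iota_{\mf{der}(A)}(v^\sharp),-\big]$, $v \in \mf{h}$; since $r\colon \mf{h} \to \mf{der}(A)$ and $\iota_{\mf{der}(A)}$ are both morphisms of Lie algebras, the assignment $v \mapsto \ad_{\iota_{\mf{der}(A)}(v^\sharp)}$ is a Lie algebra morphism into $\Der_{\K}\big(\cU_\ahha(\mf{der}(A))\big)$, and $\cU_\ahha\big(\mf{der}(A)\big)^{\mf h}$ is the common kernel of these derivations, that is, the centraliser of $\iota_{\mf{der}(A)}(\mf{h}^\sharp)$, which as a centraliser is a $\K$-subalgebra of $\cU_\ahha\big(\mf{der}(A)\big)$.

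Next I would construct the two structure maps and check they land in this centraliser. For $f \in A^{\mf h}$, the defining relation \eqref{eq:compLRalg} for $\cU_\ahha(\mf{der}(A))$ (whose anchor is the identity) gives $\big[\iota_{\mf{der}(A)}(v^\sharp),\iota_\ahha(f)\big] = \iota_\ahha\big(v^\sharp(f)\big) = 0$, so $\iota_\ahha$ restricts to a $\K$-algebra morphism $\phi_{A^{\mf h}}\colon A^{\mf h} \to \cU_\ahha(\mf{der}(A))^{\mf h}$ endowing the target with its $A^{\mf h}$-ring structure. For $X \in \mf{der}(A)^{\mf h}$, the fact that $\iota_{\mf{der}(A)}$ is a Lie morphism yields $\big[\iota_{\mf{der}(A)}(v^\sharp),\iota_{\mf{der}(A)}(X)\big] = \iota_{\mf{der}(A)}\big([v^\sharp,X]\big) = 0$, because $X$ centralises $\mf{h}^\sharp$; hence $\iota_{\mf{der}(A)}$ restricts to a Lie algebra morphism $\phi_{\mf{der}(A)^{\mf h}}\colon \mf{der}(A)^{\mf h} \to \cU_\ahha(\mf{der}(A))^{\mf h}$.

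Then I would verify the compatibility conditions \eqref{dumdidum} for the Lie-Rinehart algebra $\big(A^{\mf h},\mf{der}(A)^{\mf h},\rho\big)$, whose anchor is the restriction $\rho(X) = X|_{A^{\mf h}}$ of \eqref{restrictionAh} and whose $A^{\mf h}$-module structure is the restriction of that of $\mf{der}(A)$. The first identity $\phi_{\mf{der}(A)^{\mf h}}(fX) = \phi_{A^{\mf h}}(f)\,\phi_{\mf{der}(A)^{\mf h}}(X)$ is the first relation in \eqref{eq:compLRalg}, valid because the product $fX$ in $\mf{der}(A)^{\mf h}$ agrees with $fX$ in $\mf{der}(A)$. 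The second identity $\big[\phi_{\mf{der}(A)^{\mf h}}(X),\phi_{A^{\mf h}}(f)\big] = \phi_{A^{\mf h}}\big(\rho(X)(f)\big)$ follows since the left-hand side equals $\iota_\ahha\big(X(f)\big)$ by the second relation in \eqref{eq:compLRalg}, and since $X \in \mf{der}(A)^{\mf h}$ preserves $A^{\mf h}$ (as established in the proof that $\mf{der}(A)^{\mf h}$ is a Lie-Rinehart algebra over $A^{\mf h}$) one has $X(f) = \rho(X)(f) \in A^{\mf h}$, whence $\iota_\ahha\big(X(f)\big) = \phi_{A^{\mf h}}\big(\rho(X)(f)\big)$. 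The universal property then yields a unique $A^{\mf h}$-ring morphism $\Phi$ with $\Phi \circ \iota_{A^{\mf h}} = \phi_{A^{\mf h}}$ and $\Phi \circ \iota_{\mf{der}(A)^{\mf h}} = \phi_{\mf{der}(A)^{\mf h}}$.

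The only step needing genuine care — the closest thing to an obstacle — is the bookkeeping of invariance: one must be certain that the $\mf{h}$-action defining the target is the inner action by $\iota_{\mf{der}(A)}(\mf{h}^\sharp)$, so that ``invariant'' means ``commuting with $\iota_{\mf{der}(A)}(\mf{h}^\sharp)$'', and that both structure maps therefore land in the centraliser. This is exactly where the hypotheses that $r$ and $\iota_{\mf{der}(A)}$ are Lie morphisms enter. Everything else is a direct transcription of the enveloping-algebra relations, so no freeness, projectivity, or completeness assumption on $A$ or $\mf{h}$ is required; this is why $\Phi$ exists in full generality, whereas its bijectivity (the isomorphism \eqref{wildguess}) fails without the additional freeness hypothesis discussed after the statement.
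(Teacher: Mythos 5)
Your proposal is correct and follows essentially the same route as the paper: restrict $\iota_\ahha$ and $\iota_{\mf{der}(A)}$ to $A^{\mf h}$ and $\mf{der}(A)^{\mf h}$, check via the defining relations \eqref{eq:compLRalg} that both land in the centraliser of $\iota_{\mf{der}(A)}(\mf{h}^\sharp)$, verify \eqref{dumdidum} using that $X(f)=\rho(X)(f)\in A^{\mf h}$, and invoke the universal property. The only (cosmetic) difference is that the paper phrases invariance through the induced $U_\K(\mf h)$-ring map $R\colon U_\K(\mf h)\to\cU_\ahha\big(\mf{der}(A)\big)$ and first builds $\Phi'$ into $\cU_\ahha\big(\mf{der}(A)\big)$ before corestricting, whereas you target the centraliser directly; these descriptions of $\cU_\ahha\big(\mf{der}(A)\big)^{\mf h}$ agree since $U_\K(\mf h)$ is generated by $\mf h$.
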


\begin{proof}
For the sake of brevity, set $\mf{d} \coloneqq \mf{der}(A)$ and $U \coloneqq U_\K(\mf{h})$. 
If we extend $r$ to an action of $U$ on $A$ (in fact, $A$ becomes a $U$-module algebra) and if we define an additional right $U$-action on $A$ by restriction of scalars along the counit $\varepsilon$ of $U$, that is
\[
a\cdot u \coloneqq \varepsilon(u)a
\]
for all $a\in A$ and $u \in U$, then
\[
A^{\mf{h}} = Z_\uhhu(A) = \Hom{\uhhu}{}{\uhhu}{}{U}{A} = \left\{a \in A \mid u\cdot a = a \cdot u \text{ for all } u \in U\right\},
\]
where $A^{\mf{h}}$ was defined in \eqref{Ah}.
In a similar way, $\mf{h}$ is represented on the vector space $\mf{d}$ via 
\[
\mf{h } \to \mathfrak{gl}_{\,\K}({\mf{d}}), \qquad v \mapsto [v^\sharp,-],
\]
and hence we have a left $U$-module structure on $\mf{d}$ induced by it. By defining again a right $U$-module structure on $\mf{d}$ by restriction of scalars along $\varepsilon$, we find that
\[\mf{d}^{\mf{h}} = Z_\uhhu(\mf{d}) = \Hom{U}{}{U}{}{U}{\mf{d}} = \big\{ \delta \in \mf{d} \mid u \cdot \delta = \delta \cdot u \text{ for all } u \in U \big\}.
\]
If we now consider the $\K$-algebra morphism $\phi_\ahha  \colon A^{\mf{h}} \subseteq A \xrightarrow{\eta_\ahha } \cU_\ahha (\mf{d})$ and the Lie algebra and left $A^{\mf{h}}$-linear map $\phi_{\mf{d}} \colon \mf{d}^{\mf{h}} \subseteq \mf{d} \xrightarrow{\eta_{\mf{d}}} \cU_\ahha (\mf{d})$, then the universal property of $\cU_{A^{\mf{h}}}\left(\mf{d}^{\mf{h}}\right)$ implies that there exists an $A^{\mf{h}}$-ring morphism
\[
\Phi'\colon \cU_{A^{\mf{h}}}(\mf{d}^{\mf{h}}) \to \cU_\ahha (\mf{d})
\]
extending $\phi_{\mf{d}}$, because obviously
\[
\phi_{\mf{d}}(\delta)\phi_\ahha (a) - \phi_\ahha (a)\phi_{\mf{d}}(\delta) = \eta_{\mf{d}}(\delta)\eta_\ahha (a) - \eta_\ahha (a)\eta_{\mf{d}}(\delta) = \eta_\ahha \left(\omega(\delta)(a)\right) = \phi_\ahha (\delta(a))
\]
for all $a \in A^{\mf{h}}$ and $\delta \in \mf{d}^{\mf{h}}$. 
In addition, the Lie algebra map $\mf{h} \xrightarrow{\rho} \mf{d} \xrightarrow{\eta_{\mf{d}}} \cU_\ahha (\mf{d})$ induces a $\K$-algebra map $R \colon U \to \cU_\ahha (\mf{d})$, which makes $\cU_\ahha (\mf{d})$ into a $U$-ring. By a direct computation,
\[
R(X)\phi_\ahha (a) = \eta_{\mf{d}}(r(X))\eta_\ahha (a) = \eta_\ahha (a)\eta_{\mf{d}}(r(X)) + \eta_\ahha \left(\omega(r(X))(a)\right) = \phi_\ahha (a)R(X)
\]
for all $a \in A^{\mf{h}}$ and all $X \in \mf{h}$, and
\[
R(X)\phi_{\mf{d}}(\delta) = \eta_{\mf{d}}(r(X))\eta_{\mf{d}}(\delta) = \eta_{\mf{d}}(\delta)\eta_{\mf{d}}(r(X)) + \eta_{\mf{d}}\left([r(X),\delta]\right) = \phi_{\mf{d}}(\delta)R(X)
\]
for all $\delta \in \mf{d}^{\mf{h}}$ and all $X \in \mf{h}$, whence $\Phi'$ lands, in fact, in
\[
\cU_\ahha (\mf{d})^{\mf{h}} = \big\{V \in \cU_\ahha (\mf{d}) \mid u \cdot V = R(u)V = VR(u) = V \cdot u \text{ for all } u \in U\big\}.
\]
So, we have an $A^{\mf{h}}$-ring morphism $\Phi \colon \cU_{A^{\mf{h}}}(\mf{d}^{\mf{h}}) \to \cU_\ahha (\mf{d})^{\mf{h}}$ as claimed.
\end{proof}

Note, however, that the above canonical morphism of $A^{\mf{h}}$-ring in Proposition \ref{prop:wildguess} need not be injective in general, not even in the projective case, as explicitly shown by the following counterexample.

\begin{example}[Weyl algebra]
Consider a finite-dimensional $\K$-vector space $V$ of dimension $d$. The symmetric algebra ${\cal S}V$ is isomorphic to the algebra $\K[X^i]$ of polynomials in $d$ variables $X^i$ for $i=1,2,\ldots,d$, with which it will sometimes be identified here.
The Grothendieck algebra ${\cal D}({\cal S}V)\simeq {\cal U}_{{\cal S}V}\big(\Der({\cal S}V)\big)$ of polynomial differential operators on the symmetric algebra is isomorphic to the Weyl algebra $A_d(\K)$.
As an ${\cal S}V$-module, $\Der({\cal S}V)\simeq {\cal S}V\otimes V^*$.
We have 
$$
\text{gr}\,{\cal D}({\cal S}V) \simeq \cS_{{}_{\cS V}}\left(\cS V \otimes V^*\right) 
\simeq {\cal S}(V)\otimes {\cal S}(V^*)
\simeq {\cal S}(V\oplus V^*).
$$
Equivalently, the graded algebra associated to the Weyl algebra is the algebra of polynomial functions on $T^*V\simeq V\oplus V^*$. Similarly, $\text{gr}A_d(\K)\simeq \K[X^i,P_j]$.
Let $\mathfrak{h}$ be the one-dimensional Lie algebra acting on the commutative algebra $\K[X^i]$ via a representation \eqref{representation} whose image $\mathfrak{h}^\sharp\subset\Der(A)$ is spanned by the Euler vector field, reading explicitly
$$
E=\sum\limits_{i=1}^d X^i\frac{\partial}{\partial X^i}.
$$ 
Therefore, the commutative subalgebra $({\cal S}V)^\mathfrak{h}$ as in \eqref{Ah} is isomorphic to $\K$, while the Atiyah algebra $\Der({\cal S}V)^\mathfrak{h}$ as in \eqref{Ati} is isomorphic to the Lie algebra $\mathfrak{gl}_\K(V)$. Moreover, the associative subalgebra ${\cal D}({\cal S}V)^\mathfrak{h}$ is spanned by polynomial differential operators on the symmetric algebra ${\cal S}V$ preserving the rank of tensors. It is isomorphic to the centraliser of the Euler vector field inside the Weyl algebra, which is spanned by sums of differential operators with as many coordinates $X$ as derivatives $\partial_X$. This subalgebra is generated by vector fields of the form $X^i\frac{\partial}{\partial X^j}$. In fact, let $\{T^i{}_j\}$ denote a basis of $\mathfrak{gl}_\K(V)$. The morphism $\Phi\colon U_\K\big(\mathfrak{gl}_\K(V)\big)\to{\cal U}_{{\cal S}V}\big(\Der({\cal S}V)\big)^\mathfrak{h}$ is the associative algebra map extending the Lie algebra map $T^i{}_j\mapsto X^i\frac{\partial}{\partial X^j}$ via the universal property. For $d>1$, this morphism is surjective but not injective. This fact is easier to see at the level of their graded counterparts. On the one hand,
$$
\text{gr}\, U_\K\big(\mathfrak{gl}_\K(V)\big)\simeq 
{\cal S}\big(\mathfrak{gl}_\K(V)\big)\simeq {\cal S}(V\otimes V^*),
$$
which is isomorphic to $\K[Y^i{}_j]$. For instance, in degree two, ${\cal S}^2(V\otimes V^*)$ is of dimension $\frac12(d^2+1)d^2$.
On the other hand,
$$
\text{gr}\,{\cal D}({\cal S}V)^\mathfrak{h}\simeq {\cal S}(V\oplus V^*)^\mathfrak{h}\simeq\bigoplus\limits_{n\in\mathbb N} {\cal S}^n(V)\otimes {\cal S}^n(V^*),
$$
which is isomorphic to the subalgebra of $\K[X^i,P_j]$ spanned by homogeneous polynomials with the same degree in $X$ as in $P$. For instance, in degree two,
${\cal S}^2(V\oplus V^*)^\mathfrak{h}\simeq {\cal S}^2(V)\otimes\,{\cal S}^2(V^*)$ is of dimension $\frac14(d+1)^2d^2$. The map $\text{gr}\,\Phi$ reads explicitly $Y^i{}_j\mapsto X^iP_j$ in terms of $\K[Y^i{}_j]$ and $\K[X^i,P_j]$.
\end{example}


\appendix
\addtocontents{toc}{\protect\setcounter{tocdepth}{1}}

\setcounter{section}{0}

\section{Some technical proofs}\label{sec:tech}
In this subsection, after recalling a few technical details concerning permutations, we collect some of the technical proofs omitted from \S\ref{sec:weakcase}.


\subsection{Useful things about permutations}\label{ssec:perm}
Let us recall the following facts about permutations.

\begin{definition}[Shuffle]
A permutation $\sigma \in \mf{S}_k$ is an $(a,b)$-shuffle (where $a+b=k$) if $\sigma(1)<\sigma(2)<\cdots <\sigma(a)$ and $\sigma(a+1)<\sigma(a+2)<\cdots <\sigma(k)$. Let us denote by $W_{a,b}$ the subset of $(a,b)$-shuffles. Its cardinality is $\binom{k}{a}$.
\end{definition}

\begin{lemma}\label{lem:shuffles}
Consider the assignment
$$
W_{a,b}\times (\mf{S}_a  \times \mf{S}_b) \to \mf{S}_k,\quad (\sigma,\tau) \mapsto \sigma \circ \tau,
$$ where $\sigma\in W_{a,b}$ is an  $(a,b)$-shuffle and $\tau\in \mf{S}_a \times \mf{S}_b$ permutes $\{1,\ldots,a\}$ and $\{a+1,\ldots,k\}$ separately. The above map is a bijection.
\end{lemma}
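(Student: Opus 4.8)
The plan is to prove bijectivity by combining a cardinality count with an injectivity argument. First I would observe that both sides have the same size: since $|W_{a,b}| = \binom{k}{a}$ and $|\mf{S}_a \times \mf{S}_b| = a!\,b!$, the domain has $\binom{k}{a}\,a!\,b! = k! = |\mf{S}_k|$ elements. Hence it suffices to establish injectivity (or, equivalently, surjectivity).

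The key observation driving the argument is that the subgroup $\mf{S}_a \times \mf{S}_b \subseteq \mf{S}_k$ consists precisely of those permutations preserving the two blocks $\{1,\ldots,a\}$ and $\{a+1,\ldots,k\}$ setwise, whereas a shuffle $\sigma \in W_{a,b}$ is uniquely determined by the set $S_\sigma \coloneqq \{\sigma(1),\ldots,\sigma(a)\}$: the shuffle condition forces $\sigma$ to list $S_\sigma$ in increasing order on the first block and its complement in increasing order on the second. So, given $\sigma \circ \tau = \sigma' \circ \tau'$ with $\sigma, \sigma' \in W_{a,b}$ and $\tau, \tau' \in \mf{S}_a \times \mf{S}_b$, I would note that $\tau$ maps $\{1,\ldots,a\}$ onto itself, so $\{(\sigma\circ\tau)(1), \ldots, (\sigma\circ\tau)(a)\} = S_\sigma$; applying the same to $\sigma' \circ \tau'$ gives $S_\sigma = S_{\sigma'}$, whence $\sigma = \sigma'$ by the uniqueness observation. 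Cancelling $\sigma$ on the left then yields $\tau = \tau'$.

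Alternatively, and perhaps more transparently, I would exhibit an explicit inverse: to a permutation $\pi \in \mf{S}_k$ one associates the set $S \coloneqq \{\pi(1), \ldots, \pi(a)\}$, lets $\sigma$ be the unique shuffle with $S_\sigma = S$, and sets $\tau \coloneqq \sigma^{-1} \circ \pi$. A short check shows that $\tau$ preserves the two blocks (for $i \leq a$ one has $\pi(i) \in S$, so $\sigma^{-1}(\pi(i)) \in \{1,\ldots,a\}$, and dually for $i > a$), so $\tau \in \mf{S}_a \times \mf{S}_b$ and $\sigma \circ \tau = \pi$. This builds a two-sided inverse directly and sidesteps the cardinality count altogether.

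I do not expect any serious obstacle: the only point requiring care is the elementary but essential claim that a shuffle is pinned down by the image $S_\sigma$ of its first block, which is immediate from the monotonicity built into the definition of $W_{a,b}$. Everything else is bookkeeping about the blockwise action of $\mf{S}_a \times \mf{S}_b$.
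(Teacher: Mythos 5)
Your proof is correct, and it is worth noting where it coincides with and departs from the paper's argument. The paper proves exactly the same two facts you use in your first argument---injectivity plus equal cardinalities $\binom{k}{a}\,a!\,b! = k!$---but it establishes injectivity by a hands-on induction: from $\sigma\tau = \sigma'\tau'$ it writes $\sigma = \sigma'\tau'\tau^{-1}$ and extracts $\sigma(i) = \sigma'(i)$ one value at a time, using $\sigma(i) = \min\{\sigma(i),\ldots,\sigma(a)\}$ on the first block and the analogous maximum statement on the second. Your injectivity argument packages that same monotonicity into the single crisp observation that a shuffle $\sigma \in W_{a,b}$ is uniquely determined by the set $S_\sigma = \{\sigma(1),\ldots,\sigma(a)\}$, which is applied once rather than inductively; this is the same underlying mechanism (the min-induction is precisely how one would verify your uniqueness claim), just organized more conceptually. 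Your second argument, however, is genuinely different from the paper's: by constructing the explicit inverse $\pi \mapsto (\sigma, \sigma^{-1}\circ\pi)$, where $\sigma$ is the unique shuffle with $S_\sigma = \{\pi(1),\ldots,\pi(a)\}$, you prove surjectivity constructively and dispense with the counting argument entirely. What each approach buys: the paper's route is self-contained and requires no auxiliary uniqueness lemma, only the shuffle condition itself; your explicit inverse is more robust (it does not rely on finiteness of the sets involved, whereas the injection-between-equinumerous-sets argument does) and it makes the coset decomposition $\mf{S}_k = \bigsqcup_{\sigma \in W_{a,b}} \sigma\,(\mf{S}_a \times \mf{S}_b)$ transparent, with the shuffles as distinguished coset representatives. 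Both verifications you flag as requiring care---the blockwise action of $\mf{S}_a \times \mf{S}_b$ and the uniqueness of the shuffle with prescribed $S_\sigma$---are carried out correctly, so there is no gap.
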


\begin{proof}
Let $\sigma,\sigma' \in W_{a,b}$ and $\tau,\tau' \in \mf{S}_a \times \mf{S}_b$ and assume that $\sigma\tau = \sigma'\tau'$. This implies that $\sigma = \sigma'\tau'\tau^{-1}$. Therefore
\begin{multline*}
\sigma(1) \stackrel{(*)}{=} \min\{\sigma(1),\ldots,\sigma(a)\} = \min\{\sigma'\tau'\tau^{-1}(1),\ldots,\sigma'\tau'\tau^{-1}(a)\} \\
\stackrel{(**)}{=} \min\{\sigma'(1),\ldots,\sigma'(a)\} \stackrel{(*)}{=} \sigma'(1)
\end{multline*}
where in $(*)$ we used the fact that $\sigma$ and $\sigma'$ are $(a,b)$-shuffles and in $(**)$ we used the fact that both $\tau$ and $\tau'$ just permute $\{1,\ldots,a\}$. Analogously one shows inductively that $\sigma(i) = \sigma'(i)$ for all $1\leq i \leq a$, by considering $\sigma(i) = \min\{\sigma(i),\ldots,\sigma(a)\}$. Moreover,
\begin{multline*}
\sigma(k) = \max\{\sigma(a+1),\ldots,\sigma(k)\} = \max\{\sigma'\tau'\tau^{-1}(a+1),\ldots,\sigma'\tau'\tau^{-1}(k)\} \\
= \max\{\sigma'(a+1),\ldots,\sigma'(k)\} = \sigma'(k)
\end{multline*}
and analogously $\sigma(i) = \sigma'(i)$ for all $a+1\leq i \leq k$. Therefore, the assignment
\[
W_{a,b}\times (\mf{S}_a \times \mf{S}_b) \to \mf{S}_k, \qquad (\sigma,\tau) \mapsto \sigma \circ \tau
\]
is injective between sets of the same cardinality, whence it is bijective.
\end{proof}

\begin{lemma}
The assignment
\begin{equation}\label{eq:permuts}
\{\begin{pmatrix}i & k\end{pmatrix} \in\mf{S}_k \mid 1\leq i\leq k\} \times \mf{S}_{k-1} \mapsto \mf{S}_k, \qquad \big(\begin{pmatrix}i & k\end{pmatrix},\sigma\big) \mapsto \begin{pmatrix}i & k\end{pmatrix}\circ\sigma
\end{equation}
is a bijection.
\end{lemma}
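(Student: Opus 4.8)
The plan is to recognise this as the standard coset decomposition of $\mf{S}_k$ by the stabiliser of the last letter, and to reduce the proof to a cardinality count together with one elementary observation. First I would make explicit the convention that $\mf{S}_{k-1}$ is regarded as the subgroup of $\mf{S}_k$ consisting of those permutations that fix $k$ (consistently with the way $\mf{S}_a \times \mf{S}_b$ was used inside $\mf{S}_k$ in Lemma \ref{lem:shuffles}), and I would note that the indexing set $\{\begin{pmatrix}i & k\end{pmatrix} \mid 1 \leq i \leq k\}$ has exactly $k$ elements, with the value $i=k$ yielding $\begin{pmatrix}k & k\end{pmatrix} = \mathrm{id}$. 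Since $\lvert\mf{S}_{k-1}\rvert = (k-1)!$, the domain has cardinality $k \cdot (k-1)! = k! = \lvert\mf{S}_k\rvert$, so it will suffice to prove that the assignment \eqref{eq:permuts} is injective.

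The one observation that drives everything is the computation of a composite at the letter $k$. For $\sigma \in \mf{S}_{k-1}$ one has $\sigma(k) = k$, hence
\[
\big(\begin{pmatrix}i & k\end{pmatrix}\circ\sigma\big)(k) = \begin{pmatrix}i & k\end{pmatrix}\big(\sigma(k)\big) = \begin{pmatrix}i & k\end{pmatrix}(k) = i.
\]
From this I would deduce injectivity directly: if $\begin{pmatrix}i & k\end{pmatrix}\circ\sigma = \begin{pmatrix}j & k\end{pmatrix}\circ\tau$ with $\sigma,\tau \in \mf{S}_{k-1}$, then evaluating both sides at $k$ forces $i = j$, and then composing on the left with $\begin{pmatrix}i & k\end{pmatrix}$ (which is its own inverse) gives $\sigma = \tau$. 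Together with the cardinality count this already yields that \eqref{eq:permuts} is a bijection.

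There is no genuine obstacle here; the only points deserving care are bookkeeping ones. If one prefers a self-contained argument that does not invoke the count, I would instead exhibit the explicit inverse: given $\pi \in \mf{S}_k$, set $i \coloneqq \pi(k)$ and $\sigma \coloneqq \begin{pmatrix}i & k\end{pmatrix}\circ\pi$; then $\sigma(k) = \begin{pmatrix}i & k\end{pmatrix}(i) = k$, so $\sigma \in \mf{S}_{k-1}$, and $\begin{pmatrix}i & k\end{pmatrix}\circ\sigma = \pi$ because $\begin{pmatrix}i & k\end{pmatrix}^2 = \mathrm{id}$, proving surjectivity as well. Either route completes the proof; the main thing to flag is simply the convention identifying $\mf{S}_{k-1}$ with the stabiliser of $k$ and the harmless degenerate case $i=k$.

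\begin{proof}
Regard $\mf{S}_{k-1}$ as the subgroup of $\mf{S}_k$ consisting of the permutations fixing $k$. The indexing set $\{\begin{pmatrix}i & k\end{pmatrix} \mid 1 \leq i \leq k\}$ has $k$ elements (for $i = k$ we set $\begin{pmatrix}k & k\end{pmatrix} = \mathrm{id}$), so the domain of \eqref{eq:permuts} has cardinality $k\cdot(k-1)! = k! = \lvert\mf{S}_k\rvert$. Thus it is enough to prove injectivity.

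For every $\sigma \in \mf{S}_{k-1}$ one has $\sigma(k) = k$ and hence
\[
\big(\begin{pmatrix}i & k\end{pmatrix}\circ\sigma\big)(k) = \begin{pmatrix}i & k\end{pmatrix}\big(\sigma(k)\big) = \begin{pmatrix}i & k\end{pmatrix}(k) = i.
\]
Now suppose $\begin{pmatrix}i & k\end{pmatrix}\circ\sigma = \begin{pmatrix}j & k\end{pmatrix}\circ\tau$ for some $\sigma,\tau \in \mf{S}_{k-1}$. Evaluating both sides at $k$ gives $i = j$. Composing on the left with $\begin{pmatrix}i & k\end{pmatrix} = \begin{pmatrix}i & k\end{pmatrix}^{-1}$ then yields $\sigma = \tau$. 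Therefore \eqref{eq:permuts} is an injection between finite sets of the same cardinality, hence a bijection.
\end{proof}
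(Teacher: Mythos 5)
Your proof is correct and follows essentially the same route as the paper's: injectivity established by evaluating at the letter $k$ (forcing $i = j$ and then $\sigma = \tau$ by cancelling the involutive transposition), combined with the observation that an injection between finite sets of equal cardinality is a bijection. Your direct computation $\big(\begin{pmatrix}i & k\end{pmatrix}\circ\sigma\big)(k) = i$ is in fact slightly cleaner than the paper's case analysis, and the explicit inverse you sketch is a harmless bonus.
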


\begin{proof}
If $\begin{pmatrix}i & k\end{pmatrix}\sigma = \begin{pmatrix}j & k\end{pmatrix}\tau$ then $\sigma = \begin{pmatrix}i & k\end{pmatrix}\begin{pmatrix}j & k\end{pmatrix}\tau$ and since $\sigma,\tau \in \mf{S}_{k-1}$, we should have that
\[k = \sigma(k) = \begin{pmatrix}i & k\end{pmatrix}\begin{pmatrix}j & k\end{pmatrix}\tau(k) = \begin{pmatrix}i & k\end{pmatrix}\begin{pmatrix}j & k\end{pmatrix}(k) = \begin{cases} j & i \neq j \\ k & i=j. \end{cases}\]
Therefore, $i =j$ and $\sigma = \tau$. Being \eqref{eq:permuts} an injection between sets of the same cardinality, it is a bijection.
\end{proof}
    
Recall also that for every $\rho \in \mf{S}_k$ we have a bijective correspondence $\{\sigma\rho \mid \sigma\in\mf{S}_k\} \leftrightarrow \mf{S}_k$, because $\mf{S}_k$ is a group.


\subsection{Proof of Theorem \ref{thm:isocoring}}\label{sssec:proof3.3}

By the combinatorics of permutations, we can now provide the details of the proof of Theorem \ref{thm:isocoring}.

\begin{proof}[Proof of Theorem \ref{thm:isocoring}]
For every $k \geq 1$ we have
{\small
\begin{align*}
k! \, \Delta_{\cU} & \big(\textsc{S}(X_1 \cdots X_k)\big) \stackrel{\scriptscriptstyle\eqref{eq:symmetr}}{=} 
\sum_{\substack{ j_1,\ldots,j_k \\ \sigma\in\mf{S}_k} }\varphi_{j_1}(X_1)\cdots\varphi_{j_k}(X_k)\Delta_{\cU}\big(\chi_{j_{\sigma(1)}}\cdots\chi_{j_{\sigma(k)}}\big) 
\\
 & \stackrel{\phantom{\eqref{eq:arciuffa}}}{=} \sum_{\substack{ j_1,\ldots,j_k \\ \sigma\in\mf{S}_k} }\varphi_{j_1}(X_1)\cdots\varphi_{j_k}(X_k)\Delta_{\cU}\big(\chi_{j_{\sigma(1)}}\big)\cdots\Delta_{\cU}\big(\chi_{j_{\sigma(k)}}\big) 
 \\
 & \stackrel{\phantom{\eqref{eq:arciuffa}}}{=} \sum_{\substack{ j_1,\ldots,j_k \\ \sigma\in\mf{S}_k} }\varphi_{j_1}(X_1)\cdots\varphi_{j_k}(X_k)\Bigg(\sum_{\substack{s+t=k \\ \rho\in W_{s,t}}}\chi_{j_{\rho\sigma(1)}}\cdots \chi_{j_{\rho\sigma(t)}} \otimes_{\ahha} \chi_{j_{\rho\sigma(t+1)}}\cdots \chi_{j_{\rho\sigma(k)}}\Bigg) 
 \\
 & \stackrel{\scriptscriptstyle\eqref{eq:arciuffa}}{=} \sum_{ j_1,\ldots,j_k }\sum_{ \sigma\in\mf{S}_k}\sum_{\substack{s+t=k \\ \rho\in W_{s,t}}}\varphi_{j_{1}}\left(X_{\rho\sigma(1)}\right)\cdots\varphi_{j_{k}}\left(X_{\rho\sigma(k)}\right)\left(\chi_{j_{1}}\cdots \chi_{j_{t}} \otimes_{\ahha} \chi_{j_{t+1}}\cdots \chi_{j_{k}}\right),
\end{align*}
}
where $W_{s,t} \subseteq \mf{S}_k$ denotes the set of $(s,t)$-shuffles, while%
\begin{small}
\begin{align*}
\big(\textsc{S} \otimes_{\ahha} & \textsc{S}\big) \left(\Delta_{\cS}\left(X_1 \cdots X_k\right)\right) = \left(\textsc{S} \otimes_{\ahha} \textsc{S}\right)\Bigg(\sum_{\substack{s+t=k \\\alpha\in W_{s,t}}} X_{\alpha(1)}\cdots X_{\alpha(t)} \otimes_{\ahha} X_{\alpha(t+1)} \cdots X_{\alpha(k)}\Bigg) 
\\
 & \begin{aligned} \stackrel{\scriptscriptstyle\eqref{eq:symmetr}}{=}  \sum_{\substack{s+t=k \\\alpha\in W_{s,t}}} & \Bigg(\frac{1}{t!}\sum_{\substack{ j_1,\ldots,j_t \\ \beta\in\mf{S}_t} }\varphi_{j_1}\left(X_{\alpha(1)}\right)\cdots\varphi_{j_t}\left(X_{\alpha(t)}\right)\chi_{j_{\beta(1)}}\cdots\chi_{j_{\beta(t)}}\Bigg) 
 \\ 
 & \otimes_{\ahha} \Bigg( \frac{1}{s!} \sum_{\substack{ j_{t+1},\ldots,j_{k} \\ \gamma\in\mf{S}_s} }\varphi_{j_{t+1}}\left(X_{\alpha(t+1)}\right)\cdots\varphi_{j_{k}}\left(X_{\alpha(k)}\right)\chi_{j_{\gamma(t+1)}}\cdots\chi_{j_{\gamma(k)}}\Bigg) 
 \end{aligned} 
 \\
 & \stackrel{\phantom{\eqref{eq:symmetr}}}{=} \sum_{\substack{s+t=k \\\alpha\in W_{s,t}}} \frac{1}{t!} \frac{1}{s!} \sum_{\substack{ j_1,\ldots,j_k \\ \beta\in\mf{S}_t \\ \gamma\in\mf{S}_s }} \varphi_{j_1}\left(X_{\alpha(1)}\right)\cdots\varphi_{j_t}\left(X_{\alpha(t)}\right)\varphi_{j_{t+1}}\left(X_{\alpha(t+1)}\right)\cdots\varphi_{j_k}\left(X_{\alpha(k)}\right) \ldots \\
 & \hspace{2cm} \ldots\big(\chi_{j_{\beta(1)}}\cdots\chi_{j_{\beta(t)}} \otimes_{\ahha} \chi_{j_{\gamma(t+1)}}\cdots\chi_{j_{\gamma(k)}}\big) 
 \\
 & \stackrel{\scriptscriptstyle\eqref{eq:arciuffa}}{=} \sum_{\substack{s+t=k \\\alpha\in W_{s,t}}} \frac{1}{t!} \frac{1}{s!} \sum_{\substack{ j_1,\ldots,j_k \\ \beta\in\mf{S}_t \\ \gamma\in\mf{S}_s }} \varphi_{j_{1}}\left(X_{\alpha\beta(1)}\right)\cdots\varphi_{j_{t}}\left(X_{\alpha\beta(t)}\right)\varphi_{j_{t+1}}\left(X_{\alpha\gamma(t+1)}\right)\cdots\varphi_{j_{k}}\left(X_{\alpha\gamma(k)}\right)\ldots \\
 & \hspace{2cm} \ldots\left(\chi_{j_{1}}\cdots\chi_{j_{t}} \otimes_{\ahha} \chi_{j_{t+1}}\cdots\chi_{j_{k}}\right).
\end{align*}
\end{small}
If we analyse the summand corresponding to
\[\chi_{j_{1}} \cdots \chi_{j_{t}} \otimes_{\ahha} \chi_{j_{t+1}} \cdots \chi_{j_{k}},\]
it appears with coefficient
\[\frac{1}{k!}\sum_{ \sigma\in\mf{S}_k}\sum_{\substack{s+t=k \\ \rho\in W_{s,t}}}\varphi_{j_{1}}\left(X_{\rho\sigma(1)}\right)\cdots\varphi_{j_{k}}\left(X_{\rho\sigma(k)}\right)\]
in the first sum and with coefficient
\[\sum_{\substack{s+t=k \\\alpha\in W_{s,t} \\ \beta\in\mf{S}_t, \gamma\in\mf{S}_s}} \frac{1}{t!} \frac{1}{s!} \varphi_{j_{1}}\left(X_{\alpha\beta(1)}\right)\cdots\varphi_{j_{t}}\left(X_{\alpha\beta(t)}\right)\varphi_{j_{t+1}}\left(X_{\alpha\gamma(t+1)}\right)\cdots\varphi_{j_{k}}\left(X_{\alpha\gamma(k)}\right)\]
in the second. In view of what is recalled in \S\ref{ssec:perm}, 
\begin{gather*}
    \sum_{\substack{s+t=k \\\alpha\in W_{s,t} \\ \beta\in\mf{S}_t, \gamma\in\mf{S}_s}} \frac{1}{t!} \frac{1}{s!} \varphi_{j_{1}}(X_{\alpha\beta(1)})\cdots\varphi_{j_{t}}(X_{\alpha\beta(t)})\varphi_{j_{t+1}}(X_{\alpha\gamma(t+1)})\cdots\varphi_{j_{k}}(X_{\alpha\gamma(k)}) \\
    = \sum_{\substack{s+t=k \\ \tau\in\mf{S}_k}} \frac{1}{t!} \frac{1}{s!} \varphi_{j_{1}}(X_{\tau(1)})\cdots\varphi_{j_{k}}(X_{\tau(k)}).
\end{gather*}
On the other hand, since for every $\rho \in \mf{S}_k$ we have that $\{\rho\sigma \mid \sigma\in\mf{S}_k\} $ is in bijection with $ \mf{S}_k$, we also have that
\begin{align*}
\frac{1}{k!}\sum_{ \sigma\in\mf{S}_k}\sum_{\substack{s+t=k \\ \rho\in W_{s,t}}}\varphi_{j_{1}}\left(X_{\rho\sigma(1)}\right)\cdots\varphi_{j_{k}}\left(X_{\rho\sigma(k)}\right) 
& = \frac{1}{k!}\sum_{\substack{\sigma\in\mf{S}_k \\ s+t=k }}\left|W_{s,t}\right|\varphi_{j_{1}}\left(X_{\sigma(1)}\right)\cdots\varphi_{j_{k}}\left(X_{\sigma(k)}\right) \\
& = \sum_{\substack{\sigma\in\mf{S}_k \\ s+t=k }}\frac{1}{s!t!}\varphi_{j_{1}}\left(X_{\sigma(1)}\right)\cdots\varphi_{j_{k}}\left(X_{\sigma(k)}\right).
\end{align*}
Therefore $\Delta_{\cU} \circ \textsc{S} = (\textsc{S} \otimes_{\ahha} \textsc{S}) \circ \Delta_{\cS}$. The compatibility with the counit is clear.
\end{proof}


\subsection{Proof of Proposition \ref{prop:Gamma}}
\label{sssec:prop3.7}

The following computational result will be helpful in completing the proof of Proposition \ref{prop:Gamma}.

\begin{lemma}\label{lem:maancheno}
For every $a \in A$ and $X_1,\ldots,X_k \in \mf{h}$, in $\cU_\ahha(\mf{h})$ we have
\begin{equation}\label{eq:maancheno}
X_1\cdots X_ka = \sum_{\substack{t+s = k \\ \sigma \in W_{t,s}}} \omega\left(X_{\sigma(1)}\right)\cdots \omega\left(X_{\sigma(t)}\right)(a)X_{\sigma(t+1)} \cdots X_{\sigma(k)},
\end{equation}
where, as usual, $W_{t,s}$ is the set of $(t,s)$-shuffles as in \S\ref{ssec:perm}.
\end{lemma}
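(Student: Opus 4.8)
The plan is to argue by induction on $k$, the single engine being the defining commutation relation \eqref{eq:compLRalg}, which in $\cU_\ahha(\mf{h})$ (suppressing $\iota_\ahha$ and $\iota_{\mf{h}}$) reads $Xb = bX + \omega(X)(b)$ for every $X \in \mf{h}$ and every $b \in A$. For $k = 0$ the asserted identity is the tautology $a = a$, and for $k=1$ it is precisely this relation: the right-hand side of \eqref{eq:maancheno} then has only the two shuffles coming from $(t,s) = (0,1)$ and $(t,s) = (1,0)$, contributing $aX_1$ and $\omega(X_1)(a)$ respectively.

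For the inductive step I would first peel off the last generator, writing $X_1\cdots X_k a = X_1\cdots X_{k-1}(X_k a)$ and applying the base relation to get $X_1\cdots X_{k-1}\,a\,X_k + X_1\cdots X_{k-1}\,\omega(X_k)(a)$. To the first summand I apply the inductive hypothesis to $X_1\cdots X_{k-1}a$ and then multiply by $X_k$ on the right; to the second I apply the inductive hypothesis to $X_1\cdots X_{k-1}a'$ with $a' \coloneqq \omega(X_k)(a) \in A$, which is legitimate since $\omega(X_k)$ is a derivation of $A$. This presents $X_1\cdots X_k a$ as a sum indexed by the $(t,s)$-shuffles of $\{1,\dots,k-1\}$: in the first family the tail is $X_{\sigma(t+1)}\cdots X_{\sigma(k-1)}X_k$, while in the second family the scalar coefficient is $\omega(X_{\sigma(1)})\cdots\omega(X_{\sigma(t)})\big(\omega(X_k)(a)\big)$, that is, the composite of derivations with $\omega(X_k)$ applied innermost.

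It then remains to match this with the right-hand side of \eqref{eq:maancheno} for $k$, and this is where the only genuine bookkeeping lives. Since $k$ is the largest index and each block of a shuffle is increasing, in any $(t,s)$-shuffle $\tau$ of $\{1,\dots,k\}$ the element $k$ must occupy either the last slot of the first block (position $t$) or the last slot of the second block (position $k$); these two possibilities partition $\bigcup_{t+s=k} W_{t,s}$. Deleting $k$ gives, respectively, a bijection onto the $(t-1,s)$-shuffles and the $(t,s-1)$-shuffles of $\{1,\dots,k-1\}$, in the spirit of the combinatorial lemmas recalled in \S\ref{ssec:perm}. Under this correspondence the shuffles with $k$ in the second block reproduce the first family (the factor $X_k$ sitting at the end of the word), while the shuffles with $k$ at the end of the first block reproduce the second family (the chain of derivations ending, innermost, in $\omega(X_k)$ applied to $a$). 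The main obstacle, such as it is, is purely notational: keeping the composition order of the derivations $\omega(X_{\sigma(i)})$ straight while simultaneously tracking where $X_k$ lands, so that the two inductive families align termwise with these two cases; once the bijection is set up the equality is immediate.
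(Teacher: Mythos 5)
Your proof is correct and takes essentially the same route as the paper's: induction on $k$ with the defining relation $Xa = aX + \omega(X)(a)$ as the base case, and as the combinatorial key the observation that in any shuffle the maximal index $k$ must sit at the end of one of the two blocks, so that deleting $k$ gives the two bijections onto shuffles of $\{1,\dots,k-1\}$ matching your two inductive families (with $\omega(X_k)$ applied innermost in the second, exactly as required). The paper merely runs the same computation in the opposite direction---collapsing the right-hand side to $X_1\cdots X_{k-1}\,a\,X_k + X_1\cdots X_{k-1}\,\omega(X_k)(a)$, with the reindexing carried out by the explicit cycle $(t,t+1,\dots,k)$ rather than a deletion bijection---which is your argument read backwards.
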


\begin{proof}
We proceed by induction on $k \geq 0$. For $k = 1$, we have
\[
Xa \stackrel{\scriptscriptstyle\eqref{eq:compLRalg}}{=} aX + \omega(X)(a).
\]
Assume then that 
\begin{equation}
\label{eq:maancheno1}
X_1\cdots X_na = \sum_{\substack{t+s = n \\ \sigma \in W_{t,s}}} \omega\left(X_{\sigma(1)}\right)\cdots \omega\left(X_{\sigma(t)}\right)(a)X_{\sigma(t+1)} \cdots X_{\sigma(n)},
\end{equation}
for all $n \leq k-1$ and let us verify it for $n = k$. We have
\begin{align*}
\sum_{\substack{u+v = k \\ \sigma \in W_{u,v}}} & \omega\left(X_{\sigma(1)}\right)\cdots \omega\left(X_{\sigma(u)}\right)(a)X_{\sigma(u+1)} \cdots X_{\sigma(k-1)}X_{\sigma(k)} \\
 & \stackrel{\phantom{\eqref{eq:maancheno1}}}{=} \sum_{\substack{u+v = k \\ \sigma \in W_{u,v} \\ \sigma(k) = k}} \omega\left(X_{\sigma(1)}\right)\cdots \omega\left(X_{\sigma(u)}\right)(a)X_{\sigma(u+1)} \cdots X_{\sigma(k-1)}X_{k}\, + \\ 
& \hspace{1cm} +\, \sum_{\substack{u+v = k \\ \sigma \in W_{u,v} \\ \sigma(k) \neq k}} \omega\left(X_{\sigma(1)}\right)\cdots \omega\left(X_{\sigma(u)}\right)(a)X_{\sigma(u+1)} \cdots X_{\sigma(k-1)}X_{\sigma(k)} \\
 & \stackrel{(*)}{\mathmakebox[\widthof{$\stackrel{\scriptscriptstyle\eqref{eq:maancheno1}}{=}$}]{=}} \sum_{\substack{t+s = k-1 \\ \sigma \in W_{t,s}}} \omega\left(X_{\sigma(1)}\right)\cdots \omega\left(X_{\sigma(t)}\right)(a)X_{\sigma(t+1)} \cdots X_{\sigma(k-1)}X_{k} \, + \\ 
& \hspace{1cm} +\,  \sum_{\substack{u+v = k \\ \sigma \in W_{u,v} \\ \sigma(u) = k}} \omega\left(X_{\sigma(1)}\right)\cdots \omega\left(X_{\sigma(u-1)}\right)\omega\left(X_{k}\right)(a)X_{\sigma(u+1)} \cdots X_{\sigma(k-1)}X_{\sigma(k)} \\
 & \stackrel{\phantom{\eqref{eq:maancheno1}}}{=} \sum_{\substack{t+s = k-1 \\ \sigma \in W_{t,s}}} \omega\left(X_{\sigma(1)}\right)\cdots \omega\left(X_{\sigma(t)}\right)(a)X_{\sigma(t+1)} \cdots X_{\sigma(k-1)}X_{k} \, + \\ 
& \hspace{1cm} +\,  \sum_{\substack{u+v = k \\ \sigma \in W_{u,v} \\ \sigma(u) = k \\ \tau=(u,u+1,\ldots,k)}} \omega\left(X_{\sigma\tau(1)}\right)\cdots \omega\left(X_{\sigma\tau(u-1)}\right)\omega\left(X_{k}\right)(a)X_{\sigma\tau(u)} \cdots X_{\sigma\tau(k-2)}X_{\sigma\tau(k-1)} \\
 & \stackrel{\phantom{\eqref{eq:maancheno1}}}{=} \sum_{\substack{t+s = k-1 \\ \sigma \in W_{t,s}}} \omega\left(X_{\sigma(1)}\right)\cdots \omega\left(X_{\sigma(t)}\right)(a)X_{\sigma(t+1)} \cdots X_{\sigma(k-1)}X_{k} \, + \\ 
& \hspace{1cm} +\,  \sum_{\substack{t+s = k-1 \\ \sigma \in W_{t,s}}} \omega\left(X_{\sigma(1)}\right)\cdots \omega(X_{\sigma(t)})\omega(X_{k})(a)X_{\sigma(t+1)} \cdots X_{\sigma(k-1)} \\
& \stackrel{\scriptscriptstyle\eqref{eq:maancheno1}}{=} X_1\cdots X_{k-1}aX_k + X_1\cdots X_{k-1}\omega(X_k)(a) \\
& \stackrel{\scriptscriptstyle\eqref{eq:compLRalg}}{\mathmakebox[\widthof{$\stackrel{\scriptscriptstyle\eqref{eq:maancheno1}}{=}$}]{=}} X_1\cdots X_ka,
\end{align*}
where to transform the second summand in $(*)$ we used the fact that if $\sigma(k) \neq k$, then $k \neq \sigma(i)$ for all $u+1 \leq i \leq k-1$ because $\sigma(i) < \sigma(k)$ by the shuffle condition, whence $\sigma(u) = k$ by the shuffle condition again.
\end{proof}

We are now ready to provide the details of the missing part in the proof of Proposition \ref{prop:Gamma}.

\begin{lemma}
The $A$-coring section $\Gamma$ of the $A$-bialgebroid morphism $\Pi \coloneqq \cU_\ahha (\pi)$ satisfies
\[\Gamma(a \bla u \bra b) = a \bla \Gamma(u) \bra b\]
for all $a,b \in A$ and $ u  \in \cU_\ahha(\mf{h})$.
\end{lemma}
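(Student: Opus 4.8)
The statement is genuinely stronger than it looks: $\Gamma$ is already known to be an $A$-coring morphism, but that only encodes linearity for the \emph{white} (left-multiplication) bimodule structure, whereas here $a \bla u \bra b = u\,\iota_\ahha(ab)$ is \emph{right} multiplication by $\iota_\ahha(ab)$. Since $a \bla u \bra b = u\,\iota_\ahha(ab)$, it suffices to prove, for every $c \in A$, that the $\K$-linear map
\[
\delta_c \coloneqq \Gamma \circ r_c - r_c \circ \Gamma \colon \cU_\ahha(\mf{h}) \to \cU_\ahha(\mf{g})
\]
vanishes, where $r_c$ denotes right multiplication by $\iota_\ahha(c)$ (that is, $u \mapsto u \bract c$). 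My plan is to prove $\delta_c = 0$ by induction on the canonical filtration degree, using that $\cU_\ahha(\mf{h})$ is a cocomplete (conilpotent) filtered coalgebra whose primitive and canonical filtrations coincide (see Lemma \ref{lem:allinjnew} and \cite{MoerdijkLie}), and that $\Gamma$ is an $A$-coring section of $\Pi = \cU_\ahha(\pi)$ with $\Pi\circ\Gamma = \id$ (Proposition \ref{prop:Gamma}).

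The induction is driven by a comultiplicativity and a counit relation for $\delta_c$. For the first, combining that $\Gamma$ is an $A$-coring map with the bialgebroid compatibility \eqref{beethoven2} in the form $\Delta_{\mf{h}}(u \bract c) = (u_{(1)} \bract c) \otimes_\ahha u_{(2)}$, a direct computation gives
\[
\Delta_{\mf{g}}\big(\delta_c(u)\big) = \delta_c(u_{(1)}) \otimes_\ahha \Gamma(u_{(2)}).
\]
For the second I would show $\varepsilon_{\mf{g}} \circ \delta_c = 0$. Since $\varepsilon_{\mf{g}}\circ\Gamma = \varepsilon_{\mf{h}}$ and $\varepsilon(u \bract c)$ is exactly the module action \eqref{eq:UAact} of $u$ on $c$, this reduces to comparing the two induced actions $\lambda_{\mf{h}}, \lambda_{\mf{g}} \colon \cU_\ahha(-) \to \End{\K}{A}$. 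The key point is that both are \emph{algebra} morphisms and that $\lambda_{\mf{g}} = \lambda_{\mf{h}} \circ \Pi$: indeed they agree on the generators $A$ and $\mf{g}$ because $\omega_{\mf{h}}\circ\pi = \omega_{\mf{g}}$. Hence $\lambda_{\mf{g}}\circ\Gamma = \lambda_{\mf{h}}\circ\Pi\circ\Gamma = \lambda_{\mf{h}}$, so $\Gamma(u)$ and $u$ act identically on $A$, which forces $\varepsilon_{\mf{g}}(\delta_c(u)) = \lambda_{\mf{h}}(u)(c) - \lambda_{\mf{g}}(\Gamma(u))(c) = 0$.

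Granting these two facts, the induction is immediate: writing $\Delta_{\mf{h}}(u) = u \otimes_\ahha 1 + 1 \otimes_\ahha u + \sum u' \otimes_\ahha u''$ for $u$ of degree $n$, with $u', u''$ of strictly smaller degree by conilpotency, the comultiplicativity identity together with the inductive hypothesis that $\delta_c$ vanishes in degrees $< n$ collapses $\Delta_{\mf{g}}(\delta_c(u))$ to $\delta_c(u) \otimes_\ahha 1$; the counit axiom \eqref{beethoven3} then gives $\delta_c(u) = \iota_\ahha\big(\varepsilon_{\mf{g}}(\delta_c(u))\big) = 0$ by the counit relation. The main obstacle is precisely the counit relation: it is the one step that genuinely uses both the anchor factorization $\omega_{\mf{h}}\circ\pi = \omega_{\mf{g}}$ and the retraction $\Pi\circ\Gamma = \id$, rather than formal coalgebra manipulation. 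As an alternative — the route for which Lemma \ref{lem:maancheno} is tailored — one could instead expand $\Gamma\big(\textsc{S}_{\mf{h}}(X_1\cdots X_k)\,a\big)$ and $\textsc{S}_{\mf{g}}\big(\cS_\ahha(\gamma)(X_1\cdots X_k)\big)\,a$ via the dual bases, commute $a$ leftward through each monomial with \eqref{eq:maancheno}, and match terms using the shuffle decompositions of \S\ref{ssec:perm}; this works but the re-symmetrization of the undifferentiated factors (paralleling the proof of Theorem \ref{thm:isocoring}) is the laborious bookkeeping that the inductive argument sidesteps.
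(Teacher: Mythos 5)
Your proof is correct, but it takes a genuinely different route from the paper's. The paper proves the identity by direct computation in \S\ref{sssec:prop3.7}: it evaluates $\Gamma \circ r_a \circ \textsc{S}_{\mf{h}}$ and $r_a \circ \textsc{S}_{\mf{g}} \circ \cS_\ahha(\gamma)$ on a monomial $X_1\cdots X_k$, pushes $a$ to the left through each symmetrised monomial via Lemma \ref{lem:maancheno}, and matches the two expansions through the shuffle bijections of \S\ref{ssec:perm} --- exactly the bookkeeping you identify and sidestep. Your inductive argument holds up at every point where it could fail: the twisted comultiplicativity $\Delta_{\mf{g}}\big(\delta_c(u)\big) = \delta_c(u_{(1)}) \otimes_\ahha \Gamma(u_{(2)})$ follows from \eqref{beethoven2} in both $\cU_\ahha(\mf{h})$ and $\cU_\ahha(\mf{g})$ together with $(\Gamma \otimes_\ahha \Gamma)\circ\Delta = \Delta\circ\Gamma$, and $\delta_c \otimes_\ahha \Gamma$ descends to the balanced tensor product since both maps are left $A$-linear; your counit relation is correct, and can even be shortened --- since $\Pi$ is a bialgebroid morphism, \eqref{krach1} gives directly $\varepsilon_{\mf{g}}(u \bract c) = \varepsilon_{\mf{h}}\big(\Pi(u) \bract c\big)$, so $\varepsilon_{\mf{g}}\circ\delta_c = 0$ follows from $\Pi\circ\Gamma = \id$ alone, with no need for the generators argument or \eqref{eq:Xfactor}; and the conilpotency driving the induction is available for projective $\mf{h}$, where the explicit shuffle coproduct on PBW monomials supplies a representative of the reduced coproduct with both legs in filtration degree $\leq n-1$ (a point worth one sentence in a final write-up, since the decomposition of $\Delta_{\mf{h}}(u)$ lives in a balanced tensor product and must be chosen before balancing). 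From $\Delta_{\mf{g}}\big(\delta_c(u)\big) = \delta_c(u)\otimes_\ahha 1$ and counitality \eqref{beethoven3} you then correctly conclude $\delta_c(u) = \iota_\ahha\big(\varepsilon_{\mf{g}}(\delta_c(u))\big) = 0$. What your route buys beyond brevity is generality: it never uses the specific construction of $\Gamma$ through the symmetrisation maps, only that $\Gamma$ is a unital left $A$-linear coring section of $\Pi$, so it shows that the black-action compatibility demanded in \eqref{vanilly} is \emph{automatic} for any such section of $\cU_\ahha(\pi)$ in the cocommutative setting --- a strengthening the paper's computation cannot deliver. What the paper's route buys is a formula-level verification requiring no conilpotency input, which doubles as a consistency check on the symmetrisation machinery of Theorem \ref{thm:isocoring} and Lemma \ref{lem:maancheno}.
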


\begin{proof}
To prove the compatibility between $\Gamma$ and the black actions (see \eqref{pergolesi}), it is enough to check that $\Gamma(ua) = \Gamma(u)a$ for $a \in A$, $u \in \cU_\ahha(\mf{h})$. To this end, denote by $r_a$ the endomorphism (both of $\cU_\ahha(\mf{h})$ and $\cU_\ahha(\mf{g})$) given by right multiplication by $a \in A$. In view of the left $A$-linearity of the maps involved, it suffices to check that $\Gamma \circ r_a \circ \psi_{\mf{h}} = r_a \circ \Gamma \circ \psi_{\mf{h}}$ (or, equivalently, that $\Gamma \circ r_a \circ \psi_{\mf{h}} = r_a \circ \psi_{\mf{g}} \circ \cS_\ahha (\gamma)$) on an homogeneous element of the form $X_1\cdots X_k \in \cS_\ahha (\mf{h})$. This can be done directly as follows:
{\tiny
\begin{align*}
	(\Gamma \circ r_a \circ \psi_{\mf{h}}) & (X_1\cdots X_k) 
		\stackrel{\scriptscriptstyle\eqref{eq:symmetr}}{=} 
	\Gamma\Bigg(\frac{1}{k!}\sum_{\substack{ j_1,\ldots,j_k \in I \\ \sigma\in\mf{S}_k} }\psi_{j_1}(X_1)\cdots\psi_{j_k}(X_k)\theta_{j_{\sigma(1)}}\cdots\theta_{j_{\sigma(k)}}a\Bigg) 
\\
	\stackrel{\scriptscriptstyle\eqref{eq:arciuffa}}{=} \ & 
		\Gamma\Bigg(\frac{1}{k!}\sum_{\substack{ j_1,\ldots,j_k \in I \\ \sigma\in\mf{S}_k} }\psi_{j_1}(X_{\sigma(1)})\cdots\psi_{j_k}(X_{\sigma(k)})\theta_{j_{1}}\cdots\theta_{j_{k}}a\Bigg) 
\\
	\stackrel{\scriptscriptstyle\eqref{eq:maancheno}}{=} \ &
		\Gamma\Bigg(\frac{1}{k!} \sum_{\substack{ j_1,\ldots,j_k \in I \\ \sigma\in\mf{S}_k} }\sum_{\substack{t+s=k \\ \tau \in W_{t,s}}}\psi_{j_1}(X_{\sigma(1)})\cdots\psi_{j_k}(X_{\sigma(k)})\omega_{\mf{h}}(\theta_{j_{\tau(1)}})\cdots\omega_{\mf{h}}(\theta_{j_{\tau(t)}})(a) \theta_{j_{\tau(t+1)}}\cdots\theta_{j_{\tau(k)}}\Bigg) 
\\
		\stackrel{(\bullet)}{=} \ & 
			\frac{1}{k!}\sum_{\substack{ j_1,\ldots,j_k \in I \\ \sigma\in\mf{S}_k} }\sum_{\substack{t+s=k \\ \tau \in W_{t,s}}}\psi_{j_{\tau(1)}}(X_{\sigma\tau(1)})\cdots\psi_{j_{\tau(t)}}(X_{\sigma\tau(t)})\omega_{\mf{h}}(\theta_{j_{\tau(1)}})\cdots\omega_{\mf{h}}(\theta_{j_{\tau(t)}})(a) \ldots \\ 
			& \hspace{1cm} \ldots \Gamma(\psi_{j_{\tau(t+1)}}(X_{\sigma\tau(t+1)})\cdots\psi_{j_{\tau(k)}}(X_{\sigma\tau(k)}) \theta_{j_{\tau(t+1)}}\cdots\theta_{j_{\tau(k)}}) 
\\
		\stackrel{(*)}{=} \ &
			\frac{1}{k!} \sum_{\substack{t+s=k \\ \tau \in W_{t,s}}} \sum_{\substack{ j_1,\ldots,j_k \in I \\ \rho\in\mf{S}_k} } \psi_{j_{\tau(1)}}(X_{\rho(1)})\cdots\psi_{j_{\tau(t)}}(X_{\rho(t)})\omega_{\mf{h}}(\theta_{j_{\tau(1)}})\cdots\omega_{\mf{h}}(\theta_{jf_{\tau(t)}})(a) \ldots \\ 
			& \hspace{1cm} \ldots \Gamma(\psi_{j_{\tau(t+1)}}(X_{\rho(t+1)})\cdots\psi_{j_{\tau(k)}}(X_{\rho(k)}) \theta_{j_{\tau(t+1)}}\cdots\theta_{j_{\tau(k)}}) 
\\
		\stackrel{(\star)}{=} \ & 
			\frac{1}{k!} \sum_{\substack{t+s=k \\ \tau \in W_{t,s}}} \sum_{\substack{ j_1,\ldots,j_k \in I \\ \sigma\in W_{t,s} \\ (\varpi_t,\varpi_s) \in \mf{S}_t \times \mf{S}_s} } \psi_{j_{\tau(1)}}(X_{\sigma\varpi_t(1)})\cdots\psi_{j_{\tau(t)}}(X_{\sigma\varpi_t(t)})\omega_{\mf{h}}(\theta_{j_{\tau(1)}})\cdots\omega_{\mf{h}}(\theta_{j_{\tau(t)}})(a) \ldots \\ 
			& \hspace{1cm} \ldots \Gamma(\psi_{j_{\tau(t+1)}}(X_{\sigma(t+\varpi_s(1))})\cdots\psi_{j_{\tau(k)}}(X_{\sigma(t + \varpi_s(s))}) \theta_{j_{\tau(t+1)}}\cdots\theta_{j_{\tau(k)}}) 
\\
		\stackrel{}{=} \ &
			\frac{1}{k!} \sum_{\substack{t+s=k \\ \tau \in W_{t,s}}} \sum_{\substack{ j_{\tau(1)},\ldots,j_{\tau(t)} \in I \\ \sigma\in W_{t,s} \\ \varpi_t \in \mf{S}_t }} \psi_{j_{\tau(1)}}(X_{\sigma\varpi_t(1)})\cdots\psi_{j_{\tau(t)}}(X_{\sigma\varpi_t(t)})\omega_{\mf{h}}(\theta_{j_{\tau(1)}})\cdots\omega_{\mf{h}}(\theta_{j_{\tau(t)}})(a) \ldots \\ 
			& \hspace{1cm} \ldots \Gamma\Bigg(\sum_{\substack{ j_{\tau(t+1)},\ldots,j_{\tau(k)} \in I \\ \varpi_s \in \mf{S}_s} }\psi_{j_{\tau(t+1)}}(X_{\sigma(t+\varpi_s(1))})\cdots\psi_{j_{\tau(k)}}(X_{\sigma(t + \varpi_s(s))}) \theta_{j_{\tau(t+1)}}\cdots\theta_{j_{\tau(k)}}\Bigg) 
\\
	\stackrel{\scriptscriptstyle\eqref{eq:symmetr},\eqref{eq:arciuffa}}{=} \ &
		\frac{1}{k!} \sum_{\substack{t+s=k \\ \tau \in W_{t,s}}} \sum_{\substack{ j_{\tau(1)},\ldots,j_{\tau(t)} \in I \\ \sigma\in W_{t,s} \\ \varpi_t \in \mf{S}_t }} \psi_{j_{\tau(1)}}(X_{\sigma\varpi_t(1)})\cdots\psi_{j_{\tau(t)}}(X_{\sigma\varpi_t(t)})\omega_{\mf{h}}(\theta_{j_{\tau(1)}})\cdots\omega_{\mf{h}}(\theta_{j_{\tau(t)}})(a) \dots \\
		& \hspace{1cm} \ldots s!\Gamma(\psi_{\mf{h}}(X_{\sigma(t+1)}\cdots X_{\sigma(t+s)})) 
\\
	\stackrel{}{=} \ &  
		\sum_{\substack{t+s=k \\ \tau \in W_{t,s}}} \sum_{\substack{ j_{\tau(1)},\ldots,j_{\tau(t)} \in I \\ \sigma\in W_{t,s} \\ \varpi_t \in \mf{S}_t }} \frac{s!}{k!} \varphi_{j_{\tau(1)}}\gamma(X_{\sigma\varpi_t(1)})\cdots\varphi_{j_{\tau(t)}}\gamma(X_{\sigma\varpi_t(t)})\omega_{\mf{h}}\pi(\chi_{j_{\tau(1)}})\cdots\omega_{\mf{h}}\pi(\chi_{j_{\tau(t)}})(a) \ldots \\ 
		& \hspace{1cm} \ldots \psi_{\mf{g}}(\gamma(X_{\sigma(t+1)})\cdots \gamma(X_{\sigma(t+s)})) 
\\
		\stackrel{\scriptscriptstyle\eqref{eq:symmetr}}{=} \ &
			\sum_{\substack{t+s=k \\ \tau \in W_{t,s}}} \sum_{\substack{ j_{\tau(1)},\ldots,j_{\tau(t)} \in I \\ \sigma\in W_{t,s} \\ \varpi_t \in \mf{S}_t }} \frac{s!}{k!} \varphi_{j_{\tau(1)}}\gamma(X_{\sigma\varpi_t(1)})\cdots\varphi_{j_{\tau(t)}}\gamma(X_{\sigma\varpi_t(t)})\omega_{\mf{g}}(\chi_{j_{\tau(1)}})\cdots\omega_{\mf{g}}(\chi_{j_{\tau(t)}})(a) \ldots \\ 
			& \hspace{1cm} \ldots \Bigg(\sum_{\substack{j_{\tau(t+1)},\ldots,j_{\tau(k)} \in I \\ \varpi_s \in \mf{S}_s}} \frac{1}{s!}\varphi_{j_{\tau(t+1)}}\gamma(X_{\sigma(t+\varpi_s(1))})\cdots \varphi_{j_{\tau(k)}}\gamma(X_{\sigma(t+\varpi_s(s))})\chi_{j_{\tau(t+1)}}\cdots \chi_{j_{\tau(k)}}\Bigg) 
\\
		\stackrel{}{=} \ &   
			\sum_{\substack{t+s=k \\ \tau \in W_{t,s}}} \sum_{\substack{ j_{1},\ldots,j_{k} \in I \\ \sigma\in W_{t,s} \\ (\varpi_t,\varpi_s) \in \mf{S}_t \times \mf{S}_s }} \frac{1}{k!} \varphi_{j_{\tau(1)}}\gamma(X_{\sigma\varpi_t(1)})\cdots\varphi_{j_{\tau(t)}} \gamma(X_{\sigma\varpi_t(t)}) \ldots \\ 
		 & \hspace{1cm}	\ldots\varphi_{j_{\tau(t+1)}}\gamma(X_{\sigma(t+\varpi_s(1))})\cdots \varphi_{j_{\tau(k)}}\gamma(X_{\sigma(t+\varpi_s(s))})\omega_{\mf{g}}(\chi_{j_{\tau(1)}})\cdots\omega_{\mf{g}}(\chi_{j_{\tau(t)}})(a)\chi_{j_{\tau(t+1)}}\cdots \chi_{j_{\tau(k)}} 
\\
		\stackrel{(\star)}{=} \ &
			\sum_{\substack{t+s=k \\ \tau \in W_{t,s}}} \sum_{\substack{ j_{1},\ldots,j_{k} \in I \\ \rho\in \mf{S}_k}} \frac{1}{k!} \varphi_{j_{\tau(1)}}\gamma(X_{\rho(1)})\cdots\varphi_{j_{\tau(t)}}\gamma(X_{\rho(t)})\varphi_{j_{\tau(t+1)}}\gamma(X_{\rho(t+1)})\cdots \varphi_{j_{\tau(k)}}\gamma(X_{\rho(t+s)}) \ldots \\ 
			& \hspace{1cm} \ldots \omega_{\mf{g}}(\chi_{j_{\tau(1)}})\cdots\omega_{\mf{g}}(\chi_{j_{\tau(t)}})(a)\chi_{j_{\tau(t+1)}}\cdots \chi_{j_{\tau(k)}} 
\\
	\stackrel{(*)}{=} \ & 
		\sum_{\substack{t+s=k \\ \tau \in W_{t,s}}} \sum_{\substack{ j_{1},\ldots,j_{k} \in I \\ \sigma\in \mf{S}_k}} \frac{1}{k!} \varphi_{j_{\tau(1)}}\gamma(X_{\sigma\tau(1)})\cdots \varphi_{j_{\tau(k)}}\gamma(X_{\sigma\tau(k)})  \omega_{\mf{g}}(\chi_{j_{\tau(1)}})\cdots\omega_{\mf{g}}(\chi_{j_{\tau(t)}})(a)\chi_{j_{\tau(t+1)}}\cdots \chi_{j_{\tau(k)}} 
\\
	\stackrel{(\bullet)}{=} \ &  
		\sum_{\substack{ j_{1},\ldots,j_{k} \in I \\ \sigma\in \mf{S}_k}} \frac{1}{k!} \varphi_{j_{1}}\gamma(X_{\sigma(1)})\cdots \varphi_{j_{k}}\gamma(X_{\sigma(k)}) \Bigg( \sum_{\substack{t+s=k \\ \tau \in W_{t,s}}}\omega_{\mf{g}}(\chi_{j_{\tau(1)}})\cdots\omega_{\mf{g}}(\chi_{j_{\tau(t)}})(a)\chi_{j_{\tau(t+1)}}\cdots \chi_{j_{\tau(k)}}\Bigg) 
\\
 \stackrel{\scriptscriptstyle\eqref{eq:maancheno}}{=} \ & \sum_{\substack{ j_{1},\ldots,j_{k} \in I \\ \sigma\in \mf{S}_k}} \frac{1}{k!} \varphi_{j_{1}}\gamma(X_{\sigma(1)})\cdots \varphi_{j_{k}}\gamma(X_{\sigma(k)}) \chi_{j_{1}} \cdots \chi_{j_{k}} a \\ 
 \stackrel{\scriptscriptstyle\eqref{eq:symmetr},\eqref{eq:arciuffa}}{=} \ & \psi_{\mf{g}}(\gamma(X_1)\cdots \gamma(X_k))a = (r_a \circ \psi_{\mf{g}} \circ \cS_\ahha (\gamma))(X_1\cdots X_k),
\end{align*}
}%
where in $(*)$ we used the fact that for any fixed $\tau \in W_{t,s}$, if $\sigma$ runs over all permutations in $\mf{S}_k$, then $\rho\coloneqq \sigma\tau$ still runs over all permutations in $\mf{S}_k$, in $(\star)$ we used the fact that for $t+s=k$ fixed, $W_{t,s} \times (\mf{S}_t \times \mf{S}_s) \to \mf{S}_k, (\sigma,\varpi) \mapsto \sigma\varpi,$ is a bijection as in Lemma \ref{lem:shuffles}, and in $(\bullet)$ we used commutativity of $A$ to reorder coefficients.
\end{proof}


\addtocontents{toc}{\SkipTocEntry}
\section*{Acknowledgements}

X.B.\ thanks J.\ Huebschmann for correspondence on universal enveloping algebras of Lie-Rinehart algebras before this project started, and C.\ Laurent-Gengoux for discussions about foliations.
N.K.\ and P.S.\ are members of the {\em Gruppo Nazionale per le Strutture Algebriche, Geometriche e le loro Applicazioni (GNSAGA-INdAM)}.
P.S.\ is a {\em Charg\'e de Recherches} of the {\em Fonds de la Recherche Scientifique (FNRS)}.

We are grateful to the Erwin Schr\"{o}dinger International Institute for Mathematics and Physics (Vienna) for hospitality and support during the thematic programme {\em ``Geometry for Higher Spin Gravity: Conformal Structures, PDEs, and Q-manifolds''} (August 23rd -- September 17th, 2021), where part of this work was completed.


\end{document}